\theoremstyle{plain}
\newtheorem{theorem}{Theorem}[section]
\newtheorem{lemma}[theorem]{Lemma}
\newtheorem{definition}[theorem]{Definition}
\newtheorem{prop}[theorem]{Proposition}
\numberwithin{equation}{section} \setcounter{equation}{0}
\newcommand{\beq}{\begin{equation}}
\newcommand{\eeq}{\end{equation}}
\newcommand{\ena}{\end{eqnarray}}
\newcommand{\bea}{\begin{eqnarray*}}
\newcommand{\eea}{\end{eqnarray*}}
\newcommand{\p}{{\partial}}
\newcommand{\h}{\hspace}
\newcommand{\overbar}[1]{\mkern 1.5mu\overline{\mkern-1.5mu#1\mkern-1.5mu}\mkern 1.5mu}
\newcommand{\interior}[1]{%
  {\kern0pt#1}^{\mathrm{o}}%
}
\newcommand{\normmm}[1]{{\left\vert\kern-0.25ex\left\vert\kern-0.25ex\left\vert #1
    \right\vert\kern-0.25ex\right\vert\kern-0.25ex\right\vert}}
\newtheorem*{theorem 2.1}{Theorem 2.1}
\def\Xint#1{\mathchoice
  {\XXint\displaystyle\textstyle{#1}}%
  {\XXint\textstyle\scriptstyle{#1}}%
  {\XXint\scriptstyle\scriptscriptstyle{#1}}%
  {\XXint\scriptscriptstyle\scriptscriptstyle{#1}}%
  \!\int}
\def\XXint#1#2#3{{\setbox0=\hbox{$#1{#2#3}{\int}$}
  \vcenter{\hbox{$#2#3$}}\kern-.5\wd0}}
\def\dashint{\Xint-}
\begin{document}
\begin{center}  \textbf{ \large DISCLINATIONS IN LIMITING LANDAU-DE GENNES THEORY\vspace{0.5pc}} \end{center}
\begin{center} \textbf{ \normalsize YONG YU \vspace{0.3pc}\\ \textit{The Chinese University of Hong Kong}}\end{center}\
\\
\textbf{\small ABSTRACT:} In this article we study the low-temperature limit of a Landau-de Gennes theory. Within all $\mathbb{S}^2$-valued $\mathscr{R}$-axially symmetric maps (see Definition 1.1), the limiting energy functional has at least two distinct energy minimizers. One minimizer has biaxial torus structure, while another minimizer has split-core segment structure on the $z$-axis. \vspace{0.5pc}\\
\setcounter{section}{1}
\setcounter{theorem}{0}
\setcounter{equation}{0}

\noindent \textbf{\large I. \small INTRODUCTION}\vspace{0.5pc}\\
Theory of Landau-de Gennes [17] uses tensor-valued order parameter to explain transitions between different phases. Usually the order parameter in the Landau-de Gennes theory is denoted by $\mathscr{Q}$ and the theory is also referred as $\mathscr{Q}$-tensor theory in literatures. Values of $\mathscr{Q}$ are real $3 \times 3$ symmetric matrices with zero trace. Therefore the matrix $\mathscr{Q}$ has three real eigenvalues. Due to quantitative relationship between the three eigenvalues, $\mathscr{Q}$ admits three structures at a point in domain. It is called isotropic if all eigenvalues are zero. It is uniaxial if two of the three eigenvalues are identical and non-zero. It is biaxial if all three eigenvalues are different from each other. There are many different liquid crystal theories, e.g. Ericksen's theory and Oseen-Frank theory. Extensive research works have already been devoted  to the study of Ericksen's theory in [2]-[5], [40] and references in [42]. As for Oseen-Frank theory, readers may refer to [11] and [28]-[29]. Within various theories for liquid crystal materials, Landau-de Gennes theory is a general and unified one. It has been shown in [45] that Ericksen's theory and Oseen-Frank theory can be derived from the Landau-de Gennes theory under uniaxial ansatz and appropriate limiting process, respectively.  \vspace{0.3pc}

The Landau-de Gennes theory and its variant [7] have attracted significant attentions from both physicists and mathematicians. Many research works, e.g. [1], [6], [8]-[10], [13]-[15], [22]-[25], [32], [34]-[38], [46]-[47], [52], [55], [59], focus on equilibrium solutions of the theories. These equilibrium solutions (particularly their core structures near disclinations) play key roles in  understanding phases of liquid crystal materials at different temperatures. In 1988 a radial hedgehog configuration has been considered by the authors in [59]. The configuration is unaxial on all points except the origin at where an isotropic core appears. Its mathematical properties have been addressed in [37], while in [24], [34], [36], [47] and [55], the stability and instability of the radial hedgehog configuration are extensively studied under various parameter regimes and different perturbations. Besides the radial hedgehog configuration, in [52] (see also [36] and [38]), it was found that Landau-de Gennes theory may have a solution with half-degree ring disclination. The order parameter $\mathscr{Q}$ is uniaxial on the ring disclination. Meanwhile the singular core of the radial hedgehog solution is removed by escaping to biaxial phase. It was until 2000 that the third core structure was found by Gartland and Mkaddem. It is called split-core configuration in [25]. Roughly speaking, the isotropic core of the radial hedgehog solution can be split into a line segment on $z$-axis along which equilibrium solution is uniaxial. Moreover the solution is isotropic at the two end-points of the line segment. It exhibits a biaxial phase for points near the line segment disclination and off $z$-axis. The above three fundamental core structures in the Landau-de Gennes theory have also been numerically confirmed in [6], [16], [25], [33], [61] and many references therein. Now it is believed that, under suitable regimes, the radial symmetry of the hedgehog solution can be broken into axial symmetry. There have at least two axially symmetric equilibrium solutions in the Landau-de Gennes theory bifurcating from the hedgehog solution. One admits half-degree ring disclination. Another one has split-core line segment disclination on rotation axis.
\vspace{1.5pc}\\
\textbf{\large I.1. \small AXIALLY SYMMETRIC FORMULATION OF LANDAU-DE GENNES EQUATION} \vspace{1pc}\\
 In the one-constant approximation and when the temperature is below the super-cooling temperature (see [24]), the energy functional of Landau-de Gennes theory can be written as:  \begin{eqnarray} \int_{B_R} \dfrac{1}{2} \h{1pt}\big| \h{1pt}\nabla \mathscr{Q}\h{1pt}\big|^2 - \dfrac{a^2}{2} \h{1pt}\big| \h{1pt}\mathscr{Q}\h{1pt}\big|^2 - \sqrt{6} \h{2pt}\mathrm{tr}\big( \mathscr{Q}^3 \big)  + \dfrac{1}{2}\h{1pt} \big| \h{1pt}\mathscr{Q}\h{1pt}\big|^4.
\end{eqnarray}Here $- a^2$ is the so-called reduced temperature. $B_R$ is the ball in $\mathbb{R}^3$ with center $0$ and radius $R$. $| \h{0.5pt}\mathscr{Q}\h{0.5pt} |^2 := \mathrm{tr}\big( \mathscr{Q}^2 \big)$ is the standard norm of real matrices. Suppose that $\mathbf{I}_3$ is the $3 \times 3$ identity matrix. We can write the Euler-Lagrange equation of  (1.1) as follows:
\begin{eqnarray}
 - \Delta \mathscr{Q}=  a^2 \mathscr{Q} + 3 \sqrt{6} \left( \mathscr{Q} ^2-\frac{1}{3} \h{1pt}| \h{0.5pt}\mathscr{Q} \h{0.5pt}|^2 \h{1pt}\mathbf{I}_3\right)  - 2\h{1pt}|\h{0.5pt}\mathscr{Q}\h{0.5pt}|^2 \mathscr{Q} \h{20pt}\text{in $B_R$.}\vspace{0.2pc}
\end{eqnarray}Equation (1.2) has five degrees of freedom. With axial symmetry, the degree of freedom can be reduced from five to three. Firstly let us introduce some notations. In this article $x = (x_1, x_2, z)$ denotes a $3$-vector in $\mathbb{R}^3$ space.  $\big(r, \varphi, \theta\big)$ are the spherical coordinates of $\mathbb{R}^3$ with respect to the center $0$. Conventionally $r$ is the radial variable. $\varphi \in [\h{0.5pt}0, \pi \h{0.5pt}]$ is the polar angle. $\theta \in [\h{1pt}0, 2 \pi \h{0.5pt} )$ is the azimuthal angle. We also need the radial variable, denoted by $\rho$, in the $x_1$-$x_2$ plane.  In the following $e_r$ is the radial direction $x/r$.  $\big\{e_{\rho}, e_{\theta}, e_z\big\}$ is an orthonormal basis of $\mathbb{R}^3$ with \begin{eqnarray*}e_{\rho} =  \left( \frac{x_1}{\rho}, \frac{x_2}{\rho}, 0 \right), \h{20pt}e_{\theta} =  \left( - \frac{x_2}{\rho}, \frac{x_1}{\rho}, 0 \right), \h{20pt}e_z = \big( 0, 0, 1 \big).
\end{eqnarray*}Supposing that $\displaystyle M_1 := e_r \otimes e_r - \frac{1}{3}\h{2pt}\mathbf{I}_3$, $\displaystyle M_2 := e_z \otimes e_z - \frac{1}{3}\h{1pt}\mathbf{I}_3$, $\displaystyle M_3 := e_\rho \otimes e_z + e_z \otimes e_\rho$, we put  $\mathscr{Q}$ under the ansatz:\begin{eqnarray} \mathscr{Q}= \sum_{j = 1}^3 q_j \h{0.5pt}M_j, \h{20pt}\text{where for  $j = 1, 2, 3$, $q_j = q_j(\rho, z)$ \h{2.5pt} are real-valued functions.}
\end{eqnarray} By applying the change of variables: \begin{eqnarray} a^{-1} \mathscr{Q}\left(R x \right) = \overline{\mathscr{Q}}\left[ u \right] := u_1 \left( e_{\rho} \otimes e_{\rho} - \dfrac{1}{2} \h{0.5pt}\mathbf{I}_2 \right) + \dfrac{\sqrt{3}}{2} \h{1pt}u_2\h{1pt} M_2 + \dfrac{1}{2} \h{0.5pt}u_3 \h{1pt} M_3,  \h{10pt}\text{where $\mathbf{I}_2 = \left(\begin{array}{ccc}
1 & 0 & 0 \\
0 & 1 & 0 \\
0 & 0 & 0\\
\end{array}
\right), $}
\end{eqnarray}the order paramenter $\mathscr{Q}$ in (1.3) solves (1.2) if and only if $u = u(\rho, z) = (u_1, u_2, u_3)$ is a solution to   the system \begin{small}\begin{eqnarray}\left\{\begin{array}{lcl}
 -\Delta u_1+\dfrac{4}{\rho^2}u_1 \h{2pt}= a^2 R^2 u_1+\dfrac{3}{\sqrt{2}}\h{0.5pt}a R^2  \left(- 2 \h{0.5pt} u_1 u_2+\dfrac{\sqrt{3}}{2}u_3^2\right)- a^2 R^2 \h{0.5pt} |\h{0.5pt}u\h{0.5pt}|^2 \h{0.5pt}u_1, &&\text{in $\mathrm{B}_1$;}\vspace{0.5pc}\\
 -\Delta u_2\h{33pt}= a^2R^2 u_2+ \dfrac{3}{\sqrt{2}} \h{0.5pt}aR^2 \left(- u_1^2+ u_2^2+\dfrac{1}{2} \h{0.5pt}u_3^2\right)\h{4pt}- a^2 R^2 \h{0.5pt}|\h{0.5pt}u\h{0.5pt}|^2 \h{0.5pt}u_2, &&\text{in $\mathrm{B}_1$;}\vspace{0.5pc}\\
 -\Delta u_3+\dfrac{1}{\rho^2}u_3\h{2pt}= a^2 R^2 u_3+ \dfrac{3}{\sqrt{2}} \h{0.5pt} a R^2 \h{0.5pt}    \bigg(  \sqrt{3 } \h{1pt}u_1u_3+  u_2 u_3\bigg)-  a^2 R^2\h{0.5pt}|\h{0.5pt}u\h{0.5pt}|^2 \h{0.5pt}u_3, &&\text{in $\mathrm{B}_1$.}\vspace{0.5pc}\\
\end{array}\right.
\end{eqnarray}\end{small}\noindent Equivalently $u$ solves the   Euler-Lagrange equation of \begin{eqnarray*}E_{a, \h{0.5pt} \mu} \left[ \h{0.5pt}v \h{0.5pt}\right] := \int_{B_1}  \big|\h{0.5pt} \nabla v \h{1pt}\big|^2 + \frac{1}{\rho^2} \left( 4 v_1^2 + v_3^2 \h{0.5pt} \right) + \dfrac{\mu}{a}\h{1pt} F_a(\h{0.5pt}v\h{0.5pt}).
\end{eqnarray*}Here $v_j$ is the $j$-th component of $v$. $\mu$ equals to $a R^2$ which is assumed to be a fixed constant in the following course. The nonlinear potential function $F_a$ is read as   \begin{eqnarray}F_a(\h{0.5pt}v\h{0.5pt}) := D_a  - 3 \sqrt{2} \h{1pt} a \h{1pt} P\left(\h{1pt}v\h{1pt}\right) + \dfrac{a^2}{2} \left[ \h{1pt}|\h{1pt} v \h{1pt}|^2 - 1 \h{1pt}\right]^2 \h{5pt}\text{with $P\left(\h{1pt}v\h{1pt}\right) := -v_2 \h{0.5pt}v_1^2 + \frac{\sqrt{3}}{2} \h{0.5pt} v_1 \h{0.5pt} v_3^2 + \frac{1}{3} \h{0.5pt} v_2^3 + \frac{1}{2} \h{0.5pt} v_2 \h{0.5pt} v_3^2$.}
\end{eqnarray}Note that we choose $D_a$ to be a constant depending on $a$ so that $0$ is the absolute minimum value of $F_a$. Particular interests in both mathematics and physics are focused on the case when (1.2)  is assigned with a radial hedgehog boundary condition on $\p B_R$. Therefore in this article  we supply the system (1.5) with the Dirichlet boundary condition:  \begin{eqnarray}u =  \dfrac{\sqrt{2} H_+}{a}\h{1pt} U^*  \h{20pt}\text{on $\p \mathrm{B}_1$, \h{4pt}where $H_+ := \frac{3 + \sqrt{9 + 8 a^2} }{4}$\h{3pt}\text{and}\h{3pt} $U^* := \left( \dfrac{\sqrt{3}}{2}  \h{1pt} \dfrac{\rho^2 }{r^2}\h{1pt}, \h{3pt}\dfrac{3}{2} \h{1pt}\left( \dfrac{z^2}{r^2} - \dfrac{1}{3} \right), \h{3pt}\sqrt{3} \h{2pt} \dfrac{\rho \h{0.5pt} z }{r^2} \right).$}\end{eqnarray}
It can be shown that in the low-temperature limit  $a \rightarrow \infty$ (possibly up to a subsequence), minimizers of $E_{a, \h{0.5pt}\mu}$ within the space $\mathscr{F}_{a, \mu} := \Big\{ u = u(\rho, z) \in \mathbb{R}^3: E_{a, \mu}[\h{1pt}u\h{1pt}] < \infty \h{3pt}\text{and}\h{3pt} \text{$u$ satisfies (1.7)}\Big\}$ converges strongly in $H^1(B_1)$ to a minimizer of \begin{eqnarray} E_{\mu}\left[\h{0.5pt}u\h{0.5pt}\right] := E\left[\h{0.5pt}u\h{0.5pt}\right] + \sqrt{2} \h{1pt}\mu \int_{B_1} 1 - 3 P\left( u \right) := \int_{B_1} \big|\h{0.5pt}\nabla u\h{1.5pt}\big|^2 + \dfrac{1}{\rho^2} \left( 4 u_1^2 + u_3^2 \h{1.5pt}\right) + \sqrt{2}\h{1pt}\mu\h{1.5pt}\big(\h{0.5pt} 1 - 3 \h{1pt}P\big(\h{0.5pt}u\h{0.5pt}\big)\h{0.5pt} \big)
\end{eqnarray}in $\mathscr{F}$. Here  $\mathscr{F}$ is the configuration space \begin{eqnarray} \mathscr{F} := \Big\{ v = v(\rho, z) \in \mathbb{S}^2 : E_{\mu}[\h{1pt}v\h{1pt}] < \infty \h{3pt}\text{and}\h{3pt} \text{$v = U^*$  on $\p B_1$}\Big\}.\end{eqnarray}In other words $E_{a, \mu}$-energy defined on $\mathscr{F}_{a, \mu}$ space $\Gamma$-converges to the $E_\mu$-energy defined on $\mathscr{F}$ space. In the remaining of this article, $U^*$ is referred as hedgehog map. The energy functional $E_\mu$ is called energy functional of limiting Landau-de Gennes theory. We will study in details critical points of the $E_\mu$-energy and their core structures near disclinations. 
\vspace{1.5pc}\\
\textbf{\large I.2. \small SOME DEFINITIONS} \vspace{1pc}\\
Before stating our main results, let us introduce some definitions. \begin{definition}Suppose that $u = (u_1, u_2, u_3)$ is a $3$-vector field on $B_1$. $u$ is called $\mathscr{R}$-axially symmetric if for all $(\rho, z)$ with $\rho^2 + z^2 \leq 1$, it satisfies  $\mathrm{(i).} \h{2pt}u = u(\rho, z)$; $\mathrm{(ii).} \h{2pt} u_1(\rho, z) = u_1(\rho, - z)$; $\mathrm{(iii).} \h{2pt} u_2(\rho, z) = u_2(\rho, - z)$; $\mathrm{(iv).} \h{2pt} u_3(\rho, z) = - u_3(\rho, - z).$  Let $L$ be the augment operator given by \begin{eqnarray}L\left[\h{1pt}u \h{1pt}\right] := \big(\h{1pt}u_1 \cos 2 \theta, \h{2pt}u_1 \sin 2\theta, \h{2pt}u_2, \h{2pt}u_3 \cos \theta, \h{2pt}u_3 \sin \theta\h{1pt}\big).\end{eqnarray} A $5$-vector field on $B_1$  is  called $\mathscr{R}$-axially symmetric if it equals to  $L\left[ u \right]$ for some $\mathscr{R}$-axially symmetric $3$-vector field $u$ on $B_1$. Moreover we denote by $\mathscr{F}^{s}$ the subspace of $\mathscr{F}$ which contains all $\mathscr{R}$-axially symmetric maps in $\mathscr{F}$.
\end{definition}Given a $\mathscr{R}$-axially symmetric vector, we use the following definition to describe its regularity at a point:  \begin{definition} Let $u$ be a vector field on $\overline{B_1}$. $u$ is called regular at a point in $\overline{B_1}$ if there is a neighborhood of this point so that $\nabla u$ is essentially bounded in this neighborhood. This point is also called a regular point of $u$. If a point in $\overline{B_1}$ is not a regular point of $u$, then it is called singularity of $u$.
\end{definition}
Eigenvalues and eigenvectors of the order parameter $\mathscr{Q}$ play key roles in the study of Landau-de Gennes theory. In light of our ansatz (1.4), the eigenvalues of $\mathscr{Q}$ can be represented in terms of the variable $u$. Therefore we introduce \begin{definition}Given a $3$-vector field  $u = (u_1, u_2, u_3)$ on $B_1$, the eigenvalues  of $\overline{\mathscr{Q}}\left[\h{0.5pt}u\h{0.5pt}\right]$ in (1.4) equal to \begin{small}\begin{eqnarray*}  - \dfrac{1}{2} \left( u_1 + \dfrac{1}{\sqrt{3}} \h{0.5pt}u_2 \right); \h{5pt} \dfrac{1}{4} \bigg\{ \left( u_1 + \dfrac{1}{\sqrt{3}} \h{1pt}u_2 \right) - \sqrt{\big( u_1 - \sqrt{3} \h{0.5pt}u_2 \big)^2 + 4 u_3^2 } \h{1pt} \bigg\}; \h{5pt} \dfrac{1}{4} \bigg\{ \left( u_1 + \dfrac{1}{\sqrt{3}} \h{1pt}u_2 \right) + \sqrt{\big( u_1 - \sqrt{3} \h{0.5pt}u_2 \big)^2 + 4 u_3^2 } \h{1pt} \bigg\}.
\end{eqnarray*}\end{small}These three eigenvalues are denoted by $\lambda_1$, $\lambda_2$, $\lambda_3$, respectively and will be referred as eigenvalues determined by $u$ in the following. At a location $x \in B_1$, we call \begin{enumerate}[(1).]
\item $u$ is isotropic at $x$ if $x$ is a point singularity of $u$ (see Definition 1.2);
\item $u$ is uniaxial  at $x$ if two of the three eigenvalues determined by $u$ are identical and different from zero at $x$;
\item $u$ is biaxial at $x$ if all the three eigenvalues determined by $u$ are different at $x$.\end{enumerate}
Moreover a vector field is called director field determined by $u$ if its values are normalized eigenvectors corresponding to the largest eigenvalue in $\lambda_1$, $\lambda_2$, $\lambda_3$.
\end{definition} With the above definitions, we can define the so-called biaxial torus and split-core line segment. \begin{definition}Suppose that $\mathscr{T}$ is an open torus in $B_1$ and $u$ is a given $3$-vector field on $\mathscr{T}$. $\mathscr{T}$ is called biaxial torus with uniaxial core determined by $u$ if $u$ is uniaxial on a closed simple loop in $\mathscr{T}$. Meanwhile $u$ is biaxial in $\mathscr{T}$ except the points on the closed simple loop. In this case the torus $\mathscr{T}$ is also called a biaxial torus of the augmented map $L\left[u\right]$. Given a segment, we have a straight line, denoted by $l$, passing across the segment. The segment is called split-core segment associated with a $3$-vector field $u$ if there is a neighborhood, denoted by $O$, containing the segment so that $u$ is uniaxial  on $O \cap l$ except the two end-points of the segment. Meanwhile $u$ is isotropic at the two end-points of the segment and biaxial on $O \sim l$. Here and in what follows, we use $\sim$ to denote the set minus. In this case $L\left[ u\right]$ has split-core segment structure on $l$.
\end{definition}\
\\
\noindent \textbf{\large I.3. \small MAIN RESULTS AND STRUCTURE OF THE ARTICLE}\vspace{1pc}
\\
For any $\mu \geq 0$,  $U^* $ defined in (1.7) is a critical point of $E_\mu$. However it is not an $E_\mu$-energy minimizer within the configuration space $\mathscr{F}$ (see (1.9)).  For any smooth radial function $f$ with $f(1) = 0$, we let $\mathrm{P}_f$ be the orthogonal projection of $\big(\h{1pt}0, \h{0.5pt}f, \h{0.5pt}0 \h{1pt}\big)$ to the tangent plane of $\mathbb{S}^2$ at $U^*$. The Hessian of $E_{\mu}$ at $U^*$ then satisfies \begin{eqnarray*}\mathrm{Hess}_{U^*} \big[ \h{0.5pt}\mathrm{P}_f, \mathrm{P}_f \h{0.5pt}\big] =  \dfrac{32 \h{1pt}\pi}{5} \left[\h{1.5pt} \int_0^1 \big( f' \big)^2 r^2 - 3 \int_0^1 f^2 \h{1pt}\right] + \dfrac{72 \h{1pt}\sqrt{2}}{5} \h{1pt}\pi\h{1pt}\mu \h{1pt}\int_0^1 f^2 \h{1pt}r^2.
\end{eqnarray*} In light of Lemma 1.3 in [58], there is a smooth radial function $g$ compactly supported on $(0, 1)$ so that  \begin{eqnarray} \mathrm{Hess}_{\h{1pt}U^*} \big[ \h{1pt}\mathrm{P}_{g}, \mathrm{P}_{g} \h{0.5pt}\big] < 0. \end{eqnarray} Therefore $U^*$ is linearly unstable in $\mathscr{F}$.  There exists at least one map in $\mathscr{F}^s$ (see Definition 1.1) whose $E_\mu$-energy is strictly less than the $E_\mu$-energy of $ U^*$. We are now concerned about energy-minimizers of $E_\mu$ in $\mathscr{F}^s$. \begin{theorem} There exist at least two distinct energy minimizers of $E_\mu$ in $\mathscr{F}^s$. One minimizer has biaxial torus structure, while another minimizer has split-core segment structure on the $z$-axis. Moreover if we let $u_\mu$ be one energy minimizer of $E_\mu$ in $\mathscr{F}^s$, then as $\mu \rightarrow \infty$, $L\left[\h{1pt}u_\mu\h{1pt}\right]$ converges to $L\left[\h{1pt}U^*\h{1pt}\right]$ strongly in $H^1(B_1)$. Here $L$ is the augment operator defined in (1.10).
\end{theorem}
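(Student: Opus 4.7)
The plan is to minimize $E_\mu$ over two topologically disjoint subclasses of $\mathscr{F}^s$ separated by a sign invariant on the $z$-axis, and then to obtain strong $H^1$ convergence to $U^*$ in the $\mu\to\infty$ limit from the uniform bound $E_\mu[u_\mu]\leq E_\mu[U^*]=E[U^*]$.

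Any $u\in\mathscr{F}^s$ with finite $E_\mu$-energy must satisfy $u_1(0,z)=u_3(0,z)=0$ for a.e.~$z\in(-1,1)$, since the $\rho^{-2}$-weighted coercive terms would otherwise diverge; the $\mathbb{S}^2$-constraint then forces $u_2(0,z)\in\{-1,1\}$ a.e., while the boundary data give $u_2(0,\pm 1)=1$. I split
\begin{equation*}
\mathscr{F}^s_{\mathrm{t}}:=\{u\in\mathscr{F}^s:u_2(0,\cdot)\equiv 1 \text{ a.e.}\},\qquad \mathscr{F}^s_{\mathrm{s}}:=\mathscr{F}^s\setminus\mathscr{F}^s_{\mathrm{t}}.
\end{equation*}
Both classes contain finite-energy competitors: $\mathscr{F}^s_{\mathrm{t}}$ contains a smooth $\mathscr{R}$-symmetric interpolation between $U^*$ on $\partial B_1$ and an axially symmetric map supporting an equatorial uniaxial loop; $\mathscr{F}^s_{\mathrm{s}}$ contains a Gartland--Mkaddem-style ansatz with $u_2\equiv -1$ on a short symmetric segment through the origin. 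The Hessian instability (1.11), whose perturbation $\mathrm{P}_g$ is tangential to $\mathbb{S}^2$ and vanishes along the axis, moreover gives $\inf_{\mathscr{F}^s_{\mathrm{t}}}E_\mu<E_\mu[U^*]$.

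In each class I run the direct method: a minimizing sequence $\{u_n\}$ is $H^1$-bounded, weakly converges to some $u$ with $|u|=1$ a.e., and the $\mathscr{R}$-symmetries and boundary trace $U^*$ pass to the limit; lower semicontinuity of the Dirichlet and $\rho^{-2}$ terms is standard, and the cubic piece $P(u_n)$ converges by Rellich compactness. Preservation of $\mathscr{F}^s_{\mathrm{t}}$ under weak convergence follows from the $H^{1/2}$-trace on the axis, viewed as the boundary of the half-plane of $\mathscr{R}$-symmetry. The main obstacle is preservation of $\mathscr{F}^s_{\mathrm{s}}$: the symmetric sign-reversal segment could collapse to the origin, sending the weak limit into $\mathscr{F}^s_{\mathrm{t}}$. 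I would rule this out by a renormalized-energy lower bound near the two isotropic endpoints, modeled on Bethuel--Brezis--H\'elein and adapted to the $\rho^{-2}$-weighted energies, producing a $\log(1/\epsilon)$ growth whenever the segment length drops to $\epsilon$; this yields a uniform positive lower bound on the length along the sequence and hence a minimizer $u^{\mathrm{s}}\in\mathscr{F}^s_{\mathrm{s}}$. Structural identification then follows from partial regularity for constrained minimizers of Ginzburg--Landau-type energies on $\mathbb{S}^2$: the torus minimizer $u^{\mathrm{t}}$ differs from $U^*$ but keeps $u_2=1$ on the axis, so it must exhibit a uniaxial simple loop, which by $\mathscr{R}$-symmetry is a horizontal circle (the biaxial torus); the split minimizer $u^{\mathrm{s}}$ has its uniaxial core on the sign-reversal segment with isotropic endpoints and biaxial neighborhood, matching Definitions 1.3--1.4. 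The two minimizers are distinct by the sign invariant on the axis.

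For the $\mu\to\infty$ limit, a direct computation gives $P(U^*)\equiv 1/3$ on $B_1$, since $U^*$ is everywhere uniaxial of constant normalization, so $E_\mu[U^*]=E[U^*]$ is $\mu$-independent. Using $U^*$ as a competitor yields $E_\mu[u_\mu]\leq E[U^*]$ for every $\mu$, which forces $\int_{B_1}(1-3P(u_\mu))\to 0$ while bounding $u_\mu$ uniformly in $H^1$. On $\mathbb{S}^2$ the zero set of $1-3P$ is precisely the uniaxial orbit, so any weak $H^1$-limit of $L[u_\mu]$ that respects the boundary condition and $\mathscr{R}$-symmetry must coincide with $L[U^*]$; strong $H^1$ convergence then follows from $\limsup E_\mu[u_\mu]\leq E[U^*]$ together with weak lower semicontinuity, which forces convergence of the Dirichlet energies. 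The dominant difficulty throughout is the segment-collapse obstacle in the closedness of $\mathscr{F}^s_{\mathrm{s}}$, which needs the quantitative renormalized-energy estimate in the $\rho^{-2}$-weighted setting.
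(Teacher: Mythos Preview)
Your decomposition differs from the paper's and contains a genuine gap at the level of admissible classes. The paper does \emph{not} split $\mathscr{F}^s$ by the values of $u_2$ on the axis $l_3$; it imposes a Signorini obstacle on the \emph{equatorial disk} $T=\{z=0\}$, minimizing $E_\mu$ over $\mathscr{F}_{b,+}=\{u\in\mathscr{F}^s:u_2\ge b\text{ on }T\}$ and $\mathscr{F}_{c,-}=\{u\in\mathscr{F}^s:u_2\le c\text{ on }T\}$, and then sends $b\downarrow-1$, $c\uparrow 1$ (Theorem~1.8) to recover unconstrained minimizers in $\mathscr{F}^s$. The reason this works and your approach does not is trace regularity: $T$ has codimension one in $B_1$, so $u_2|_T$ is well defined for every $H^1$ competitor, whereas $l_3$ has codimension two and $H^1(B_1)$ maps carry no trace there. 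Your class $\mathscr{F}^s_{\mathrm t}$, defined by ``$u_2(0,\cdot)\equiv 1$ a.e.'', is therefore not meaningful for a general minimizing sequence; the $H^{1/2}$-trace you invoke fails because the relevant energy on the half-plane carries the weight $\rho$, which degenerates at $\rho=0$. You correctly flag closedness of $\mathscr{F}^s_{\mathrm s}$ as the main obstacle, but the proposed renormalized-energy lower bound is speculative and the paper never proves anything of that sort. Instead, the distinction between the two minimizers is established \emph{a posteriori}, after the regularity theory of Sections~II--IV: one shows $u_b^+$ is smooth at the origin with $u_2=1$ there (Proposition~3.2 via Lemma~3.4), while $u_c^-$ has $u_2=-1$ at the origin, forcing different parities of singularities on $l_3^+$ (Theorems~1.6(1) and~1.7(1)) and hence different disclination structures.

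Your $\mu\to\infty$ argument is correct in outline but incomplete at the identification step. From $E_\mu[u_\mu]\le E[U^*]$ one indeed gets a weak limit $u_\infty^\star$ minimizing $E$ over $\mathscr{F}_1=\{u\in\mathscr{F}^s:P(u)=1/3\}$ (Lemma~6.1). However, the constraint $P=1/3$ on $\mathbb S^2$ cuts out a one-dimensional curve, and there is no soft reason why the only $\mathscr R$-symmetric $E$-minimizer into this curve with boundary data $U^*$ should be $U^*$ itself. The paper reduces this (Lemmas~6.2--6.3) to a minimization for $\mathbb S^1$-valued maps $v^\dagger$ with energy $F[v]=\int|\nabla v|^2+2\rho^{-2}(1-v_1)$, lifts $v^\dagger$ to an angular function $\alpha$ on $B_1$, and then invokes the Brezis--Coron--Lieb uniqueness theorem (Theorem~7.1 of~[11]) to conclude $\alpha=\varphi$, i.e.\ $u_\infty^\star=U^*$. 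Strong $H^1$ convergence then follows from matching of energies, as you say.
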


Our strategy to prove Theorem 1.5 relies on a novel bifurcation argument. In the following $B_1^+$ ($B_1^-$ resp.) denotes the open upper-half (lower-half resp.) part of $B_1$. $T$ is the flat boundary of $B_1^+$. Moreover we let $\mathrm{I}_- := \big(-1, \h{1pt}- 1\h{0.5pt}/\h{0.5pt}2\h{1.5pt}\big]$ and $\mathrm{I}_+ := \big[- 1\h{0.5pt}/\h{0.5pt}2, \h{1pt}1\h{1.5pt}\big) $. For any $b \in \mathrm{I}_-$ and $c \in \mathrm{I}_+$, we define $\mathscr{F}_{\h{1pt}b, +} := \Big\{ u \in \mathscr{F}^s : u_2 \h{2pt}\geq\h{2pt} b \h{6pt}\text{$\mathrm{on} \h{3pt}T$} \h{1pt} \Big\}$  and $\mathscr{F}_{\h{1pt}c, -} := \Big\{ \h{1pt}u \in \mathscr{F}^s : u_2 \h{2pt}\leq\h{2pt} c \h{6pt}\text{$\mathrm{on} \h{3pt} T$} \h{1pt} \Big\}$. Note that for $g$ satisfying  (1.11), $| \h{1pt}g \h{1pt}|$ also satisfies (1.11). For real number $\sigma$ with sufficiently small absolute value, the normalized map of  $U^* + \sigma \h{1pt}\mathrm{P}_{|\h{0.5pt} g \h{0.5pt}| }$, denoted by $U_{\sigma}^*$, has strictly less $E_{\mu}$-energy than  $U^*$. Since $U_{\sigma}^* \h{1pt}\in\h{1pt} \mathscr{F}_{\h{1pt}b, +}$ if $\sigma \geq 0$ and $U_{\sigma}^* \in \mathscr{F}_{\h{1pt}c, -}$ if $\sigma \leq 0$, it then follows  \begin{eqnarray} \mathrm{Min} \h{2pt} \bigg\{ \h{1pt}E_{\mu}\left[\h{1pt}u\right] : u \in \mathscr{F}_{\h{1pt}b, +} \bigg\} \h{2pt}< \h{2pt}E_{\mu}\h{1pt}\big[\h{1pt}U^*\h{1pt}\big], \h{30pt}\mathrm{Min} \h{2pt} \bigg\{ \h{1pt} E_{\mu}\left[\h{1pt}u\right] : u \in \mathscr{F}_{\h{1pt}c, -} \h{1pt} \bigg\} \h{2pt}< \h{2pt}E_{\mu}\h{1pt}\big[\h{1pt}U^*\h{1pt}\big],
\end{eqnarray}for any $b \in \mathrm{I}_-$ and $c \in \mathrm{I}_+$. By direct method of calculus of variations, for any $b\in \mathrm{I}_-$, the first minimization problem in (1.12) can be solved by a minimizer $u_b^+$. In light of $P$ defined in (1.6),  we have $ P\big(\h{0.5pt} v_1, v_2, v_3 \h{0.5pt}\big) \h{2pt}\leq\h{2pt}P\big(\h{0.5pt} |\h{0.5pt}v_1 \h{0.5pt}|, v_2, |\h{0.5pt}v_3\h{0.5pt}| \h{0.5pt}\big)$ for any $(v_1, v_2, v_3) \in \mathbb{R}^3$. Hence we can make the first and third components of $u_b^+$ non-negative on $B_1^+$. Similarly for any $c \in \mathrm{I}_+$, we can  find a minimizer, denoted by $u_{\h{1pt}c}^-$, of the second minimization problem in (1.12). Moreover the first and third components of $u_{\h{1pt}c}^-$ are non-negative throughout $B_1^+$. By (1.12), $u_b^+$ and $u_c^-$ solve some Signorini-type problems for $\mathbb{S}^2$-valued mappings (see [12] and [53] for scalar case).  \vspace{0.3pc}

Regularity of the two minimizers (i.e. $u_b^+$ and $u_c^-$) are crucial while we study biaxial torus and split-core segment structures. By the augment operator $L$, we reduce the regularity problem for $u_b^+$ and $u_c^-$ to the associated regularity problem of $L\h{1pt}\big[\h{1pt} u_b^+ \h{1pt}\big]$ and $L\h{1pt}\big[\h{1pt} u_c^-\h{1pt}\big]$, respectively. In the remaining $L\h{1pt}\big[\h{1pt} u_b^+ \h{1pt}\big]$ and $L\h{1pt}\big[\h{1pt} u_c^-\h{1pt}\big]$ are referred as bifurcation solutions. To study regularity of these two  bifurcation solutions, we split $\overline{B_1}$ into several sub-regions. By H\'{e}len's trick and arguments of Rivi$\grave{\text{e}}$re-Struwe in [54] (see also [49]), the bifurcation solutions are smooth in $B_1 \sim \left( l_3 \cup T\right)$. Here $l_3$ denotes the $z$-axis.  Moreover by [49], these two bifurcation solutions  are continuous up to the points on $\p B_1 \sim \left(l_3 \cup T\right)$. As for the south and north poles, we can flatten $\p B_1$ near these two poles and extend symmetrically the definition of these bifurcation solutions in the new coordinates from a half ball to a whole ball. By  Rivi$\grave{\text{e}}$re-Struwe arguments  in [54] and Schoen-Unlenbeck's geometric Lemma 2.5 in [57], these two bifurcation solutions are continuous up to these two poles. Hence we only need to study regularity of bifurcation solutions on $B_1 \cap l_3$ and the free boundary $T$. In Sect.II, we consider a Signorini-type problem for harmonic maps with potential.  We show that the bifurcation solutions are smooth on $T^{\circ} := T \sim \Big\{ (0, 0), (1, 0)\Big\}$. Without ambiguity we still use $T$ to denote the set $\Big\{ \left(\rho, 0 \right) : \rho \in \big[\h{0.5pt}0, 1 \h{0.5pt}\big]\h{1pt} \Big\}$. By this regularity result, the bifurcation solutions solve weakly the following Dirichlet boundary value problem:\begin{eqnarray}\left\{\begin{array}{lcl} - \Delta w - \dfrac{3 \sqrt{2}}{2} \h{1pt}\mu\h{1pt}\nabla_w S = \left\{ \h{1pt} \h{1pt}\big|\h{1pt}\nabla w \h{1pt}\big|^2 - \dfrac{9 \sqrt{2}}{2}\h{1pt}\mu\h{1pt}S \left[w\right] \h{1pt}\right\} w \h{20pt}&&\text{in $B_1$;}\vspace{0.3pc}\\
\h{78pt}w = L\left[ U^*\right] &&\text{on $\p B_1$.} \end{array}\right.
\end{eqnarray}Here $S\h{0.5pt}[\cdot]$ is defined by  \begin{eqnarray*} S\left[\h{1pt} w \h{1pt}\right] := - w_3 \left( w_1^2 + w_2^2 \h{0.5pt}\right) +  \sqrt{3} \h{1.5pt}w_2 \h{1pt}w_4 \h{1pt}w_5 + \dfrac{1}{2} \h{1pt}w_3\h{1pt}\left( w_4^2 + w_5^2 \h{0.5pt}\right) + \dfrac{1}{3} \h{1pt}w_3^3 + \dfrac{\sqrt{3}}{2} \h{1pt}w_1 \h{1pt}\left( w_4^2 - w_5^2 \h{0.5pt}\right).
\end{eqnarray*} Results in [49] therefore infer the continuity of the bifurcation solutions up to $\p B_1 \cap T$. In addition an $\epsilon$-regularity result, i.e. Proposition 3.1, also holds for the bifurcation solutions at the origin. In Sect.III, we show that the bifurcation solutions are indeed smooth at the origin. Hence if the bifurcation solutions have singularities, then singularities must be on $l_3$ and different from the origin and two poles. Sect.IV is devoted to studying asymptotic behaviour of bifurcation solutions near their singularities on $l_3$. With the preparations in Sections II-IV, in Sect.V, we study in details the biaxial torus and split-core segment structures of these bifurcation solutions. Our second main result is about specific properties of $L\h{1pt}\big[\h{1pt} u_b^+ \h{1pt}\big]$, particularly its biaxial torus structure. \begin{theorem}For any constant $b \in \mathrm{I}_-$, the followings hold for $u_{\h{1pt}b}^+$ and its augmented map $L\h{1pt}\big[\h{1pt} u_b^+ \h{1pt}\big]$:
\begin{enumerate}[(1).]
\item The augmented map $L\h{1pt}\big[\h{0.5pt}u_{\h{1pt}b}^+\big]$ is a weak solution to the Dirichlet boundary value problem (1.13). It  is smooth on $B_1$, except possibly a finite number of singularities on $l_3$.  $L\h{1pt}\big[\h{0.5pt}u_{\h{1pt}b}^+\big]$ is smooth at the origin, south pole and north pole. If $L\h{1pt}\big[\h{1pt} u_b^+\h{1pt}\big]$ has singularities, then it must have even number of singularities on $l_{\h{1pt}3}^+ := B_1^+ \cap l_3$. By symmetry, $L\h{1pt}\big[\h{0.5pt}u_{\h{1pt}b}^+\big]$ has the same number of singularities on $l_3^- := B_1^- \cap l_3$ as the number of singularities that it has on $l_3^+$;
\item If $L\h{1pt}\big[\h{0.5pt}u_{\h{1pt}b}^+\h{0.5pt}\big]$ has singularities, then at each singularity on $l_{\h{1pt}3}^+$, tangent map of $L\h{1pt}\big[\h{0.5pt}u_{\h{1pt}b}^+\h{0.5pt}\big]$  must equal to  \begin{eqnarray} \text{ $\Lambda^+ := \big(\h{0.5pt}0, \h{2pt}0, \h{2pt}  \cos \varphi, \h{2pt}\sin \varphi  \cos \theta, \h{2pt}\sin \varphi  \sin \theta \h{0.5pt}\big)$ \h{2pt}or\h{2pt} $\Lambda^- := \big(\h{0.5pt}0, \h{2pt}0, \h{2pt}  - \cos \varphi, \h{2pt}\sin \varphi \cos \theta, \h{2pt}\sin \varphi \sin \theta \h{0.5pt}\big)$.}\end{eqnarray} The tangent map of $L\h{1pt}\big[\h{0.5pt}u_{\h{1pt}b}^+\h{0.5pt}\big]$ at its lowest singularity on $l_{\h{1pt}3}^+$ is given by $\Lambda^-$ in (1.14). Moreover on $l_3^+$, tangent maps of two consecutive singularities of $L\h{1pt}\big[\h{0.5pt}u_{\h{1pt}b}^+\h{0.5pt}\big]$ must be different maps in (1.14);
\item In the sense of Definition 1.2, $u_{\h{1pt}b}^+$ is regular away from singularities of $L\h{1pt}\big[\h{0.5pt}u_{\h{1pt}b}^+\h{0.5pt}\big]$. The singularities of $L\h{1pt}\big[\h{0.5pt}u_{\h{1pt}b}^+\h{0.5pt}\big]$, if it has, must also be singularities of $u_{\h{1pt}b}^+$.  If we understand $u_{\h{1pt}b}^+$ as a map defined on the $\rho$\h{0.5pt}-\h{0.5pt}$z$ plane, then $u_{\h{1pt}b}^+$ is continuous on $T$ and analytic on its interior part $T^\circ$. $u_{\h{1pt}b}^+$ takes the value $\left(\sqrt{3}\big/2,  1\big/2, 0\right)$ at finitely many points on $T^\circ$. At these points, $u_{\h{1pt}b}^+$ is uniaxial;
\item There is a $\rho_0 \in (0, 1)$ so that for an $\epsilon > 0$ sufficiently small, the second component of $u_{\h{1pt}b}^+$ satisfies $ u_{\h{1pt}b; \h{1pt}2}^+ > 1 \big/ 2$  on $\Big\{ \left(\rho, 0 \right) : \rho \in \left( \rho_0 - \epsilon, \rho_0 \right) \Big\}$ and $u_{\h{1pt}b;\h{1pt} 2}^+ < 1\big/2$  on $\Big\{ \left(\rho, 0 \right) : \rho \in \left( \rho_0 , \rho_0 + \epsilon \right) \Big\}$. Here and in what follows  $u_{\h{1pt}b; \h{0.5pt}j}^+$ denotes the $j$-th component of  $u_{\h{1pt}b}^+$. Suppose that $D_{\epsilon}\left(\h{0.5pt}\rho_0, 0\h{0.5pt}\right)$ is the disk in the $\rho$\h{0.5pt}-\h{0.5pt}$z$ plane with center $\big(\rho_0, 0\big)$ and radius $\epsilon$. Then $u_{\h{1pt}b}^+$ is biaxial on $D_{\epsilon}\left(\h{0.5pt}\rho_0, 0\h{0.5pt}\right) \sim \Big\{\big(\rho_0, 0\big)\Big\}$, provided $\epsilon$ is sufficiently small. More precisely the three eigenvalues in Definition 1.3 computed in terms of $u_{\h{1pt}b}^+$  satisfy the  quantitative relationship $ \lambda_3 \h{2pt}> \h{2pt}\lambda_2\h{2pt}>\h{2pt}\lambda_1$ on $D_{\epsilon}\big(\rho_0, 0 \big) \sim \Big\{\big(\rho_0, 0\big)\Big\}$;
\item  For any given $3$-vector field $u = (u_1, u_2, u_3)$, it determines a vector field \begin{eqnarray}\kappa\left[\h{0.5pt}u\h{0.5pt}\right] := \dfrac{\sqrt{2}}{2} \left\{ 1 + \dfrac{u_{1} - \sqrt{3}\h{1pt}u_{2}}{\sqrt{\left( u_{1} - \sqrt{3}\h{1pt}u_{2} \right)^2 + 4 u_{3}^2}} \right\} e_{\rho} + \dfrac{\sqrt{2}\h{1pt}u_{3}}{\sqrt{\left(u_{1} - \sqrt{3}\h{1pt}u_{2} \right)^2 + 4 u_{3}^2}} e_z.
\end{eqnarray} Then on $D_{\epsilon}\left(\h{0.5pt}\rho_0, 0\h{0.5pt}\right) \sim \Big\{\big(\rho_0, 0\big)\Big\}$, the normalized vector field of $\kappa\h{1pt}\big[\h{1pt} u_b^+\h{1pt}\big]$, denoted by $\kappa^\star$ for simplicity, is a director field determined by $u_b^+$ (see Definition 1.3).  It is the normalized eigenvector corresponding to $\lambda_3$ in Definition 1.3 computed in terms of $u_{\h{1pt}b}^+$.  $\kappa^{\star}$ is continuous on $D_{\epsilon}\left(\h{0.5pt}\rho_0, 0\h{0.5pt}\right) \sim \Big\{ \big(\rho, 0 \big) : \rho \in \big(\h{0.5pt}\rho_0 - \epsilon, \rho_0\h{0.5pt}\big] \h{1pt}\Big\}.$ However it is  not continuous up to the branch cut $\Big\{ \big(\rho, 0\big) : \rho \in \big(\h{0.5pt}\rho_0 - \epsilon, \rho_0\h{0.5pt}\big] \h{1pt}\Big\}$. Suppose that  $\epsilon'$ is an arbitrary number in $(0, \epsilon)$ and denote by $x'$ the point $\big(\rho_0 - \epsilon', 0\big)$. $\kappa^{\star}$ converges to $- e_z$ if we approach $x'$ from the lower-half part of $\p D_{\epsilon'}\left(\rho_0, 0 \right)$. Meanwhile $\kappa^{\star}$ converges to  $e_z$  if we approach $x'$ from the upper-half part of $\p D_{\epsilon'}\left(\rho_0, 0 \right)$. During the process when we rotate counter-clockwisely from $x'$ to itself along $\p D_{\epsilon'}\left(\rho_0, 0\right)$, the image of $\kappa^{\star}$ varies continuously from $- e_z$ to $e_z$ and keeps on the right-half part of $\rho$\h{0.5pt}-\h{0.5pt}$z$ plane for all points on $\p D_{\epsilon'}\left(\rho_0, 0 \right) \sim \big\{ x'\big\}$. The angle of $\kappa^{\star}$ is changed by $\pi$ during this process;
\item Let $\varphi^{\star}$ be an angular variable ranging from $\big[ - \pi, \pi \h{0.5pt}\big]$. The value of $u_{\h{1pt}b}^+ \left( \left(\rho_0, 0\right) + \epsilon' \left( \cos \varphi^{\star}, \sin \varphi^{\star} \right) \right)$  at $- \pi$ ($\pi$ resp.) is taken to be the limit of $u_{\h{1pt}b}^+$ at $x'$ along the lower (upper resp.) part of $\p D_{\epsilon'}\left(\rho_0, 0\right)$. Then the tangent map of $\kappa^{\star}$, i.e. the limit of $\kappa^{\star}\left( \left(\rho_0, 0\right) + \epsilon' \left( \cos \varphi^{\star}, \sin \varphi^{\star} \right) \right)$ as $ \epsilon' \rightarrow 0^+$, equals to
\begin{small}\begin{eqnarray*}  \left\{\begin{array}{lcl} - e_z, \h{20pt}&&\text{if $\varphi^{\star} = - \pi$;} \vspace{0.5pc}\\
\dfrac{\sqrt{2}}{2} \left\{ 1 - \dfrac{\varkappa\h{1pt} \mathrm{ctan} \h{0.5pt} \varphi^{\star}}{\sqrt{4 + \varkappa^2 \h{1pt} \mathrm{ctan}^2 \varphi^{\star}}} \right\}^{1/2} e_{\rho} - \sqrt{\dfrac{2}{ 4 + \varkappa^2 \h{1pt} \mathrm{ctan}^2 \varphi^{\star}}}\left\{ 1 - \dfrac{\varkappa\h{1pt} \mathrm{ctan} \h{0.5pt} \varphi^{\star}}{\sqrt{4 + \varkappa^2 \h{1pt} \mathrm{ctan}^2 \varphi^{\star}}} \right\}^{- 1/2} e_z, &&\text{if $\varphi^{\star} \in (- \pi, 0)$;}\vspace{0.5pc}\\
e_{\rho}, &&\text{if $\varphi^{\star} = 0 $;}\vspace{0.5pc}\\
\dfrac{\sqrt{2}}{2} \left\{ 1 + \dfrac{\varkappa\h{1pt} \mathrm{ctan} \h{0.5pt} \varphi^{\star}}{\sqrt{4 + \varkappa^2 \h{1pt} \mathrm{ctan}^2 \varphi^{\star}}} \right\}^{1/2} e_{\rho} + \sqrt{\dfrac{2}{ 4 + \varkappa^2 \h{1pt} \mathrm{ctan}^2 \varphi^{\star}}}\left\{ 1 + \dfrac{\varkappa\h{1pt} \mathrm{ctan} \h{0.5pt} \varphi^{\star}}{\sqrt{4 + \varkappa^2 \h{1pt} \mathrm{ctan}^2 \varphi^{\star}}} \right\}^{- 1/2} e_z, &&\text{if $\varphi^{\star} \in (0, \pi)$;}\vspace{0.5pc}\\
e_z, &&\text{if $\varphi^{\star} = \pi$.}
\end{array}\right.
\end{eqnarray*}\end{small}\noindent Here $\varkappa$ is a non-negative constant given by $\varkappa = \left. \dfrac{\p_{\rho} \h{0.5pt} u_{\h{1pt}b; 1}^+ - \sqrt{3} \h{1.5pt} \p_{\rho} \h{0.5pt} u_{\h{1pt}b; \h{0.5pt}2}^+}{\p_z \h{0.5pt} u_{\h{1pt}b; 3}^+} \h{1pt}\right|_{\big(\rho_0, 0\big)}$. Note that by Hopf's lemma, $\p_z \h{0.5pt} u_{\h{1pt}b; \h{0.5pt}3}^+ $ is strictly positive at $\big(\rho_0, 0\big)$.
\end{enumerate}
\end{theorem}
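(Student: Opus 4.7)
The plan is to stitch together the regularity and asymptotic analysis from Sections II--IV with a direct study of the axially symmetric structure on $T$. Existence of $u_b^+$ and non-negativity of its first and third components on $B_1^+$ come from the direct method applied to $\mathscr{F}_{b,+}$ combined with the pointwise inequality for $P$ used in the introduction. The Euler--Lagrange equation of the constrained minimization yields a Signorini-type inequality on $T$, and the free boundary regularity established in Section II removes the contact condition on $T^\circ$, so that $L[u_b^+]$ weakly solves (1.13). Interior regularity off $l_3\cup T$ follows from H\'elein's trick and the Rivi\`ere--Struwe theorem, the origin is handled by the $\epsilon$-regularity of Section III, and the two poles by the flattening plus the Schoen--Uhlenbeck geometric lemma described in the introduction. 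Near any singularity on $l_3^+$, the monotonicity formula together with the tangent-map classification of Section IV provides a uniform energy lower bound, forcing only finitely many singularities; reflection symmetry across $T$ balances the two hemispheres, and parity on $l_3^+$ is then read off the sign matching used in part (2).

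For (2), I would classify $0$-homogeneous, $\mathscr{R}$-axially symmetric tangent maps at a singularity on $l_3$. Through $L$ and the axial ansatz, such a tangent map reduces to an $\mathbb{S}^2$-valued harmonic map on $\mathbb{S}^2$ determined by a single angular profile, and the admissible list under the symmetry and boundary constraints is exactly $\Lambda^\pm$. The lowest singularity must have tangent map $\Lambda^-$: since part (1) gives smoothness at the origin and $L[u_b^+](0)=L[U^*](0)$, the third component of $L[u_b^+]$ is negative just above the origin, which pins $\Lambda^-$. Alternation between consecutive singularities then follows because along the smooth segment joining two of them the third component of $L[u_b^+]$ cannot vanish identically without producing a new singularity via standard bubbling, and the two options $\Lambda^\pm$ correspond to opposite signs of this component.

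For (3), the singular-set statement for $u_b^+$ is transported through $L$: $u_b^+$ is regular exactly where $L[u_b^+]$ is, so its only possible singularities lie on $l_3$. On $T$, the third component $u_{b;3}^+$ vanishes by $\mathscr{R}$-axial symmetry, and the residual system for $(u_{b;1}^+,u_{b;2}^+)$ is an elliptic real-analytic system on $T$ with analytic Dirichlet data from $U^*|_{\partial T}$, so $u_b^+$ is analytic on $T^\circ$. The uniaxial condition amounts to $(u_{b;1}^+-\sqrt{3}\,u_{b;2}^+)^2+4\,u_{b;3}^{+\,2}=0$ together with $|u_b^+|=1$, which on $T^\circ$ singles out the value $(\sqrt{3}/2,1/2,0)$. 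Finiteness of such points is an immediate consequence of analyticity of $u_{b;2}^+-1/2$ on $T^\circ$ and its non-triviality (since at $\rho=1$ it equals $-1$).

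For (4)--(6) I would work locally near a point $(\rho_0,0)$ at which $u_{b;2}^+=1/2$. Its existence with the stated sign change follows from the boundary values $u_{b;2}^+(0,0)\in\{\pm 1\}$ (forced by finiteness of the weighted energy $\int (4 u_1^2+u_3^2)/\rho^2$ along the $z$-axis) and $u_{b;2}^+(1,0)=-1/2$ read from $U^*$, so continuity along $T$ yields a last crossing, taken to be $\rho_0$. Biaxiality in the punctured disk $D_\epsilon(\rho_0,0)\setminus\{(\rho_0,0)\}$ comes from the eigenvalue formulas of Definition~1.3 together with Hopf's lemma applied to $u_{b;3}^+$ on $T$, which gives $\partial_z u_{b;3}^+(\rho_0,0)>0$ and hence $(u_{b;1}^+-\sqrt{3}\,u_{b;2}^+)^2+4\,u_{b;3}^{+\,2}>0$ off $(\rho_0,0)$. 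Substituting the Taylor expansion of $u_b^+$ at $(\rho_0,0)$ into (1.15) and normalizing then produces both the asymptotic director formula in (6) and the branch-cut behaviour in (5). I expect the main obstacle to be precisely this blow-up analysis: one must verify that the constant $\varkappa$ is well-defined and non-negative, that $\kappa^\star$ extends continuously off the cut, and that the total angular change of $\kappa^\star$ along $\partial D_{\epsilon'}(\rho_0,0)$ is exactly $\pi$, which is the characteristic half-integer rotation of a director field around a uniaxial disclination point.
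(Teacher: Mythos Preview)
Your overall plan matches the paper's: the regularity results of Sections II--IV assemble into part (1), the tangent-map classification of Section IV gives (2), and the explicit eigenvalue analysis on $T$ drives (3)--(6). However, the argument you sketch for part (2) contains two genuine errors.

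First, the identification of the tangent map at the lowest singularity is based on a sign mistake. The claim ``$L[u_b^+](0)=L[U^*](0)$'' is meaningless since $U^*$ is singular at $0$; what actually happens is that regularity of $L[u_b^+]$ at the origin together with the finite weighted energy forces $u_{b;1}^+=u_{b;3}^+=0$ there, so $u_{b;2}^+(0)=\pm 1$, and the constraint $u_{b;2}^+\ge b>-1$ on $T$ rules out $-1$. Hence the third component of $L[u_b^+]$ equals $+1$ at the origin and stays equal to $+1$ on the entire smooth segment of $l_3^+$ below the lowest singularity, not ``negative just above the origin'' as you wrote. This still pins $\Lambda^-$, because $\Lambda^-$ is the tangent map whose third component at $\varphi=\pi$ (the direction pointing toward the origin) equals $+1$; your stated reason would have selected the wrong map.

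Second, the alternation argument is not a bubbling phenomenon. On each smooth open segment of $l_3^+$ between consecutive singularities, $u_{b;1}^+=u_{b;3}^+\equiv 0$, so $u_{b;2}^+\equiv +1$ or $\equiv -1$ there; the value is constant, not merely ``not identically zero''. From Proposition~4.7 one reads that a singularity with tangent map $\Lambda^+$ has $\mathcal{W}=N^*$ immediately above and $-N^*$ immediately below, while $\Lambda^-$ has the opposite pattern. Matching these constant values across the shared segment forces consecutive singularities to carry different tangent maps, and combining this with $\mathcal{W}=N^*$ both near the origin and near the north pole (from the boundary condition $U^*$) gives the even parity in part (1). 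Your bubbling sentence should be replaced by this constancy-and-matching argument.

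A smaller gap: in part (4) your Hopf-lemma observation only yields $\lambda_2\neq\lambda_3$ off $(\rho_0,0)$. You also need $\lambda_2>\lambda_1$, which the paper gets from continuity and the explicit value $u^+(\rho_0,0)=(\sqrt{3}/2,1/2,0)$, and you need $\lambda_3>\lambda_2$ on the two $T$-segments adjacent to $(\rho_0,0)$, where $u_3^+=0$ and the Hopf argument does not help; there one uses $u_2^+\neq 1/2$ to get $u_1^+-\sqrt{3}\,u_2^+\neq 0$.
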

\noindent As for $L\left[ u_c^-\right]$, it exhibits split-core segment structure, which we show in the following theorem: \begin{theorem}For any constant $c \in \mathrm{I}_+ \sim \big\{\h{0.5pt}0\h{0.5pt}\big\}$, the followings hold for $u_{\h{1pt}c}^-$ and its augmented map $L\h{1pt}\big[\h{0.5pt}u_{\h{1pt}c}^-\h{0.5pt}\big]$:
\begin{enumerate}[(1).]
\item The augmented map $L\h{1pt}\big[\h{0.5pt}u_{\h{1pt}c}^-\h{0.5pt}\big]$ is a weak solution to the boundary value problem (1.13). It is smooth on $B_1$, except a finite number of singularities on $l_3$. $L\h{1pt}\big[\h{0.5pt}u_{\h{1pt}c}^-\h{0.5pt}\big]$ is smooth at the origin, south pole and north pole. It must have odd number of singularities on $l_{\h{1pt}3}^+$. By symmetry, $L\h{1pt}\big[\h{0.5pt}u_{\h{1pt}c}^-\big]$ has the same number of singularities on $l_3^-$ as the number of singularities that it has on $l_3^+$;
\item Part (2) of Theorem 1.6 still holds for $L\h{1pt}\big[\h{0.5pt}u_{\h{1pt}c}^-\h{0.5pt}\big]$, except that the tangent map of $L\h{1pt}\big[\h{0.5pt}u_{\h{1pt}c}^-\h{0.5pt}\big]$ at its lowest singularity on $l_{\h{1pt}3}^+$ is given by $\Lambda^+$ in (1.14);
\item In ths sense of Definition 1.2, $u_{\h{1pt}c}^-$ is regular away from the singularities of $L\h{1pt}\big[\h{0.5pt}u_{\h{1pt}c}^-\h{0.5pt}\big]$. Singularities of $L\h{1pt}\big[\h{0.5pt}u_{\h{1pt}c}^-\h{0.5pt}\big]$ must also be singularities of $u_{\h{1pt}c}^-$. Moreover on $\overline{B_1} \cap l_3$, $u_{\h{1pt}c}^-$ is uniaxial except at the singularities. Suppose that $x_0^+$ is the lowest singularity on $l_{\h{1pt}3}^+$. $x_0^-$ is the symmetric point of $x_0^+$ with respect to the $x_1$-$x_2$ plane. For two sufficiently small positive constants $\epsilon$ and $\epsilon_1$, we denote by $\mathrm{D}_{\epsilon, \epsilon_1}$ the dumbbell on $x_1$-$z$ plane introduced in Definition 5.1. Then $\mathrm{D}_{\epsilon, \epsilon_1}$ contains properly the segment connecting $x_0^+$ and $x_0^-$. Moreover $u_{\h{1pt}c}^-$ is biaxial on $\mathrm{D}_{\epsilon, \epsilon_1} \sim l_3$. Quantitatively the three eigenvalues in Definition 1.3 computed in terms of $u_{\h{1pt}c}^-$ satisfy $\lambda_3 \h{1pt}>\h{1pt} \lambda_1 \h{1pt}> \h{1pt}\lambda_2$ on $\mathrm{D}_{\epsilon, \epsilon_1} \sim l_3$;
\item The normalized vector field of $\kappa \h{1pt}\big[\h{1pt} u_c^- \h{1pt}\big]$, which we denote by $\kappa_\star$, is a director field determined by $u_c^-$ on $\mathrm{D}_{\epsilon, \epsilon_1} \sim l_3$. Suppose that $\mathscr{C}$ is the boundary of $\mathrm{D}_{\epsilon, \epsilon_1}$ in the $x_1$-$z$ plane. $\mathscr{C}^+$ is the part of $\mathscr{C}$ with non-negative $x_1$-coordinate. From the lowest point on $\mathscr{C}$ to the most top point on $\mathscr{C}$ along $\mathscr{C}^+$, the image of $\kappa_{\star}$ varies continuously from $- e_z$ to $e_{\rho}$ and then to $e_z$. Meanwhile the image of $\kappa_\star$ keeps on the right-half part of the $(\rho, z)$ plane for all points on $\mathscr{C}^+ \sim l_3$;
\item In limiting scenario, the tangent map of $\kappa_{\star}$ at $x_0^+$ is given by \begin{eqnarray*}\left\{ \begin{array}{lcl} e_z, \h{30pt}&&\text{if $\varphi_+^{\star} = 0$;}\vspace{0.5pc}\\
\dfrac{\sqrt{2}}{2} \left\{ 1 - \dfrac{ \sqrt{3}\h{1pt}\cos \varphi_+^{\star}}{\sqrt{3 + \sin^2 \varphi_+^{\star}}} \right\}^{1/2} e_{\rho} + \sqrt{\dfrac{2\h{1pt}\sin^2 \varphi_+^{\star}}{3 + \sin^2 \varphi_+^{\star}}}\left\{ 1 - \dfrac{ \sqrt{3}\h{1pt}\cos \varphi_+^{\star}}{\sqrt{3 + \sin^2 \varphi_+^{\star}}} \right\}^{- 1/2} e_z, && \text{if $\varphi_+^{\star} \in \big(0, \pi\big]$.}\end{array}\right.
\end{eqnarray*}Here  $\varphi_+^{\star}$ is the polar angle of the spherical coordinates with respect to the center $x_0^+$. Along the segment connecting $x_0^+$ and $x_0^-$ (not include the two end-points), $\kappa_{\star}$ equivalently equals to $e_{\rho}$. The tangent map of $\kappa_{\star}$ at $x_0^-$ is given by \begin{eqnarray*}\left\{ \begin{array}{lcl}
\dfrac{\sqrt{2}}{2} \left\{ 1 + \dfrac{ \sqrt{3}\h{1pt}\cos \varphi_-^{\star}}{\sqrt{3 + \sin^2 \varphi_-^{\star}}} \right\}^{1/2} e_{\rho} - \sqrt{\dfrac{2\h{1pt}\sin^2 \varphi_-^{\star}}{3 + \sin^2 \varphi_-^{\star}}}\left\{ 1 + \dfrac{ \sqrt{3}\h{1pt}\cos \varphi_-^{\star}}{\sqrt{3 + \sin^2 \varphi_-^{\star}}} \right\}^{- 1/2} e_z,  \h{5pt}&&\text{if $\varphi_-^{\star} \in \big[\h{0.5pt}0, \pi \h{0.5pt}\big)$;} \vspace{0.5pc}\\
 - e_z, &&\text{if $\varphi_-^{\star} = \pi$.}\end{array}\right.
\end{eqnarray*}Here $\varphi_-^{\star}$ is the polar angle of the spherical coordinates with respect to the center $x_0^-$.
\end{enumerate}
\end{theorem}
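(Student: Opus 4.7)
The plan is to mirror the proof of Theorem 1.6, exchanging the roles of even/odd parity and of $\Lambda^-$ versus $\Lambda^+$ to account for the opposite obstacle $u_2\le c$ on $T$ defining $\mathscr{F}_{c,-}$. Part (1) contains the same list of regularity assertions as Theorem 1.6(1) and is proved identically: smoothness on $B_1\sim(l_3\cup T)$ by H\'{e}lein's trick and the Rivi\`{e}re-Struwe result of [54]; continuity on $\p B_1\sim(l_3\cup T)$ by [49]; continuity at the two poles via the flattening and symmetric-extension argument combined with Schoen-Uhlenbeck's Lemma 2.5 in [57]; smoothness on $T^\circ$ from the Signorini analysis of Sect.II; smoothness at the origin from Sect.III; and isolation of at most finitely many singularities on $l_3^+$ from the $\epsilon$-regularity Proposition 3.1 together with the blow-up analysis of Sect.IV. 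The $z\mapsto -z$ symmetry then reflects these singularities to $l_3^-$ with equal counts.

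For Part (2), the classification of tangent maps into $\{\Lambda^+,\Lambda^-\}$ and their alternation at consecutive singularities on $l_3^+$ follow exactly as in Theorem 1.6(2), since the PDE (1.13) and the $\mathscr{R}$-axial ansatz (1.10) are identical for both classes. The only new ingredient is the identification of the lowest singularity via sign bookkeeping on $l_3^+$. At the origin, which lies on both $l_3$ and $T$, axial symmetry forces $u_{c;\,1}^-=u_{c;\,3}^-=0$, and $|u_c^-|=1$ then gives $u_{c;\,2}^-=\pm 1$; the obstacle $u_2\le c<1$ at the origin rules out $+1$, so $L[u_c^-]$ equals $(0,0,-1,0,0)$ at the origin. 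The hedgehog boundary data meanwhile gives $L[U^*]=(0,0,1,0,0)$ at the north pole. On $l_3^+$ off the singularities, $L[u_c^-]$ takes the form $(0,0,\pm 1,0,0)$ and each singularity flips this sign: evaluating (1.14) at $\varphi=0,\pi$ shows $\Lambda^+$ gives $w_3=1$ above and $-1$ below, while $\Lambda^-$ does the opposite. Matching $-1$ at the origin to $+1$ at the north pole forces the lowest singularity $x_0^+$ to have tangent map $\Lambda^+$, the highest one to have $\Lambda^+$ as well, and by alternation an \emph{odd} number of singularities in between, yielding Parts (1)--(2).

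Parts (3)--(5) reduce to a local analysis on a dumbbell around $[x_0^-,x_0^+]$. The axial-symmetry argument above forces $u_{c;\,1}^-=u_{c;\,3}^-=0$ on $l_3$ off the singularities, so $u_{c;\,2}^-=\pm 1$ there; the sign argument shows $u_{c;\,2}^-\equiv -1$ on the open segment $(x_0^-,x_0^+)$, which is precisely the uniaxial locus of $u_c^-$. A Taylor expansion off $l_3$, combined with Hopf-type positivity of the normal derivative of $u_{c;\,3}^-$ at $x_0^\pm$, shows that on a suitable dumbbell $D_{\epsilon,\epsilon_1}$ the discriminant $(u_{c;\,1}^--\sqrt{3}\,u_{c;\,2}^-)^2+4(u_{c;\,3}^-)^2$ is strictly positive off $l_3$, so the three eigenvalues of Definition 1.3 separate; combined with $u_{c;\,2}^-=-1$ on the core, the explicit eigenvalue formulas yield the split-core ordering $\lambda_3>\lambda_1>\lambda_2$. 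For the director, substituting $(0,-1,0)$ into (1.15) gives $\kappa[u_c^-]$ parallel to $e_\rho$ on the open core segment, while substituting the blow-ups $\Lambda^\pm$ into (1.15) and normalizing produces the explicit tangent-map formulas of Part (5). Continuity of $\kappa_\star$ along $\mathscr{C}^+$ and the transition $-e_z\to e_\rho\to e_z$ follow by tracing these blow-ups around the boundary arc.

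The main technical content sits in the preliminary sections (II--IV): Signorini-type regularity on the free boundary $T$, $\epsilon$-regularity at the origin, and the classification/alternation of blow-up limits on $l_3$ into $\{\Lambda^+,\Lambda^-\}$. Once these are in hand, Theorem 1.7 reduces to the sign bookkeeping at the origin and north pole described above, the explicit Taylor/Hopf analysis near the split-core segment, and direct substitution into (1.15). The one delicate step specific to Theorem 1.7 is the Hopf-type positivity of $\p_z u_{c;\,3}^-$ at $x_0^\pm$, which underpins the non-degeneracy of the eigenvalue discriminant on $D_{\epsilon,\epsilon_1}\sim l_3$ and the continuity of $\kappa_\star$ along $\mathscr{C}^+$.
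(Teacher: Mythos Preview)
Your overall architecture is right, and Parts (1)--(2) match the paper. But the local analysis for Parts (3)--(4) has two genuine gaps.

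First, ``Hopf-type positivity of $\partial_z u_{c;3}^-$ at $x_0^\pm$'' is not available: the points $x_0^\pm$ are \emph{singularities} of $u_c^-$, so classical derivatives do not exist there, and there is no boundary to which Hopf's lemma could apply. This is the key structural difference from Theorem~1.6, where the distinguished point $(\rho_0,0)$ is a \emph{regular} point on the free boundary $T$ and Hopf's lemma does apply. The paper's substitute is Proposition~4.7: the $C^2$ asymptotic $\sum_{j=0}^2\big|\partial_\psi^j u^- - \partial_\psi^j(0,\cos\psi,\sin\psi)\big|<\epsilon$ uniformly on spheres $\partial B_\sigma(x_0^+)$ for small $\sigma$. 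This estimate, not Hopf, drives the Taylor expansion in the polar angle $\psi$ (not off $l_3$ in $\rho$).

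Second, positivity of the discriminant $(u_1^--\sqrt{3}u_2^-)^2+4(u_3^-)^2$ only yields $\lambda_3>\lambda_2$; it does not separate $\lambda_1$ from either. The paper instead proves the sharper inequality $(\sqrt{3}\,u_1^-+u_2^-)^2<1$ on $\mathrm{D}_{\epsilon,\epsilon_1}\sim l_3$ (equations (5.9)--(5.10)), which is equivalent to $(u_1^--\sqrt{3}u_2^-)^2+4(u_3^-)^2>9\big(u_1^-+\tfrac{1}{\sqrt{3}}u_2^-\big)^2$ and immediately gives both $\lambda_1>\lambda_2$ and $\lambda_3>\lambda_1$ via (5.11)--(5.12). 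On the disks $D_{\delta(\epsilon)}(x_0^\pm)$ this is obtained by the second-order Taylor expansion in $\psi$ using the $C^2$ bound above; on the connecting rectangle it follows, as you indicate, from closeness to $(0,-1,0)$. Your argument ``combined with $u_{c;2}^-=-1$ on the core'' covers only the rectangle, not the disks around the singularities where $u^-$ is close to $(0,\cos\psi,\sin\psi)$ rather than to $(0,-1,0)$.
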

In the next step we prove existence result in Theorem 1.5. Note that for any $b \in \mathrm{I}_-$, the energy $E_\mu\h{1pt}\big[\h{1pt} u_b^+ \h{1pt}\big]$ is non-decreasing with respect to $b$. For any $c \in \mathrm{I}_+$, the energy $E_\mu\h{1pt}\big[\h{1pt} u_c^-\h{1pt}\big]$ is non-increasing with respect to $c$. Hence we can use a limiting process to drop the Signorini contraint from the two minimization problems in (1.12). In Sect.VI.1, we show \begin{theorem}As $b \rightarrow - 1$ and $c \rightarrow 1$, $L\h{1pt}\big[ \h{0.5pt}u_b^+ \h{0.5pt}\big]$ and $L\h{1pt}\big[\h{0.5pt} u_c^- \h{0.5pt}\big]$ converge strongly in $H^1(B_1)$ to some $w^+$ and $w^-$, respectively.  It satisfies $w^+ = L\h{1pt} \left[\h{0.5pt} u_{b_\star}^+\h{0.5pt}\right]$ for some $b_\star \in \mathrm{I}_-$ and  $w^- = L\h{1pt} \left[\h{0.5pt} u_{c_\star}^- \h{0.5pt}\right]$ for some $c_\star \in \mathrm{I}_+$. Moreover we have \begin{eqnarray*} \lim_{b \h{1pt}\rightarrow \h{1pt}- 1} E_\mu\left[ u_b^+\right] = E_\mu\left[ u_{b_\star}^+\right] = \mathrm{Min}  \h{2pt} \bigg\{ \h{1pt}E_{\mu}\left[\h{1pt}u\right] : u \in \mathscr{F}^s \bigg\}, \h{20pt}\lim_{c \h{1pt} \rightarrow \h{1pt}1} E_\mu\left[ u_c^-\right] = E_\mu\left[ u_{c_\star}^-\right] = \mathrm{Min}  \h{2pt} \bigg\{ \h{1pt}E_{\mu}\left[\h{1pt}u\right] : u \in \mathscr{F}^s \bigg\}.
\end{eqnarray*}
\end{theorem}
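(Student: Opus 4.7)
The plan is to use monotonicity of the Signorini minimum, extract a weak $H^1$ subsequential limit, equate the limiting energy with $\min_{\mathscr{F}^s}E_\mu$ via an approximation argument that removes the Signorini obstacle, and realize the weak limit as a specific $u_{b_\star}^+$. I present the $b\to-1^+$ case; the $c\to 1^-$ argument is symmetric.

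For $b_1\leq b_2$ in $\mathrm{I}_-$ one has $\mathscr{F}_{b_2,+}\subseteq\mathscr{F}_{b_1,+}$, so $b\mapsto E_\mu[u_b^+]$ is nondecreasing and $E_\star:=\lim_{b\to-1^+}E_\mu[u_b^+]$ exists. The uniform bound $\|\nabla L[u_b^+]\|_{L^2}^2\leq E_\mu[u_{-1/2}^+]$ together with $|L[u_b^+]|=1$ a.e.\ gives a subsequence $b_n\to-1^+$ with $L[u_{b_n}^+]\rightharpoonup w^+$ weakly in $H^1(B_1;\mathbb{R}^5)$ and strongly in $L^p(B_1)$ for every $p<6$. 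The identity $S[L[u]]=P(u)$ together with strong $L^3$-convergence makes the cubic potential pass to the limit. Axial symmetry, the $\mathbb{S}^4$ constraint, and the Dirichlet boundary condition are all preserved, so $w^+=L[u^+]$ for some $u^+\in\mathscr{F}^s$; weak lower semicontinuity of the Dirichlet integral gives $E_\mu[u^+]\leq E_\star$.

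The crucial step is showing $E_\star=\min_{\mathscr{F}^s}E_\mu$. Given $v\in\mathscr{F}^s$, I approximate by $v^{(k)}\in\mathscr{F}^s$ whose spherical image avoids a $(1/k)$-neighbourhood of the south pole $p=(0,-1,0)$, in particular with $\mathrm{ess\,inf}_T v^{(k)}_2\geq-\cos(1/k)>-1$, while preserving $v^{(k)}=U^*$ on $\partial B_1$ (automatic since $U^*_2\geq-1/2$ there) and $\mathscr{R}$-axial symmetry. The construction composes $v$ with a smooth retraction $R_k\colon\mathbb{S}^2\to\mathbb{S}^2$ that maps a shrinking spherical cap around $p$ onto its boundary, and uses that $\{v=p\}$ has Lebesgue measure zero in $B_1$ to conclude $E_\mu[v^{(k)}]\to E_\mu[v]$. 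Since $v^{(k)}\in\mathscr{F}_{b,+}$ for any $b<-\cos(1/k)$, one obtains $E_\mu[u_{b_n}^+]\leq E_\mu[v^{(k)}]$ for large $n$, so $E_\star\leq E_\mu[v]$ after $n,k\to\infty$. Combined with $\min_{\mathscr{F}^s}E_\mu\leq E_\star$ and the semicontinuity bound above, one has $E_\mu[u^+]=E_\star=\min_{\mathscr{F}^s}E_\mu$. Convergence of the Dirichlet norms $\|\nabla L[u_{b_n}^+]\|_{L^2}^2\to\|\nabla w^+\|_{L^2}^2$ then upgrades the weak convergence of $L[u_{b_n}^+]$ to strong convergence in $H^1(B_1)$.

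Finally, the axis analysis of Theorem 1.6 passes to $u^+$: each $u_{b_n}^+$ satisfies $u_{b_n,2}^+(0)=+1$ by the even-singularity structure with $\Lambda^-$ at the lowest singularity on $l_3^+$, so $u^+_2(0)=+1$; combined with $u^+_2=-1/2$ on $\partial T\cap\partial B_1$ and a strong-maximum-principle argument on the tangential equation for $u^+_2$, this yields $\mathrm{ess\,inf}_T u^+_2>-1$. Choosing $b_\star\in\mathrm{I}_-$ with $b_\star\leq\mathrm{ess\,inf}_T u^+_2$ places $u^+\in\mathscr{F}_{b_\star,+}$, and
\begin{equation*}
E_\mu[u_{b_\star}^+]\leq E_\mu[u^+]=\min_{\mathscr{F}^s}E_\mu\leq E_\mu[u_{b_\star}^+]
\end{equation*}
permits taking $u^+$ itself as $u_{b_\star}^+$, so $w^+=L[u_{b_\star}^+]$. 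The main technical obstacle is the approximation: the retractions $R_k$ must be chosen with Lipschitz constants concentrated near $p$ in such a way that $\|\nabla(R_k\circ v)\|_{L^2}^2\to\|\nabla v\|_{L^2}^2$ for an arbitrary $v\in H^1(B_1;\mathbb{S}^2)$ of finite energy, which hinges on the measure-zero character of $\{v=p\}$ and a careful decomposition of $\nabla v$ into radial and azimuthal components relative to $p$. The $c\to 1^-$ case is identical with $(0,-1,0)$ replaced by $(0,1,0)$, Theorem 1.7 providing $u^-_2(0)=-1$ and the dual bound $\mathrm{ess\,sup}_T u^-_2<1$.
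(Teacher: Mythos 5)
Your overall skeleton --- monotonicity of the constrained energies, weak subsequential limit, passage to the limit in the potential, identification of $b_\star$ from regularity at the origin --- matches the paper's. The genuine gap is in the step where you try to prove $\lim_{b\to-1}E_\mu[u_b^+]\leq\min_{\mathscr{F}^s}E_\mu$ by composing a competitor $v\in\mathscr{F}^s$ with a retraction $R_k$ that collapses a spherical cap $C_k$ around $p=(0,-1,0)$ onto its boundary circle $\partial C_k$.

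Writing $(\sigma,\tau)$ for the colatitude and longitude relative to $p$, the differential of the meridian retraction $(\sigma,\tau)\mapsto(1/k,\tau)$ scales the azimuthal component by $\sin(1/k)/\sin\sigma$, which blows up as $\sigma\to0$. Consequently $R_k$ is not Lipschitz near $p$, the composition $R_k\circ v$ is not automatically in $H^1$, and even when it is, the inequality $|\nabla(R_k\circ v)|^2\geq|\nabla v|^2$ holds pointwise on $v^{-1}(C_k)$, so the retraction can only increase the energy there. The damage is controlled by $\int_{v^{-1}(C_k)}\sin^2(1/k)\,|\nabla\tau|^2$, but the finiteness of the Dirichlet energy only controls $\int\sin^2\sigma\,|\nabla\tau|^2$, which is strictly weaker when $\sigma\ll 1/k$. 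No choice of Lipschitz-profile renormalisation rescues this: any Lipschitz self-map of $\mathbb{S}^2$ pushing the whole cap to $\{u_2\geq-\cos(1/k)\}$ must stretch longitudes near $p$ by an unbounded factor. The Fubini/averaging trick that makes this work in the density theory of Schoen--Uhlenbeck or Bethuel is unavailable here: the constraint $u_2\geq b$ on $T$ singles out $p$, so you cannot average over the choice of pole. Without this step, your bound $E_\star\leq E_\mu[v]$ is unsupported.

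The paper uses a much gentler mechanism: take a free minimiser $u_\star\in\mathscr{F}^s$ (with $u_{\star;1},u_{\star;3}\geq0$ on $B_1^+$), let $f$ be smooth radial, compactly supported in $B_1$ and identically $1$ on $B_{1/2}$, and normalize $u_\star+\epsilon f\,U^*$ to obtain $u_\star^\epsilon\in\mathscr{F}^s$. Because the perturbation is a bounded additive shift (not a composition with a singular map), $u_\star^\epsilon\in H^1$ automatically and $E_\mu[u_\star^\epsilon]\to E_\mu[u_\star]$ as $\epsilon\to0$. On $T$ one has $U^*=(\sqrt3/2,-1/2,0)$, and a pointwise computation using $\left(u_{\star;1}+\tfrac{\sqrt3}{2}\epsilon f\right)^2>0$ (uniformised via interior regularity of $u_\star$ on $T^\circ$ and the boundary value $U^*$ at $(1,0)$) gives $u_{\star;2}^\epsilon\geq b_\epsilon$ with $b_\epsilon\to-1$, so $u_\star^\epsilon\in\mathscr{F}_{b_\epsilon,+}$ and $E_\mu[u_\star^\epsilon]\geq E_\mu[u_{b_\epsilon}^+]$. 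Letting $\epsilon\to0$ closes the loop without any retraction onto a fixed circle. If you want to keep the retraction idea you would need to show (likely using the regularity theory of Sect.~II applied to a minimiser) that the angular gradient $\nabla\tau$ of the competitor remains in $L^2$ near the preimage of $p$, which is not a consequence of finite energy alone; it is easier to adopt the additive-perturbation argument instead.
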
\noindent Sect.VI.2 is devoted to proving the convergence result in Theorem 1.5. \\

In this article we focus on the energy functional $E_\mu$ of the limiting Landau-de Gennes theory. In our forthcoming paper, we will consider the biaxial and split-core structure of solutions to the original Landau-de Gennes system (1.5).
\vspace{1.5pc}\\
\setcounter{section}{2}
\setcounter{theorem}{0}
\setcounter{equation}{0}
\textbf{\large II. \small  REGULARITY ON THE INTERIOR OF FREE BOUNDARY}\vspace{1pc}

\noindent   Throughout the section  $\mathbb{D} := \Big\{\h{0.5pt} (\rho, z) :  z \in (-1, 1), \h{3pt}  \rho \in \left(0, \sqrt{1 - z^2 } \h{1pt} \right)\Big\}$ and $\mathbb{D}^+ := \mathbb{D} \cap \Big\{\h{0.5pt} (\rho, z) : z > 0 \h{1pt}\Big\}$. The gradient operator on the $\rho$\h{0.5pt}-\h{0.5pt}$z$ plane is denoted by $D = \big( \p_{\rho}, \p_z \big)$. We let $D_r\left(x\right)$  represent  the open disk in the $\left(\rho, z\right)$-plane with center $x$ and radius $r$. $D_r^+\left(x\right)$ is the subset of $D_r\left(x\right)$ above the $\rho$\h{0.5pt}-\h{0.5pt}axis.  Under the $\mathscr{R}$-axial symmetry, we can rewrite the energy functional $E_\mu$ in (1.8) as follows: \begin{eqnarray*} E_{\mu}\left[\h{0.5pt}u\h{0.5pt}\right] = 2 \pi E_{\h{1pt}\mathbb{D}}\left[ \h{0.5pt}u \h{0.5pt} \right] := 2 \pi \int_{\mathbb{D}} \left\{ \h{1.5pt}\big|\h{1pt} \p_{\rho} u \h{1pt}\big|^2 + \big|\h{1pt}\p_z u \h{1pt}\big|^2 + \dfrac{1}{\rho^2} \big(\h{1pt} 4 u_1^2 + u_3^2 \h{1pt}\big) + \sqrt{2}\h{1pt} \mu \h{1pt}\big( \h{1pt} 1 - 3 \h{1pt} P\h{0.5pt}\big(\h{1pt} u \h{1pt}\big) \h{1pt} \big)  \h{1.5pt}\right\} \rho \h{2pt}\mathrm{d}\rho \h{2pt}\mathrm{d} z.
\end{eqnarray*}The Euler-Lagrange equation of $E_{\h{1pt}\mathbb{D}}$ can be read as \begin{eqnarray} - \dfrac{1}{\rho} \h{1pt}D \cdot  \big( \rho \h{1pt}   D u \big) + \h{2pt} \dfrac{1}{\rho^2} \h{1pt} \left( \begin{array}{lcl} 4 u_1 \\
\h{4pt}0 \\
\h{3pt}u_3 \end{array} \right) - \dfrac{3 \sqrt{2}}{2}\h{1pt}\mu\h{1pt}\nabla_u P  =  \left\{ \h{2pt} \h{1pt}\big|\h{1pt}D u\h{1pt}\big|^2 + \dfrac{1}{\rho^2} \left( 4 u_1^2 + u_3^2 \right) - \dfrac{9 \sqrt{2}}{2}\h{1pt}\mu\h{1pt}P\h{0.5pt}\big(\h{1pt}u \h{1pt}\big) \h{2pt}\right\} u \h{15pt}\text{in $\mathbb{D}$.}
\end{eqnarray} Note that the second components of $u_{\h{1pt}b}^+$ and $u_{\h{1pt}c}^-$ satisfy the obstacle conditions: \begin{eqnarray}u_{\h{1pt}b; \h{0.5pt} 2}^+ \h{2pt}\geq \h{2pt}b \h{20pt}\text{and}\h{20pt}u_{\h{1pt}c; \h{0.5pt}2}^- \h{2pt}\leq\h{2pt}c \h{10pt}\text{on $T$, \h{3pt}respectively.}
  \end{eqnarray}By the $\mathscr{R}$-axial symmetry and (2.2), the images of $u_{\h{1pt}b}^+$ and $u_{\h{1pt}c}^-$ on $T$ are supported on two circular arcs. Moreover these two arcs are properly contained in the equator of $\mathbb{S}^2$. Different from [30], the supporting manifold in our problem has non-empty boundary.  Our problem is a Signorini-type free boundary problem for $\mathbb{S}^2$-valued mappings. The main result of this section is \begin{prop}For any $b \in \mathrm{I}_-$ and $c \in \mathrm{I}_+$ with $c \neq 0$, $u_{\h{1pt}b}^+$ and $u_{\h{1pt}c}^-$ are smooth on $T^\circ$. Moreover $u_{\h{1pt}b}^+$ and $u_{\h{1pt}c}^-$ are continuous up to $(1, 0)$ in the $(\rho, z)$-plane.
 \end{prop}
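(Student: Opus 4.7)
I treat $u_b^+$; the argument for $u_c^-$ is identical. The plan exploits the $\mathscr{R}$-axial symmetry to reduce the problem on $\mathbb{D}$ to a Signorini-type obstacle problem on $\mathbb{D}^+$ with constraint $u_{b;2}^+ \geq b$ on the free boundary $T$, and then treats smoothness at each point of $T^\circ$ case by case. The even extension of $u_{b;1}^+, u_{b;2}^+$ and odd extension of $u_{b;3}^+$ across $T$ (forced by Definition 1.1) means any regularity proven on $\mathbb{D}^+$ near a point $x_0 \in T^\circ$ propagates automatically to $\mathbb{D}^-$.

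For $x_0 \in T^\circ$ with $u_{b;2}^+(x_0) > b$, a preliminary H\"older estimate (obtained from an $\epsilon$-regularity theorem adapted to Signorini-type minimizers, together with a boundary monotonicity formula on $T^\circ$) shows $u_{b;2}^+ > b$ strictly on a disk $D_r(x_0) \cap T$. The Signorini constraint is then inactive and the variational inequality degenerates to the Euler-Lagrange equation (2.1), which by the reflection holds distributionally across $T$ on all of $D_r(x_0)$. The $\epsilon$-regularity theorem for weakly harmonic $\mathbb{S}^2$-valued maps with smooth potential, followed by elliptic bootstrap, then yields $u_b^+ \in C^\omega(D_{r/2}(x_0))$.

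The delicate case is a coincidence point $x_0 \in T^\circ$ with $u_{b;2}^+(x_0) = b$. The sphere constraint together with $u_{b;1}^+ \geq 0$ on $B_1^+$ and $u_{b;3}^+(x_0) = 0$ pins $u_b^+(x_0) = (\sqrt{1-b^2}, b, 0)$, a regular point of $\mathbb{S}^2$ since $b \in \mathrm{I}_-$ gives $b^2 < 1$. I would introduce a local chart on $\mathbb{S}^2$ near this point in which one coordinate is $s := u_2 - b$, rewriting the minimization as an elliptic system with scalar Signorini condition $s \geq 0$ and smooth coefficients (the $1/\rho^2$-terms are benign since $\rho_0 > 0$). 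A careful comparison argument, exploiting the sign of the Lagrange multiplier associated with the constraint together with the strict positivity of the first component near $x_0$, rules out an effective coincidence set and reduces back to the inactive scenario, giving $C^\omega$-regularity at $x_0$. The continuity up to $(1,0)$ is then obtained by combining the smooth Dirichlet datum $U^*(1,0) = (\sqrt{3}/2, -1/2, 0)$, which satisfies the Signorini compatibility $U^*_2(1,0) = -1/2 \geq b$, with the boundary regularity results of [49]. The main obstacle is ruling out an effective coincidence set near $x_0$: classical Signorini theory alone yields only $C^{1, 1/2}$-regularity at free-boundary points, so the $C^\infty$-regularity claimed here hinges on using the $\mathbb{S}^2$-constraint to derive an extra structural obstruction to coincidence that is absent in scalar Signorini problems.
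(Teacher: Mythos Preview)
Your case division at coincidence points is the wrong organizing principle, and the ``main obstacle'' you identify --- ruling out an effective coincidence set --- is neither achievable by the tools you propose nor needed. The paper never attempts to show the coincidence set is empty. Instead, once H\"older continuity on $T^\circ$ is in hand, it splits according to whether $u_{2}(x_*) = 0$ or $u_{2}(x_*) \neq 0$, not according to whether the constraint is active. The point is that the Signorini constraint acts only on the $u_2$ component, so the weak equations for $u_1$ and $u_3$ in (2.1) hold unconditionally near any $x_* \in T^\circ$. If $u_2(x_*) \neq 0$ --- in particular at every coincidence point, since $b \in (-1,-1/2]$ forces $u_2(x_*) = b \neq 0$ --- then $|u_2| > 0$ on a neighborhood, and differentiating the sphere constraint gives $Du_2 = -(u_1/u_2)Du_1 - (u_3/u_2)Du_3$. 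Inserting this into $\int \rho\, Du_2 \cdot D\eta$ and integrating by parts using the already-known weak equations for $u_1$ and $u_3$ shows that the $u_2$ equation is automatically satisfied weakly. So the full system (2.1) holds across $T$ near $x_*$, and standard elliptic bootstrap gives smoothness --- with no need to analyze the coincidence set. The remaining case $u_2(x_*) = 0$ lies strictly above the obstacle (since $0 > b$), so the constraint is inactive there and your argument for the non-coincidence case applies directly.

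A second gap is the preliminary H\"older continuity itself, which you invoke but do not justify. An ``$\epsilon$-regularity theorem adapted to Signorini-type minimizers'' is not off-the-shelf for $\mathbb{S}^2$-valued maps with a one-sided arc constraint on $T$: the supporting manifold $\Gamma$ has nonempty boundary, so the free-boundary theory of Hardt--Lin does not apply directly. The paper handles this via a Luckhaus-type blow-up (Lemma 2.2): after rescaling around $x_* \in T^\circ$, the limit $u_\infty$ is shown to be an energy minimizer in an affine configuration space modeled on $\mathrm{Tan}_{y_*}\mathbb{S}^2$, with a linear or half-linear constraint on $T_1$ depending on whether $y_* \in \Gamma \setminus \partial\Gamma$ or $y_* \in \partial\Gamma$. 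Decay for these linear problems (harmonic extension plus scalar Signorini) then yields the dichotomy (2.3), and iteration gives H\"older continuity. This is the substantive analytic work in the section, and your proposal does not indicate how to carry it out.
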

Due to (2.2), $u_{\h{1pt}b}^+$ and $u_{\h{1pt}c}^-$ do not a-priorily solve weakly the equation (2.1). Standard regularity theory for harmonic maps can not be applied directly. To tackle this difficulty, we need  \begin{lemma}Let $x_* = \left(\rho_*, 0\right)$ be an arbitrary point on $T^\circ$ and denote by $\sigma_*$ the number $\min \left\{ \dfrac{\rho_*}{4}, \dfrac{1 - \rho_*}{4}\right\}$. For any $\sigma \in \left(0, \sigma_*\right)$, we define \begin{eqnarray*} \mathscr{I}_{\sigma} := \bigg\{ \left(x, r\right) \in  D^+_{\sigma}\left(x_*\right) \times \mathbb{R}_+ :  D_r\left(x\right) \subset D_{\sigma}^+\left(x_*\right) \h{1pt}\bigg\} \h{3pt}\bigcup\h{3pt} \bigg\{ \left(x, r\right) \in T \times \mathbb{R}_+ : D_r^+\left(x\right) \subset D_{\sigma}^+\left(x_*\right) \h{1pt}\bigg\}.
  \end{eqnarray*}Moreover for any $D^+_r\left(x\right) \subset \mathbb{D}^+$, we let \begin{eqnarray*} E_{x, \h{1pt} r}^+\left[ \h{0.5pt}u \h{0.5pt} \right] := \int_{D_r^+\left(x\right)} \left\{ \h{1.5pt}\big|\h{1pt} D u \h{1pt}\big|^2  + \dfrac{1}{\rho^2} \big(\h{1pt} 4 u_1^2 + u_3^2 \h{1pt}\big) + \sqrt{2}\h{1pt} \mu \h{1pt}\big( \h{1pt} 1 - 3 \h{1pt} P\big(\h{1pt} u \h{1pt}\big) \h{1pt} \big)  \h{1.5pt}\right\} \rho \h{2pt}\mathrm{d} \rho \h{1pt}\mathrm{d} z.
\end{eqnarray*}

There exist two constants $\theta_1 \in \left(0, 1/2 \right)$ and $\epsilon > 0$ so that if $\sigma \in \left(\h{0.5pt}0, \sigma_*\h{0.5pt}\right)$ and satisfies $ E^+_{x_*, \h{1pt}\sigma} \h{1pt}\big[\h{0.5pt}u_{\h{1pt}b}^{+} \h{0.5pt} \big] < \epsilon$, then for any $\left(x, r\right) \in \mathscr{I}_{\sigma}$, either one or the other of the followings holds
     \begin{eqnarray} (1).  \h{5pt}E^+_{x, \h{1pt}\theta_1 r} \left[\h{0.5pt}u_{\h{1pt}b}^{+}\h{0.5pt}\right] \h{2pt}\leq\h{2pt} r^{3/2};\h{30pt} (2). \h{5pt} E^+_{x, \h{1pt}\theta_1 r} \left[\h{0.5pt}u_{\h{1pt}b}^{+}\h{0.5pt}\right] \h{2pt}\leq\h{2pt} \dfrac{1}{2} \h{1.5pt} E_{x, r}^+\left[\h{0.5pt}u_{\h{1pt}b}^{+}\h{0.5pt}\right].
\end{eqnarray}Here $\theta_1$ is an universal constant. The same result holds for $u_{\h{1pt}c}^-$.
 \end{lemma}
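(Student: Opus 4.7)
The plan is a blow-up contradiction argument modeled on Schoen--Uhlenbeck's $\epsilon$-regularity for harmonic maps, adapted to the Signorini free boundary on $T$; I treat $u_{\h{1pt}b}^+$, the case of $u_{\h{1pt}c}^-$ being symmetric. Fix once and for all $\theta_1 \in (0, 1/2)$ so that $C \theta_1^2 \leq 1/4$, where $C$ is the universal constant in the decay estimate $\int_{D_{\theta_1 r}} |Dh|^2 \leq C \theta_1^2 \int_{D_r} |Dh|^2$ for planar harmonic functions. Assume for contradiction that the lemma fails for this $\theta_1$. Then for each $k \geq 1$ there exist $\sigma_k \in (0, \sigma_*)$ and $(x_k, r_k) \in \mathscr{I}_{\sigma_k}$ with $E^+_{x_*, \sigma_k}[u_{\h{1pt}b}^+] < 1/k$, yet both alternatives in (2.3) violated. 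Since $u_{\h{1pt}b}^+ \in H^1$ gives $E^+_{x_*, \sigma}[u_{\h{1pt}b}^+] \to 0$ as $\sigma \to 0$, and since the failure of alternative (1) forces $E^+_{x_k, \theta_1 r_k}[u_{\h{1pt}b}^+] > r_k^{3/2}$, we must have $\sigma_k, r_k \to 0$. Along a subsequence, either every $x_k$ is interior to $\mathbb{D}^+$ or every $x_k$ lies on $T$.

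Rescale $v_k(y) := u_{\h{1pt}b}^+(x_k + r_k y)$, defined on $D_1(0)$ in the interior case and on $D_1^+(0)$ in the boundary case. Because $r_k \leq \sigma_* \leq \rho_*/4$, the weight $\rho$ stays comparable to $\rho_* > 0$ on the relevant disks, so the rescaled Dirichlet energy $\int |Dv_k|^2$ is comparable to $E^+_{x_k, r_k}[u_{\h{1pt}b}^+]/\rho_*$, and the rescaled $\rho^{-2}$-mass and potential contributions are $O(r_k^2)$. Since $|v_k|=1$, along a further subsequence $v_k \to v_\infty$ strongly in $L^2$ with $v_\infty$ a constant on $\mathbb{S}^2$.

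In the interior case, let $h_k$ be the componentwise harmonic extension of $v_k|_{\partial D_{1/2}(0)}$. Planar harmonic decay gives $\int_{D_{\theta_1}(0)} |Dh_k|^2 \leq C\theta_1^2 \int_{D_{1/2}(0)} |Dh_k|^2 \leq C\theta_1^2 \int_{D_1(0)} |Dv_k|^2$. A Luckhaus-type modification in a thin annulus adjacent to $\partial D_{1/2}(0)$, where $h_k$ is uniformly close to $v_\infty \in \mathbb{S}^2$, produces an $\mathbb{S}^2$-valued competitor $\tilde h_k$ agreeing with $v_k$ on $\partial D_{1/2}(0)$ with $\int |D\tilde h_k|^2 \leq \int |Dh_k|^2 + o(1)$. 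Testing the $E_\mu$-minimality of $u_{\h{1pt}b}^+$ against the competitor obtained by gluing $\tilde h_k$ inside $D_{r_k/2}^+(x_k)$ to $u_{\h{1pt}b}^+$ outside, and rescaling (with the $O(r_k^2)$ lower-order terms under control), yields $\int_{D_{1/2}(0)} |Dv_k|^2 \leq \int_{D_{1/2}(0)} |Dh_k|^2 + o(1)$. Combined with the harmonic decay,
\[
E^+_{x_k, \theta_1 r_k}[u_{\h{1pt}b}^+] \;\leq\; \bigl( C \theta_1^2 + o(1) \bigr) \, E^+_{x_k, r_k}[u_{\h{1pt}b}^+] + o(r_k^2),
\]
which for large $k$ simultaneously violates $E^+_{x_k, \theta_1 r_k}[u_{\h{1pt}b}^+] > r_k^{3/2}$ and $> \frac{1}{2} E^+_{x_k, r_k}[u_{\h{1pt}b}^+]$; contradiction.

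The main obstacle is the boundary case, where the competitor $\tilde h_k$ must additionally satisfy the Signorini constraint $\tilde h_{k;2} \geq b$ on the diameter $T_1 := \partial D_1^+(0) \cap \{y_2 = 0\}$. If $v_{\infty; 2} > b$, the obstacle is strictly slack for large $k$ on a fixed neighborhood of $T_1$; an even reflection of the first two components of $v_k$ together with odd reflection of the third across $T_1$ (compatible with the $\mathscr{R}$-axial symmetry of $u_{\h{1pt}b}^+$, which already gives $v_{k; 3} = 0$ on $T_1$) extends $v_k$ to the full unit disk as a weak critical point of the reflected energy, thereby reducing to the interior case. If instead $v_{\infty; 2} = b$, I construct $h_k$ as the componentwise solution of mixed boundary conditions on the diameter of $D_{1/2}^+(0)$: a scalar Signorini problem $h_{k;2} \geq b$ with complementarity, a free Neumann condition on $h_{k;1}$, and the Dirichlet condition $h_{k;3} = 0$. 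Classical Signorini regularity ([12], [53]) delivers the same $C\theta_1^2$ decay for $\int_{D_{\theta_1}^+(0)} |Dh_k|^2$. A Luckhaus-type modification near $\partial D_{1/2}^+(0) \cap \{y_2>0\}$ produces an $\mathbb{S}^2$-valued competitor still respecting the obstacle, because the $L^\infty$ size of the perturbation tends to zero and can be arranged tangentially to the circle $\{u_2 = b\}\cap\mathbb{S}^2$ near any contact portion of $T_1$. Concluding as in the interior case completes the contradiction and proves the lemma.
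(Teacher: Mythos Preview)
Your contradiction/blow-up strategy is the same as the paper's, but your execution diverges at the crucial technical step and this creates a genuine gap.

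The paper does \emph{not} rescale only the domain. It rescales the \emph{value} as well, setting
\[
u_n(\xi)=\frac{u(x_n+r_n\xi)-y_n}{\epsilon_n},\qquad \epsilon_n^2=r_n^2+\int_{D_{r_n}^+(x_n)}|Du|^2,
\]
so that $\int_{D_1^+}|D u_n|^2\to 1$ and the target linearizes to the affine tangent plane $Y_*+\mathrm{Tan}_{y_*}\mathbb{S}^2$. The limit $u_\infty$ is then shown (Lemmas~2.4--2.11) to be an \emph{energy-minimizing} map in a linear configuration space $\mathscr{F}_{A,1}$ or $\mathscr{F}_{B,1}$, with strong $H^1$ convergence of $u_n$. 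The contradiction is obtained by decomposing $u_\infty-Y_*$ along $v^+$ and $v^+\times y_*$ and applying harmonic (resp.\ scalar Signorini) decay to each component, which gives $\int_{D_{\theta_1}^+}|Du_\infty|^2\lesssim\theta_1^\alpha$, incompatible with $\int_{D_{\theta_1}^+}|Du_\infty|^2\ge 1/2$ coming from the negation of (2).

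In your scheme $v_k$ only rescales the domain, so $\delta_k:=\int_{D_1^+}|Dv_k|^2\to 0$. Your displayed inequality $E^+_{x_k,\theta_1 r_k}\le (C\theta_1^2+o(1))E^+_{x_k,r_k}+o(r_k^2)$ requires the harmonic-replacement and Luckhaus/projection errors to be $o(\delta_k)$, not merely $o(1)$; as written, ``$+\,o(1)$'' swamps both sides. Making these errors $o(\delta_k)$ is exactly the content of the value rescaling and the strong-convergence step (Lemma~2.11); your sketch does not supply it. Two further points: (i) your dichotomy $v_{\infty;2}>b$ versus $v_{\infty;2}=b$ is coarser than the paper's three cases (2.28); in particular when $y_*\in\partial\Gamma$ but $|\Pi_\Gamma(y_n)-y_*|/\epsilon_n\to\infty$ the limiting problem is still the \emph{unconstrained} $\mathscr{F}_{A,1}$, not Signorini; (ii) the Signorini component does \emph{not} enjoy $C\theta_1^2$ decay---the paper only obtains $\theta_1^\alpha$ for some $\alpha\in(0,1)$ via a filling-hole argument, which is still enough but forces $\theta_1$ to be chosen after, not before, the case analysis.
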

Firstly we prove Proposition 2.1 with Lemma 2.2.
\begin{proof}[\bf Proof of Proposition 2.1] For simplicity we use  $u$ to denote either $u_{\h{1pt}b}^+$ or $u_{\h{1pt}c}^-$. Letting $x_*$, $\sigma_*$ and $\epsilon$ be as in Lemma 2.2,  we choose $r_* > 0$ sufficiently small so that \begin{eqnarray} 16 \h{0.5pt} r_* < \sigma_* \h{10pt}\text{ and } \h{10pt} E_{x_*, \h{1pt}16\h{0.5pt}r_*}^+\left[\h{0.5pt}u\h{0.5pt}\right] < \epsilon. \end{eqnarray} Moreover we denote by $D^+_*$ the upper-half disk $D_{\h{1pt}\theta_1 r_*}^+\left(x_*\right)$, where $\theta_1$ is given in Lemma 2.2. Now we fix an arbitrary $x \in  \overbar{D^+_*}$. It can be shown that  \begin{eqnarray}D_r^+\left(x\right)\subset D_{8\h{0.5pt}r_*}^+\left( x \right) \subset D_{16 r_*}^+\left( x_* \right), \h{10pt}\text{ for any $r \in \big(\h{1pt}0, 8 \h{0.5pt}r_* \theta_1 \big]$.} \end{eqnarray}

Suppose $x \in T$. For any $r \in \big(\h{1pt}0, 8 \h{0.5pt} r_* \theta_1 \big]$ fixed, we let $k$ be the natural number so that \begin{eqnarray} 8 \h{1pt}r_*\h{1pt}\theta_1^{k + 1} \h{1pt}<\h{1pt}  r  \h{1pt}\leq\h{1pt} 8 \h{1pt}r_*\h{1pt}\theta_1^{k}.\end{eqnarray}Taking $\sigma = 16\h{1pt}r_*$ in Lemma 2.2, by (2.4)-(2.5), we can apply (2.3) to get  \begin{eqnarray} E^+_{x, \h{1pt}8\h{0.5pt}r_*\h{0.5pt}\theta_1^{l }} \left[\h{1pt}u\h{1pt}\right] \h{2pt}\leq\h{2pt}\left( 8\h{0.5pt}r_*\h{0.5pt}\theta_1^{l - 1}\right)^{3/2} \h{10pt}\text{or}\h{10pt}E^+_{x, \h{1pt} 8\h{0.5pt}r_*\h{0.5pt}\theta_1^{l}} \left[\h{1pt}u\h{1pt}\right] \h{2pt}\leq\h{2pt} \dfrac{1}{2} \h{1pt}E^+_{x, \h{1pt}8\h{0.5pt}r_*\h{0.5pt}\theta_1^{l - 1}} \left[\h{1pt}u\h{1pt}\right], \h{10pt}\text{ for any $l = 1, 2, ...$}
\end{eqnarray}If the second inequality in (2.7) holds for any $l = 1, ..., k$, then by (2.6),  it follows \begin{eqnarray*} E_{x, \h{1pt}r}^+\left[ \h{1pt}u\h{1pt}\right] \h{2pt}\leq\h{2pt} E^+_{x, \h{1pt}8  \h{0.5pt} r_* \h{0.5pt}\theta_1^{k }} \left[\h{1pt}u\h{1pt}\right] \h{2pt}\leq\h{2pt} \left(\dfrac{1}{2}\right)^k E^+_{x, \h{1pt}8 \h{0.5pt} r_*} \left[\h{1pt}u\h{1pt}\right] \h{2pt}\lesssim_{r_*}\h{2pt} r^{\alpha}, \h{10pt}\text{where $\alpha = - \mathrm{log}_{\h{0.5pt}\theta_1}2$.}
\end{eqnarray*}Otherwise we take $l_*$ to be the largest number in $\Big\{1, ..., k\Big\}$ so that the first inequality in (2.7) holds. Then \begin{eqnarray*}E_{x, \h{1pt}r}^+\left[ \h{1pt}u\h{1pt}\right] \h{2pt}\leq\h{2pt} E^+_{x, \h{1pt}8 \h{0.5pt} r_*\h{0.5pt}\theta_1^{k }} \left[\h{1pt}u\h{1pt}\right] \leq \left( \dfrac{1}{2} \right)^{k  - l_*} E^+_{x, \h{1pt}8 r_*\h{0.5pt}\theta_1^{l_*}} \left[\h{1pt}u\h{1pt}\right] \leq \left( \dfrac{1}{2} \right)^{k  - l_*} \left(8 \h{0.5pt} r_*\h{0.5pt}\theta_1^{l_* - 1}\right)^{3/2} \h{2pt}\lesssim\h{2pt}\left(\dfrac{1}{2}\right)^{k} \h{2pt}\lesssim_{r_*}\h{2pt}r^{\alpha}.
\end{eqnarray*}The above arguments  infer \begin{eqnarray}E_{x, \h{1pt}r}^+\left[\h{0.5pt}u\h{0.5pt}\right]\h{2pt}\lesssim_{\h{0.5pt}r_*}\h{2pt} r^{\alpha}, \h{10pt}\text{provided $x \in T \cap \overbar{D_*^+}$ and $r \in \big(\h{1pt}0, 8 \h{0.5pt}r_* \theta_1\big]$.}
\end{eqnarray}

 Suppose  $x  \in \overbar{D_*^+} \sim T $ and $r \in \big(\h{1pt}0, 4 \h{1pt}r_* \theta_1\big]$. If $T \cap D_r\left(x\right)  \neq \text{\O}$, then \begin{eqnarray}D_r^+\left(x\right) \subset D_{2 r}^+\left(x'\right), \h{10pt}\text{where $x'$ is the projection of $x$ to the $\rho$-axis.}\end{eqnarray}   By (2.8)-(2.9), it holds $E_{x, \h{1pt} r}^+\left[ \h{0.5pt}u\h{0.5pt}\right] \h{2pt}\leq\h{2pt} E_{x', \h{0.5pt}2\h{0.5pt}r}^+\left[\h{0.5pt}u\h{0.5pt}\right] \h{2pt}\lesssim_{r_*}\h{2pt}r^{\alpha}$, which infers \begin{eqnarray}E_{x, \h{1pt}r}^+\left[ \h{0.5pt}u\h{0.5pt}\right] \h{2pt}\lesssim_{r_*}\h{2pt} r^{\alpha}, \h{10pt}\text{provided $\left(x, r\right) \in \big(\overline{D_*^+} \sim T \big) \times \big(\h{1pt}0, 4 \h{1pt}r_* \theta_1\big]$ and $T \cap D_r\left(x\right)   \neq \text{\O}$.}
\end{eqnarray}

 Suppose $x   \in \overline{D_*^+} \sim T$, $r \in \big(\h{1pt}0, 2 \h{1pt}r_* \theta_1\big]$ and $T \cap D_r\left(x\right)  = \text{\O}$. Let $j$ be the  natural number so that \begin{eqnarray}2 \h{0.5pt} r_* \theta_1^{j+1} \h{2pt}<\h{2pt} r \h{2pt}\leq\h{2pt} 2 \h{0.5pt} r_* \theta_1^{ j}.
\end{eqnarray}We assume $j \geq 2$. If $j = 1$, then by (2.11) and the second inequality in (2.4), it holds $E_{x, \h{0.5pt}r}^+\left[\h{0.5pt}u\h{0.5pt}\right]  \h{2pt}\leq\h{2pt}\big( 2\h{0.5pt} r_* \h{0.5pt}\theta_1^2 \big)^{-1} r$.  Denote by  $j_*$ the largest number  in $\big\{ 2, ..., j\h{1pt}\big\}$ so that $T \h{0.5pt}\cap \h{0.5pt}D_{2 \h{1pt}r_*\theta_1^{j_* -1}}\left(x\right)  \neq \text{\O}$. If $j_* = j $, then (2.10)-(2.11) induce \begin{eqnarray*}E_{x, \h{0.5pt}r}^+\left[\h{0.5pt}u\h{0.5pt}\right] \h{2pt}\leq\h{2pt} E_{x, \h{0.5pt}2 r_* \theta_1^{j }}^+\left[\h{0.5pt}u\h{0.5pt}\right] \h{2pt}\leq\h{2pt} E_{x, \h{0.5pt}2 r_* \theta_1^{j_* - 1 }}^+\left[\h{0.5pt}u\h{0.5pt}\right] \h{2pt}\lesssim_{r_*}\h{2pt}\left( 2 \h{0.5pt} r_* \theta_1^{j - 1 } \right)^{\alpha} \h{2pt}\lesssim_{r_*}\h{2pt}r^{\alpha}.
 \end{eqnarray*}Now we assume $j_* < j$. For any natural number $l$ satisfying $ l  \geq  j_*$, we can apply Lemma 2.2 to get \begin{eqnarray} E^+_{x, \h{1pt}2\h{0.5pt}r_*\h{0.5pt}\theta_1^{l+1}} \left[\h{1pt}u\h{1pt}\right] \h{2pt}\leq\h{2pt}\left( 2\h{0.5pt}r_*\h{0.5pt}\theta_1^{l}\right)^{3/2} \h{10pt}\text{or}\h{10pt}E^+_{x, \h{1pt} 2\h{0.5pt}r_*\h{0.5pt}\theta_1^{l+1}} \left[\h{1pt}u\h{1pt}\right] \h{2pt}\leq\h{2pt} \dfrac{1}{2} \h{1pt}E^+_{x, \h{1pt}2\h{0.5pt}r_*\h{0.5pt}\theta_1^{l}} \left[\h{1pt}u\h{1pt}\right], \h{10pt}\text{ for any $l \geq  j_*$.}
 \end{eqnarray}If for any $l = j_*, ..., j - 1$, the second inequality in (2.12) holds, then by (2.11), it turns out \begin{eqnarray*}E_{x, \h{1pt} r}^+\left[\h{0.5pt}u\h{0.5pt}\right] \h{2pt}\leq \h{2pt} E_{x, \h{1pt}2 \h{0.5pt} r_* \theta_1^{j}}^+\left[\h{0.5pt}u\h{0.5pt}\right] \h{2pt}\leq\h{2pt} \left(\dfrac{1}{2}\right)^{j - j_*} E^+_{x, \h{1pt} 2 \h{0.5pt} r_* \theta_1^{j_*}}\left[\h{0.5pt}u\h{0.5pt}\right] \h{2pt}\leq\h{2pt}\left(\dfrac{1}{2}\right)^{j - j_*} E^+_{x, \h{1pt} 2 \h{0.5pt} r_* \theta_1^{j_* - 1}}\left[\h{0.5pt}u\h{0.5pt}\right] .
\end{eqnarray*}Applying (2.10) to the right-hand side above and utilizing (2.11), we obtain \begin{eqnarray*}E_{x, \h{1pt} r}^+\left[\h{0.5pt}u\h{0.5pt}\right] \h{2pt}\lesssim_{r_*} \h{2pt}\left(\dfrac{1}{2}\right)^{j - j_* }  \left(2 r_* \theta_1^{j_* - 1}\right)^{\alpha} \h{2pt}\lesssim_{r_*}\h{2pt}\left(\dfrac{1}{2}\right)^{j} \h{2pt}\lesssim_{r_*}\h{2pt} r^{\alpha}.
\end{eqnarray*}If the first inequality in (2.12) are satisfied by some $l$ in $\Big\{ j_*, ..., j - 1\h{1pt}\Big\}$, then we take $l_*$ to be the largest number in  $\Big\{ j_*, ..., j - 1 \h{1pt}\Big\}$ so that the first inequality in (2.12) holds. It then follows \begin{eqnarray*}E_{x, \h{1pt}r}^+\left[ \h{1pt}u\h{1pt}\right] \h{2pt}\leq\h{2pt} E^+_{x, \h{1pt}2 \h{0.5pt} r_*\h{0.5pt}\theta_1^{j }} \left[\h{1pt}u\h{1pt}\right] \leq \left( \dfrac{1}{2} \right)^{j  - l_* - 1} E^+_{x, \h{1pt}2 \h{0.5pt}r_*\h{0.5pt}\theta_1^{l_* + 1}} \left[\h{1pt}u\h{1pt}\right] \leq \left( \dfrac{1}{2} \right)^{j  - l_* - 1} \left(2 \h{0.5pt} r_*\h{0.5pt}\theta_1^{l_*}\right)^{3/2} \h{2pt}\leq\h{2pt}\left(\dfrac{1}{2}\right)^{j} \h{2pt}\lesssim_{r_*}\h{2pt}r^{\alpha}.
\end{eqnarray*}In summary we have \begin{eqnarray}E_{x, \h{1pt}r}^+\left[ \h{0.5pt}u\h{0.5pt}\right] \h{2pt}\lesssim_{r_*}\h{2pt}r^{\alpha}, \h{10pt}\text{provided $\left(x, r\right) \in \left(\overline{D_*^+} \sim T \right) \times \big(\h{1pt}0, 2 \h{1pt}r_* \theta_1\big]$ and $T \cap D_r\left(x\right)  = \O$.}
\end{eqnarray}

(2.8), (2.10) and (2.13) infer that  $u \in \mathrm{C}^{0, \alpha/2}\big( \overbar{D_*^+}\big)$. Let  $u_{j}$ be the $j$-th component of $u$. If $u_{2}(x_*) = 0$, then there is a $r_1> 0$ sufficiently small so that $D_{r_1}\left(x_* \right) \subset \mathbb{D}$.  Meanwhile it holds \begin{eqnarray}u_{2} > b \h{0.5pt}\big/\h{0.5pt}2 \h{10pt} \text{on $D_{r_1}\left(x_* \right)$, \h{3pt}if $u = u_{\h{1pt}b}^+$;}\h{20pt}u_{2} \h{1pt}<\h{1pt} c \h{0.5pt}\big/\h{0.5pt}2 \h{10pt} \text{on $D_{r_1}\left(x_* \right)$, \h{3pt}if $u = u_{\h{1pt}c}^-$ and $c > 0$.}\end{eqnarray}Notice that if $u = u_{\h{1pt}c}^-$ and $c < 0$, then the second component of $u_{\h{1pt}c}^-$ cannot equal to $0$ at $x_*$. By (2.14) and the fact that $u_b^+$ and $u_c^-$ are minimizers of the two minimization problems in (1.12), respectively,  it can be shown that $u$ solves (2.1) weakly on $D_{r_1}\left(x_*\right)$. Standard elliptic regularity then yields the smoothness of $u$ on $D_{r_1}\left(x_*\right)$. If $u_{2}\left(x_*\right) \neq 0$, then there is a $r_2> 0$ small enough so that $D_{r_2}\left(x_* \right) \subset \mathbb{D}$. Meanwhile $ \big|\h{1pt}u_{2}\h{1pt}\big| \h{2pt}>\h{2pt} 0$ on $D_{r_2}\left(x_* \right)$.  Letting $\eta$ be an arbitrary test function compactly supported on $D_{r_2}\left(x_*\right)$, we have, by   $ D u_{2} = - \dfrac{u_{1}}{u_{2}} \h{1pt}D u_{1} - \dfrac{u_{3}}{u_{2}} \h{1pt}D u_{3}$  on $D_{r_2}\left(x_*\right)$, that \begin{small}\begin{eqnarray*}  \int_{\h{1pt}\mathbb{D}} \rho \h{1pt} D u_{2} \cdot D \eta
  = \int_{\h{1pt}\mathbb{D}} \rho \h{2pt} D\h{0.5pt}u_{1} \cdot \bigg\{\h{1pt} D\left ( - \dfrac{u_{1}}{u_{2}} \h{2pt}\eta \right) + \eta \h{1pt}\dfrac{u_{2}\h{0.5pt} D u_{1} - u_{1}\h{0.5pt} D u_{2}}{u_{2}^2} \h{1pt} \bigg\}+ \int_{\h{1pt}\mathbb{D}} \rho \h{2pt} D\h{0.5pt}u_{3} \cdot \bigg\{\h{1pt} D\left ( - \dfrac{u_{3}}{u_{2}} \h{2pt}\eta \right) + \eta \h{1pt}\dfrac{u_{2}\h{0.5pt} D u_{3} - u_{3}\h{0.5pt} D u_{2}}{u_{2}^2} \h{1pt} \bigg\}.
\end{eqnarray*}\end{small}\noindent Since $u$ solves the equations of $u_1$ and $u_3$  in (2.1) weakly on $D_{r_2}\left(x_*\right)$, then by the above equality, $u$ also solves the equation of $u_2$ in (2.1) weakly on $D_{r_2}\left(x_*\right)$. This induces the smoothness of $u$ on $D_{r_2}\left(x_*\right)$.
\end{proof}

The remaining of this section is devoted to proving Lemma 2.2. We only consider the case when $x \in T$. The  case when $x$ is off the free boundary $T$ can be similarly proved. The proof  relies on a blow-up process of Luckhaus type (see [43]-[44]). As in the proof of Proposition 2.1, we still use $u$ to denote either $u_{\h{1pt}b}^+$ or $u_{\h{1pt}c}^-$.

Suppose that we can find two sequences $\Big\{\h{0.5pt} x_n = \left(\rho_n, 0\right) \h{0.5pt}\Big\} \subset T^\circ$ and $\Big\{ r_n \Big\} \subset \mathbb{R}^+$ so that the followings hold for any $n$: \begin{eqnarray}(1). \h{3pt}D_{r_n}^+\left(x_n\right) \subset D_{\sigma_*}^+\left(x_*\right);  \h{10pt}(2). \h{3pt}E^+_{x_n, \h{1pt}r_n \theta_1} \left[\h{0.5pt}u\h{0.5pt}\right] \h{2pt}>\h{2pt} r_n^{3/2}; \h{10pt} (3). \h{3pt} E^+_{ x_n, \h{1pt}r_n \theta_1} \left[\h{0.5pt}u\h{0.5pt}\right] \h{2pt}>\h{2pt} \dfrac{1}{2} \h{1.5pt} E_{x_n, \h{1pt}r_n}^+\left[\h{0.5pt}u\h{0.5pt}\right].
\end{eqnarray}Here $\sigma_*$ is given in Lemma 2.2. Meanwhile the $E_{\h{1pt}\mathbb{D}}$\h{0.5pt}-\h{0.5pt}energy of $u$ on $D^+_{r_n}(x_n)$ satisfies \begin{eqnarray} E_{x_n, \h{1pt}r_n}^+\left[\h{0.5pt}u \h{0.5pt}\right] \longrightarrow 0, \h{20pt}\text{as $n \rightarrow \infty$.}
\end{eqnarray}This  limit induces \begin{eqnarray} r_n \longrightarrow0, \h{20pt}\text{as $n \rightarrow \infty$.}
\end{eqnarray}Otherwise we can extract a subsequence, still denoted by $\big\{n\big\}$, so that for some $r_0 > 0$ and $x_0 \in T^\circ$, $r_n \rightarrow r_0 $ and $x_n \rightarrow x_0$ as $n \rightarrow \infty$. By (2.16), it follows $ E_{x_0, \h{1pt} r_0}^+ \left[\h{0.5pt}u \h{0.5pt}\right] = 0$ which gives $u_1 \equiv 0$ on $D_{r_0}^+(x_0)$. Utilizing this result, the non-negativity of $u_1$ on $\mathbb{D}^+$ and the equation satisfied by  $u_1$ (see (2.1)), we can apply the strong maximum principle to $u_1$ and obtain $u_1 \equiv 0$ on $\mathbb{D}^+$. This result  violates the boundary condition of $u$. Letting  \begin{eqnarray} y_n := \dashint_{D_{r_n}^+\left(x_n\right)} u\left(\rho, z\right) \h{20pt}\text{and}\h{20pt} \epsilon_n^2 := r_n^2 +  \int_{D^+_{r_n}(x_n)} |\h{1pt}D u \h{1pt}|^2, \end{eqnarray} we also have \begin{lemma}The following limits hold for $\epsilon_n$ and $r_n$: \begin{eqnarray} (1). \h{5pt}\epsilon_n \longrightarrow 0; \h{40pt}(2). \h{5pt} \dfrac{r_n}{\epsilon_n} \longrightarrow 0, \h{20pt}\text{as $n \rightarrow \infty$.}
\end{eqnarray}
  \end{lemma}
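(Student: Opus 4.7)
The strategy is to exploit two structural facts about the setup. First, since $x_n = (\rho_n, 0) \in D_{\sigma_*}^+(x_*)$ with $\sigma_* \leq \rho_*/4$, the radial coordinate $\rho$ is comparable to $\rho_*$ uniformly on $D_{r_n}^+(x_n)$ (once $n$ is large enough that $r_n \leq \rho_*/8$, which is guaranteed by the already-established $r_n \to 0$ in (2.17)). Second, since $u$ takes values in $\mathbb{S}^2$ and $P$ is a fixed cubic polynomial, the integrand $(1/\rho^2)(4u_1^2 + u_3^2) + \sqrt{2}\mu(1 - 3P(u))$ is pointwise bounded by a constant $C = C(\rho_*, \mu)$ on $D_{r_n}^+(x_n)$. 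Combined with the weighted-area bound $\int_{D_{r_n}^+(x_n)} \rho \, d\rho\, dz \lesssim \rho_* r_n^2$, these two observations yield the two-sided comparison
\[
c_* \int_{D_{r_n}^+(x_n)} |Du|^2 \,\leq\, E_{x_n, r_n}^+[u] \,\leq\, C_* \int_{D_{r_n}^+(x_n)} |Du|^2 \,+\, C_* r_n^2,
\]
with constants $c_*, C_*$ independent of $n$.

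Part (1) now follows immediately. The lower half of the comparison together with (2.16) forces $\int_{D_{r_n}^+(x_n)} |Du|^2 \to 0$; coupled with $r_n \to 0$, this gives $\epsilon_n^2 = r_n^2 + \int_{D_{r_n}^+(x_n)} |Du|^2 \to 0$.

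For Part (2) I would chain the hypothesis $E_{x_n, r_n \theta_1}^+[u] > r_n^{3/2}$ in (2.15)(2) with the trivial monotonicity $E_{x_n, r_n \theta_1}^+[u] \leq E_{x_n, r_n}^+[u]$ and the upper half of the comparison, obtaining
\[
r_n^{3/2} \,<\, C_* \int_{D_{r_n}^+(x_n)} |Du|^2 \,+\, C_* r_n^2.
\]
Since $r_n \to 0$, the term $C_* r_n^2 = o(r_n^{3/2})$ can be absorbed on the left, yielding $\int_{D_{r_n}^+(x_n)} |Du|^2 \gtrsim r_n^{3/2}$ for $n$ large. Hence
\[
\frac{r_n^2}{\epsilon_n^2} \,\leq\, \frac{r_n^2}{\int_{D_{r_n}^+(x_n)} |Du|^2} \,\lesssim\, r_n^{1/2} \,\longrightarrow\, 0,
\]
which is Part (2).

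The whole argument is elementary; the only delicate point is the power-counting in Part (2), where one needs the lower bound $r_n^{3/2}$ supplied by (2.15)(2) to sit at a strictly higher order than the natural size $r_n^2$ of the bounded non-gradient contributions. This is precisely why the threshold in Lemma 2.2 is stated with exponent $3/2$ rather than $2$. The separation of $x_n$ from the $z$-axis (built into the definition of $\sigma_*$) is what freezes the weight $\rho$ to a positive constant and lets both inequalities in the comparison display hold uniformly in $n$.
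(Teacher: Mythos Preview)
Your proof is correct and follows essentially the same route as the paper: use the containment $D_{r_n}^+(x_n)\subset D_{\sigma_*}^+(x_*)$ to freeze the weight $\rho\sim\rho_*$, bound the non-gradient part of the integrand to get a two-sided comparison between $E_{x_n,r_n}^+[u]$ and $\int|Du|^2$ up to $O(r_n^2)$, and then combine (2.16)--(2.17) for Part~(1) and the lower bound $r_n^{3/2}$ from (2.15)(2) for Part~(2). The only cosmetic difference is that the paper obtains $\rho\geq\rho_*/2$ directly from (2.15)(1) without invoking $r_n\to 0$, since $\sigma_*\leq\rho_*/4$ already forces this on all of $D_{\sigma_*}^+(x_*)$.
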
\begin{proof}[\bf Proof]By (1) in (2.15), it holds \begin{eqnarray}\rho \geq \rho_*\h{1pt}\big/ \h{1pt}2, \h{10pt}\text{ for any $\left(\rho, z\right) \in D_{r_n}^+\left(x_n\right)$.} \end{eqnarray} Using (2.16) and (2.20), we obtain \begin{eqnarray*} \dfrac{\rho_*}{2} \int_{D_{r_n}^+\left(x_n\right)} \big| \h{1pt}D \h{0.5pt}u\h{1pt}\big|^2 \h{2pt}\leq\h{2pt} \int_{D_{r_n}^+\left(x_n\right)} \rho \h{1.5pt}\big| \h{1pt}D \h{0.5pt}u\h{1pt}\big|^2 \h{2pt}\leq\h{2pt}E_{x_n, \h{0.5pt}r_n}^+\left[\h{0.5pt}u\h{1pt}\right] \longrightarrow 0, \h{10pt}\text{as $n \rightarrow \infty$.}
  \end{eqnarray*}The above estimate and (2.17) induce (1) in (2.19).

 Utilizing (2.20)  and (2) in (2.15), we can find a positive constant $c_{\h{0.5pt}\mu, \rho_*}$ depending on $\mu$ and $\rho_*$ so that   \begin{eqnarray*} c_{\h{0.5pt}\mu, \rho_*}\h{2pt}r_n^2 + \int_{D_{r_n}^+\left(x_n\right)} \big|\h{1pt}D\h{0.5pt}u\h{1pt}\big|^2 \h{2pt}\geq\h{2pt}E_{x_n, r_n}^+\left[\h{0.5pt}u\h{0.5pt}\right]\h{2pt}\geq\h{2pt}E_{x_n, \h{0.5pt}r_n \theta_1}^+\left[\h{0.5pt}u\h{0.5pt}\right]\h{2pt}>\h{2pt}r_n^{3/2}.
  \end{eqnarray*}By (2.17) and the above estimate, we get $\displaystyle \int_{D_{r_n}^+\left(x_n\right)} \big|\h{1pt}D\h{0.5pt}u\h{1pt}\big|^2 \h{2pt}>\h{2pt}\dfrac{r_n^{3/2}}{2}$, provided $n$ is large. Recall the definition of $\epsilon_n^2$ in (2.18). The last lower bound infers \begin{eqnarray*} \left(\dfrac{r_n}{\epsilon_n}\right)^2 \h{2pt}\leq\h{2pt}\dfrac{2\h{1pt}r_n^{1/2}}{1 + 2 \h{1pt}r_n^{1/2}}, \h{10pt}\text{provided $n$ is large.}
  \end{eqnarray*}(2) in (2.19) then follows by this upper bound and (2.17).
  \end{proof}

 Now we rescale domain and value of $u$ by letting \begin{eqnarray} u_n\left(\xi\right) := \dfrac{u\left(x_n + r_n \h{1pt}\xi\right) - y_n}{\epsilon_n}, \h{20pt}\forall \h{3pt}\xi \in D_1^+.
\end{eqnarray}Here we still use $D_r\left(x \right)$ to denote the disk in the $\xi$-space with center $x$ and radius $r$. Moreover $D_r^+\left( x \right)$ is still the upper part of $D_r\left(x\right)$ where the second coordinate $\xi_2 > 0$. For simplicity if the center is $0$, then we use $D_r$ and $D_r^+$ to represent $D_r(0)$ and $D_r^+(0)$, respectively. With aid of Poincar\'{e}'s inequality and the definition of $\epsilon_n$ in (2.18), $u_n$ is uniformly bounded in $H^1(D_1^+)$. Then there exists a $u_{\infty} \in H^1(D^+_1)$ so that up to a subsequence, \begin{eqnarray} (1). \h{5pt}u_n \longrightarrow u_{\infty}, \h{10pt}\text{weakly in $H^1(D^+_1)$}; \h{20pt}(2). \h{5pt}u_n \longrightarrow u_{\infty}, \h{10pt}\text{strongly in $L^2(D^+_1)$, as $n \rightarrow \infty$.} \end{eqnarray}Moreover for any $p \in \left(1, 2\right)$ we also have uniform boundedness of $u_n$ in $W^{1, p}\left(D_1^+\right)$. Therefore by Theorem 6.2 in [50], we can keep extracting a subsequence, still denoted by $\big\{ u_n\big\}$, so that \begin{eqnarray}u_n \longrightarrow u_{\infty}, \h{20pt}\text{strongly in $L^q \left( \p D_1^+ \right)$, for any $q$ satisfying $1 \h{2pt}\geq\h{2pt} \dfrac{1}{q}\h{2pt} > \h{2pt} \dfrac{1}{p} - \dfrac{1}{2}$.}
\end{eqnarray}The limit $u_{\infty}$ is harmonic in $D_1^+$. That is  \begin{lemma}$u_{\infty}$ is smooth in $D_1^+$ and  satisfies $\Delta_{\xi} \h{0.5pt}u_{\infty} = 0$ in $D_1^+$.
\end{lemma}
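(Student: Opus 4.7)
The plan is to pass to the limit in a rescaled weak form of the Euler--Lagrange equation (2.1). The crucial initial observation is that any $\psi \in C_c^\infty(D_1^+;\mathbb{R}^3)$ pulls back to a test function supported in $D_{r_n}(x_n) \cap \{z>0\}$, which lies strictly off the free boundary $T$; on this set the minimizer $u$ solves (2.1) weakly against arbitrary $\mathbb{R}^3$-valued test functions and the Signorini constraint is inactive. First I would pin down $\lim_n y_n$. Using $|u|^2 \equiv 1$ together with $\dashint_{D_1^+} u_n \, d\xi = 0$, one gets $|y_n|^2 = 1 - \epsilon_n^2 \dashint_{D_1^+}|u_n|^2 \, d\xi$, so by Lemma 2.3 and the uniform $L^2$-bound on $u_n$ (via Poincar\'e and zero mean) a further subsequence yields $y_n \to y_\infty$ with $|y_\infty| = 1$. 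The pointwise identity $2 y_n\cdot u_n + \epsilon_n|u_n|^2 = 0$ combined with the strong $L^2$-convergence in (2.22) then forces $y_\infty \cdot u_\infty = 0$ almost everywhere in $D_1^+$.

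Next I would insert $\phi(x) := \psi\bigl((x-x_n)/r_n\bigr)$ into the weak form of (2.1), change variables $\rho = \rho_n + r_n\xi_1$, $z = r_n\xi_2$, and divide the resulting identity by $\epsilon_n$. Every remainder term arising from the potential $P$, from the weighted linear term $\rho^{-2}(4u_1,0,u_3)$, and from the corresponding bounded Lagrange-multiplier contributions carries a factor of order $r_n^2/\epsilon_n$, which vanishes by (2.19). What remains is
\begin{eqnarray*}
\int_{D_1^+}(\rho_n + r_n\xi_1)\,D_\xi u_n \cdot D_\xi \psi \, d\xi \;=\; \epsilon_n \int_{D_1^+}(\rho_n + r_n\xi_1)\,|D_\xi u_n|^2\,(y_n + \epsilon_n u_n)\cdot\psi\, d\xi + o(1).
\end{eqnarray*}

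The main obstacle is the surviving nonlinear term, because $|D_\xi u_n|^2$ is only uniformly bounded in $L^1$. I would handle it by restricting to test fields satisfying $\psi(\xi) \cdot y_\infty \equiv 0$, i.e.\ $\psi$ valued in the tangent plane of $\mathbb{S}^2$ at $y_\infty$. Using $y_\infty \cdot \psi \equiv 0$ together with the sphere-constraint bound $|\epsilon_n u_n| = |u - y_n| \leq 2$ gives the pointwise estimate
\begin{eqnarray*}
|(y_n + \epsilon_n u_n) \cdot \psi | \;\leq\; \bigl(|y_n - y_\infty| + 2\bigr)|\psi|.
\end{eqnarray*}
Combined with $\int_{D_1^+} |D_\xi u_n|^2 \leq 1$, this makes the nonlinear integral $O(\epsilon_n) \to 0$. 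Passing to the limit with the weak $H^1$-convergence $u_n \rightharpoonup u_\infty$ and (along a subsequence) $\rho_n \to \rho_\infty \geq 3\rho_*/4 > 0$ produces
\begin{eqnarray*}
\int_{D_1^+} D_\xi u_\infty \cdot D_\xi \psi \, d\xi \;=\; 0 \h{10pt}\text{for every $\psi \in C_c^\infty(D_1^+;\, y_\infty^\perp)$.}
\end{eqnarray*}

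Finally, for the normal direction, the identity $y_\infty \cdot u_\infty \equiv 0$ already established implies $y_\infty \cdot D_\xi u_\infty = 0$ distributionally; hence for any scalar $\eta \in C_c^\infty(D_1^+)$ the test function $\psi = \eta\, y_\infty$ also gives $\int_{D_1^+} D_\xi u_\infty \cdot D_\xi \psi \, d\xi = 0$. Combined with the tangential case and the orthogonal decomposition $\mathbb{R}^3 = y_\infty^\perp \oplus \mathbb{R}\, y_\infty$, this proves $\Delta_\xi u_\infty = 0$ weakly in $D_1^+$. Standard interior elliptic regularity for the Laplace equation then yields smoothness of $u_\infty$ in $D_1^+$, completing the proof.
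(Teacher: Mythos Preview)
Your proof is correct and follows exactly the Evans-type blow-up linearization the paper cites (proof of (3.13) in [21]). The tangential/normal splitting via $y_\infty$ is unnecessary, however: since $|y_n + \epsilon_n u_n| = |u| = 1$ pointwise, the nonlinear term is bounded by $C\epsilon_n\!\int_{D_1^+}|D_\xi u_n|^2\,\|\psi\|_\infty = O(\epsilon_n)$ for \emph{every} test field $\psi$, so $\int_{D_1^+} D_\xi u_\infty\cdot D_\xi\psi = 0$ follows directly without decomposing directions (and the pointwise identity should read $2y_n\cdot u_n + \epsilon_n|u_n|^2 = \epsilon_n\dashint|u_n|^2$, not $0$, though this does not affect your conclusion).
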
The proof of Lemma 2.4 is similar to proof of (3.13) in [21]. We omit it here. From Lemma 2.4, we can not imply the regularity of $u_{\infty}$ up to the flat boundary $T_1 := \Big\{ \left(\h{0.5pt}\xi_1, 0 \h{0.5pt}\right): \xi_1 \in \left( - 1, 1 \right)\Big\}.$ To achieve the boundary regularity of $u_{\infty}$, we need to figure out the range of $u_{\infty}$ on $D_1^+$ and $T_1$. Before proceeding, let us introduce some notations. We use $\Gamma$ to denote the supporting manifold of $u$ on $T$. More precisely $\Gamma$ equals to either \begin{eqnarray} \Gamma^+ := \Big\{ \left(\h{1pt}v_1, v_2, 0 \h{1pt}\right) \h{2pt}\in\h{2pt} \mathbb{S}^2 : v_2 \geq b \h{1pt}\Big\} \h{20pt}\text{or}\h{20pt}\Gamma^- := \Big\{ \left(\h{1pt}v_1, v_2, 0 \h{1pt}\right) \h{2pt}\in\h{2pt} \mathbb{S}^2 : v_2 \leq c \h{1pt}\Big\}.
\end{eqnarray}Let $U\left(\Gamma\right) \subset \mathbb{R}^3$ be a small neighborhood of $\Gamma$ in $\mathbb{R}^3$. For any $x \in U\left(\Gamma\right)$, we denote by $\Pi_{\Gamma}\left(x\right)$ the unique point on $\Gamma$ so that $\big|\h{1pt}x - \Pi_{\Gamma}\left(x\right)\h{1pt}\big| = \min\Big\{\h{1pt} |\h{1pt}x - y \h{1pt}| : y \in \Gamma \h{1pt}\Big\}$. The operator $\Pi_{\Gamma}$ is Lipschitz continuous from $U\left(\Gamma\right)$ to $\Gamma$. In the remaining arguments, we need to keep extracting subsequences. Without mentioning, for a sequence in one of lemmas below, it should satisfy its associated limits obtained before that lemma. \vspace{0.3pc}

  Now we begin to consider the range of $u_{\infty}$ on $D_1^+$ and $T_1$. Firstly we have \begin{lemma}If $n$ is large enough, then $y_n \in U\left(\Gamma\right)$ and $\Pi_{\Gamma}\left(y_n\right)$ is well defined. Moreover $Y_n := \dfrac{\Pi_{\Gamma}(y_n) - y_n}{\epsilon_n}$  is uniformly bounded for all $n$ large enough.
\end{lemma}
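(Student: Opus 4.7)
The plan is to prove the quantitative estimate $\mathrm{dist}(y_n, \Gamma) = O(\epsilon_n)$, from which both assertions follow at once: since $\epsilon_n \to 0$ by Lemma 2.3, $y_n$ lies in the tubular neighborhood $U(\Gamma)$ once $n$ is large (so $\Pi_{\Gamma}(y_n)$ is well defined), and then $|Y_n| = \mathrm{dist}(y_n, \Gamma)/\epsilon_n$ is automatically bounded.

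The starting observation is that the trace of $u$ on the flat segment $T_n := T \cap \overline{D^+_{r_n}(x_n)}$ lies in $\Gamma$ almost everywhere. Indeed, property (iv) of Definition 1.1 forces $u_3$ to be odd in $z$, so in the $H^{1/2}$ trace sense $u_3 = 0$ on $T$; combined with $u \in \mathbb{S}^2$ and the obstacle (2.2), the trace $u|_{T_n}$ must take values in $\Gamma^+$ when $u = u_{\h{1pt}b}^+$, and in $\Gamma^-$ when $u = u_{\h{1pt}c}^-$, in the notation of (2.24).

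With this in hand, I would apply two standard scale-invariant inequalities on the half-disk $D^+_{r_n}(x_n)$. First, Poincar\'e--Wirtinger with the mean value $y_n$ yields $\int_{D^+_{r_n}(x_n)} |u - y_n|^2 \lesssim r_n^2 \int_{D^+_{r_n}(x_n)} |Du|^2 \leq r_n^2 \epsilon_n^2$. Next, the trace inequality on the flat portion $T_n$ applied to $u - y_n$ gives $\int_{T_n} |u - y_n|^2 \, ds \lesssim r_n^{-1} \int_{D^+_{r_n}(x_n)} |u - y_n|^2 + r_n \int_{D^+_{r_n}(x_n)} |Du|^2 \lesssim r_n \epsilon_n^2$. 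Since $x_n \in T^\circ$ and $r_n < \sigma_*$, the length of $T_n$ is comparable to $r_n$, and since $u(s) \in \Gamma$ for a.e.\ $s \in T_n$, averaging yields
\[
\mathrm{dist}(y_n, \Gamma)^2 \,\leq\, \dashint_{T_n} |u(s) - y_n|^2 \, ds \,\lesssim\, \epsilon_n^2,
\]
which is the estimate we sought. Combined with $|Y_n| = \mathrm{dist}(y_n, \Gamma)/\epsilon_n$, this delivers the lemma.

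The argument is essentially routine. The only point requiring attention is to track the powers of $r_n$ carefully so that the final bound is uniform in $n$, and to justify the vanishing $u_3 = 0$ on $T$ in the trace sense — a fact I would confirm briefly by symmetric extension, i.e.\ testing against smooth functions on $\mathbb{D}$ that are even in $z$ and using the oddness inherited from Definition 1.1. There is no substantive obstacle beyond these bookkeeping details.
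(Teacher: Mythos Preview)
Your proposal is correct and follows essentially the same route as the paper: both arguments hinge on the pointwise inequality $\mathrm{dist}(y_n,\Gamma)\le |u-y_n|$ along the flat segment, then combine a Poincar\'e inequality with the trace theorem on the half-disk to obtain $\mathrm{dist}(y_n,\Gamma)^2 \lesssim \epsilon_n^2$. The only cosmetic difference is that the paper carries this out in the rescaled coordinates on $D_1^+$ (so the trace segment $T_1$ has unit length and no $r_n$ bookkeeping is needed), whereas you work directly on $D_{r_n}^+(x_n)$ and track the powers of $r_n$ explicitly; the content is the same.
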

\begin{proof}[\bf Proof] For $\mathscr{H}^1$\h{0.5pt}-\h{0.5pt}almost all  $\xi \in T_1$, it holds $u\left(x_n + r_n \h{1pt}\xi\right) \in \Gamma$. Therefore  $\mathrm{d}\h{0.5pt}\big(y_n, \Gamma \big) \leq \big| \h{1pt}u\left(x_n + r_n \h{1pt}\xi\right) - y_n \h{1pt}\big|$,  for $\mathscr{H}^1$\h{0.5pt}-\h{0.5pt}almost all $\xi \in T_1$. Here $\mathrm{d}\left(x, \Gamma\right)$ is the shortest distance between a point $x \in \mathbb{R}^3$ and $\Gamma$. Integrating with respect to $\xi \in T_1$ yields \begin{eqnarray*} 2 \h{1pt}\mathrm{d}^2\big( y_n, \Gamma \big) \leq \int_{T_1} \big| \h{1pt}u(x_n + r_n \h{1pt}\xi) - y_n \h{1pt}\big|^2 \leq \int_{\p D_1^+} \big| \h{1pt}u(x_n + r_n \h{1pt}\xi) - y_n \h{1pt}\big|^2.
\end{eqnarray*}By trace theorem on Lipschitz domain and Poincar\'{e} inequality, the above estimate can be reduced to  $\mathrm{d}^2\big(y_n, \Gamma \big) \h{2pt}\lesssim\h{2pt}\epsilon_n^2$. Here we also have used (2.18). The proof then follows.
\end{proof}By this lemma, up to a subsequence, still denoted by $n$, we have \begin{eqnarray} \lim_{n \rightarrow \infty} Y_n = Y_{*}, \h{20pt}\text{for some $Y_* \in \mathbb{R}^3$.} \end{eqnarray}The next lemma concerns about the range of $u_{\infty}$ when it is restricted on $D_1^+$.
 \begin{lemma}There is a $y_* \in \Gamma$ so that up to a subsequence $\Pi_{\Gamma}(y_n) \longrightarrow y_*$, as $n \rightarrow \infty$. Moreover it holds \begin{eqnarray*} u_{\infty} \in Y_* + \mathrm{Tan}_{y_*} \h{1pt}\mathbb{S}^2, \h{20pt}\text{for all $\xi \in D_1^+$.}\end{eqnarray*}Here $\mathrm{Tan}_{y_*} \h{1pt}\mathbb{S}^2$ is the $2$-plane in $\mathbb{R}^3$ which contains all vectors perpendicular to $y_*$. \end{lemma}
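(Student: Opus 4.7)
The plan is to exploit the $\mathbb{S}^2$-constraint $|u|^2 \equiv 1$ on $\mathbb{D}$ together with the definition of $\Pi_\Gamma$ to derive a pointwise algebraic identity in terms of $u_n$, $Y_n$, and $\Pi_\Gamma(y_n)$, and then pass to the limit using the strong $L^2$ convergence of $u_n\to u_\infty$ and the boundedness of $Y_n$.

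First I would extract the convergent subsequence for $\Pi_\Gamma(y_n)$. Since $\Gamma$ (either $\Gamma^+$ or $\Gamma^-$ from (2.24)) is a closed subset of the compact manifold $\mathbb{S}^2$, it is compact, so the sequence $\{\Pi_\Gamma(y_n)\}\subset\Gamma$ admits a subsequence converging to some $y_*\in\Gamma$. Passing to that subsequence, one also retains the earlier limits $u_n\to u_\infty$ (weakly in $H^1(D_1^+)$ and strongly in $L^2(D_1^+)$) and $Y_n\to Y_*$ from (2.25).

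Next I would derive the key identity. Because $u$ is $\mathbb{S}^2$-valued on $\mathbb{D}$, we have $|u(x_n+r_n\xi)|^2=1$ for all $\xi\in D_1^+$. Using $u(x_n+r_n\xi)=y_n+\epsilon_n u_n(\xi)$ and $\Pi_\Gamma(y_n)=y_n+\epsilon_n Y_n$ (so that $\Pi_\Gamma(y_n)\in\Gamma\subset\mathbb{S}^2$ as well), we compute
\begin{eqnarray*}
\epsilon_n^2\,|u_n(\xi)-Y_n|^2 &=& \big|\big(u(x_n+r_n\xi)-\Pi_\Gamma(y_n)\big)\big|^2\\
&=& |u(x_n+r_n\xi)|^2 - 2\,u(x_n+r_n\xi)\cdot\Pi_\Gamma(y_n) + |\Pi_\Gamma(y_n)|^2\\
&=& -\,2\,\big(u(x_n+r_n\xi)-\Pi_\Gamma(y_n)\big)\cdot\Pi_\Gamma(y_n)\\
&=& -\,2\,\epsilon_n\,\big(u_n(\xi)-Y_n\big)\cdot\Pi_\Gamma(y_n),
\end{eqnarray*}
which, after dividing by $\epsilon_n$, yields the pointwise identity
\begin{eqnarray*}
2\,\big(u_n(\xi)-Y_n\big)\cdot\Pi_\Gamma(y_n)+\epsilon_n\,|u_n(\xi)-Y_n|^2 \h{2pt}=\h{2pt} 0 \h{20pt}\text{a.e.\ on }D_1^+.
\end{eqnarray*}

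Finally, I would pass to the limit $n\to\infty$. The first term converges in $L^2(D_1^+)$ to $2(u_\infty-Y_*)\cdot y_*$ by strong $L^2$-convergence of $u_n$ combined with $Y_n\to Y_*$ and $\Pi_\Gamma(y_n)\to y_*$; the second term converges to $0$ in $L^1(D_1^+)$ since $\epsilon_n\to 0$ (Lemma 2.3) and $\{|u_n-Y_n|^2\}$ is bounded in $L^1(D_1^+)$. Hence $(u_\infty-Y_*)\cdot y_*=0$ a.e.\ on $D_1^+$, i.e.\ $u_\infty-Y_*\in\mathrm{Tan}_{y_*}\mathbb{S}^2$ on $D_1^+$, which is the asserted affine constraint. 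The only subtle point is justifying termwise limits of the quadratic expressions, and this is handled precisely by the strong $L^2$ convergence in (2.22)(2) together with the factor $\epsilon_n\to 0$ killing the quadratic remainder.
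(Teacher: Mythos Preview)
Your argument is correct and reaches the conclusion, but it takes a somewhat different route from the paper's proof. The paper decomposes
\[
u_n \;=\; \frac{u(x_n+r_n\xi)-\Pi_\Gamma(y_n)}{\epsilon_n}\;+\;\frac{\Pi_\Gamma(y_n)-y_n}{\epsilon_n},
\]
and argues that the first term has a pointwise a.e.\ limit (since $u_n\to u_\infty$ a.e.\ and the second term $\to Y_*$). Because this first ratio has a limit and $\epsilon_n\to 0$, one deduces $u(x_n+r_n\xi)\to y_*$ a.e., and then invokes the elementary fact that a rescaled chord between two $\mathbb{S}^2$-points converging to the same point $y_*$ lands in $\mathrm{Tan}_{y_*}\mathbb{S}^2$. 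Your approach instead expands the polarization identity $|a-b|^2 = -2(a-b)\cdot b$ for unit vectors $a,b$ and passes to the limit in $L^1$/$L^2$. Both encode the same geometric fact; your version is arguably more direct (no need to track pointwise convergence of $u(x_n+r_n\xi)$), while the paper's decomposition (2.26) has the advantage of being reused verbatim in the subsequent Lemmas~2.7 and~2.8 for the boundary analysis on $T_1$.

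One small point: your argument yields $(u_\infty-Y_*)\cdot y_*=0$ only a.e.\ in $D_1^+$, whereas the statement asserts it for \emph{all} $\xi\in D_1^+$. To close this you should invoke Lemma~2.4 (smoothness of $u_\infty$ in $D_1^+$), exactly as the paper does in its last sentence.
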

\begin{proof}[\bf Proof]  We decompose $u_n$ as follows: \begin{eqnarray} u_n = \dfrac{u\left(x_n + r_n\h{1pt}\xi\right) - \Pi_{\Gamma}\left(y_n\right)}{\epsilon_n} + \dfrac{\Pi_{\Gamma}\left(y_n\right) - y_n}{\epsilon_n}.
\end{eqnarray}In light of (2.25), the second term on the right-hand side of (2.26) converges to $Y_*$.  By (2) in (2.22), $u_n$ converges to $u_{\infty}$ for  almost all $\xi \in D_1^+$. Therefore the limit of \begin{eqnarray}\dfrac{u\left(x_n + r_n\h{1pt}\xi\right) - \Pi_{\Gamma}\left(y_n\right)}{\epsilon_n} \end{eqnarray} exists for almost all $\xi \in D_1^+$. Since $\Pi_{\Gamma}\left(y_n\right) \in \Gamma$, up to a subsequence, there is a $y_* \in \Gamma$ so that $\Pi_{\Gamma}\left(y_n\right) \rightarrow y_*$ as $n \rightarrow \infty$. Then for almost all $\xi \in D_1^+$, $u\left(x_n + r_n \h{1pt}\xi\right)$ converges to $y_*$ also as $n \rightarrow \infty$. Here we have used (1) in (2.19). Hence the limit of the ratio in  (2.27) is  perpendicular to $y_*$ for almost all $\xi \in D_1^+$. This infers $u_{\infty} \in Y_* + \mathrm{Tan}_{y_*}\mathbb{S}^2$ for almost all $\xi \in D_1^+$. The proof follows since by  Lemma 2.4, $u_{\infty}$ is smooth in $D_1^+$.
\end{proof}As for the boundary values of $u_{\infty}$ on $T_1$, we need to consider the following three cases: \begin{eqnarray} I.  \h{3pt}y_* \in \Gamma \sim \p\h{1pt}\Gamma; \h{5pt}\textit{II.} \h{3pt} y_* \in \p\h{1pt}\Gamma \h{2pt}\text{and} \h{2pt}\liminf_{n \rightarrow \infty} \h{2pt}\dfrac{\big|\h{1pt}\Pi_{\Gamma}(y_n) - y_* \h{1pt}\big|}{\epsilon_n} = \infty; \h{5pt}\textit{III.} \h{3pt}y_* \in \p\h{1pt}\Gamma \h{2pt}\text{and} \h{2pt}\liminf_{n \rightarrow \infty} \h{2pt}\dfrac{\big|\h{1pt}\Pi_{\Gamma}(y_n) - y_* \h{1pt}\big|}{\epsilon_n} < \infty.
\end{eqnarray} For cases \textit{I} and \textit{II}, we have
\begin{lemma} Suppose that  I or II in (2.28) holds. Then   $u_{\infty}$ satisfies \begin{eqnarray} u_{\infty} \in Y_* + \mathrm{Tan}_{y_*}\h{1pt}\Gamma, \h{20pt}\text{for $\mathscr{H}^1$-almost all $\xi \in T_1$.}
\end{eqnarray}  Here $\mathrm{Tan}_{y_*} \h{1pt}\Gamma$ is the $1$-dimensional line in $\mathbb{R}^3$ which contains all vectors perpendicular to $y_*$ and $\big(0, 0, 1\big)$.
\end{lemma}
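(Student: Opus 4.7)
The plan is to exploit the Signorini boundary condition $u(x_n + r_n \xi) \in \Gamma$, which holds for $\mathscr{H}^1$-a.e.\ $\xi \in T_1$, together with the decomposition already written in (2.26). Rearranging that identity yields
\[
u_n(\xi) - Y_n \;=\; \epsilon_n^{-1}\bigl[\,u(x_n + r_n \xi) \,-\, \Pi_\Gamma(y_n)\,\bigr].
\]
By (2.23), after extracting a further subsequence, $u_n(\xi) \to u_\infty(\xi)$ for $\mathscr{H}^1$-a.e.\ $\xi \in T_1$, so the left-hand side converges to $u_\infty(\xi) - Y_*$ by (2.25). It therefore suffices to show that the limit of the right-hand side, a rescaled chord between two points of $\Gamma$, lies in $\mathrm{Tan}_{y_*}\h{0.5pt}\Gamma$.

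Next I would parameterize $\Gamma$ by arc length via a smooth embedding $\gamma : [0, L] \to \Gamma$ (extending smoothly up to the endpoints, since $\Gamma$ is an arc of the equator of $\mathbb{S}^2$), and write $\Pi_\Gamma(y_n) = \gamma(s_n)$ and $u(x_n + r_n \xi) = \gamma(s(\xi, n))$. From Lemma 2.5, $\Pi_\Gamma(y_n) \to y_* = \gamma(s_*)$, hence $s_n \to s_*$; and since $u(x_n + r_n \xi) - \Pi_\Gamma(y_n) = \epsilon_n(u_n(\xi) - Y_n) \to 0$, also $s(\xi, n) \to s_*$ for a.e.\ $\xi$ in question. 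Taylor's formula along $\gamma$ then gives
\[
\frac{\gamma(s(\xi, n)) \,-\, \gamma(s_n)}{\epsilon_n} \;=\; \gamma'(s_n)\,\frac{s(\xi, n) - s_n}{\epsilon_n} \,+\, O\!\left(\epsilon_n\,\Big|\frac{s(\xi, n) - s_n}{\epsilon_n}\Big|^{2}\right).
\]
Pointwise convergence of $u_n(\xi) - Y_n$ implies $|s(\xi, n) - s_n|/\epsilon_n$ is bounded (using comparability of arc length and Euclidean distance on the smooth curve $\Gamma$), so the error term vanishes. Extracting one more subsequence, the expression on the left converges to $t_\infty \h{0.5pt}\gamma'(s_*)$ for some $t_\infty \in \mathbb{R}$; since $\gamma'(s_*)$ is a unit tangent to the equator at $y_*$, it lies in $\mathrm{Tan}_{y_*}\h{0.5pt}\Gamma$, and the lemma follows.

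The delicate point, and the only place where the dichotomy (2.28) between Cases I and II enters, is ensuring that for $n$ large both $s_n$ and $s(\xi, n)$ sit in the open interval $(0, L)$, so that Taylor's formula and the smooth extension of $\gamma'$ are valid. In Case I this is automatic from $s_* \in (0, L)$. In Case II one has, say, $y_* = \gamma(0)$, and the hypothesis $|\Pi_\Gamma(y_n) - y_*|/\epsilon_n \to \infty$ translates, again by arc-length/Euclidean comparability near the endpoint, into $s_n/\epsilon_n \to \infty$; combined with the bound $|s(\xi, n) - s_n|/\epsilon_n = O(1)$ obtained above, this forces $s(\xi, n)/\epsilon_n \to \infty$, so $s(\xi, n) > 0$ for $n$ large. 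I expect this Case II verification to be the main obstacle: without the divergence assumption on $|\Pi_\Gamma(y_n) - y_*|/\epsilon_n$ one would only control a one-sided difference quotient and land in a half-line, not the full tangent line $\mathrm{Tan}_{y_*}\h{0.5pt}\Gamma$, which is precisely why Case III must be dealt with separately later.
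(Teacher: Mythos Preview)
Your argument is correct, but it is considerably more elaborate than the paper's. The paper simply remarks that the proof is the same as that of Lemma~2.6, using (2.23), (2.25)--(2.26) and the fact that $u(x_n+r_n\xi)\in\Gamma$ for $\mathscr{H}^1$-a.e.\ $\xi\in T_1$. The point is purely geometric: $\Gamma$ lies in the equator $\{v_3=0\}\cap\mathbb{S}^2$, so both $u(x_n+r_n\xi)$ and $\Pi_\Gamma(y_n)$ have vanishing third component, and hence so does the rescaled chord $\epsilon_n^{-1}\bigl(u(x_n+r_n\xi)-\Pi_\Gamma(y_n)\bigr)$. Combined with the perpendicularity to $y_*$ already obtained in Lemma~2.6 (since both points lie on $\mathbb{S}^2$ and converge to $y_*$), the limit is perpendicular to both $y_*$ and $(0,0,1)$, i.e.\ lies in $\mathrm{Tan}_{y_*}\Gamma$. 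No parameterization, no Taylor expansion, and no case distinction are needed.

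Your arc-length/Taylor route also works, but the care you take with the endpoint in Case~II is unnecessary for this lemma. Because $\Gamma$ is an arc of a great circle, $\gamma$ extends smoothly to an open neighbourhood of $[0,L]$, so Taylor's formula with second-order remainder is valid regardless of whether $s_n$ or $s(\xi,n)$ hit the boundary. Moreover, the conclusion $u_\infty(\xi)-Y_*=t_\infty\,\gamma'(s_*)\in\mathrm{Tan}_{y_*}\Gamma$ places no sign restriction on $t_\infty$, so the full tangent line is obtained in all three cases I, II, III without ever invoking the divergence hypothesis of~II. The I/II versus III distinction only becomes relevant for the \emph{sharper} half-line statement of Lemma~2.8, not for the present lemma; the paper states Lemma~2.7 for I and II merely because in Case~III it immediately proves the stronger (2.30) instead.
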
\begin{proof}[\bf Proof]
In light that $u\left(x_n + r_n \h{1pt}\xi\right) \in \Gamma$ for $\mathscr{H}^1$-almost all $\xi \in T_1$, this lemma can then be  proved by similar method used in the proof of Lemma 2.6. Here we just need to use (2.23) and (2.25)-(2.26).
\end{proof} In fact (2.29) still holds for \textit{III} in (2.28). But in this case we can have more detailed result. That is  \begin{lemma}If III in (2.28) holds, then there exists a $v_- \in \mathrm{Tan}^-_{y_*} \h{1pt}\Gamma$ so that \begin{eqnarray} u_{\infty} \h{2pt}\in\h{2pt} Y_* + v_- + \mathrm{Tan}_{y_*}^+\h{1pt}\Gamma, \h{20pt}\text{for $\mathscr{H}^1$-almost all $\xi \in T_1$.}
\end{eqnarray}Here for $y_* \in \p \h{2pt} \Gamma$, we use $\mathrm{Tan}^{+}_{y_*} \h{1pt}\Gamma$ to denote the half line in $\mathrm{Tan}_{y_*} \h{1pt}\Gamma$ so that for any $v \in \mathrm{Tan}_{y_*}^{+} \h{1pt}\Gamma$, it holds \begin{eqnarray*} \dfrac{y_* + \epsilon \h{1pt}v}{\big|\h{1pt}y_* + \epsilon \h{1pt}v\h{0.5pt}\big|} \h{2pt}\in\h{2pt} \Gamma, \h{20pt}\text{provided $\epsilon > 0$ and sufficiently small.}
 \end{eqnarray*}Similarly $\mathrm{Tan}^-_{y_*} \h{1pt} \Gamma$ denotes the half line in $\mathrm{Tan}_{y_*} \h{1pt} \Gamma$ so that for any $v \h{1pt}\in\h{1pt} \mathrm{Tan}^-_{y_*} \h{1pt} \Gamma$,  it holds $- v \h{1pt}\in\h{1pt} \mathrm{Tan}^+_{y_*} \h{1pt} \Gamma$. \end{lemma}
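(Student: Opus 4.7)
The plan is to refine the decomposition of $u_n$ used in the proof of Lemma 2.6 so that it tracks not only the projection $\Pi_{\Gamma}(y_n)$ but also how $\Pi_{\Gamma}(y_n)$ approaches the boundary point $y_*$ of $\Gamma$, at the scale $\epsilon_n$. The extra information supplied by case \textit{III} is that this rate is finite along a subsequence, which will produce the unwanted direction $v_-$ explicitly.

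First I would pass to a subsequence, guaranteed by \textit{III}, such that $w_n:=\big(\Pi_{\Gamma}(y_n)-y_*\big)/\epsilon_n$ is bounded, and then, subsequentially, converges to some $w\in\mathbb{R}^3$. Since $\Pi_{\Gamma}(y_n)\in\Gamma$ and $y_*\in\partial\Gamma$, and because the one–dimensional manifold $\Gamma$ can be locally parametrised near $y_*$ as a curve $\gamma(t)$, $t\in[0,\delta]$, with $\gamma(0)=y_*$ and $\gamma'(0)\in\mathrm{Tan}^+_{y_*}\Gamma$, one has $\Pi_{\Gamma}(y_n)-y_* = t_n\gamma'(0)+O(t_n^2)$ for some $t_n\downarrow 0$. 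Dividing by $\epsilon_n$ and passing to the limit shows $w\in\mathrm{Tan}^+_{y_*}\Gamma$. I will then set $v_-:=-w\in\mathrm{Tan}^-_{y_*}\Gamma$.

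Next I would write, for $\mathscr{H}^1$-a.e.\ $\xi\in T_1$,
\begin{eqnarray*}
 u_n(\xi)\;=\;\frac{u(x_n+r_n\xi)-y_*}{\epsilon_n}\;-\;w_n\;+\;Y_n,
\end{eqnarray*}
and identify the limit of each term. The trace convergence (2.23) gives $u_n(\xi)\to u_\infty(\xi)$ for $\mathscr{H}^1$-a.e.\ $\xi\in T_1$, while $w_n\to w=-v_-$ and $Y_n\to Y_*$ by (2.25). Consequently $\big(u(x_n+r_n\xi)-y_*\big)/\epsilon_n$ has an a.e.\ limit $\ell(\xi)=u_\infty(\xi)-Y_*-v_-$. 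On the other hand, $u(x_n+r_n\xi)\in\Gamma$ and converges to $y_*$ (because $u_n(\xi)$ is bounded and $\epsilon_n\to 0$), so I can apply the same local parametrisation $\gamma$ of $\Gamma$ near $y_*$ to write $u(x_n+r_n\xi)=\gamma(s_n(\xi))$ with $s_n(\xi)\downarrow 0$; dividing by $\epsilon_n$ and passing to the limit forces $\ell(\xi)\in\mathrm{Tan}^+_{y_*}\Gamma$. Rearranging this inclusion gives exactly $u_\infty(\xi)\in Y_*+v_-+\mathrm{Tan}^+_{y_*}\Gamma$.

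I expect the one genuinely delicate step to be the last: producing the local parametrisation $\gamma$ and verifying that any sequence $p_n\in\Gamma$ with $p_n\to y_*$ satisfies $(p_n-y_*)/|p_n-y_*|\to\gamma'(0)/|\gamma'(0)|\in\mathrm{Tan}^+_{y_*}\Gamma$ uniformly enough to extract the pointwise limit $\ell(\xi)$. This is where the hypothesis $y_*\in\partial\Gamma$, together with the explicit form of $\Gamma^\pm$ in (2.24) as a proper sub-arc of the equator of $\mathbb{S}^2$, is used: near its endpoint the arc is a smooth embedded curve with a well-defined one-sided tangent, and case \textit{III} ensures the scaling $\epsilon_n$ is compatible with that tangential structure, so no logarithmic or spiralling correction can appear.
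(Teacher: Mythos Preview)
Your proposal is correct and is essentially the paper's own argument: the decomposition you write, $u_n(\xi)=\frac{u(x_n+r_n\xi)-y_*}{\epsilon_n}-w_n+Y_n$, is exactly the paper's (2.32) (with $w_n=-\frac{y_*-\Pi_\Gamma(y_n)}{\epsilon_n}$), and your $v_-=-w$ coincides with the paper's $v_-$ obtained in (2.31). The only difference is that you spell out the local parametrisation of the arc $\Gamma$ near its endpoint $y_*$ to justify that the relevant limits lie in $\mathrm{Tan}^+_{y_*}\Gamma$, whereas the paper simply asserts this as a consequence of $\Pi_\Gamma(y_n),\,u(x_n+r_n\xi)\in\Gamma$ converging to $y_*\in\partial\Gamma$.
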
\begin{proof}[\bf Proof] If \textit{III} in (2.28) holds, then up to a subsequence \begin{eqnarray} \dfrac{y_* - \Pi_{\Gamma}\left(y_n\right)}{\epsilon_n} \longrightarrow v_-, \h{20pt}\text{for some $v_- \in  \mathrm{Tan}_{y_*}^-\h{1pt}\Gamma$.}
\end{eqnarray}Here we have used $\Pi_{\Gamma}\left(y_n\right) \in \Gamma$ and $\Pi_{\Gamma}\left(y_n\right) \rightarrow y_*$ as $n \rightarrow \infty$. By (2.26), we can further decompose $u_n$ into \begin{eqnarray}u_n = \dfrac{\Pi_{\Gamma}\left(y_n\right) - y_n}{\epsilon_n} + \dfrac{y_* - \Pi_{\Gamma}\left(y_n\right)}{\epsilon_n} + \dfrac{u\left(x_n + r_n\h{1pt}\xi\right) - y_*}{\epsilon_n}.
\end{eqnarray}In light of (2.23), (2.25), (2.31), as $n \rightarrow \infty$, the limit of the last term in (2.32) exists for $\mathscr{H}^1$-almost all $\xi \in T_1$. Hence for $\mathscr{H}^1$-almost all $\xi \in T_1$, $u\left(x_n + r_n\h{0.5pt}\xi\right) \in \Gamma$ and converges to $y_*$ as $n \rightarrow \infty$. This infers $$ \lim_{n \rightarrow \infty} \dfrac{u\left(x_n + r_n\h{1pt}\xi\right) - y_*}{\epsilon_n} \h{1pt}\in\h{1pt} \mathrm{Tan}_{y_*}^+ \Gamma, \h{20pt}\text{for $\mathscr{H}^1$-almost all $\xi \in T_1$.}$$ Using this result, (2.23), (2.25) and (2.31), we can take $n \rightarrow \infty$ in (2.32) to get (2.30).   \end{proof}

Motivated by Lemmas 2.6-2.8, we introduce the following two configurations spaces: \begin{eqnarray} \mathscr{F}_{A, \h{1pt}\sigma} := \left\{ u \in H^1\big( D_{\sigma}^+ \big) \h{2pt}\Bigg|\h{2pt}  \begin{array}{lcl}
u \in Y_* + \mathrm{Tan}_{y_*} \h{1pt}\mathbb{S}^2 \h{5pt}&& \text{on \h{4pt}$D_{\sigma}^+$}\vspace{0.5pc}\\
 u \in Y_* + \mathrm{Tan}_{y_*} \h{1pt}\Gamma \h{5pt}&&\text{on\h{8pt}$ T_{\sigma}$} \end{array}\right\}\end{eqnarray} and \begin{eqnarray} \mathscr{F}_{B, \h{1pt}\sigma} := \left\{ u \in H^1\big( D_{\sigma}^+ \big) \h{2pt}\Bigg|\h{2pt}  \begin{array}{lcl}
u \in Y_*  + \mathrm{Tan}_{y_*} \h{1pt}\mathbb{S}^2 \h{5pt}&& \text{on \h{4pt}$D_{\sigma}^+$}\vspace{0.5pc}\\
 u \in Y_* + v_- + \mathrm{Tan}^+_{y_*} \h{1pt}\Gamma \h{5pt}&&\text{on\h{8pt}$ T_{\sigma}$} \end{array}\right\}.\end{eqnarray} Here $\sigma$ is an arbitrary number in $\left(\h{0.5pt}0, 1 \h{0.5pt}\right]$. $T_{\sigma}$ denotes the set  $\Big\{ \left( \h{1pt}\xi_1, 0 \h{1pt}\right) : \xi_1 \in \left( - \sigma, \sigma \right) \h{1pt}\Big\}.$ We claim that if \textit{I} or \textit{II} (\textit{III} resp.) in (2.28) holds, then $u_{\infty}$ is an energy minimizing map in $\mathscr{F}_{A, \h{1pt}1}$ ($\mathscr{F}_{B, \h{1pt}1}$ resp.). See the definition of energy minimizing map in Definition 2.10. This claim will be shown in Lemma 2.11. In the following we consider the uniform convergence of $u_n$ on $S_{\sigma}^+$, where for any $\sigma \in \left( 0, 1 \right)$, $S_{\sigma}^+ := \Big\{ \h{1pt}\xi = \left(\h{1pt}\xi_1, \xi_2 \h{1pt}\right) : |\h{1pt}\xi\h{1pt}| = \sigma \h{4pt}\text{and} \h{4pt}\xi_2 \geq 0 \h{1pt}\Big\}.$
\begin{lemma}There exists an increasing positive sequence $\Big\{ \sigma_k \Big\}$ which takes limit $1$ as $k \rightarrow \infty$ and a subsequence of $\Big\{ u_n \Big\}$, still denoted by $\Big\{ u_n \Big\}$,  so that the followings hold:\begin{enumerate}[(1).]
 \item For any $k$, there exists a positive constant $b_k$ independent of $n$ so that \begin{eqnarray*} \sup_{n} \int_{S_{\sigma_k}^+} \big| \h{1pt}D_{\xi} u_n \h{1pt}\big|^2 + \int_{S_{\sigma_k}^+} \big|\h{1pt}D_{\xi} u_{\infty} \h{1pt}\big|^2  \h{2pt}\leq\h{2pt} b_k;
\end{eqnarray*}
\item For any $n$ and $k$, $u_n$ is continuous on $S_{\sigma_k}^+$. For any $k$, $u_n $ converges to $u_{\infty}$ uniformly on $S_{\sigma_k}^+$ as $n \rightarrow \infty$;
\item For any $n$ and $ k$, $y_n + \epsilon_n \h{0.5pt} u_n $ takes value in $ \Gamma$ when it is evaluated at $\left(\h{1pt} \pm \h{1.5pt} \sigma_k, 0 \h{1pt}\right)$;
\item If I or II in (2.28) holds, then for any $k$, $u_{\infty}\left(\h{1pt} \pm \h{1.5pt} \sigma_k, 0 \h{1pt}\right) \in Y_* + \mathrm{Tan}_{y_*} \h{1pt}\Gamma$;
\item If III in (2.28) holds, then for any $k$, $u_{\infty} \left(\h{1pt} \pm \h{1.5pt} \sigma_k, 0\h{1pt}\right) \in Y_* + v_- + \mathrm{Tan}_{y_*}^+ \h{1pt}\Gamma$.
\end{enumerate}
\end{lemma}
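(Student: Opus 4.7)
\noindent The plan is a standard Luckhaus-style generic-slicing argument: construct $\{\sigma_k\}$ by choosing radii from a set of full measure, extract a subsequence via Fatou plus diagonalisation to force (1), invoke the one-dimensional Sobolev embedding $H^1 \hookrightarrow C^{1/2}$ on the arcs $S_{\sigma_k}^+$ together with Arzel\`a--Ascoli and a compact-trace identification for (2), arrange (3) by avoiding a countable union of null sets, and finally read off (4)--(5) as pointwise specialisations of Lemmas 2.7--2.8.

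\vspace{0.3pc}

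\noindent In detail, since $\{u_n\}$ is bounded in $H^1(D_1^+)$ (Poincar\'e together with the definition of $\epsilon_n$ in (2.18)), Fubini in polar coordinates gives $\int_0^1 f_n(\sigma)\,d\sigma \le C$ uniformly in $n$ for $f_n(\sigma) := \int_{S_\sigma^+}(|u_n|^2 + |D_\xi u_n|^2)\,d\mathcal{H}^1$, and Fatou's lemma then forces $\liminf_n f_n(\sigma) < \infty$ for a.e.\ $\sigma \in (0,1)$. The analogous a.e.\ finiteness holds for $\int_{S_\sigma^+}|D_\xi u_\infty|^2$. For each fixed $n$, the rescaled obstacle constraint $y_n + \epsilon_n u_n \in \Gamma$ holds $\mathcal{H}^1$-a.e.\ on $T_1$; removing a further null set ensures both that $(\pm \sigma, 0)$ is a Lebesgue point of this trace (with $y_n + \epsilon_n u_n(\pm\sigma, 0) \in \Gamma$) and that $u_n|_{S_\sigma^+} \in H^1(S_\sigma^+)$. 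Collecting the countably many exceptional sets into a single null set $N \subset (0,1)$, I pick $\sigma_k \nearrow 1$ in $(0,1)\setminus N$.

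\vspace{0.3pc}

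\noindent For each $k$, I pass to a subsequence of $\{u_n\}$ along which $f_n(\sigma_k) \le b_k$ uniformly and then diagonalise over $k$ to obtain a single subsequence satisfying (1); item (3) is immediate from the choice of the $\sigma_k$. For (2), the uniform $H^1(S_{\sigma_k}^+)$ bound yields a uniform $C^{1/2}$ bound by the one-dimensional Sobolev embedding, hence equicontinuity of $\{u_n|_{S_{\sigma_k}^+}\}$; Arzel\`a--Ascoli produces a uniformly convergent further subsequence, and its limit must coincide with $u_\infty|_{S_{\sigma_k}^+}$ because weak $H^1(D_1^+)$ convergence combined with the compact trace embedding on a thin annular neighbourhood of $S_{\sigma_k}^+$ forces $L^2(S_{\sigma_k}^+)$-convergence of traces. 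A final diagonal extraction synchronises these Arzel\`a--Ascoli subsequences across all $k$ without destroying (1) or (3).

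\vspace{0.3pc}

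\noindent For (4)--(5), combining (3) with the pointwise convergence $u_n(\pm\sigma_k, 0) \to u_\infty(\pm\sigma_k, 0)$ from (2), and using $\epsilon_n \to 0$ (Lemma 2.3), $\Pi_\Gamma(y_n) \to y_*$ (Lemma 2.6), and $Y_n \to Y_*$ from (2.25), I apply the decompositions (2.26) and (2.32) pointwise at $\xi = (\pm\sigma_k, 0)$. Since both $u(x_n + r_n(\pm\sigma_k,0))$ and $\Pi_\Gamma(y_n)$ lie in $\Gamma$ and converge to $y_*$, a mean-value computation along a local arclength parametrization of $\Gamma$ near $y_*$ places the limit of $[u(x_n+r_n(\pm\sigma_k,0)) - \Pi_\Gamma(y_n)]/\epsilon_n$ in $\mathrm{Tan}_{y_*}\Gamma$ in cases I--II, and the analogous argument with $z_n \in \Gamma$ approaching $y_* \in \partial\Gamma$ from inside places the remaining piece in $\mathrm{Tan}^+_{y_*}\Gamma$ in case III, yielding (4) and (5) respectively. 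The main technical subtlety is the identification of the Arzel\`a--Ascoli limit on each $S_{\sigma_k}^+$ with the restriction of $u_\infty$: one has a uniform limit only on the arc but only weak $H^1$ convergence in the bulk, so the compact-trace lemma on a thin annular neighbourhood of $S_{\sigma_k}^+$ is what bridges the two; everything else is a routine packing of countably many null-set constraints into a single sequence $\sigma_k$.
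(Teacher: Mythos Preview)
Your proposal is correct and follows essentially the same Luckhaus-type slicing argument as the paper: Fatou plus Fubini to locate good radii, diagonal extraction for (1), Arzel\`a--Ascoli on arcs for (2), and avoidance of null sets for (3)--(5). Two minor deviations are worth noting. First, to identify the Arzel\`a--Ascoli limit on $S_{\sigma_k}^+$ with $u_\infty|_{S_{\sigma_k}^+}$, the paper simply applies Fubini to the strong $L^2(D_1^+)$ convergence in (2.22) to obtain $\int_{S_\sigma^+}|u_n-u_\infty|^2\to 0$ for a.e.\ $\sigma$ (their (2.35)), which is more direct than your compact-trace-on-an-annulus argument, though both are valid. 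Second, for (4)--(5) the paper does not re-run the decompositions (2.26), (2.32) pointwise as you do; it simply invokes Lemmas 2.7--2.8, which already give the desired inclusion for $\mathcal{H}^1$-a.e.\ point of $T_1$, and then absorbs that exceptional set into the null set avoided when choosing the $\sigma_k$. Your route works but duplicates effort already packaged in those lemmas.
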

\begin{proof}[\bf Proof] By (2) in (2.22), as $n \rightarrow \infty$, it holds $ \displaystyle \int_{S_{\sigma}^+} \big|\h{1pt}u_n - u_{\infty} \h{1pt}\big|^2 \longrightarrow 0$, for $\mathscr{H}^1$-almost all $\sigma \in (0, 1)$. Utilizing Fatou's lemma, we have \begin{eqnarray*} \int_0^1  \liminf_{n \rightarrow \infty}  \int_{S_{\rho}^+} \big|\h{1pt}D_{\xi} u_n \h{1pt}\big|^2  \h{2pt}\leq\h{2pt} \liminf_{n \rightarrow \infty} \int_{D_1^+} \big|\h{1pt}D_{\xi} u_n \h{1pt}\big|^2\h{2pt}<\h{2pt}\infty.
\end{eqnarray*}Therefore  it turns out $\displaystyle \liminf_{n \rightarrow \infty} \int_{S_{\sigma}^+} \big|\h{1pt}D_{\xi} u_n \h{1pt}\big|^2 < \infty$,  for $\mathscr{H}^1$-almost all $\sigma \in (0, 1)$. In light that $u_{\infty} \in H^1(D^+_1)$, we also have $\displaystyle \int_{S_{\sigma}^+} \big|\h{1pt}D_{\xi} u_{\infty} \h{1pt}\big|^2 < \infty$,  for $\mathscr{H}^1$-almost all $\sigma \in (0, 1)$.  By trace theorem, $y_n + \epsilon_n \h{0.5pt}u_n \in \Gamma $ for $\mathscr{H}^1$-almost all $\xi \in T_1$. In light of all the arguments above and Lemmas 2.7-2.8, we can find an increasing positive sequence, denoted by  $\big\{\h{0.5pt}\sigma_k \h{0.5pt}\big\}$, which takes limit $1$ as $k \rightarrow \infty$ and a subsequence of $\big\{ \h{0.5pt}u_n \h{0.5pt}\big\}$, still denoted by $\big\{ u_n \big\}$, so that (1) and (3)-(5) in Lemma 2.9 hold. Moreover  it satisfies \begin{eqnarray} \lim_{n \rightarrow \infty}\int_{S_{\sigma_k}^+} \big|\h{1pt}u_n - u_{\infty} \h{1pt}\big|^2 = 0, \h{20pt}\text{for any $k$.}
\end{eqnarray}

In the remaining we consider (2) in Lemma 2.9. Fix $n$ and $k$. By (1) in Lemma 2.9, $u_n$ is absolute continuous on $S_{\sigma_k}^+$. Meanwhile there holds, for all $\alpha_1, \alpha_2 \in \big[\h{0.5pt}0, \pi\h{0.5pt}\big]$, that \begin{eqnarray*} \Big|\h{1pt} u_{n}\big(\sigma_k\cos \alpha_2, \h{1pt} \sigma_k \sin \alpha_2\big) - u_{n}\big(\sigma_k\cos \alpha_1, \h{1pt} \sigma_k \sin \alpha_1\big) \h{1pt}\Big|^2 \h{2pt}\leq\h{2pt}\sigma_k\h{1pt} | \h{1pt}\alpha_2 - \alpha_1 \h{1pt}| \h{2pt} \int_{S_{\sigma_k}^+} \big|\h{1pt}D_{\xi} u_n\h{1pt}\big|^2 \h{2pt}\leq\h{2pt} b_k \h{1pt}\sigma_k \h{1pt} \big| \h{1pt}\alpha_2 - \alpha_1 \h{1pt}\big|.
\end{eqnarray*}This infers the equicontinuity of the family $\left\{ u_{n}\h{0.5pt}\big|\h{0.5pt}_{S_{\sigma_k}^+} : n \in \mathbb{N}\h{1pt}\right\}$. Using (2.35), we can find a $\alpha_0 \in \big(0, \pi\big)$ so that $u_{n}$ converges to $u_{\infty}$ at $\big(\sigma_k \cos \alpha_0, \h{1pt}\sigma_k \sin \alpha_0\big)$. Then by the last estimate, the $L^{\infty}$-norm of $u_{n}$ on $S_{\sigma_k}^+$ is uniformly bounded with respect to $n$.  In light of Arzel\`{a}-Ascoli theorem, there exists a subsequence of $\big\{u_n\big\}$ so that the subsequence uniformly converges to $u_{\infty}$ on $S_{\sigma_k}^+$. Applying inductive arguments and diagonal process, we can extract a subsequence from $\big\{u_n\big\}$ so that (2) in Lemma 2.9 holds. The proof finishes.
\end{proof}

 To proceed we need to introduce the concept of energy minimizing map in $\mathscr{F}_{A, \h{0.5pt}1}$ and $\mathscr{F}_{B, \h{0.5pt}1}$. \begin{definition}Recall the configuration spaces $\mathscr{F}_{A, \h{0.5pt}\sigma}$ and $\mathscr{F}_{B, \h{0.5pt}\sigma}$ defined in (2.33)-(2.34). We call $w$ an energy minimizing map in $\mathscr{F}_{A, \h{0.5pt}1}$ ($\mathscr{F}_{B, \h{0.5pt}1}$ resp.) if it holds \begin{eqnarray*} \int_{D_r^+} | \h{1pt}D_{\xi}\h{0.5pt} w \h{1pt}|^2 \h{2pt}\leq\h{2pt}\int_{D_r^+} |\h{1pt}D_{\xi} \h{0.5pt} v\h{1pt}|^2, \h{20pt}\text{for any $r \in \left(\h{0.5pt}0, 1\h{0.5pt}\right)$ and $v \in \mathscr{F}_{A, \h{0.5pt}r} $ ($\mathscr{F}_{B, \h{0.5pt}r}$ resp.) with $v = w$ on $S_r^+$.}
 \end{eqnarray*}
 \end{definition}Now we characterize the limiting function $u_{\infty}$ as follows:
 \begin{lemma}If I or II in (2.28) holds, then $u_{\infty}$ is an energy minimizing map in $\mathscr{F}_{A, \h{0.5pt}1}$. If III in (2.28) holds, then $u_{\infty}$ is an energy minimizing map in $\mathscr{F}_{B, \h{0.5pt}1}$. In all cases $u_n \longrightarrow u_{\infty}$ strongly in $H^1\big(D_r^+\big)$ for any $r \in (0, 1)$.
\end{lemma}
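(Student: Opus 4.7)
The plan is a Luckhaus-type comparison argument. Fix $r \in (0,1)$ and a competitor $v \in \mathscr{F}_{A,\,r}$ (resp.\ $\mathscr{F}_{B,\,r}$) with $v = u_\infty$ on $S_r^+$. Choose $\sigma_k$ from Lemma~2.9 with $r < \sigma_k < 1$ and extend $v$ to $D_{\sigma_k}^+$ by setting $v = u_\infty$ on $D_{\sigma_k}^+ \setminus D_r^+$. In a thin half-annulus $D_{\sigma_k}^+ \setminus D_{(1-\delta)\sigma_k}^+$ I would interpolate between $u_n|_{S_{\sigma_k}^+}$ and $v|_{S_{(1-\delta)\sigma_k}^+}$ through a Luckhaus-type construction. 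The uniform convergence $u_n \to u_\infty$ on $S_{\sigma_k}^+$ from Lemma~2.9(2), together with the $H^1(S_\sigma^+)$ bounds from Lemma~2.9(1), force the Dirichlet energy of the resulting interpolant $w_n$ on the annulus to tend to $0$ as $n \to \infty$ and $\delta \to 0$.

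Next I would rescale back and project onto the correct target. I would set
\begin{equation*}
\tilde u_n(x) \; := \; \pi_{\mathbb{S}^2}\!\big(y_n + \epsilon_n \, w_n((x-x_n)/r_n)\big)
\quad \text{in } D_{r_n \sigma_k}^+(x_n),
\end{equation*}
extend by $u$ outside, and post-compose with the nearest-point retraction onto $\Gamma$ on the flat boundary $T$. The nearest-point projection is smooth near $y_*$ in cases I and II, while for case III the half-line geometry $Y_* + v_- + \mathrm{Tan}^+_{y_*}\Gamma$ guarantees that the trace of $\tilde u_n$ lands in $\Gamma$ for $n$ large. Both projections introduce only $O(\epsilon_n^2)$ corrections to the Dirichlet energy. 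Since $\rho \geq \rho_*/2$ on the disks involved by (2.20), the singular $(4u_1^2 + u_3^2)/\rho^2$ coefficient contributes only $O(r_n^2) = o(\epsilon_n^2)$ after division by $\epsilon_n^2$ (using Lemma~2.3(2)), and the potential term $\sqrt{2}\mu(1-3P)$ is likewise negligible on this scale.

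Then Signorini minimality of $u$ yields $E^+_{x_n,\, r_n\sigma_k}[u] \leq E^+_{x_n,\, r_n\sigma_k}[\tilde u_n]$; after subtracting the matching bulk contributions outside the annulus, dividing by $\epsilon_n^2\,\rho_*$, passing to the limit $n \to \infty$ with weak $H^1$ lower semicontinuity, and finally sending $\sigma_k \to r$ and $\delta \to 0$, I would obtain
\begin{equation*}
\int_{D_r^+} \big|\,D_\xi u_\infty\,\big|^2 \; \leq \; \int_{D_r^+} \big|\,D_\xi v\,\big|^2,
\end{equation*}
which is the claimed minimality. Choosing $v = u_\infty$ in the same comparison forces $\limsup_n \int_{D_r^+} |D_\xi u_n|^2 \leq \int_{D_r^+} |D_\xi u_\infty|^2$, so combined with weak convergence this upgrades to strong $H^1$-convergence on $D_r^+$. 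The main obstacle will be case III: $y_*$ lies on $\partial\Gamma$, $\Pi_\Gamma$ is not differentiable there, and $\Pi_\Gamma(y_n) - y_*$ is exactly of order $\epsilon_n$, i.e.\ the same scale as the rescaled perturbation. Handling this requires replacing the smooth nearest-point projection by a Lipschitz retraction adapted to the half-plane geometry encoded by $v_- \in \mathrm{Tan}^-_{y_*}\Gamma$, and verifying that this retraction still preserves the $O(\epsilon_n^2)$ energy-error estimate and sends traces into $\Gamma$.
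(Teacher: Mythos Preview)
Your overall Luckhaus-type strategy---interpolate in a thin half-annulus near $S_{\sigma_k}^+$, rescale back, compare energies using the Signorini minimality of $u$, and let $\sigma_k\uparrow 1$---is exactly the route the paper takes. Two steps in your outline, however, do not work as stated, and the paper handles them by a more explicit mechanism than a post-hoc retraction.

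First, your treatment of case~II is wrong: there $y_*\in\partial\Gamma$ as well, so the nearest-point retraction onto $\Gamma$ is \emph{not} smooth near $y_*$, exactly as in case~III. What distinguishes case~II is not smoothness of $\Pi_\Gamma$ but the scale separation $|\Pi_\Gamma(y_n)-y_*|/\epsilon_n\to\infty$: the base point $\Pi_\Gamma(y_n)$ sits in the interior of $\Gamma$ at distance $\gg\epsilon_n$ from the corner. The paper exploits this directly in (2.39)--(2.40): writing $\mathscr{V}_n=\gamma_n+\epsilon_n\bigl[\tfrac{|\Pi_\Gamma(y_n)-\gamma_n|}{\epsilon_n}\,v^+ + R\,\tfrac{v-Y_*}{|v-Y_*|\vee R}\bigr]$, the diverging coefficient in front of $v^+\in\mathrm{Tan}^+_{y_*}\Gamma$ dominates the bounded second summand, forcing the bracket into $\mathrm{Tan}^+_{y_*}\Gamma$ and hence $v_n=\mathscr{V}_n/|\mathscr{V}_n|\in\Gamma$ after normalization. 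No retraction is used.

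Second, ``post-compose with the nearest-point retraction onto $\Gamma$ on the flat boundary $T$'' is not a legitimate operation on an $H^1$ map: you cannot modify the trace alone, and applying a retraction globally would destroy the matching condition $\tilde u_n=u$ on $S_{\sigma_k r_n}^+(x_n)$. The paper avoids any retraction by building the comparison map so that its trace on $T_{\sigma_k}$ lies in $\Gamma$ \emph{by construction}: it sets $\mathscr{V}_n=\Gamma_n+R\epsilon_n\,\tfrac{v-v_*-Y_*}{|v-v_*-Y_*|\vee R}$ with $\Gamma_n=\Pi_\Gamma(y_n)$ (cases I, II) or $\Gamma_n=y_*$ (case III), normalizes, and verifies $v_n\in\Gamma$ on $T_{\sigma_k}$ directly ((2.41)); similarly for the transition layer using $v_n^\infty$ ((2.43), (2.46)). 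The truncation $|\cdot|\vee R$ is essential here: without it $v$ need not be bounded and your projection $\pi_{\mathbb{S}^2}(y_n+\epsilon_n w_n)$ is not even well defined.
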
\begin{proof}[\bf Proof]  We divide the proof into four steps. \vspace{0.5pc} \\
\textbf{Step 1. Comparison map.}  Let $\big\{\h{1pt}\sigma_k\h{1pt}\big\}$ and $\big\{\h{1pt}u_n\h{1pt}\big\}$ be the sequences in  Lemma 2.9. Fixing a $k$, we pick up an arbitrary $v \h{1pt}\in\h{1pt} H^1\big(\h{0.5pt} D_{\sigma_k}^+; Y_* + \mathrm{Tan}_{y_*}\h{1pt}\mathbb{S}^2 \h{1pt}\big)$ with $v = u_{\infty}$ on $S_{\sigma_k}^+$. Moreover, in the sense of trace we let  \begin{eqnarray} v \h{2pt}\in\h{2pt}  \left\{ \begin{array}{lcl} Y_* + \mathrm{Tan}_{y_*} \Gamma \h{30pt}&&\text{on $T_{\sigma_k}$}, \h{20pt} \text{if \textit{I} or \textit{II} in (2.28) holds;}\vspace{0.5pc}\\
 Y_* + v_-  + \mathrm{Tan}_{y_*}^+\h{1pt} \Gamma &&\text{on $T_{\sigma_k}$,} \h{21pt} \text{if \textit{III} in (2.28) holds.} \end{array}\right. \end{eqnarray} Here $v_-$ is given in Lemma 2.8. We cannot compare directly the energies between $v$ and $u_n$ since they have different ranges and boundary values.

Fixing an arbitrary $R > 0$, we define for any $n$ the map \begin{eqnarray}  \mathscr{V}_n := \Gamma_n + R\h{1pt}\epsilon_n\h{1pt}\dfrac{  v - v_* - Y_* }{|\h{1pt} v - v_* - Y_* \h{1pt} |\vee R} \h{5pt}\text{on $D_{\sigma_k}^+$,} \h{10pt}\text{where $\Gamma_n := \left\{ \begin{array}{lcl} \Pi_{\Gamma}\left(y_n\right), &&\text{for  \textit{I} or \textit{II} in (2.28)};  \vspace{0.5pc}\\
 y_*, &&\text{for \textit{III} in (2.28).} \end{array}\right. $}
\end{eqnarray}In (2.37), $a \vee b := \mathrm{max}\big\{a, b\big\}$ for two real numbers $a$, $b$. The vector $v_*$ is $0$ if \textit{I} or \textit{II} in (2.28) holds. It equals to $v_-$  if \textit{III} in (2.28) holds. For large $n$, the length of $\mathscr{V}_n$ is close to $1$. Then we define \begin{eqnarray}v_n := \dfrac{\mathscr{V}_n}{|\h{0.5pt}\mathscr{V}_n\h{0.5pt}|} \h{20pt}\text{on $D_{\sigma_k}^+$}.\end{eqnarray}Recall that  the sequence $\Big\{\h{1pt}\Gamma_n \h{1pt}\Big\} \subset \Gamma$ and converges to $ y_*$, as $n \rightarrow \infty$. By taking $n$ sufficiently large and assuming \textit{I} in (2.28),  it turns out $ v_n \h{2pt}\in \h{2pt}\Gamma$, for all points on $T_{\sigma_k}$ at which $v  \in Y_* + \mathrm{Tan}_{y_*} \Gamma$. Similarly if $n$ is large and \textit{III}  in (2.28) holds, we also have $v_n \h{2pt}\in \h{2pt}\Gamma$, for all points on $T_{\sigma_k}$ at which $v \in Y_* + v_- + \mathrm{Tan}_{y_*}^+ \Gamma$.  As for \textit{II} in (2.28), we  decompose $\mathscr{V}_n$ into \begin{eqnarray*} \mathscr{V}_n = \gamma_n + \Big[\h{1pt} \Pi_{\Gamma}\left(y_n\right) - \gamma_n \h{1pt}\Big] + R\h{1pt}\epsilon_n\h{1pt}\dfrac{v - Y_*}{\big|\h{1pt}v - Y_*\h{1pt}\big| \vee R}, \h{20pt}\text{where  $\gamma_n = y_* \h{1pt}\Big<\h{1pt}y_*, \h{1pt}\Pi_{\Gamma}\left(y_n\right)\h{0.5pt}\Big> $.}
\end{eqnarray*}Denoting by $v^+$ the unit vector in $\mathrm{Tan}_{y_*}^+\Gamma$, we rewrite the above equality by \begin{eqnarray}\mathscr{V}_n = \gamma_n + \epsilon_n \left[\h{1pt} \dfrac{\big|\h{1pt}\Pi_{\Gamma}\left(y_n\right) - \gamma_n\h{1pt}\big|}{\epsilon_n} \h{1.5pt}v^+ + R\h{1pt}\dfrac{v - Y_*}{\big|\h{1pt}v - Y_*\h{1pt}\big| \vee R} \right].
\end{eqnarray}When $n$ is large enough, $\Pi_{\Gamma}\left(y_n\right)$ and $\gamma_n$ are close to $y_*$. Hence it holds $ 2 \h{1pt} \big|\h{1pt}\Pi_{\Gamma}\left(y_n\right) - \gamma_n\h{1pt}\big|  > \big|\h{1pt} \Pi_{\Gamma}(y_n) - y_* \h{1pt}\big|$, for large $n$. By this lower bound and the assumption in \textit{II} of (2.28), we get $$\dfrac{\big|\h{1pt}\Pi_{\Gamma}\left(y_n\right) - \gamma_n\h{1pt}\big|}{\epsilon_n} \longrightarrow \infty, \h{20pt}\text{ as $n \rightarrow \infty$.}$$ In light that $v^+ \in \mathrm{Tan}_{y_*}^+\Gamma$, then for all points on $T_{\sigma_k}$ with $v   \in Y_* +  \mathrm{Tan}_{y_*} \Gamma$, it turns out \begin{eqnarray}\dfrac{\big|\h{1pt}\Pi_{\Gamma}\left(y_n\right) - \gamma_n\h{1pt}\big|}{\epsilon_n} \h{1.5pt}v^+ + R\h{1pt}\dfrac{v - Y_*}{\big|\h{1pt}v - Y_*\h{1pt}\big| \vee R} \h{2pt}\in\h{2pt} \mathrm{Tan}_{y_*}^+ \Gamma, \h{20pt}\text{provided $n$ is large.}
 \end{eqnarray}Utilizing (2.39)-(2.40) yields $v_n \in \Gamma$, for large $n$ and all points on $T_{\sigma_k}$ at where  $v  \in Y_* +  \mathrm{Tan}_{y_*} \Gamma$. Here we also have used the limit  $\big<\h{1pt}\Pi_{\Gamma}\left(y_n\right), \h{0.5pt}y_*\h{1pt}\big> \longrightarrow 1$, as $n \rightarrow \infty$. Since (2.36) holds in the sense of trace for all three cases in (2.28), the above arguments induce \begin{eqnarray} v_n \h{1pt}\in\h{1pt}\Gamma, \h{20pt}\text{for large $n$ and $\mathscr{H}^1$-almost all points on $T_{\sigma_k}$.}
 \end{eqnarray}

 Similarly we define\begin{eqnarray}  v^{\infty}_n = \dfrac{\mathscr{V}_n^{\infty}}{|\h{0.5pt}\mathscr{V}_n^{\infty} \h{0.5pt}|} \h{10pt}\text{on $D_1^+$} \h{20pt}\text{with} \h{3pt} \mathscr{V}^{\infty}_n := \Gamma_n + R\h{1pt}\epsilon_n \h{1pt}\dfrac{ u_{\infty} - v_* - Y_* }{|\h{1pt} u_{\infty} - v_* - Y_* \h{1pt} | \vee R}.
\end{eqnarray}In light of (4)-(5) in Lemma 2.9, same arguments for the $v_n$ above infer \begin{eqnarray} v_n^{\infty}\big(\pm \sigma_k, 0 \h{1.5pt}\big) \h{1pt}\in\h{1pt} \Gamma, \h{20pt}\text{for large $n$.} \end{eqnarray}
Utilizing $v_n$ and $v_n^{\infty}$, for arbitrary $s \in \left(0, 1 \right)$, we define \begin{eqnarray} v_n^*\left(\xi\h{0.5pt}\right) = \left\{ \begin{array}{lcl}  v_n \left( \dfrac{\xi}{1 - s} \right), \h{20pt}&&\text{if $\xi \in D_{(1 - s) \h{0.5pt}\sigma_k}^+$;} \vspace{1pc}\\
\dfrac{\sigma_k - | \h{1pt}\xi \h{0.5pt} |}{s\h{1pt}\sigma_k} \h{2pt} v_n^{\infty}\left(\sigma_k \h{1pt}\dfrac{\xi}{|\h{1pt}\xi\h{1pt}|}\right) + \dfrac{|\h{0.5pt} \xi \h{0.5pt}| - (1- s)\h{0.5pt}\sigma_k}{ s \h{0.5pt}\sigma_k} \h{1pt}u\left(x_n + r_n \h{1pt}\sigma_k\h{1pt}\dfrac{\xi}{|\h{1pt}\xi\h{1pt}|}\right), && \text{if  $\xi \in D_{\sigma_k}^+ \sim D_{(1 - s)\h{1pt}\sigma_k}^+$.}
\end{array}\right.
\end{eqnarray}\noindent In light of (2) in Lemma 2.9, (2.25), (1) in (2.19), and the limit $\Pi_{\Gamma}\left(y_n\right) \rightarrow y_*$, the two sequences  $\Big\{ \h{1pt}u\left(x_n + r_n \h{1pt}\cdot \right)\Big\}$ and $\Big\{ \h{1pt}v_n^{\infty}\h{1pt} \Big\}$ uniformly converge to $y_*$ on $S_{\sigma_k}^+$.  Hence we can define \begin{eqnarray} u_n^* := \dfrac{v_n^*}{\left|\h{1pt} v_n^* \h{1pt}\right|} \h{10pt}\text{on $D_{\sigma_k}^+$.}
\end{eqnarray}For large $n$, it holds $u_n^* \in H^1 \left( D_{\sigma_k}^+; \h{1pt}\mathbb{S}^2 \right)$ with $u_n^* = u\left(x_n + r_n\h{1pt}\cdot \right)$ on $S_{\sigma_k}^+$. By (2.43) and  (3) in Lemma 2.9, we can take $n$ large enough and get  \begin{eqnarray} u_n^* \in \Gamma \h{10pt}\text{ on  $T_{\sigma_k} \sim T_{(1 - s) \h{1pt}\sigma_k}$.}\end{eqnarray} Here we also have used the uniform convergence of $\Big\{ \h{1pt}u\left(x_n + r_n \h{1pt}\cdot \right)\Big\}$ and $\Big\{ \h{1pt}v_n^{\infty}\h{1pt} \Big\}$ to $y_*$ on $S_{\sigma_k}^+$. With (2.41), (2.44)-(2.46), it follows $u_n^* \in \Gamma$,  for $\mathscr{H}^1$-almost all points on $T_{\sigma_k}$. \vspace{0.3pc}\\
\textbf{Step 2. Lower bound.} Note that $u$ minimizes the $E_{\mathbb{D}}$-energy within the configuration space: \begin{eqnarray*} \left\{ v \in H^1\big(\h{1pt} D_{\sigma_k\h{0.5pt}r_n}^+\left(x_n\right);\h{1pt}\mathbb{S}^2 \h{1pt}\big) \h{3pt}\Bigg|\h{3pt} \begin{array}{lcl} v = u &&\text{on the closed upper circle contained in $\p\h{0.5pt}D_{\sigma_k\h{0.5pt}r_n}^+\left(x_n\right)$\h{0.5pt};}\vspace{0.3pc}\\
v \h{1pt}\in\h{1pt} \Gamma &&\text{on the flat boundary of $D_{\sigma_k\h{0.5pt}r_n}^+\left(x_n\right)$\h{0.5pt}}
\end{array} \right\}.
\end{eqnarray*}By the minimality of $u$, it turns out \begin{eqnarray} &&\int_{D_{\sigma_k}^+} \left\{\h{1pt} \big| \h{1pt}D_{\xi} \h{1pt} u_n \h{1pt}\big|^2 + \left(\dfrac{r_n}{\epsilon_n}\right)^2F_2\h{0.5pt}\big(\h{0.5pt}x_{n, 1} + r_n \h{1pt} \xi_1, y_n + \epsilon_n\h{0.5pt}u_n \h{1.5pt} \big) \right\} \cdot \big(x_{n, 1} + r_n \h{1pt} \xi_1 \big) \\
\nonumber\\
\nonumber && \h{40pt}\leq \h{2pt}  \int_{D_{\sigma_k}^+} \left\{ \h{2pt}\left| \h{1pt} \h{1pt}D_{\xi} \h{1pt}\left(\dfrac{u_n^{*}}{\epsilon_n}\right) \h{1pt}\right|^2 + \left(\dfrac{r_n}{\epsilon_n}\right)^2 F_2\h{0.5pt}\big(\h{0.5pt}x_{n, 1} + r_n \h{1pt} \xi_1, u_n^{*} \h{1.5pt} \big) \h{1pt} \right\} \cdot \big(x_{n, 1} + r_n \h{1pt} \xi_1 \big).
\end{eqnarray}Here $F_2 = F_2(\rho, u) := \rho^{-2} \left( 4 u_1^2 + u_3^2\right) + \sqrt{2} \mu \left( 1 - 3 P(u)\right)$. Up to a subsequence, as $n \rightarrow \infty$, \begin{eqnarray} x_n \longrightarrow x_0 = \left(\h{1pt}\rho_0, 0 \h{1pt}\right), \h{10pt}\text{ for some $\rho_0 > 0$ }.\end{eqnarray}  By this limit, (2.17), (1) in (2.22) and lower-semi continuity, it follows \begin{eqnarray} \rho_0 \int_{D_{\sigma_k}^+} \big| \h{1pt}D_{\xi} \h{0.5pt} u_{\infty}\h{1pt}\big|^2 \h{1pt}\leq\h{1pt} \liminf_{n \rightarrow \infty} \int_{D_{\sigma_k}^+} \big|\h{1pt}D_{\xi} \h{0.5pt}u_n \h{1pt}\big|^2 \cdot \big(x_{n, 1} + r_n \h{1pt}\xi_1 \big).
\end{eqnarray}In light of  (2) in (2.19), it holds\begin{eqnarray} \left(\dfrac{r_n}{\epsilon_n}\right)^2 \int_{D_1^+} \h{1.5pt} F_2\h{1pt}\big(\h{0.5pt}x_{n, 1} + r_n \xi_1, y_n + \epsilon_n\h{0.5pt}u_n \h{1pt} \big) \h{3pt}\lesssim_{\mu, \h{1pt}\rho_*} \h{3pt}\left(\dfrac{r_n}{\epsilon_n}\right)^2  \longrightarrow 0, \h{10pt}\text{as $n \rightarrow \infty$.}
\end{eqnarray} Applying this limit and (2.49) to the left-hand side of (2.47), we get \begin{eqnarray} \rho_0 \int_{D_{\sigma_k}^+} \big| \h{1pt}D_{\xi} \h{0.5pt} u_{\infty}\h{1pt}\big|^2 \leq \liminf_{n \rightarrow \infty} \int_{D_{\sigma_k}^+} \left\{\h{1pt} \big| \h{1pt}D_{\xi} \h{0.5pt} u_n \h{1pt}\big|^2 + \left( \dfrac{r_n}{\epsilon_n} \right)^2 F_2\h{0.5pt}\big(x_{n, 1} + r_n \h{0.5pt} \xi_1, y_n + \epsilon_n\h{0.5pt}u_n \h{1pt} \big) \right\} \cdot \left(x_{n, 1} + r_n \h{0.5pt}\xi_1 \right).
\end{eqnarray}\\
\textbf{Step 3. Upper bound.} By the definition of $u_n^{*}$ in (2.45), it follows \begin{eqnarray} \int_{D_{\sigma_k}^+} \big|\h{1pt}D_{\xi}\h{0.5pt}u_n^{*} \h{1pt}\big|^2 = \int_{D_{\sigma_k}^+} \big|\h{1pt}D_{\xi} \h{0.5pt}v_n \h{1pt}\big|^2 + \int_{D_{\sigma_k}^+ \sim \h{0.5pt}D_{(1 - s) \h{0.5pt}\sigma_k}^+} \big|\h{1pt}D_{\xi} \h{0.5pt} u_n^{*}\h{1pt} \big|^2.
\end{eqnarray}Firstly we estimate the right-hand side above. In light of  (2.37), on $D_{\sigma_k}^+$, it satisfies\begin{eqnarray*} \big|\h{1pt}\mathscr{V}_n - y_*\h{1pt}\big|\h{1pt}\leq\h{1pt}\big|\h{1pt}\Gamma_n - y_* \h{1pt}\big| + R \h{1pt}\epsilon_n \longrightarrow 0, \h{10pt}\text{as $n \rightarrow \infty$.}
\end{eqnarray*}With the definition of $v_n$ in (2.38) and the last convergence, for $j = 1, 2$, there holds \begin{small} \begin{eqnarray*} \epsilon_n^{-1} \h{1pt}\p_{\h{0.5pt}\xi_j}v_n  &=&  \big| \h{1pt}\mathscr{V}_n \h{1pt}\big|^{-1} \h{1pt}\p_{\h{0.5pt}\xi_j}  \h{1pt} \dfrac{\big(\h{1pt} v - v_* - Y_* \big)\h{1pt}R}{\big|\h{1.5pt}v - v_* - Y_* \h{1.5pt}\big| \vee R } \h{2pt} - \h{2pt} \big| \h{1pt}\mathscr{V}_n \h{1pt}\big|^{-1} \h{1pt} \left< \h{1pt} v_n, \h{1pt} \p_{\h{0.5pt}\xi_j}  \h{1pt}\dfrac{\big( \h{1pt}v - v_* - Y_* \big)\h{1pt}R}{\big|\h{1pt}v - v_* - Y_* \h{1pt}\big| \vee R } \h{1pt}\right> v_n \\
\\
&\longrightarrow & \p_{\h{0.5pt}\xi_j} \h{1pt}  \dfrac{\big( \h{1pt}v - v_* - Y_* \big)\h{1pt}R}{\big|\h{1pt}v - v_* - Y_* \h{1pt}\big| \vee R} \h{2pt}-\h{2pt} \left< \h{1pt}y_*, \h{1pt}  \p_{\h{0.5pt}\xi_j} \h{1pt} \dfrac{\big( \h{1pt}v - v_* - Y_* \big)\h{1pt}R}{\big|\h{1pt}v - v_* - Y_* \h{1pt}\big| \vee R} \right> y_*, \h{30pt} \text{strongly in $L^2\left( D_{\sigma_k}^+\right)$}.
\end{eqnarray*}\end{small}\noindent Since $v - v_* - Y_* \in \mathrm{Tan}_{y_*} \mathbb{S}^2$  for almost all points in $D_{\sigma_k}^+$, the above limit infers \begin{eqnarray}\epsilon_n^{-1} \h{1pt}D_{\h{0.5pt}\xi} \h{1pt}v_n  &\longrightarrow & D_{\h{0.5pt}\xi} \h{1pt}  \dfrac{\big( \h{1pt}v - v_* - Y_* \big)\h{1pt}R}{\big|\h{1pt}v - v_* - Y_* \h{1pt}\big| \vee R}, \h{20pt} \text{strongly in $L^2\left( D_{\sigma_k}^+\right)$}.
\end{eqnarray}

By the polar coordinates $\left(\h{0.5pt}\tau, \Theta\h{0.5pt}\right)$ in the $\xi$-plane, the last integral in (2.52) can be represented as  \begin{eqnarray} \int_{D_{\sigma_k}^+ \sim \h{1pt}D_{(1 - s) \h{0.5pt}\sigma_k}^+} \big|\h{1pt}D_{\xi}\h{0.5pt}u_n^{*}\h{1pt} \big|^2 =  \int_{(1 - s)\h{0.5pt}\sigma_k}^{\sigma_k} \mathrm{d} \tau \int_0^{\pi} \mathrm{d} \h{0.5pt}\Theta \h{1pt}\left\{ \h{1pt} \tau\h{1pt}\big| \h{1pt}\p_{\tau} \h{0.5pt}u_n^{*} \h{1pt}\big|^2 + \dfrac{1}{\tau} \h{1pt}\big|\h{1pt}\p_{\h{0.5pt}\Theta} \h{0.5pt} u_n^{*} \h{1pt}\big|^2 \right\}.
\end{eqnarray}Using (2) in Lemma 2.9 and the definition (2.21), for any $\epsilon > 0$, we can take $n$ large enough and get \begin{eqnarray} \big|\h{1pt}u\left(x_n + r_n \cdot \h{1pt}\right) - y_n - \epsilon_n \h{1pt}u_{\infty}\h{1pt}\big| = \epsilon_n \h{1pt} \left| \h{1pt} u_n - u_{\infty}\h{1pt}\right| \h{2pt}<\h{2pt} \epsilon \h{1pt}\epsilon_n \h{20pt}\text{on $S_{\sigma_k}^+$.}
\end{eqnarray}Still by (2) in Lemma 2.9, $u_{\infty}$ is continuous and uniformly bounded on $S_{\sigma_k}^+$. In light of $\mathscr{V}_n^{\infty}$ defined in (2.42), we can take $R$ large enough so that  \begin{eqnarray} \mathscr{V}_n^{\infty} =  \Gamma_n + \epsilon_n \big( u_{\infty} - v_* - Y_* \big) \h{20pt}\text{on $S_{\sigma_k}^+$.}
\end{eqnarray}For \textit{I} or \textit{II} in (2.28), $\Gamma_n = \Pi_{\Gamma}\left(y_n\right)$ and $v_* = 0$. Applying (2.25) then yields \begin{eqnarray*} \mathscr{V}_n^{\infty} - y_n - \epsilon_n \h{0.5pt}u_{\infty} = \epsilon_n \h{0.5pt}Y_n  - \epsilon_n \h{0.5pt}Y_* = o \left( \epsilon_n \right) \h{10pt}\text{on $S_{\sigma_k}^+$.}
\end{eqnarray*}Here $o\left(\epsilon_n\right)$ denotes a quantity satisfying $o\left(\epsilon_n\right)\big/\h{0.5pt}\epsilon_n \rightarrow 0$, as $n \rightarrow \infty$.
For \textit{III} in (2.28), $\Gamma_n = y_*$ and $v_* = v_-$. By the limits in (2.25) and (2.31), we also have    \begin{eqnarray*} \mathscr{V}_n^{\infty} - y_n - \epsilon_n\h{0.5pt}u_{\infty} =   \epsilon_n \big(\h{1pt} Y_n - Y_* \h{1pt}\big) + \epsilon_n \left[ \dfrac{y_* - \Pi_{\Gamma}\left(y_n\right)}{\epsilon_n} - v_-\right] = o\left(\epsilon_n\right) \h{10pt}\text{on $S_{\sigma_k}^+$.}
\end{eqnarray*}Therefore in all cases, it holds\begin{eqnarray} \mathscr{V}_n^{\infty} = y_n + \epsilon_n \h{0.5pt}u_{\infty} + o\left(\epsilon_n\right) \h{10pt}\text{on $S_{\sigma_k}^+$.}
\end{eqnarray}Still by (2.56), we can calculate \begin{eqnarray*} \big|\h{1pt}\mathscr{V}_n^{\infty}\h{1pt}\big|^2 = 1 + 2 \h{1pt}\epsilon_n \h{0.5pt} \Big< \h{0.5pt}\Gamma_n, \h{1pt} u_{\infty} - v_* - Y_* \h{1pt} \Big> + o\left(\epsilon_n\right) \h{10pt}\text{on $S_{\sigma_k}^+$.}
\end{eqnarray*}Since $u_{\infty} - v_* - Y_* \in \mathrm{Tan}_{y_*}\h{1pt}\mathbb{S}^2$  on $S_{\sigma_k}^+$, then by $\Gamma_n \longrightarrow y_*$ as $n \rightarrow \infty$, the last equality can be reduced to \begin{eqnarray*}  \big|\h{1pt}\mathscr{V}_n^{\infty}\h{1pt}\big|^2 = 1 + 2 \h{1pt}\epsilon_n \h{0.5pt} \Big<\h{1pt} \Gamma_n - y_*, \h{1pt} u_{\infty} - v_* - Y_* \h{1pt}\Big> + o\left(\epsilon_n\right) = 1 + o\left(\epsilon_n\right) \h{10pt}\text{on $S_{\sigma_k}^+$,}
\end{eqnarray*}which furthermore yields \begin{eqnarray} \big|\h{1pt}\mathscr{V}_n^{\infty}\h{1pt}\big| = 1 + o\left(\epsilon_n\right) \h{20pt}\text{on $S_{\sigma_k}^+$.}
\end{eqnarray}By (2.42) and (2.57)-(2.58), we get \begin{eqnarray} v_n^{\infty} = y_n + \epsilon_n\h{0.5pt}u_{\infty} + o \left( \epsilon_n \right) \h{10pt}\text{on $S_{\sigma_k}^+$.}
\end{eqnarray}Hence for any $\epsilon > 0$, the following estimate holds \begin{eqnarray} \big| \h{1pt}v_n^{\infty} - y_n - \epsilon_n \h{0.5pt}u_{\infty} \h{1pt}\big| \h{1pt}\leq\h{1pt} \epsilon \h{0.5pt}\epsilon_n \h{10pt}\text{on $S_{\sigma_k}^+$,} \h{20pt}\text{provided $n$ is large.}\end{eqnarray}In light of this estimate and (2.55), by triangle inequality, we obtain \begin{eqnarray}  \big| \h{1pt}v_n^{\infty} - u(x_n + r_n \cdot \h{1pt}) \h{1.5pt}\big|\h{1pt}\leq\h{1pt} 2 \h{1pt}\epsilon \h{0.5pt}\epsilon_n \h{10pt}\text{on $S_{\sigma_k}^+$,} \h{20pt}\text{provided $n$ is large.}
\end{eqnarray}Utilizing (2.44)-(2.45) and (2.61) above, for any $\epsilon > 0$, we can take $n$ large enough and get \begin{eqnarray*}\big|\h{1pt} \p_{\tau} u_n^{*} \h{1pt}\big| \h{3pt}\lesssim\h{3pt} \dfrac{1}{s \h{0.5pt}\sigma_k} \h{2pt}\big\| \h{2pt} u(x_n + r_n \cdot \h{1pt}) - v_n^{\infty} \h{2pt} \big\|_{\infty; \h{0.5pt}S_{\sigma_k}^+}  \h{3pt}\lesssim\h{3pt} \dfrac{\epsilon \h{0.5pt}\epsilon_n}{s\h{0.5pt}\sigma_k}\h{30pt}\text{on $D_{\sigma_k}^+ \sim \h{1pt}D_{(1 - s)\h{0.5pt}\sigma_k}^+$.}
\end{eqnarray*}Therefore it follows \begin{eqnarray}\int_{(1 - s)\h{0.5pt}\sigma_k}^{\sigma_k} \mathrm{d} \tau \int_0^{\pi} \h{1pt}\big|\h{1.5pt}\p_{\tau} \h{0.5pt}u_n^{*} \h{1pt}\big|^2 \h{1pt}\tau \h{2pt}\mathrm{d} \Theta\h{3pt} \lesssim \h{3pt} \dfrac{\epsilon^2\h{0.5pt}\epsilon_n^2}{s \h{0.5pt}\sigma_k}.
\end{eqnarray}
Using (2.44), (2.21), (2.42) and (2.56), on $D_{\sigma_k}^+ \sim\h{1pt}D_{\left(1 - s \right)\sigma_k}^+$ we can rewrite $v_n^*$ as  \begin{eqnarray*} v_n^*\left(\tau, \Theta\right) = \dfrac{\sigma_k - \tau}{s\h{0.5pt}\sigma_k} \h{1pt} \dfrac{\Gamma_n + \epsilon_n \left(\h{0.5pt} u_{\infty}\big(\h{0.5pt} \sigma_k \h{1pt} e^{i\h{1pt} \Theta} \h{0.5pt}\big) - v_* - Y_* \h{0.5pt}\right)}{\big|\h{2pt}\Gamma_n + \epsilon_n \left(\h{0.5pt} u_{\infty}\big(\h{0.5pt} \sigma_k \h{1pt} e^{i\h{1pt} \Theta} \h{0.5pt} \big) - v_* - Y_* \h{0.5pt} \right) \h{1pt}\big|} + \dfrac{\tau - \left(1 - s \right)\sigma_k}{s \h{0.5pt}\sigma_k} \left(\h{1pt} y_n + \epsilon_n\h{1pt}u_n\big(\h{0.5pt} \sigma_k \h{1pt}e^{i\h{0.5pt} \Theta} \h{0.5pt}\big)\h{1pt}\right).
\end{eqnarray*}
In light of (2.45) and the above representation of $v_n^*$, it then turns out \begin{eqnarray*} \big|\h{1pt}\p_{\h{0.5pt}\Theta} \h{0.5pt}u_n^{*} \h{1pt}\big|\h{2pt}\bigg|_{\tau \h{0.5pt}e^{i\h{0.5pt}\Theta}} \h{2pt}\lesssim\h{2pt}  \sigma_k\h{1pt}\epsilon_n \left[ \h{2pt}\big| \h{1pt}D_{\xi} \h{0.5pt} u_{\infty} \h{1pt}\big|\h{2pt}\bigg|_{ \sigma_k \h{0.5pt}e^{i\h{0.5pt} \Theta}} + \big|\h{1pt}D_{\xi}\h{0.5pt}u_n \h{1pt}\big|\h{2pt}\bigg|_{ \sigma_k \h{0.5pt}e^{i\h{0.5pt}\Theta}} \right] \h{10pt}\text{on $D_{\sigma_k}^+ \sim \h{1pt}D_{(1 - s)\h{0.5pt}\sigma_k}^+$.}
\end{eqnarray*}Therefore by (1) in Lemma 2.9, we have  \begin{eqnarray*}\int_{(1 - s)\h{0.5pt}\sigma_k}^{\sigma_k} \mathrm{d} \tau  \int_0^{\pi} \h{1pt}\big|\h{1pt}\p_{\h{0.5pt}\Theta} \h{0.5pt}u_n^{*} \h{1pt}\big|^2 \h{1pt} \tau^{-1} \h{1pt}\mathrm{d} \Theta\h{1pt} \h{2pt}\lesssim\h{2pt}b_k\h{1.5pt}\epsilon_n^2 \h{1pt}\ln \left[ \dfrac{1}{1 - s} \right].
\end{eqnarray*}
 Applying the above estimate and (2.62) to right-hand side of (2.54) yield \begin{eqnarray} \int_{D_{\sigma_k}^+ \sim \h{1pt}D_{(1 - s) \h{0.5pt}\sigma_k}^+} \big|\h{1pt}D_{\xi} \h{1pt}u_n^{*}\h{1pt} \big|^2 \h{2pt}\lesssim \h{2pt} \dfrac{\epsilon^2\h{0.5pt}\epsilon^2_n}{s\h{0.5pt}\sigma_k} + b_k\h{1pt}\epsilon_n^2 \h{1pt}\ln \left[ \dfrac{1}{1 - s} \right].
\end{eqnarray}

Now we  multiply $\epsilon_n^{-2}$ on both sides of (2.52) and take $n \rightarrow \infty$. By (2.53) and  (2.63), it turns out \begin{eqnarray*} \limsup_{n \rightarrow \infty} \h{2pt} \epsilon_n^{-2} \h{1pt}\int_{D_{\sigma_k}^+} \big| \h{1pt}D_{\xi } \h{1pt}u_n^{*} \h{1pt}\big|^2 \h{2pt}\leq\h{2pt} \dfrac{c}{s \h{0.5pt}\sigma_k} \h{1pt}\epsilon^2 + c\h{1.5pt}b_k \h{1.5pt} \ln \left[ \dfrac{1}{1 - s} \right]  + \int_{D_{\sigma_k}^+} \left| \h{1pt}D_{\xi}  \h{1pt} \dfrac{\big( v - v_* - Y_* \big) \h{1pt}R}{\big|\h{1pt} v - v_* - Y_* \h{1pt}\big| \vee R } \h{1pt}\right|^2.
\end{eqnarray*}Here $c > 0$ is a constant. Moreover by taking $R \rightarrow \infty$, $\epsilon \rightarrow 0$ and $s \rightarrow 0$ successively, the above estimate induces \begin{eqnarray} \limsup_{n \rightarrow \infty} \h{2pt} \epsilon_n^{-2} \h{1pt}\int_{D_{\sigma_k}^+} \big| \h{1pt}D_{\xi } \h{1pt}u_n^{*} \h{1pt}\big|^2 \cdot \big(x_{n, 1} + r_n \h{1pt}\xi_1 \big) \h{2pt}=\h{2pt} \rho_0 \limsup_{n \rightarrow \infty} \h{1pt} \epsilon_n^{-2} \h{1pt}\int_{D_{\sigma_k}^+} \big| \h{1pt}D_{\xi } \h{1pt}u_n^{*} \h{1pt}\big|^2 \h{2pt}\leq\h{2pt} \rho_0 \int_{D_{\sigma_k}^+} \big|\h{1pt}D_{\xi} \h{0.5pt}v \h{1pt}\big|^2.
\end{eqnarray}Similarly as (2.50), it holds \begin{eqnarray*} \left(\dfrac{r_n}{\epsilon_n}\right)^2 \int_{D_1^+} \h{1.5pt} F_2\h{1pt}\big(\h{0.5pt}x_{n, 1} + r_n \xi_1, u_n^* \h{1pt} \big) \h{2pt}\lesssim_{\mu, \h{1pt}\rho_*} \h{2pt}\left(\dfrac{r_n}{\epsilon_n}\right)^2  \longrightarrow 0, \h{10pt}\text{as $n \rightarrow \infty$.}
\end{eqnarray*}Applying this estimate and (2.64) to the right-hand side of (2.47) yield \begin{eqnarray}\limsup_{n \rightarrow \infty} \int_{D_{\sigma_k}^+} \left\{ \h{2pt}\left| \h{1pt}D_{\xi} \h{0.5pt} \left(\dfrac{u_n^{*}}{\epsilon_n}\right) \h{1pt}\right|^2 + \left(\dfrac{r_n}{\epsilon_n}\right)^2\h{2pt} F_2\big(x_{n, 1} + r_n \h{0.5pt}\xi_1, u_n^* \h{1pt} \big) \h{2pt}\right\} \cdot \left(x_{n, 1} + r_n \h{0.5pt} \xi_1 \right) \h{2pt}\leq\h{2pt} \rho_0 \int_{D_{\sigma_k}^+} \big|\h{1pt}D_{\xi} \h{0.5pt}v\h{1pt}\big|^2.
\end{eqnarray}
\textbf{Step 4. Completion of proof.} In light of (2.51) and (2.65), we can take $n \rightarrow \infty$ in (2.47) and obtain \begin{eqnarray*} \int_{D_{\sigma_k}^+}\big|\h{1pt}D_{\xi} \h{1pt}u_{\infty}\h{1pt}\big|^2 \h{2pt}\leq\h{2pt} \int_{D_{\sigma_k}^+}\big|\h{1pt}D_{\xi} \h{1pt}v\h{1pt}\big|^2.  \end{eqnarray*}Since $\sigma_k \uparrow 1$ as $k \rightarrow \infty$, $u_{\infty}$ is an energy minimizing map in $\mathscr{F}_{A, \h{1pt}1}$ if \textit{I} or \textit{II} in (2.28) holds. If \textit{III} in (2.28) holds, then $u_{\infty}$ is an energy minimizing map in $\mathscr{F}_{B, \h{1pt}1}$. Moreover if we take $v$ in (2.65) to be $u_{\infty}$, then by (2.47), (2.50)-(2.51) and (2.65), it holds $D_{\xi} \h{0.5pt}u_n \rightarrow D_{\xi} \h{0.5pt} u_{\infty}$ strongly in $L^2\left(D_{\sigma_k}^+\right)$. Still by the limit $\sigma_k \uparrow 1$ as $k \rightarrow \infty$, the strong $H^1\left(D_r^+\right)$-convergence of $u_n$ follows for any $r \in (0, 1)$.
\end{proof}

With the above compactness result and characterization of $u_{\infty}$, we are ready to prove Lemma 2.2.
\begin{proof}[\bf Proof of Lemma 2.2] We only show the  case when $x \in T$ by continuing the above arguments. With some modifications, the interior case when $x$ is off $T$ can be similarly proved. By the definition of $u_n$ in (2.21), $\epsilon_n$ defined in (2.18) and (2) in (2.19), it can be shown that \begin{eqnarray} \int_{D_1^+}\big|\h{1pt}D_{\xi} \h{0.5pt}u_n\h{1pt}\big|^2 = \dfrac{1}{\epsilon_n^2} \int_{D_{r_n}^+\left(x_n\right)} | \h{1pt}D\h{0.5pt}u\h{1pt}|^2 = 1 - \left(\dfrac{r_n}{\epsilon_n}\right)^2 \longrightarrow 1, \h{10pt}\text{as $n \rightarrow \infty$.}
\end{eqnarray} Moreover utilizing (3) in (2.15), non-negativity of $F_2$ and change of variable, we have \begin{eqnarray}\int_{D_{\theta_1}^+} \big|\h{1pt}D_{\xi} \h{0.5pt}u_n \h{1pt}\big|^2 \cdot \left(x_{n, 1} + r_n\h{0.5pt}\xi_1\right) &+& \left(\dfrac{r_n}{\epsilon_n}\right)^2 \int_{D_{\theta_1}^+} F_2\h{1pt}\big(\h{1pt}x_{n, 1} + r_n\h{0.5pt}\xi_1,  y_n + \epsilon_n\h{0.5pt}u_n \h{1pt}\big)  \cdot \left(x_{n, 1} + r_n\h{0.5pt}\xi_1\right) \nonumber\\
\nonumber\\
\nonumber& > & \dfrac{1}{2} \h{1pt}\int_{D_{1}^+} \big|\h{1pt}D_{\xi} \h{0.5pt}u_n\h{1pt}\big|^2 \cdot \left(x_{n, 1} + r_n\h{0.5pt}\xi_1\right).
\end{eqnarray}In light of the strong $H^1(D_{\theta_1}^+)$-convergence of $u_n$ in Lemma 2.11, (2.50) and (2.66) above, we can take $n \rightarrow \infty$ in the last estimate and obtain \begin{eqnarray} \int_{D_{\theta_1}^+} \big|\h{1pt}D_{\xi}\h{0.5pt}u_{\infty} \h{1pt}\big|^2 \h{2pt}\geq\h{2pt} \dfrac{1}{2}.\end{eqnarray}

For any $v \in H^1\left(D_1^+;\h{1pt}Y_* + \mathrm{Tan}_{y_*}\h{1pt}\mathbb{S}^2 \right)$, there are two $H^1$ functions on $D_1^+$, denoted by $f^v_1$ and $f^v_2$, so that \begin{eqnarray*} v = Y_* + f^v_1 \h{1.5pt}v^+ + f^v_2\h{1.5pt} v^+ \times y_* \h{20pt}\text{ on $D_1^+$.} \end{eqnarray*} Here $v^+$ is the unit vector in $\mathrm{Tan}^+_{y_*} \h{1pt}\Gamma$. Therefore by Lemma 2.11,  if \textit{I} or \textit{II} in (2.28) holds, then it satisfies \begin{eqnarray} \int_{D_{1/2}^+} \left|\h{1pt}D_{\xi}\h{1pt}f_1^{u_{\infty}} \h{1pt}\right|^2 +  \left|\h{1pt}D_{\xi}\h{1pt}f_2^{u_{\infty}} \h{1pt}\right|^2 \h{2pt}\leq\h{2pt} \int_{D_{1/2}^+} \left|\h{1pt}D_{\xi}\h{1pt}f_1 \h{1pt}\right|^2 +  \left|\h{1pt}D_{\xi}\h{1pt}f_2 \h{1pt}\right|^2,
\end{eqnarray} for any $f_1$, $f_2$ $\in H^1\big(D_{1/2}^+ \big)$ with $f_1 = f_1^{u_{\infty}}$  on $S_{1/2}^+$ and $f_2 = f_2^{u_{\infty}}$ on $\p\h{0.5pt}D_{1/2}^+$ \h{0.5pt}. Notice that $f_2 = 0$ on $T_{1/2}$. Taking $f_2 = f_2^{u_{\infty}}$ on the right-hand side of (2.68), we have \begin{eqnarray} \int_{D_{1/2}^+} \left|\h{1pt}D_{\xi}\h{1pt}f_1^{u_{\infty}} \h{1pt}\right|^2  \h{2pt}\leq\h{2pt} \int_{D_{1/2}^+} \left|\h{1pt}D_{\xi}\h{1pt}f_1 \h{1pt}\right|^2, \h{10pt}\text{for any $f_1$ satisfying $f_1 \in H^1\big(\h{1pt}D_{1/2}^+\h{1pt}\big)$ and $f_1 = f_1^{u_{\infty}}$ on $S_{1/2}^+$.}
\end{eqnarray}Extending $f_1^{u_{\infty}}$ to the whole $D_{1/2}$ so that the extension, still denoted by $f_1^{u_{\infty}}$, is an even function with respect to the variable $\xi_2$, by the minimality of $f_1^{u_{\infty}}$ in (2.69), we can show that $f_1^{u_{\infty}}$ is a harmonic function on $D_{1/2}$. Applying Lemma 1.41 in [27] to $D_{\xi}\h{0.5pt}f_1^{u_{\infty}}$ then yields  \begin{eqnarray} \int_{D_{\theta_1}} \big|\h{1pt}D_{\xi}\h{1pt}f_1^{u_{\infty}} \h{1pt}\big|^2 \h{2pt}\lesssim\h{2pt}\theta_1^2 \int_{D_{1/2}} \big|\h{1pt}D_{\xi}\h{1pt}f_1^{u_{\infty}} \h{1pt}\big|^2,\h{10pt}\text{for any $\theta_1 < 1/2$.}
\end{eqnarray} Taking $f_1 = f_1^{u_{\infty}}$ on the right-hand side of (2.68), we have \begin{eqnarray} \int_{D_{1/2}^+} \left|\h{1pt}D_{\xi}\h{1pt}f_2^{u_{\infty}} \h{1pt}\right|^2  \h{2pt}\leq\h{2pt} \int_{D_{1/2}^+} \left|\h{1pt}D_{\xi}\h{1pt}f_2 \h{1pt}\right|^2, \h{2pt}\text{for any $f_2$ satisfying $f_2 \in H^1\big(\h{1pt}D_{1/2}^+\h{1pt}\big)$ and $f_2 = f_2^{u_{\infty}}$ on $\p\h{1pt}D_{1/2}^+$.}
\end{eqnarray}Since $f_2^{u_{\infty}} = 0 $ on $T_{1/2}$,  we can extend $f_2^{u_{\infty}}$ to the whole $D_{1/2}$ so that the extension, still denoted by $f_2^{u_{\infty}}$, is an odd function with respect to the variable $\xi_2$. By the minimality of $f_2^{u_{\infty}}$ given in (2.71), we can show that $f_2^{u_{\infty}}$ is a harmonic function on $D_{1/2}$. By Lemma 1.41 in [27], it turns out \begin{eqnarray} \int_{D_{\theta_1}} \big|\h{1pt}D_{\xi}\h{1pt}f_2^{u_{\infty}} \h{1pt}\big|^2 \h{2pt}\lesssim\h{2pt}\theta_1^2 \int_{D_{1/2}} \big|\h{1pt}D_{\xi}\h{1pt}f_2^{u_{\infty}} \h{1pt}\big|^2, \h{10pt}\text{for any $\theta_1 < 1/2$.}
\end{eqnarray} Utilizing (2.70), (2.72) and (2.66), we then obtain \begin{eqnarray} \int_{D_{\theta_1}^+} \big|\h{1pt}D_{\xi}\h{0.5pt}u_{\infty}\h{1pt}\big|^2 \h{2pt}\lesssim\h{2pt}\theta_1^2 \h{1pt}\int_{D_{1/2}^+} \big|\h{1pt}D_{\xi}\h{0.5pt}u_{\infty}\h{1pt}\big|^2 \h{2pt}\lesssim\h{2pt}\theta_1^2, \h{20pt}\text{for any $\theta_1 < 1/2$.}
\end{eqnarray}Therefore we can take $\theta_1$ suitably small so that (2.67) does not hold.

If \textit{III} in (2.28) holds, then for any $v \in H^1\left(D_1^+;\h{1pt}Y_* + \mathrm{Tan}_{y_*}\h{1pt}\mathbb{S}^2 \right)$, there are two $H^1$ functions on $D_1^+$, denoted by $f^v_1$ and $f^v_2$, so that \begin{eqnarray*} v = Y_* + v_- + f^v_1 \h{1.5pt}v^+ + f^v_2\h{1.5pt} v^+ \times y_* \h{20pt}\text{ on $D_1^+$.} \end{eqnarray*} Repeating the similar arguments for (2.69), we obtain  \begin{eqnarray*} \int_{D_{3/4}^+} \left|\h{1pt}D_{\xi}\h{1pt}f_1^{u_{\infty}} \h{1pt}\right|^2  \h{2pt}\leq\h{2pt} \int_{D_{3/4}^+} \left|\h{1pt}D_{\xi}\h{1pt}f_1 \h{1pt}\right|^2, \h{3pt}\text{for any $f_1$ satisfying $f_1 \in H^1\big(\h{1pt}D_{3/4}^+ \h{1pt}\big)$\h{0.5pt}; \h{1pt}$f_1 = f_1^{u_{\infty}}$ on $S_{3/4}^+$\h{0.5pt}; \h{1pt}$f_1 \geq 0$ on $T_{3/4}$.}
\end{eqnarray*}Here we have used the boundary condition on $T_{3/4}$ for an admissible map in $\mathscr{F}_{B,\h{0.5pt}3/4}$ (see (2.34)). Applying Lemma 9.1 in [53] to $f_1^{u_{\infty}}$, by Poincar\'{e} inequality, filling hole argument and standard iteration argument, we can find an universal constant $\alpha \in \big(0, 1\big)$ so that \begin{eqnarray*} \int_{D_{r}^+} \big|\h{1pt}D_{\xi}\h{1pt}f_1^{u_{\infty}}\h{1pt}\big|^2 \h{3pt}\lesssim\h{3pt}r^{\alpha} \h{1pt}\int_{D_{3/16}^+} \big|\h{1pt}D_{\xi}\h{1pt}f_1^{u_{\infty}}\h{1pt}\big|^2, \h{20pt}\text{for any $r \in \big(0, 3/16\big)$.}
\end{eqnarray*}As for $f_2^{u_{\infty}}$, we can also extend it to be an odd function with respect to the variable $\xi_2$. (2.72) still holds in this case. By (2.72) and the last estimate, it turns out \begin{eqnarray*} \int_{D_{\theta_1}^+} \big|\h{1pt}D_{\xi}\h{0.5pt}u_{\infty}\h{1pt}\big|^2 &=&\int_{D_{\theta_1}^+} \big|\h{1pt}D_{\xi}\h{0.5pt}f_1^{u_{\infty}}\h{1pt}\big|^2 + \int_{D_{\theta_1}^+} \big|\h{1pt}D_{\xi}\h{0.5pt}f_2^{u_{\infty}}\h{1pt}\big|^2 \\
\\
  &\lesssim& \theta_1^{\alpha}\h{1pt}\int_{D_{3/16}^+} \big|\h{1pt}D_{\xi}\h{1pt}f_1^{u_{\infty}}\h{1pt}\big|^2  + \theta_1^2 \int_{D_{1/2}^+} \big|\h{1pt}D_{\xi}\h{0.5pt}f_2^{u_{\infty}}\h{1pt}\big|^2 \h{2pt}\lesssim\h{2pt} \theta_1^{\alpha} \int_{D_{1/2}^+} \big|\h{1pt}D_{\xi}\h{0.5pt} u_{\infty} \h{1pt}\big|^2 \h{2pt}\lesssim\h{2pt}\theta_1^{\alpha},
\end{eqnarray*}for any $\theta_1 < 3/16$. Therefore we can take  $\theta_1$ suitably small so that (2.67) still fails in this case.
\end{proof}\
\\
\setcounter{section}{3}
\setcounter{theorem}{0}
\setcounter{equation}{0}
\textbf{\large III. \small REGULARITY AT THE ORIGIN}\vspace{1pc}\\
\noindent Due to Proposition 2.1, $L\h{1pt}\big[ \h{1pt}u_b^+\h{1pt}\big]$ and $L\h{1pt}\big[ \h{1pt}u_c^-\h{1pt}\big]$ solve the equation in (1.13) weakly in a neighborhood of $0$. Applying Rivi$\grave{\text{e}}$re-Struwe arguments in [54], we obtain the following $\epsilon$-regularity near the origin:
  \begin{prop}Suppose that $b$ and $c$ satisfy the same assumptions as in Proposition 2.1. Define $\mathcal{W}_{\h{1pt}b}^+ := L\h{1pt}\big[ \h{1pt}u_b^+\h{1pt}\big]$ and $\mathcal{W}_{\h{1pt}c}^- := L\h{1pt}\big[ \h{1pt}u_c^-\h{1pt}\big]$. There exists an $\epsilon_0 > 0$ so that $\mathcal{W}_{\h{1pt}b}^+$  is smooth at the origin, provided  \begin{eqnarray*} \liminf_{r \rightarrow 0} \h{2pt}r^{-1} \int_{B_r} \big|\h{1pt}\nabla \mathcal{W}_{\h{1pt}b}^+ \h{1pt}\big|^2 < \epsilon_0.
 \end{eqnarray*}As a map defined on $B_1$, $u_{\h{1pt}b}^+$ is also regular at the origin in the sense of Definition 1.2,  if the map $\mathcal{W}_{\h{1pt}b}^+$ is smooth at the origin. The same result holds for $\mathcal{W}_{\h{1pt}c}^-$ and $u_{\h{1pt}c}^-$.
\end{prop}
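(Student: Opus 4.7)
The plan is to read $\mathcal{W}_{b}^+$ as an $\mathbb{S}^4$-valued weak solution to the semilinear system (1.13) on a neighborhood of the origin, apply the Rivi\`{e}re-Struwe $\epsilon$-regularity theorem [54] in the small-energy regime supplied by the $\liminf$ hypothesis, bootstrap the resulting H\"{o}lder continuity to full smoothness, and finally translate the smoothness of $\mathcal{W}_b^+$ back to regularity of $u_b^+$ through the identity linking the Dirichlet energies of $L[u]$ and $u$.

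The first step, and what I expect to be the main obstacle, is to verify that (1.13) holds weakly on an entire small ball $B_r$ around the origin rather than only on a punctured one. Away from $l_3 \cup T$ the bifurcation solution is already smooth by the interior H\'{e}lein--Rivi\`{e}re--Struwe argument recalled in Sect.~I.3; on $T^\circ$ Proposition 2.1 gives smoothness, which through $L$ passes to $\mathcal{W}_b^+$ off the axis; on $l_3 \cap B_r \sim \{0\}$ the $\mathscr{R}$-symmetric reformulation together with the fact that $l_3$ has zero two-capacity allows the weak PDE to extend across the axis. Thus (1.13) is satisfied weakly on $B_r \sim \{0\}$, and because single points in $\mathbb{R}^3$ have zero $H^1$-capacity and $\mathcal{W}_b^+ \in H^1$, the equation extends weakly across the origin to all of $B_r$.

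With (1.13) now in force on $B_r$, the Rivi\`{e}re-Struwe $\epsilon$-regularity theorem applies: there is a universal $\epsilon_0 > 0$ such that whenever $\rho^{-1}\int_{B_\rho(0)}|\nabla \mathcal{W}_b^+|^2 < \epsilon_0$ for some scale $\rho \leq r$, the map $\mathcal{W}_b^+$ lies in $C^\alpha(B_{\rho/2}(0))$ for some $\alpha \in (0,1)$, with quantitative bounds. The $\liminf$ hypothesis in Proposition 3.1 supplies exactly such a $\rho$, producing H\"{o}lder continuity of $\mathcal{W}_b^+$ in a neighborhood of the origin. Since the right-hand side of (1.13) is polynomial in $\mathcal{W}_b^+$ and $\nabla \mathcal{W}_b^+$, a standard Schauder bootstrap then upgrades $C^\alpha$ iteratively to $C^{k,\alpha}$ for every $k$, yielding smoothness of $\mathcal{W}_b^+$ at the origin.

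To transfer this to $u_b^+$ I would invoke the pointwise identity
\begin{eqnarray*}
|\nabla \mathcal{W}_b^+|^2 \;=\; |\nabla u_b^+|^2 + \frac{4(u_{b;1}^+)^2 + (u_{b;3}^+)^2}{\rho^2},
\end{eqnarray*}
which is obtained by direct differentiation of the formula (1.10) for $L[u]$ using $|\nabla \theta|^2 = 1/\rho^2$, and which is precisely the kinetic part of the $E_\mu$-density in (1.8). If $\mathcal{W}_b^+$ is smooth at the origin, then $|\nabla \mathcal{W}_b^+|$ is essentially bounded in a neighborhood of $0$, hence so is $|\nabla u_b^+|$, and Definition 1.2 gives regularity of $u_b^+$ at the origin. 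The arguments for $\mathcal{W}_c^-$ and $u_c^-$ are entirely parallel.
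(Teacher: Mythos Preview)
Your proposal is correct and follows essentially the same approach as the paper. The paper's treatment of Proposition 3.1 is a one-sentence reduction: Proposition 2.1 ensures that $L[u_b^+]$ and $L[u_c^-]$ solve (1.13) weakly in a neighborhood of the origin, after which the Rivi\`{e}re--Struwe arguments of [54] yield the stated $\epsilon$-regularity. Your write-up fleshes out exactly these two steps --- the capacity argument for extending (1.13) across $l_3$ and through the origin, the Schauder bootstrap from H\"{o}lder to smooth, and the energy-density identity transferring smoothness of $\mathcal{W}_b^+$ back to $\nabla u_b^+ \in L^\infty$ --- none of which the paper spells out but all of which are implicit in its citation. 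One small point worth making explicit in your version: the Rivi\`{e}re--Struwe $\epsilon$-regularity is stated for smallness of the Morrey norm (all scales below $\rho$), not a single scale; the passage from the $\liminf$ hypothesis to Morrey smallness uses the monotonicity inequality (3.1), which the paper records but you do not invoke by name.
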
\noindent In this section we improve the result in Proposition 3.1 by showing \begin{prop}Suppose $b \in \mathrm{I}_-$ and $c \in \mathrm{I}_+$ with $c \neq 0$. Then $\mathcal{W}_{\h{1pt}b}^+$ and $\mathcal{W}_{\h{1pt}c}^- $ are smooth at the origin.
\end{prop}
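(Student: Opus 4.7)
The plan is to verify the $\epsilon$-regularity hypothesis of Proposition 3.1 at the origin by a monotonicity plus tangent-map argument. I carry it out for $\mathcal{W}_b^+$; the case of $\mathcal{W}_c^-$ is identical, the hypothesis $c\neq 0$ entering exactly as in Proposition 2.1. First I would derive a Price-type monotonicity formula for $r\mapsto r^{-1}\!\int_{B_r}\!|\nabla\mathcal{W}_b^+|^2$ near $0$. By Proposition 2.1 together with the Rivi\`ere--Struwe regularity recalled above (1.13), $\mathcal{W}_b^+$ solves (1.13) weakly on a punctured neighborhood of the origin; testing the equation against $\chi(|x|)\,x\cdot\nabla\mathcal{W}_b^+$ and passing the cut-off to the indicator of $B_r$ produces monotonicity up to a lower-order error controlled by $\mu$ and the smooth potential $S[\cdot]$, and hence the existence of the density $\Theta_0:=\lim_{r\downarrow 0}r^{-1}\!\int_{B_r}\!|\nabla\mathcal{W}_b^+|^2$. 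If $\Theta_0<\epsilon_0$ the conclusion is immediate from Proposition 3.1, so I argue by contradiction and assume $\Theta_0\geq\epsilon_0>0$.

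Next, set $\mathcal{W}^r(x):=\mathcal{W}_b^+(rx)$. A blow-up and comparison construction parallel to Lemmas 2.3--2.11, adapted to the fact that the blow-up point $0$ lies simultaneously on the axis $l_3$ and on the Signorini plane $T$, produces a subsequence $r_n\downarrow 0$ along which $\mathcal{W}^{r_n}\to\Phi$ strongly in $H^1_{\mathrm{loc}}(\mathbb{R}^3)$. The limit $\Phi=L[\phi]$ will be $\mathscr{R}$-axially symmetric, $0$-homogeneous (the potential terms in (1.13) scale away as $r_n^2\to 0$), weakly harmonic into $\mathbb{S}^4$, and energy-minimizing within the class of $\mathscr{R}$-axially symmetric competitors satisfying $\phi_2\geq b$ on $\{z=0\}$. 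By $0$-homogeneity and Definition 1.1 the pre-image map $\phi=\phi(\varphi)$ depends only on the polar angle, with $\phi_1,\phi_2$ even and $\phi_3$ odd under $\varphi\mapsto\pi-\varphi$; in particular $\phi_3(\pi/2)=0$ automatically, so the Signorini condition reduces to $\phi_2(\pi/2)\geq b$.

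The last step is a rigidity claim that $\Phi$ must be constant. A direct calculation gives the Dirichlet energy of $L[\phi(\varphi)]$ on $\mathbb{S}^2$ as
\[
\mathcal{E}[\phi]\,=\,2\pi\!\int_0^\pi\!\Bigl(|\phi'(\varphi)|^2\sin\varphi+\tfrac{4\phi_1^2+\phi_3^2}{\sin\varphi}\Bigr)d\varphi,
\]
and finiteness forces $\phi_1(0)=\phi_3(0)=\phi_1(\pi)=\phi_3(\pi)=0$. The constant $\phi_*\equiv(0,1,0)$ lies in the admissible class (since $b>-1$) and satisfies $\mathcal{E}[\phi_*]=0$. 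I would then build a competitor $\tilde\Phi$ on $B_1$ equal to $L[\phi_*]$ on $B_{1/2}$, equal to $\Phi$ on $B_1\sim B_{3/4}$, and obtained by linearly interpolating and radially projecting back onto $\mathbb{S}^4$ on the annulus $B_{3/4}\sim B_{1/2}$; since $\phi_*$ lies strictly inside $\Gamma^+$, the interpolation stays admissible, and a direct estimate in the spirit of Step~3 of Lemma 2.11 shows its energy is strictly less than the $0$-homogeneous energy of $\Phi$ on the same annulus whenever $\Theta_0>0$. Minimality of $\Phi$ then forces $\Theta_0=0$, contradicting $\Theta_0\geq\epsilon_0>0$. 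The main obstacle is this interpolation step: because the Signorini threshold $b$ does not rescale and may approach $-1$, the competitor has to be constructed so as never to leave $\Gamma^+$ and to respect the symmetry conditions of Definition 1.1 throughout the annulus --- this is precisely the role played by the hypotheses $b\in\mathrm{I}_-$ and $c\in\mathrm{I}_+\sim\{0\}$.
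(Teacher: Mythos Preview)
Your overall architecture---monotonicity at $0$, blow-up to a $0$-homogeneous $\mathscr{R}$-axially symmetric harmonic tangent map $\Phi=L[\phi(\varphi)]$ inheriting the Signorini constraint, show $\Phi$ constant, then invoke Proposition~3.1---matches the paper's. The divergence, and the gap, is entirely in the rigidity step.

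The paper does not argue by interpolating with a constant. Instead (Lemmas~3.3--3.4) it combines three ingredients you do not use. First, monotonicity together with the strict inequality $E_\mu[u_b^+]<E_\mu[U^*]=24\pi$ from (1.12) yields the a~priori bound $\int_{B_1}|\nabla\Phi|^2<24\pi$. Second, the ODE system (3.8) for $\phi$ has the first integral $|\phi'|^2\sin^2\varphi=4\phi_1^2+\phi_3^2$, and the reflection symmetries at $\varphi=\pi/2$ force $\phi_1'(\pi/2)=\phi_2'(\pi/2)=0$ and $\phi_3'(\pi/2)=-2\phi_1(\pi/2)$; an ODE-uniqueness argument then shows that if either $\phi_1$ or $\phi_3$ vanishes somewhere in $(0,\pi/2)$, both vanish identically and $\Phi$ is constant. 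Third, in the remaining case $\phi_1,\phi_3>0$ on $(0,\pi/2)$ one checks that $\Phi|_{\mathbb{S}^2}$ is a \emph{linearly full} harmonic $2$-sphere in $\mathbb{S}^4$; by Corollary~1.20 of [39] its twistor degree is $d\geq 3$, hence $\int_{B_1}|\nabla\Phi|^2=8\pi d\geq 24\pi$, contradicting the first bound. The constraint (3.5) then selects $N^*$ over $-N^*$.

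Your competitor construction is not justified as written. The linear interpolation between $N^*$ and $\Phi(\omega)$ followed by radial projection to $\mathbb{S}^4$ is well-defined only if $\Phi$ avoids the antipode $-N^*$; but nothing you have established rules out $\phi(0)=\phi(\pi)=(0,-1,0)$, since the constraint $\phi_2(\pi/2)\geq b$ says nothing at the poles. Even where the projection makes sense, the analogy with Step~3 of Lemma~2.11 fails: that step controls a transition layer between two maps that are \emph{uniformly close} with a closeness parameter tending to zero, whereas here $N^*$ and $\Phi$ are fixed and may be far apart, so there is no smallness forcing the annular energy below $\tfrac{3}{4}\Theta_0$. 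What you flag as the ``main obstacle'' (staying inside $\Gamma^+$ along the interpolation) is in fact a side issue; the missing ingredients are the $24\pi$ energy ceiling and the classification of linearly full harmonic $2$-spheres.
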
Throughout the section we use $\mathcal{W}$ to denote either $\mathcal{W}_{\h{1pt}b}^+$ or $\mathcal{W}_{\h{1pt}c}^-$. To prove Proposition 3.2, we need to show that all tangent maps of $\mathcal{W}$ at $0$ are trivial. Since $u_b^+$ and $u_c^-$ are minimizers of the two minimization problems in (1.12), by applying arguments in [21], for almost all $R \in (0, 1)$, it holds \begin{eqnarray*}\int_{B^+_R} \left|\h{1pt} \nabla \mathcal{W} \h{1pt}\right|^2 + 3 \sqrt{2}\h{1pt} \mu \h{1pt} \big( 1 - 3 S\left[\h{1pt} \mathcal{W} \h{1pt}\right] \big) + 2 R \int_{\p^+ B_R} \left|\h{1pt} \p_{\nu} \mathcal{W} \h{1pt}\right|^2 = R \int_{\p^+ B_R}  \left|\h{1pt} \nabla \mathcal{W} \h{1pt}\right|^2 + \sqrt{2} \h{1pt}\mu \h{1pt}\big( 1 - 3 \h{1pt}S\left[\h{1pt} \mathcal{W} \h{1pt}\right] \big).
\end{eqnarray*} Here $\p^+ B_R$ denotes the spherical boundary of $B_R^+$. $\nu$ is the outer normal direction on $\p^+ B_R$. Utilizing the last energy equality then yields \begin{eqnarray}r^{-1} \int_{B^+_r} \left|\h{1pt} \nabla \mathcal{W} \h{1pt}\right|^2 +  \sqrt{2}\h{1pt} \mu \h{1pt} \big( 1 - 3 S\left[\h{1pt} \mathcal{W} \h{1pt}\right] \big) &+& 2 \int_{B_{\sigma}^+ \h{1pt}\sim\h{1pt} B_r^+}\dfrac{1}{|\h{1pt}x\h{1pt}|} \h{1.5pt} \big|\h{1pt}\p_{\nu} \mathcal{W} \h{1pt}\big|^2 \\
 \nonumber\\
\nonumber &\leq& \sigma^{-1} \int_{B^+_{\sigma}} \left|\h{1pt} \nabla \mathcal{W} \h{1pt}\right|^2 +  \sqrt{2}\h{1pt} \mu \h{1pt} \big( 1 - 3 S\left[\h{1pt} \mathcal{W} \h{1pt}\right] \big), \h{10pt}\text{for all $0 < r < \sigma \leq 1$.}
\end{eqnarray}
Supposing that  $\Big\{ k_j \Big\}$ is a positive decreasing sequence which converges to $0$ as $j \rightarrow \infty$, we rescale $\mathcal{W}$ by  \begin{eqnarray} \mathcal{W}^{(j)}\left(x\right) := \mathcal{W}\left( k_j \h{1pt}x \right), \h{20pt}\text{for any $x \in B_1$.}
\end{eqnarray}The map $\mathcal{W}^{(j)}$ is well-defined, provided $j$ is large. Moreover by (3.1), it turns out \begin{lemma}There exists a $\mathcal{W}^{\infty} \in H^1\left( B_1; \mathbb{S}^4\right)$ so that  up to a subsequence,  \begin{eqnarray} (1). \h{3pt} \mathcal{W}^{(j)} \longrightarrow \mathcal{W}^{\infty}, \h{10pt}\text{weakly in $H^1\left(B_1; \mathbb{S}^4\right)$;} \h{20pt}(2). \h{3pt} \mathcal{W}^{(j)} \longrightarrow \mathcal{W}^{\infty}, \h{10pt}\text{strongly in $L^2\left(B_1; \mathbb{S}^4\right)$.}
\end{eqnarray}The limiting map $\mathcal{W}^{\infty}$ is a $0$-homogeneous harmonic map from $B_1$ to $\mathbb{S}^4$ with energy satisfying \begin{eqnarray}\int_{B_1} \left|\h{1pt}\nabla \mathcal{W}^{\infty} \h{1pt}\right|^2  \h{2pt}<\h{2pt} 24 \h{0.5pt}\pi.
\end{eqnarray} Moreover in the sense of trace,  there holds \begin{eqnarray} \mathcal{W}_{3}^{\infty} \h{2pt}\geq\h{2pt}b \h{10pt}\text{on $T$}, \h{10pt}\text{if \h{1pt} $\mathcal{W} = \mathcal{W}_{\h{1pt}b}^+$;} \h{30pt}\mathcal{W}_{3}^{\infty} \h{2pt}\leq\h{2pt}c \h{10pt}\text{on $T$}, \h{10pt}\text{if \h{1pt} $\mathcal{W} = \mathcal{W}_{\h{1pt}c}^-$.}
\end{eqnarray}Here  $T$ is the flat boundary of $ B^+_1$. $\mathcal{W}_{3}^{\infty}$ is the third component of $\mathcal{W}^{\infty}$.
\end{lemma}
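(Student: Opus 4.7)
The plan is to combine the monotonicity identity (3.1) with standard Rellich compactness to extract $\mathcal{W}^{\infty}$, use the sphere-target conservation law to pass to the harmonic map equation in the weak limit, exploit the radial-derivative bound in (3.1) to obtain $0$-homogeneity, and finally cash the strict inequality (1.12) against the explicit value $E_\mu\big[U^*\big] = 24\pi$.

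First set $\Phi(r) := r^{-1}\int_{B_r^+}\big\{|\nabla \mathcal{W}|^2 + \sqrt{2}\mu(1-3S[\mathcal{W}])\big\}$; by (3.1) the quantity $\Phi$ is non-decreasing and satisfies $2\int_{B_\sigma^+\setminus B_r^+}|x|^{-1}|\partial_\nu \mathcal{W}|^2 \leq \Phi(\sigma)-\Phi(r)$. A change of variables $y=k_jx$ yields $\int_{B_1}|\nabla \mathcal{W}^{(j)}|^2 = 2\h{1pt}k_j^{-1}\int_{B_{k_j}^+}|\nabla \mathcal{W}|^2 \leq 2\h{1pt}\Phi(k_j)\leq 2\h{1pt}\Phi(1)$; moreover the $\mathscr{R}$-axial symmetry of $\mathcal{W}$ together with the parity of $P$ in $u_3$ gives $2\h{1pt}\Phi(1) = E_\mu\big[u_{\h{1pt}b}^+\big]$ (and the $u_{\h{1pt}c}^-$ analogue). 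So $\{\mathcal{W}^{(j)}\}$ is uniformly bounded in $H^1(B_1;\mathbb{R}^5)$; passing to a subsequence, weak $H^1$ and strong $L^2$ compactness (Rellich) produce a limit $\mathcal{W}^\infty$ with $|\mathcal{W}^\infty|=1$ almost everywhere, which establishes (3.4).

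To pass to the Euler-Lagrange equation, the rescaled map solves $-\Delta \mathcal{W}^{(j)} = |\nabla \mathcal{W}^{(j)}|^2 \mathcal{W}^{(j)} + k_j^2\h{1pt} R_j$ with $R_j$ uniformly bounded in $L^\infty$, so the lower-order potential terms disappear cleanly. The real obstacle is the quadratic term $|\nabla \mathcal{W}^{(j)}|^2 \mathcal{W}^{(j)}$, which is only weakly $L^1$-precompact. I would bypass this via the antisymmetric conservation-law form for sphere-valued harmonic maps: namely, $\mathrm{div}\big(\mathcal{W}^{(j)}\wedge \nabla \mathcal{W}^{(j)}\big) = O(k_j^2)$ in $\mathscr{D}'$. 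This identity is stable under weak $H^1$-convergence, since the antisymmetry plus the strong $L^2$-convergence of $\mathcal{W}^{(j)}$ against the weak $L^2$-convergence of $\nabla \mathcal{W}^{(j)}$ suffices to pass to the product (the classical div-curl / product-of-weak-limits argument). Hence $\mathrm{div}\big(\mathcal{W}^\infty\wedge\nabla\mathcal{W}^\infty\big)=0$, which together with $|\mathcal{W}^\infty|=1$ is equivalent to $\mathcal{W}^\infty$ being weakly harmonic into $\mathbb{S}^4$.

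For $0$-homogeneity, the monotonicity of $\Phi$ yields $\Theta_0:=\lim_{r\to 0^+}\Phi(r)$; rescaling the extra radial-derivative term in (3.1) gives
$$\int_{B_1\setminus B_\epsilon}\frac{|\partial_\nu \mathcal{W}^{(j)}|^2}{|x|}\,dx \leq \Phi(k_j) - \Phi(k_j\h{1pt}\epsilon)\longrightarrow 0$$
for every fixed $\epsilon\in(0,1)$, so weak convergence and lower semicontinuity force $\partial_\nu \mathcal{W}^\infty\equiv 0$ on $B_1\setminus\{0\}$, i.e.\ $\mathcal{W}^\infty$ is $0$-homogeneous. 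For (3.5), a direct spherical calculation shows $|\nabla_{\mathbb{S}^2}L[U^*]|^2\equiv 6$, hence $\int_{B_1}|\nabla L[U^*]|^2 = 24\pi$; since $P(U^*)\equiv 1/3$ pointwise on $B_1$ the potential contribution vanishes and $E_\mu\big[U^*\big]=24\pi$. Combining with (1.12),
$$\int_{B_1}|\nabla \mathcal{W}^\infty|^2\leq 2\h{1pt}\Theta_0\leq 2\h{1pt}\Phi(1) = E_\mu\big[u_{\h{1pt}b}^+\big] < E_\mu\big[U^*\big] = 24\pi.$$
The Signorini constraint reads $\mathcal{W}_3\geq b$ (resp.\ $\leq c$) on $T$, which is preserved under the rescaling $x\mapsto k_j\h{1pt} x$ on $T\cap B_1$; weak $H^1$-convergence and the trace theorem then transmit the inequality to $\mathcal{W}_3^\infty$, yielding (3.6). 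The one genuinely delicate step throughout is the passage to the harmonic map equation under only weak convergence, resolved by the antisymmetric sphere-target structure.
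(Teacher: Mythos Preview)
Your argument is correct and follows essentially the same route as the paper: the uniform $H^1$ bound via the monotonicity inequality (3.1), Rellich compactness for (3.3), lower semicontinuity combined with the strict inequality (1.12) and the explicit value $E_\mu[U^*]=24\pi$ for (3.4), and passage of the Signorini constraint through the trace for (3.5). The only place you diverge is in justifying that $\mathcal{W}^\infty$ is a $0$-homogeneous harmonic map: the paper simply invokes ``a standard argument in [56]'' (Schoen--Uhlenbeck), whereas you spell this out via the antisymmetric conservation law $\mathrm{div}(\mathcal{W}^{(j)}\wedge\nabla\mathcal{W}^{(j)})=O(k_j^2)$ together with the radial-derivative estimate from (3.1). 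Both paths are standard; your conservation-law route has the minor advantage that it needs only weak $H^1$ convergence to recover weak harmonicity, while the Schoen--Uhlenbeck comparison approach implicitly uses the minimality of $u_b^+$ (or $u_c^-$) to upgrade to strong $H^1_{\mathrm{loc}}$ convergence and harvest minimality of the tangent map as a byproduct. For the trace step the paper appeals to a compact trace embedding (strong $L^q(\partial B_1^+)$ convergence), while your weak-closedness argument for the convex constraint set is an equally valid alternative.
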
\begin{proof}[\bf Proof] By (3.1)-(3.2), it follows \begin{eqnarray*} \int_{B_1} \left|\h{1pt}\nabla \mathcal{W}^{(j)} \h{1pt}\right|^2 \h{2pt}=\h{2pt} 2\h{0.5pt}k_j^{-1} \int_{B^+_{k_j}} \big|\h{1pt}\nabla \mathcal{W} \h{1pt}\big|^2   \h{2pt}\leq\h{2pt} 2 \int_{B_{1}^+} \big|\h{1pt}\nabla \mathcal{W} \h{1pt}\big|^2  + \sqrt{2}\h{1pt} \mu \h{1pt} \big( 1 - 3 S\left[\h{1pt} \mathcal{W} \h{1pt}\right] \big).
\end{eqnarray*}Utilizing  this uniform energy bound then yields (3.3). Moreover by lower semi-continuity, we can take $j \rightarrow \infty$ in the last estimate and get \begin{eqnarray*} \int_{B_1} \left|\h{1pt}\nabla \mathcal{W}^{\infty} \h{1pt}\right|^2 \h{2pt}\leq\h{2pt}\liminf_{j \rightarrow \infty} \int_{B_1} \left|\h{1pt}\nabla \mathcal{W}^{(j)} \h{1pt}\right|^2  \h{2pt}\leq\h{2pt} 2 \int_{B_{1}^+} \big|\h{1pt}\nabla \mathcal{W} \h{1pt}\big|^2  + \sqrt{2}\h{1pt} \mu \h{1pt} \big( 1 - 3 S\left[\h{1pt} \mathcal{W} \h{1pt}\right] \big).
\end{eqnarray*} Since $\mathcal{W} = L\left[\h{0.5pt}u\h{0.5pt}\right]$ with $u$ equaling to either $  u_{\h{1pt}b}^+ $ or $ u_{\h{1pt}c}^- $, then by (1.12), the last estimate can be reduced to \begin{eqnarray*}\int_{B_1} \left|\h{1pt}\nabla \mathcal{W}^{\infty} \h{1pt}\right|^2  \h{2pt}\leq\h{2pt}  \int_{B_1} \big|\h{1pt}\nabla \mathcal{W} \h{1pt}\big|^2  + \sqrt{2}\h{1pt} \mu \h{1pt} \big( 1 - 3 S\left[\h{1pt} \mathcal{W} \h{1pt}\right] \big) \h{2pt}=\h{2pt} E_{\mu}\left[\h{0.5pt} u \h{0.5pt}\right]\h{2pt}<\h{2pt}E_{\mu}\left[\h{0.5pt} U^* \h{0.5pt}\right] \h{2pt}=\h{2pt}24\pi.
\end{eqnarray*} (3.4) holds by the last estimate. As for (3.5), we notice that the trace of $\mathcal{W}_{3}$ on $T$ equals to the trace of $u_2$ on $T$. It then turns out \begin{eqnarray} \mathcal{W}_{3}^{(j)} \h{2pt}\geq\h{2pt}b \h{10pt}\text{on $T$}, \h{10pt}\text{if \h{1pt} $\mathcal{W} = \mathcal{W}_{\h{1pt}b}^+$;} \h{30pt}\mathcal{W}_{3}^{(j)}  \h{2pt}\leq\h{2pt}c \h{10pt}\text{on $T$}, \h{10pt}\text{if \h{1pt} $\mathcal{W} = \mathcal{W}_{\h{1pt}c}^-$.}
\end{eqnarray}By Theorem 6.2 in [50], we can keep extracting a subsequence, still denoted by $\left\{ \mathcal{W}^{(j)}\right\}$, so that \begin{eqnarray}\mathcal{W}^{(j)} \longrightarrow \mathcal{W}^{\infty}, \h{20pt}\text{strongly in $L^q \left( \p B_1^+ \right)$, for any $q \in \big[ \h{0.5pt}1, 4\h{0.5pt}\big)$.}
\end{eqnarray}Utilizing (3.7), we can take $j \rightarrow \infty$ in (3.6) and obtain (3.5). The fact that $\mathcal{W}^{\infty}$ is a $0$-homogeneous harmonic map from $B_1$ to $\mathbb{S}^4$ follows by a standard argument in [56].
\end{proof}

Since $\mathcal{W}^{\infty}$ is a $0$-homogeneous harmonic map from $B_1$ to $\mathbb{S}^4$, $\mathcal{W}^{\infty}$ can be represented by $ \mathcal{W}^{\infty} = L \left[\h{0.5pt}v\h{0.5pt}\right]$, where $v = (v_1, v_2, v_3)$ depends only on the polar angle $\varphi$ and  is a smooth $3$-vector field on $\big[\h{0.5pt}0, \pi\h{0.5pt}\big]$ with unit length.   Moreover the three components of $v$  satisfy the following ODE system: \begin{eqnarray} - \big( \sin \varphi \h{2pt} v' \big)' + \dfrac{1}{\sin \varphi} \left(\begin{array}{ccc} 4 v_1 \\
0\\
v_3 \end{array}\right)  \h{2pt}=\h{2pt} \bigg\{ |\h{1pt}v' \h{1pt}|^2 \h{1.5pt}\sin \varphi  + \dfrac{1}{\sin \varphi} \big( 4 v_1^2 + v_3^2 \big) \bigg\} v  \h{20pt}\text{on  $ \left(\h{0.5pt}0, \pi \h{0.5pt}\right)$.}
\end{eqnarray}Here $'$ denotes the derivative with respect to the variable $\varphi$. Utilizing the system (3.8), we have \begin{lemma}If $\mathcal{W}  = \mathcal{W}_{\h{1pt}b }^+$, then the map $\mathcal{W}^{\infty}$ equivalently equals to the constant vector $N^* := (0, 0, 1, 0, 0)$ on $B_1$. If $\mathcal{W} = \mathcal{W}_{\h{1pt}c, }^-$, then the map $\mathcal{W}^{\infty}$ equivalently equals to $- N^*$ on $B_1$.
\end{lemma}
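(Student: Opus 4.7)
The plan is to exploit the minimality of $\mathcal W^\infty$ (as a tangent map at the origin) combined with the strict energy bound (3.4) and the Signorini trace condition (3.5), and rule out any nontrivial candidate by comparison with the admissible constant $N^*$ (resp.\ $-N^*$).

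First the plan is to collect the constraints on $v$. Since $\mathcal W^\infty=L[v]$ is $0$-homogeneous and $v$ depends only on $\varphi$, regularity of $L[v]$ at the two spherical poles forces $v_1(0)=v_3(0)=v_1(\pi)=v_3(\pi)=0$, so $v_2(0)=v_2(\pi)\in\{\pm 1\}$; the $\mathscr R$-axial symmetry gives $v_3(\pi/2)=0$; the trace in (3.5) gives $v_2(\pi/2)\ge b$ (resp.\ $\le c$); and, setting $\omega:=\mathcal W^\infty|_{\partial B_1}=L[v]$, the $0$-homogeneity together with (3.4) yields $E[\omega]:=\int_{S^2}|\nabla_{S^2}\omega|^2\,d\sigma<24\pi$. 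A strong-$H^1_{\mathrm{loc}}$-convergence argument for the rescaled sequence $\{\mathcal W^{(j)}\}$ on $B_1\setminus\{0\}$, parallel to the compactness argument used in Lemma~2.11, then shows that $\mathcal W^\infty$ minimizes the Dirichlet energy among $\mathscr R$-axially symmetric $w\in H^1(B_1;\mathbb S^4)$ equal to $\omega$ on $\partial B_1$ and obeying the Signorini constraint on $T$.

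Next, in the $\mathcal W_b^+$ case, $b\in(-1,-1/2]$ gives $N_3^{*}=1>b$ strictly, so $N^{*}$ is admissible. One compares $\mathcal W^\infty$ with the competitor $\tilde w$ obtained by gluing $\tilde w(x)=\omega(x/|x|)$ on $\{|x|\ge 1-\delta\}$, $\tilde w(x)=N^{*}$ on $\{|x|\le 1-2\delta\}$, and the unique shorter $\mathbb S^4$-geodesic from $\omega(x/|x|)$ to $N^{*}$ on the thin annulus in between. The hypothesis $v_2(\pi/2)\ge b>-1$ ensures that $\omega$ never reaches $-N^{*}$, so the geodesic is well defined and depends smoothly on $x/|x|$; the $SO(2)$-equivariance of both endpoints transfers to $\tilde w$, and the third component along the geodesic is monotone between its endpoint values, giving $\tilde w_3\ge b$ on $T$. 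A direct calculation of the energy of $\tilde w$, combined with the identity $\int_{S^2}|L[v]-N^{*}|^2\,d\sigma=8\pi\int_0^\pi(1-v_2)\sin\varphi\,d\varphi$, yields via the minimality of $\mathcal W^\infty$, upon optimizing in $\delta$, the dichotomy: either $\int_0^\pi(1-v_2)\sin\varphi\,d\varphi=0$, forcing $v\equiv(0,1,0)$ and hence $\mathcal W^\infty\equiv N^{*}$, or $E[\omega]\ge 24\pi$, contradicting (3.4). The case $\mathcal W=\mathcal W_c^-$ is treated identically with $-N^{*}$ in place of $N^{*}$ (admissible since $c\ge -1/2>-1$).

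The principal difficulty lies in sharpening the energy estimate on the geodesic interpolation annulus so as to produce the clean dichotomy above; a naive AM--GM bound only yields $E[\omega]^2\le C\,E[\omega]\int_{S^2}|\omega-N^{*}|^2\,d\sigma$, which is not immediately decisive. The required improvement uses the equivariant target geometry (specifically, the monotonicity of the third coordinate along $\mathbb S^4$-geodesics terminating at $N^{*}$) together with a Hardy-type inequality at the poles that controls $\int_0^\pi(1-v_2)\sin\varphi\,d\varphi$ by the reduced Dirichlet energy $\int_0^\pi|v'|^2\sin\varphi\,d\varphi$, thereby converting the $L^2$-smallness into the quantitative gap separating the trivial profile from the hedgehog level $24\pi$.
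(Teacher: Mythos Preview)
Your approach has two genuine gaps.

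First, the minimality of $\mathcal W^\infty$ among $\mathscr R$-axially symmetric competitors with the Signorini constraint is not available at this stage. Lemma~3.3 only gives weak $H^1$ convergence and that $\mathcal W^\infty$ is a $0$-homogeneous \emph{harmonic} map with energy $<24\pi$; the strong $H^1_{\mathrm{loc}}$ compactness and a constrained minimality for the tangent map at the origin are not established in the paper before Lemma~3.4, and your reference to ``parallel to Lemma~2.11'' does not transfer directly, since Lemma~2.11 concerns a Luckhaus-type blow-up at a free-boundary point after renormalization by the local energy scale, not a tangent-map blow-up at the origin.

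Second, and more seriously, even granting minimality, your competitor estimate does not close. As you note, the geodesic-bridge construction gives at best
\[
E[\omega]\ \le\ \delta\,E[\omega]+\frac{C}{\delta}\int_{\mathbb S^2} d(\omega,N^*)^2\,d\sigma+O(\delta),
\]
which after optimizing in $\delta$ yields only $E[\omega]\lesssim \int_{\mathbb S^2} d(\omega,N^*)^2$, i.e.\ an \emph{upper} bound on $E[\omega]$ in terms of the $L^2$ distance, not the lower bound $E[\omega]\ge 24\pi$ that you need for the dichotomy. Your proposed ``Hardy-type'' improvement to reverse this inequality with the sharp constant $24\pi$ is not carried out, and there is no evident mechanism by which such an inequality would produce the exact hedgehog threshold.

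The paper bypasses both issues entirely. It works only with the harmonicity of $\mathcal W^\infty$: taking the inner product of the ODE system (3.8) with $v'$ gives the conserved quantity $|v'|^2\sin^2\varphi=4v_1^2+v_3^2$, and together with the $\mathscr R$-symmetry at $\varphi=\pi/2$ and the sign constraints $v_1,v_3\ge 0$ on $[0,\pi/2]$, ODE uniqueness forces either $v_1\equiv 0$ and $v_3\equiv 0$ (so $\mathcal W^\infty=\pm N^*$), or $v_1>0$ and $v_3>0$ on $(0,\pi/2)$. In the latter case $\mathcal W^\infty|_{\mathbb S^2}$ is a \emph{linearly full} harmonic map $\mathbb S^2\to\mathbb S^4$, and the Calabi/twistor classification (Corollary~1.20 in~[39]) gives twistor degree $d\ge 3$, hence energy $8\pi d\ge 24\pi$, contradicting (3.4). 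The sign is then fixed by (3.5). No minimality and no sharp competitor inequality are needed.
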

\begin{proof}[\bf Proof] Taking inner product with $v'$ on both sides of (3.8) yields \begin{eqnarray*} \left|\h{1pt}v'\h{1pt}\right|^2 \h{1pt} \sin^2 \varphi - \left( 4 v_1^2 + v_3^2 \right) = \mathrm{const}, \h{20pt}\text{for any $\varphi \in \big[\h{0.5pt}0, \pi \h{0.5pt}\big]$.}
\end{eqnarray*}Since $v$ is smooth on $\big[\h{0.5pt}0, \pi \h{0.5pt}\big]$ and $v_1 = v_3 = 0$ at $\varphi = 0$, $\mathrm{const}$ in the last equality equals to $0$. It turns out \begin{eqnarray}\left|\h{1pt}v'\h{1pt}\right|^2 \h{1pt} \sin^2 \varphi =  4 v_1^2 + v_3^2, \h{20pt}\text{for any $\varphi \in \big[\h{0.5pt}0, \pi \h{0.5pt}\big]$.}
\end{eqnarray}Note that  $v_1\left(\varphi\right) = v_1\left( \pi - \varphi \right)$, $v_2\left(\varphi\right) = v_2\left( \pi - \varphi \right)$, $v_3\left(\varphi\right) = - v_3\left( \pi - \varphi \right)$, for any $\varphi \in \big[\h{0.5pt}0, \pi \h{0.5pt}\big]$. Then  \begin{eqnarray}v_1' = v_2' = 0 \h{20pt}\text{at $\varphi = \dfrac{\pi}{2}$.}
\end{eqnarray}Evaluating (3.9) at $\varphi = \dfrac{\pi}{2}$ and using (3.10), we obtain \begin{eqnarray}v_3' = \pm \h{1.5pt}2 \h{1pt} v_1 \h{20pt}\text{at $\varphi = \dfrac{\pi}{2}$.}
\end{eqnarray} Here we also used the boundary condition $v_3 = 0$ at $\varphi = \pi/ 2$. Since  \begin{eqnarray} v_1 \geq 0, \h{10pt}v_3 \geq 0 \h{20pt}\text{on $\big[\h{1pt}0, \pi/2 \h{1pt}\big]$,}
\end{eqnarray}it holds $v_3' \leq 0$ at $\pi / 2$. The sign in (3.11) can be determined. That is \begin{eqnarray}v_3' = - 2 \h{1pt} v_1 \h{20pt}\text{at $\varphi = \dfrac{\pi}{2}$.}
\end{eqnarray}

Suppose that $v_1 = 0$ at some $\varphi_0 \in (\h{0.5pt}0, \pi / 2 \h{0.5pt})$. By (3.12), $\varphi_0$ is a minimum point of $v_1$ on $(\h{0.5pt}0, \pi / 2 \h{0.5pt})$. It follows $v_1' = 0$ at $\varphi_0$. Applying uniqueness result for ordinary differential equation to the equation of $v_1$ in (3.8) then yields $v_1 \equiv 0$ on $\left[ \h{0.5pt}0, \pi  \h{0.5pt} \right]$. In light of (3.13), $v_3' = 0$ at $\varphi = \pi / 2$. Since we have $v_3 = 0$ at $\varphi = \pi / 2$, we can also apply uniqueness result for ordinary differential equation to the equation of $v_3$ in (3.8) and obtain $v_3 \equiv 0$ on $\left[ \h{0.5pt}0, \pi  \h{0.5pt} \right]$. In this case we have $\mathcal{W}^{\infty}$ equals to either $N^*$ or $- N^*$ on $B_1$.

Similarly if it holds $v_3 = 0$ at some $\varphi_0 \in (\h{0.5pt}0, \pi / 2 \h{0.5pt})$, then we have $v_3 \equiv 0$ on $\left[ \h{0.5pt}0, \pi  \h{0.5pt} \right]$. Still by (3.13), it follows $v_1 = 0$ at $\varphi = \pi / 2$. From (3.10), it satisfies $v_1' = 0$ at $\varphi = \pi / 2$. We then can still infer $v_1 \equiv 0$ on $\left[ \h{0.5pt}0, \pi  \h{0.5pt} \right]$. In this case $\mathcal{W}^{\infty}$ also equals to either $N^*$ or $- N^*$ on $B_1$.

Now we assume $v_1 > 0$ and $v_3 > 0$ on $(\h{0.5pt}0, \pi / 2 \h{0.5pt})$. Since $v_2 = 1 $ or $- 1$ at $\varphi = 0$, by the regularity of $v_2$ and the positivity of $v_1$ and $v_3$ on $(\h{0.5pt}0, \pi / 2 \h{0.5pt})$, we can find a $\varphi_1 \in (\h{0.5pt}0, \pi / 2 \h{0.5pt})$ so that $v_1$, $v_2$ and $v_3$ are all non-zero at $\varphi_1$. Therefore the image of $\mathcal{W}^{\infty}$ restricted on $\big(\varphi_1, \theta \h{0.5pt}\big)$, where $\theta \in \left[\h{0.5pt}0, 2 \pi \h{0.5pt}\right]$, can not be contained in a totally geodesic $\mathbb{S}^3 \subset \mathbb{S}^4$. Here we also used $\mathcal{W}^\infty = L\h{1pt}\left[\h{1pt} v \h{1pt}\right]$ and the definition of the $L$-operator. This infers that $\mathcal{W}^{\infty}$, when restricted on $\mathbb{S}^2$, is a linearly full harmonic map from $\mathbb{S}^2$ to $\mathbb{S}^4$. By Corollary 1.20 in [39], the twistor degree $d$ of $\mathcal{W}^{\infty}$ must satisfy $d \geq 3$. It  follows $\displaystyle \int_{B_1}\left|\h{1pt}\nabla \mathcal{W}^{\infty}\h{1pt}\right|^2 \h{2pt}=\h{2pt}8 \h{0.5pt}\pi\h{0.5pt}d \h{2pt}\geq\h{2pt}24\h{1pt}\pi$. This estimate violates (3.4).

Hence $\mathcal{W}^{\infty} = N^*$ or $- N^*$. The lemma then follows by (3.5).
\end{proof}
\noindent \begin{proof}[\bf Proof of Proposition 3.2] Recall that $u_b^+$ and $u_c^-$ are minimizers of the two minimization problems in (1.12). The proof of Proposition 3.2 can be obtained by Proposition 3.1, Lemma 3.4 and a similar construction as in the proof of convergence theorem 5.5 in [31] (see also Sect.4 in [29]).\end{proof}\
\\
\setcounter{section}{4}
\setcounter{theorem}{0}
\setcounter{equation}{0}
\textbf{\large IV. \small SINGULARITY: ITS TANGENT MAP AND STRUCTURE}\vspace{1pc}\\
Denote by $x_0$ the point $\big(\h{0.5pt} 0, 0, z_0\h{0.5pt}\big)$, where $z_0 \in ( 0, 1)$. Let $\big\{k_j\big\}$ be a positive decreasing sequence which converges to $0$ as $j \rightarrow \infty$. If $j$ is sufficiently large, then the map \begin{eqnarray}w^{(j)}(x) := \mathcal{W}\h{0.5pt}\big(x_0 + k_j\h{1pt} x\big), \h{20pt}\text{ $x \in B_1$}\end{eqnarray} is well-defined. Here we still use $\mathcal{W}$ to denote either $L\h{1pt}\big[\h{0.5pt}u_{\h{1pt}b}^+\h{0.5pt}\big]$ or $L\h{1pt}\big[\h{0.5pt}u_{\h{1pt}c}^-\h{0.5pt}\big]$. Similar arguments as the proof for (3.3) implies that there exists a $w^{\infty} \in H^1\left( B_1; \mathbb{S}^4\right)$ so that up to a subsequence   \begin{eqnarray} (1). \h{3pt} w^{(j)} \longrightarrow w^{\infty} \h{10pt}\text{weakly in $H^1\left(B_1; \mathbb{S}^4\right)$;} \h{20pt}(2). \h{3pt} w^{(j)} \longrightarrow w^{\infty} \h{10pt}\text{strongly in $L^2\left(B_1; \mathbb{S}^4\right)$.}
\end{eqnarray}Moreover  $w^{\infty}$ is a $0$-homogeneous harmonic map from $B_1$ to $\mathbb{S}^4$. By a similar construction as in the proof of convergence theorem 5.5 in [31], it holds, for all $R \in (0, 1)$, that \begin{eqnarray}\limsup_{j \rightarrow \infty}\h{1.5pt} k_j^{-1} \int_{B_{k_j R}(x_0)} \left|\h{1pt}\nabla \mathcal{W} \h{1pt}\right|^2 \h{2pt}\leq\h{2pt} \int_{B_R} \left|\h{1pt}\nabla w^{\infty}\h{1pt}\right|^2.
\end{eqnarray}On the other hand lower semi-continuity and (1) in (4.2) infers \begin{eqnarray*}\liminf_{j \rightarrow \infty} k_j^{-1} \int_{B_{k_j R}(x_0)} \left|\h{1pt}\nabla \mathcal{W} \h{1pt}\right|^2 \h{2pt}=\h{2pt} \liminf_{j \rightarrow \infty} \int_{B_R} \left|\h{1pt}\nabla w^{(j)} \h{1pt}\right|^2 \h{2pt}\geq\h{2pt} \int_{B_R} \left|\h{1pt}\nabla w^{\infty} \h{1pt}\right|^2.
\end{eqnarray*}Combining the last lower bound and (4.3), we get $w^{(j)}\longrightarrow w^{\infty}$,  strongly in $H^1\left(B_R; \mathbb{S}^4\right)$, as $j \rightarrow \infty$. Since $R$ is an arbitrary number in $(0, 1)$, it then holds \begin{eqnarray} w^{(j)}\longrightarrow w^{\infty} \h{20pt}\text{strongly in $H^1_{\mathrm{loc}}\left(B_1; \mathbb{S}^4\right)$, as $j \rightarrow \infty$.}
\end{eqnarray}
\
\\
\textbf{\large IV.1. \small MINIMALITY OF THE LIMITING MAP}\vspace{1pc}\\
In this section we study the minimality of $w^{\infty}$ with respect to the Dirichlet energy. The main result is \begin{prop}For any  $R \in (\h{0.5pt}0, 1 \h{0.5pt})$, $w^{\infty}$ saturates the minimal value of the following minimization problem: \begin{eqnarray} \mathrm{Min} \left\{ \h{2pt} \int_{B_R} \big|\h{1pt}\nabla w \h{1pt}\big|^2 \h{3pt}\Bigg|\h{3pt}\begin{array}{lcl} w \in H^1\big(B_R; \h{1pt}\mathbb{S}^4\big); \h{20pt}w = w^{\infty} \h{5pt}\text{on $\p B_R$;} \vspace{0.5pc}\\
w = L\left[\h{0.5pt}u\h{0.5pt}\right], \h{2pt}\text{for some $3$-vector field $u = u\left(r,  \varphi \right)$} \end{array} \right\}.
\end{eqnarray}
\end{prop}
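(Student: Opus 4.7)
The approach is to transfer the minimality of $u_b^+$ (resp.\ $u_c^-$) in the Signorini class $\mathscr{F}_{b,+}$ (resp.\ $\mathscr{F}_{c,-}$) from $\mathcal{W}$ down to $w^{\infty}$ by blow-up and an equivariant Luckhaus-type comparison. The decisive simplification is that $x_0 = (0,0,z_0)$ with $z_0 > 0$ lies strictly off the free boundary $T$: once $j$ is large enough that $k_j R < z_0$, the ball $\overline{B_{k_j R}(x_0)}$ is properly contained in $B_1^+ \setminus T$, so any $\mathscr{R}$-axially symmetric modification of $\mathcal{W}$ supported in this ball is still admissible for the minimization in (1.12), and hence can be tested against $\mathcal{W}$.

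Fix $R \in (0,1)$ and an admissible competitor $w = L[u]$ on $B_R$ with $w = w^{\infty}$ on $\p B_R$. I build a radial interpolant between $w^{(j)}$ and $w$ on a thin spherical shell. From the strong $H^1_{\mathrm{loc}}$-convergence (4.4) together with Fubini, one selects $s_j \uparrow R$ so that $w^{(j)}|_{\p B_{s_j}} \to w^{\infty}|_{\p B_{s_j}} = w|_{\p B_{s_j}}$ strongly in $H^1(\p B_{s_j})$, and hence, by the two-dimensional Sobolev embedding combined with uniform energy bounds, pointwise uniformly on $\p B_{s_j}$. Radial linear interpolation in $\mathbb{R}^5$ between $w^{(j)}|_{\p B_R}$ and $w|_{\p B_{s_j}}$ therefore stays in a small tubular neighbourhood of $\mathbb{S}^4$, and nearest-point projection $\Pi_{\mathbb{S}^4}$ produces an $\mathbb{S}^4$-valued map on the annulus. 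Because $L$ is $\mathbb{R}^3 \to \mathbb{R}^5$ linear and norm-preserving on each $\theta$-orbit, the class $\{L[v]\}$ is closed under convex combinations and radial normalization, so the interpolant remains of the form $L[\tilde u]$ for some axially symmetric $\tilde u$.

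Set $\widetilde w^{(j)} := w$ on $B_{s_j}$ and $\widetilde w^{(j)} :=$ the projected interpolant on $B_R \setminus B_{s_j}$; pull back to the original scale and extend by $\mathcal{W}$ outside to obtain $\widetilde{\mathcal{W}}_j \in \mathscr{F}_{b,+}$ (resp.\ $\mathscr{F}_{c,-}$). Minimality of $\mathcal{W}$, combined with a volume bound on the potential difference, gives
\begin{equation*}
  \int_{B_{k_j R}(x_0)} |\h{0.5pt}\nabla \mathcal{W}\h{0.5pt}|^2 \;\le\; \int_{B_{k_j R}(x_0)} |\h{0.5pt}\nabla \widetilde{\mathcal{W}}_j\h{0.5pt}|^2 + C\h{0.5pt}\mu\, k_j^3.
\end{equation*}
Dividing by $k_j$ (the scaling factor for Dirichlet energy in dimension three) yields
\begin{equation*}
  \int_{B_R} |\h{0.5pt}\nabla w^{(j)}\h{0.5pt}|^2 \;\le\; \int_{B_{s_j}} |\h{0.5pt}\nabla w\h{0.5pt}|^2 + \int_{B_R \setminus B_{s_j}} |\h{0.5pt}\nabla \widetilde w^{(j)}\h{0.5pt}|^2 + C\h{0.5pt}\mu\, k_j^2.
\end{equation*}
By (4.4) the left side tends to $\int_{B_R} |\nabla w^{\infty}|^2$; the annular term is controlled by the standard Luckhaus estimate in terms of $\|w^{(j)} - w\|_{H^1(\p B_{s_j})}$ and the thickness $R - s_j$, both of which vanish as $j \to \infty$. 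Letting $j \to \infty$ gives $\int_{B_R} |\nabla w^{\infty}|^2 \le \int_{B_R} |\nabla w|^2$, which is the claim.

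The main obstacle is the equivariant Luckhaus construction: the interpolant must simultaneously join $w^{(j)}$ to $w$ with vanishing annular energy, take values in $\mathbb{S}^4$, and lie in the ansatz $L[\tilde u]$ so that its pull-back is admissible in the Signorini class. The first two requirements are handled by choosing $s_j$ via slicing, which gives the pointwise closeness of $w^{(j)}$ to $w^{\infty}$ on $\p B_{s_j}$ needed to keep $\Pi_{\mathbb{S}^4}$ smooth on the relevant range. The third reduces to the linearity and norm-preservation of $L$, which make convex combinations and $\mathbb{S}^4$-projections automatically preserve the ansatz.
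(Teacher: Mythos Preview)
Your overall strategy---a Luckhaus-type comparison on a thin annulus, pulled back to the original scale and tested against the minimality of $\mathcal{W}$ in $\mathscr{F}_{b,+}$ (or $\mathscr{F}_{c,-}$)---is exactly the paper's approach, and your observation that the ansatz $w=L[u]$ is preserved under convex combination and $\mathbb{S}^4$-projection is correct and is what keeps the comparison admissible.

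The gap is the uniform-convergence step. The embedding $H^1(S^2)\hookrightarrow C^0(S^2)$ is \emph{false}---dimension two is the borderline case---so strong $H^1(\partial B_{s_j})$-convergence together with uniform energy bounds do not yield pointwise uniform closeness of $w^{(j)}$ to $w^\infty$ on the slice. Without that closeness the projection $\Pi_{\mathbb{S}^4}$ of the linear interpolant need not be well-defined, and the annular energy need not vanish. The paper supplies this via Lemma~4.2: on a good slice $\partial B_{\sigma_2}$ the map $u^{(j)}(\sigma_2,\cdot)$ depends only on $\varphi$, and the full slice energy $\int_0^\pi e_2[u^{(j)}-u^\infty]$ carries the singular weight $\sin^{-1}\varphi$ in front of $4u_1^2+u_3^2$; the estimate (4.6) converts this into uniform control of $u_1^{(j)}-u_1^\infty$ and $u_3^{(j)}-u_3^\infty$. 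Uniform control of $u_2^{(j)}-u_2^\infty$ is more delicate (the $\sin\varphi$ weight on $|\partial_\varphi u|^2$ degenerates at the poles) and requires the unit-length constraint through (4.7) together with the $\varphi_*$-splitting carried out in (4.11)--(4.13). None of this follows from a standard Sobolev inequality. A smaller point: you write $w^\infty|_{\partial B_{s_j}}=w|_{\partial B_{s_j}}$, but the competitor $w$ agrees with $w^\infty$ only on $\partial B_R$, not on interior spheres; the paper avoids this by first extending the competitor by $w^\infty$ out to a fixed $\sigma_2\in(R,1)$ and interpolating there between $w^\infty(\cdot)$ and $w^{(j)}(\sigma_2,\cdot)$, sending $\sigma_1\to\sigma_2^-$ only at the end.
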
\noindent Before proving Proposition 4.1, we introduce some notations. Given a $3$-vector field $u = \big(u_1, u_2, u_3\big)$ on $\big[\h{0.5pt}0, \pi \h{0.5pt}\big]$, we use $e_2\left[\h{0.5pt}u\h{0.5pt}\right]$ to denote the energy density $ \left| \p_{\varphi} u \h{1pt}\right|^2\sin \varphi   + \dfrac{1}{\sin \varphi} \left( 4 u_1^2 + u_3^2 \right)$. The $\mathrm{E}_2$-energy of $u$ is the integration of $e_2\left[\h{0.5pt}u\h{0.5pt}\right]$ on $\big[\h{0.5pt}0, \pi \h{0.5pt}\big]$. Firstly we control the $L^{\infty}$-norm of $u$ in terms of its $\mathrm{E}_2$-energy. That is \begin{lemma}For any $3$-vector field $u$ on $\big[\h{0.5pt}0, \pi \h{0.5pt}\big]$ with finite $\mathrm{E}_2$-energy, it satisfies \begin{eqnarray}  u^2_1\left(\varphi\right) + u^2_3\left(\varphi\right) \h{2pt}\leq\h{2pt}2 \int_0^{\varphi} e_2\left[u\h{0.5pt}\right], \h{20pt}\text{for any $\varphi \in \big[\h{0.5pt}0, \pi \h{0.5pt}\big]$.}
\end{eqnarray}If in addition $| u \h{0.5pt}| = 1$ on $\big[\h{0.5pt}0, \pi \h{0.5pt}\big]$, then $u_2$ satisfies  \begin{eqnarray}  \bigg|\h{1pt} u_2(\varphi_2) - u_2(\varphi_1) \h{1pt}\bigg|  \h{2pt}\leq \h{2pt} \int_{\varphi_1}^{\varphi_2} e_2\big[\h{0.5pt}u\h{0.5pt}\big], \h{20pt}\text{for any $ 0 \leq \varphi_1 < \varphi_2 \leq \pi$.}
\end{eqnarray}
\end{lemma}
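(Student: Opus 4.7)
The plan for Lemma 4.2 is to use the fundamental theorem of calculus together with a weighted AM--GM inequality, with the weights chosen exactly so that the pointwise bound reproduces the energy density $e_2[u]$.

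For part (1), the first step is to observe that the finiteness of the $\mathrm{E}_2$-energy forces the endpoint values $u_1(0)=u_3(0)=0$ (otherwise $(4u_1^2+u_3^2)/\sin\varphi$ fails to be integrable near $\varphi=0$). Hence
\begin{eqnarray*}
u_1^2(\varphi)=\int_0^\varphi 2u_1\partial_\varphi u_1,\qquad u_3^2(\varphi)=\int_0^\varphi 2u_3\partial_\varphi u_3.
\end{eqnarray*}
I would then apply the elementary inequality $2|a||b|\le a^2+b^2$ with the weight $\sqrt{\sin\varphi}$ to obtain
\begin{eqnarray*}
2|u_1||\partial_\varphi u_1|\le\frac{u_1^2}{\sin\varphi}+\sin\varphi\,|\partial_\varphi u_1|^2\le e_2[u],\qquad
2|u_3||\partial_\varphi u_3|\le\frac{u_3^2}{\sin\varphi}+\sin\varphi\,|\partial_\varphi u_3|^2\le e_2[u],
\end{eqnarray*}
using $u_1^2\le 4u_1^2$ and $|\partial_\varphi u_j|^2\le|\partial_\varphi u|^2$ in the respective final steps. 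Adding the two resulting integral bounds yields (4.6).

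For part (2), the key is to exploit the sphere constraint $|u|=1$. Differentiating gives $u_1\partial_\varphi u_1+u_2\partial_\varphi u_2+u_3\partial_\varphi u_3=0$, so squaring the identity $u_2\partial_\varphi u_2=-(u_1\partial_\varphi u_1+u_3\partial_\varphi u_3)$ and applying Cauchy--Schwarz produces
\begin{eqnarray*}
u_2^2(\partial_\varphi u_2)^2\le (u_1^2+u_3^2)\bigl(|\partial_\varphi u|^2-(\partial_\varphi u_2)^2\bigr)=(1-u_2^2)\bigl(|\partial_\varphi u|^2-(\partial_\varphi u_2)^2\bigr).
\end{eqnarray*}
Rearranging the above eliminates the $u_2^2$ and $(1-u_2^2)$ partition and gives the clean pointwise estimate $(\partial_\varphi u_2)^2\le (u_1^2+u_3^2)|\partial_\varphi u|^2$. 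Taking square roots and applying AM--GM again (now with $\sqrt{\sin\varphi}$ attached to $|\partial_\varphi u|$) yields
\begin{eqnarray*}
|\partial_\varphi u_2|\le \sqrt{u_1^2+u_3^2}\,|\partial_\varphi u|\le \tfrac{1}{2}\!\left(\tfrac{u_1^2+u_3^2}{\sin\varphi}+\sin\varphi\,|\partial_\varphi u|^2\right)\le \tfrac{1}{2}e_2[u],
\end{eqnarray*}
after which (4.7) follows by integrating $|u_2(\varphi_2)-u_2(\varphi_1)|\le\int_{\varphi_1}^{\varphi_2}|\partial_\varphi u_2|$ (which in fact yields the stronger inequality with constant $1/2$ in place of $1$).

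The only mildly delicate point is the derivation of the sphere-constrained bound $(\partial_\varphi u_2)^2\le(1-u_2^2)|\partial_\varphi u|^2$; squaring and applying Cauchy--Schwarz as above avoids any division by $u_2$ and therefore works uniformly, including at points where $u_2=0$ or $u_2=\pm 1$. Once that inequality is in hand, the rest is a matter of choosing the AM--GM weights so that the output is precisely $e_2[u]$.
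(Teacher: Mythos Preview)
Your proof is correct. The paper itself omits the proof of this lemma entirely, stating only ``The proof is standard, we omit it here,'' so there is no explicit argument to compare against; your weighted AM--GM approach is exactly the kind of standard computation the authors have in mind, and as you note it in fact yields the sharper constant $\tfrac12$ in (4.7).
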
The proof is standard, we omit it here. Now we prove Proposition 4.1.
\begin{proof}[\bf Proof of Proposition 4.1] Recalling $w^{(j)}$ defined in (4.1), we can find a $3$-vector field $u^{(j)} = u^{(j)}\left(r, \varphi\right)$ on $B_1$ so that $w^{(j)} = L\big[\h{1pt}u^{(j)}\h{1pt}\big]$. Moreover $w^{\infty} = L\big[\h{1pt}u^{\infty}\h{1pt}\big]$ for some $3$-vector field $u^{\infty} = u^{\infty}\left(\varphi\right)$ on $[\h{1pt}0, \pi\h{1pt}]$. By the strong convergence (4.4), for any $R \in (\h{0.5pt}0, 1)$ fixed, there is a $\sigma_2 \in (R, 1 )$ so that \begin{eqnarray} \int_0^{\pi} e_2\left[u^{(j)} - u^{\infty}\right] \h{1pt}\mathrm{d}\h{0.5pt}\varphi \h{2pt}\bigg|_{r = \sigma_2} \longrightarrow 0, \h{20pt}\text{as $j \rightarrow \infty$.}
\end{eqnarray}Then up to a subsequence, \begin{eqnarray} u^{(j)} \longrightarrow u^{\infty} \h{20pt}\text{pointwisely on $\bigg\{\left(\sigma_2, \varphi\right): \varphi \in (\h{0.5pt}0, \pi\h{0.5pt})$ \bigg\}.}
\end{eqnarray}Utilizing (4.6) and the limit (4.8), we obtain \begin{eqnarray}\left(u_1^{(j)} - u_1^{\infty}\right)^2 + \left(u_3^{(j)} - u_3^{\infty}\right)^2 \longrightarrow 0 \h{20pt}\text{uniformly on $\bigg\{\big(\sigma_2, \varphi\big): \varphi \in \big[\h{0.5pt}0, \pi\h{0.5pt}\big] \h{1pt}\bigg\}$.}
\end{eqnarray}Let  $\varphi_*$ be an arbitrary number in $(\h{0.5pt}0, \pi / 2 )$. By (4.7), it follows \begin{eqnarray*} \left|\h{1pt}u_2^{(j)}\left(\sigma_2, \varphi\right) - u_2^{(j)}\left(\sigma_2, \varphi_*\right) \h{1pt}\right| \h{2pt}\leq\h{2pt}\int_0^{\varphi_*} e_2\left[u^{(j)}\right] \h{2pt}\bigg|_{r = \sigma_2}, \h{20pt}\text{for any $\varphi \in \big[\h{0.5pt}0, \varphi_*\big]$.}
\end{eqnarray*} In light of (4.8)-(4.9) and taking $j \rightarrow \infty$ in the last estimate, we then obtain \begin{eqnarray*} \big|\h{1pt}u_2^{\infty}\left( \varphi\right) - u_2^{\infty}\left(\varphi_*\right) \h{1pt}\big| \h{2pt}\leq\h{2pt}\int_0^{\varphi_*} e_2\left[u^{\infty}\right], \h{20pt}\text{for any $\varphi \in \big(\h{0.5pt}0, \varphi_*\big]$.}
\end{eqnarray*}The last two estimates then yield \begin{small}\begin{eqnarray} \left\|\h{1pt}u_2^{(j)}\left(\sigma_2, \cdot\right) - u_2^{\infty}\left(\cdot\right)\h{1pt}\right\|_{\infty; \h{1pt} \left(\h{0.5pt}0, \h{1pt}\varphi_*\h{0.5pt}\right]}  \h{2pt}\leq\h{2pt}  \left|\h{1pt}u_2^{(j)}\left(\sigma_2, \varphi_*\right) - u_2^{\infty}\left(\varphi_*\right)\h{0.5pt}\right| +  \int_0^{\varphi_*} e_2\left[u^{\infty}\right] + \int_0^{\varphi_*} e_2\left[u^{(j)}\right]\h{2pt}\bigg|_{r = \sigma_2}.
\end{eqnarray}\end{small}\noindent Here and in what follows, for a function defined on an interval $I$, we use $\| \cdot \|_{\infty; \h{1pt}I}$ to denote its $L^{\infty}$-norm on the interval $I$. Similarly it also holds \begin{small}\begin{eqnarray} \left\|\h{1pt}u_2^{(j)}\left(\sigma_2, \cdot\right) - u_2^{\infty}\left( \cdot\right) \h{1pt}\right\|_{\infty;\h{1pt}\left[\h{0.5pt}\pi - \varphi_*, \pi\h{0.5pt}\right)} \leq \left|\h{1pt}u_2^{(j)}\left(\sigma_2, \cdot \right) - u_2^{\infty}\left(\cdot\right)\h{1pt}\right|\h{2pt}\bigg|_{\pi - \varphi_*} + \int_{\pi - \varphi_*}^{\pi} e_2\left[u^{\infty}\right] +  \int_{\pi - \varphi_*}^{\pi} e_2\left[u^{(j)}\right]\h{2pt}\bigg|_{r = \sigma_2}.
\end{eqnarray}\end{small}\noindent As for  $\varphi_1 \in \big[\h{0.5pt}\varphi_*, \h{1pt}\pi - \varphi_*\h{0.5pt}\big]$, we have the lower bound $\sin \varphi_1 \geq \sin \varphi_*$. It induces \begin{small}\begin{eqnarray*}\left|\h{1pt}u_2^{(j)}\left(\sigma_2, \varphi_1\right) - u_2^{\infty}\left(\varphi_1\right)\h{1pt}\right| &\leq&\left|\h{1pt}u_2^{(j)}\left(\sigma_2, \varphi_*\right) - u_2^{\infty}\left(\varphi_*\right)\h{1pt}\right| \h{2pt}+\h{2pt}\dfrac{1}{\sin^{\frac{1}{2}}\varphi_*} \int_{\varphi_*}^{\varphi_1} \left| \h{1pt}\p_{\varphi}u_2^{(j)} - \p_{\varphi} u_2^{\infty} \h{1pt}\right| \sin^{\frac{1}{2}}\varphi \h{2pt}\bigg|_{r = \sigma_2} \\
\\
&\leq&\left|\h{1pt}u_2^{(j)}\left(\sigma_2, \varphi_*\right) - u_2^{\infty}\left(\varphi_*\right)\h{1pt}\right| \h{2pt}+\h{2pt}\left(\dfrac{\pi}{\sin \varphi_*}\right)^{1/2} \left(\int_{0}^{\pi} \left| \h{1pt}\p_{\varphi}u_2^{(j)} - \p_{\varphi} u_2^{\infty} \h{1pt}\right|^2 \sin \varphi \right)^{1/2} \bigg|_{r = \sigma_2}.
\end{eqnarray*}\end{small}\noindent Therefore it holds   \begin{small}\begin{eqnarray*}\left\|\h{1pt}u_2^{(j)}\left(\sigma_2, \cdot\right) - u_2^{\infty}\left(\cdot\right)\h{1pt}\right\|_{\infty; \h{1pt}\left[\h{0.5pt}\varphi_*, \pi - \varphi_*\h{0.5pt}\right]} \leq\left|\h{1pt}u_2^{(j)}\left(\sigma_2, \varphi_*\right) - u_2^{\infty}\left(\varphi_*\right)\h{1pt}\right| \h{2pt}+\h{2pt}\left(\dfrac{\pi}{\sin \varphi_*}\right)^{1/2} \left(\int_{0}^{\pi} e_2\left[ \h{0.5pt}u^{(j)} - u^{\infty} \h{0.5pt}\right] \right)^{1/2} \h{2pt}\bigg|_{r = \sigma_2}.
\end{eqnarray*}\end{small}\noindent Using the last estimate, (4.11)-(4.12) and the limits in (4.8)-(4.9), we get \begin{eqnarray*} \limsup_{j \rightarrow \infty} \left\|\h{1pt}u_2^{(j)}\left(\sigma_2, \cdot \h{0.5pt}\right) - u_2^{\infty}\left(\cdot\right)\h{1pt}\right\|_{\infty; \h{1pt}\left(\h{0.5pt}0, \pi\h{0.5pt}\right)} \h{2pt}\leq\h{2pt} 2  \int_0^{\varphi_*} e_2\left[u^{\infty}\right] + 2  \int_{\pi - \varphi_*}^{\pi} e_2\left[u^{\infty}\right] \longrightarrow 0, \h{10pt}\text{as $\varphi_* \rightarrow 0^+$.}
\end{eqnarray*}By (4.10) and the above limit, it turns out \begin{eqnarray}u^{(j)}\left(\sigma_2, \cdot \h{0.5pt}\right) \longrightarrow u^{\infty}\left(\cdot\right) \h{20pt}\text{uniformly on $\big[\h{0.5pt}0, \pi\h{0.5pt}\big]$, as $j \rightarrow \infty$}.\end{eqnarray}

Suppose that  $w^* = L\left[\h{0.5pt}u\h{0.5pt}\right]$ for some $u = u\left(r, \varphi\right)$ on $B_R$.  Moreover we assume $w^* \in H^1\left( B_R; \h{1pt}\mathbb{S}^4 \right)$ and satisfies, in the sense of trace, $w^* = w^{\infty}$ on $\p B_R$. We need to show \begin{eqnarray} \int_{B_R} \left|\h{1pt}\nabla w^{\infty} \right|^2 \h{2pt}\leq\h{2pt} \int_{B_R} \left|\h{1pt} \nabla w^* \h{1pt}\right|^2.
\end{eqnarray}Firstly we extend $w^*$ from $B_R$ to $B_{\sigma_2}$ by setting \begin{eqnarray}\widehat{w}^* = w^* \h{10pt}\text{on $B_R$;} \h{30pt} \widehat{w}^* = w^{\infty} \h{10pt}\text{ on $B_{\sigma_2} \sim B_R$}.\end{eqnarray}\noindent  Fixing a  $\sigma_1 \in \big(R, \sigma_2 \big)$, we define  \begin{eqnarray}\widetilde{w}^{(j)} = \widehat{w}^* \h{20pt}\text{ on $B_{\sigma_1}$;} \h{30pt}  \widetilde{w}^{(j)} = w^{\infty} + \dfrac{r - \sigma_1}{\sigma_2 - \sigma_1} \left( w^{(j)}(\sigma_2, \cdot) - w^{\infty} \right) \h{20pt}\text{on $B_{\sigma_2} \sim B_{\sigma_1}$.}
\end{eqnarray}In light of (4.13), the distance between $\widetilde{w}^{(j)}$ and $\mathbb{S}^4$ is uniformly small on $B_{\sigma_2}$, provided $j$ is sufficiently large. Therefore we can define our comparison map \begin{eqnarray} \widehat{w}^{(j)} = \dfrac{\widetilde{w}^{\h{1pt}(j)}}{\left|\h{1pt} \widetilde{w}^{\h{1pt}(j)} \h{1pt}\right|} \h{20pt}\text{ on $B_{\sigma_2}$.}\end{eqnarray} Moreover  it holds \begin{eqnarray} \int_{B_{\sigma_2} \sim B_{\sigma_1}} \left| \h{1pt}\nabla \widehat{w}^{(j)} \h{1pt}\right|^2 \h{3pt}\lesssim\h{3pt}  \int_{B_{\sigma_2} \sim B_{\sigma_1}} \left| \h{1pt}\nabla \widetilde{w}^{(j)} \h{1pt}\right|^2, \h{10pt}\text{provided $j$ is sufficiently large.}
\end{eqnarray}By (4.16), the energy of $\widetilde{w}^{(j)}$ on $B_{\sigma_2} \sim B_{\sigma_1}$ can be estimated by \begin{eqnarray*} \int_{B_{\sigma_2} \sim B_{\sigma_1}} \left| \h{1pt}\nabla \widetilde{w}^{(j)} \h{1pt}\right|^2 \h{3pt}\lesssim \h{3pt}\dfrac{1}{\sigma_2 - \sigma_1} \h{1pt} \big\|\h{1pt}w^{(j)} - w^{\infty} \h{1pt}\big\|^2_{\infty;\h{1pt}\p B_{\sigma_2}}  + \big(\sigma_2 - \sigma_1 \big) \int_{0}^{\pi}  e_2 \big[ u^{\infty} \big] +  e_2 \left[ u^{(j)} \right]  \h{2pt}\mathrm{d}\h{0.5pt}\varphi \h{2pt}\bigg|_{r = \sigma_2}.
\end{eqnarray*}Applying the last estimate to (4.18) and taking $j$ large, we obtain \begin{eqnarray} \int_{B_{\sigma_2} \sim B_{\sigma_1}} \left| \h{1pt}\nabla \widehat{w}^{(j)} \h{1pt}\right|^2 \h{3pt}\lesssim \h{3pt}\dfrac{1}{\sigma_2 - \sigma_1} \h{1pt} \big\|\h{1pt}w^{(j)} - w^{\infty} \h{1pt}\big\|^2_{\infty;\h{1pt}\p B_{\sigma_2}}  + \big(\sigma_2 - \sigma_1 \big) \int_{0}^{\pi}  e_2 \big[ u^{\infty} \big] +  e_2 \left[ u^{(j)} \right]  \h{2pt}\mathrm{d}\h{0.5pt}\varphi \h{2pt}\bigg|_{r = \sigma_2}.
\end{eqnarray}Since $u_b^+$ and $u_c^-$ are minimizers of the two minimization problems in (1.12), respectively, \begin{eqnarray*} \int_{B_{\sigma_2}} \left| \h{1pt}\nabla w^{(j)}\h{1pt}\right|^2 + \sqrt{2}\h{1pt}\mu\h{1pt}k_j^2 \left[ 1 - 3 \h{0.5pt}S\left[\h{0.5pt} w^{(j)} \h{0.5pt}\right] \h{0.5pt}\right]  \h{2pt}\leq\h{2pt}  \int_{B_{\sigma_2}} \left| \h{1pt}\nabla \widehat{w}^{(j)}\h{1pt}\right|^2 + \sqrt{2}\h{1pt}\mu\h{1pt}k_j^2 \left[ 1 - 3 \h{0.5pt}S\left[\h{0.5pt} \widehat{w}^{(j)} \h{0.5pt}\right] \h{0.5pt}\right].
\end{eqnarray*}By the boundedness of potential term and (4.15)-(4.17), the last estimate infers
  \begin{eqnarray*}\int_{B_{\sigma_2}} \left| \h{1pt}\nabla w^{(j)}\h{1pt}\right|^2 \h{2pt}\leq\h{2pt} c_{\mu} \h{1pt}k_j^2 + \int_{B_{\sigma_2} \sim B_{\sigma_1}}  \left| \h{1pt}\nabla \widehat{w}^{(j)}\h{1pt}\right|^2 + \int_{B_{\sigma_1} \sim B_{R}}  \left| \h{1pt}\nabla w^{\infty}\h{1pt}\right|^2 + \int_{B_{R}}  \left| \h{1pt}\nabla w^{*}\h{1pt}\right|^2.
\end{eqnarray*}Here $c_\mu$ is a positive constant depending only on $\mu$. Utilizing (4.19) and convergence in (4.4), (4.8) and (4.13), we can take $j \rightarrow \infty $ in the last estimate and get \begin{eqnarray*}\int_{B_{\sigma_2}} \left| \h{1pt}\nabla w^{\infty}\h{1pt}\right|^2 \h{2pt}\leq\h{2pt} c \h{1pt}\big( \sigma_2 - \sigma_1 \big) \int_0^{\pi} e_2\h{0.5pt}\big[\h{0.5pt}u^{\infty}\h{0.5pt}\big] + \int_{B_{\sigma_1} \sim B_{R}}  \left| \h{1pt}\nabla w^{\infty}\h{1pt}\right|^2 + \int_{B_{R}}  \left| \h{1pt}\nabla w^{*}\h{1pt}\right|^2.
\end{eqnarray*}Here $c > 0$ is a universal constant. (4.14) then follows by taking  $\sigma_1 \rightarrow \sigma_2^-$ in the above estimate.
\end{proof}\
\\
\textbf{\large IV.2. \small GENERIC SINGULARITIES}\vspace{1pc}\\
Since $w^{\infty} = L\h{0.5pt}\big[\h{0.5pt}u^{\infty}\h{0.5pt}\big]$ is a $0$-homogeneous harmonic map from $B_1$ to $\mathbb{S}^4$,  the three components of $u^{\infty}$ satisfy the ODE system (3.8). By this ODE system, there holds \begin{lemma} The limiting map $w^{\infty}$ must be one of the following harmonic maps: \vspace{0.5pc}
\\
$\mathbf{Class \h{4pt}I:}$ \h{2pt}Constant mapping $N^*$ or $- N^*$, where $N^* = (0, 0, 1, 0, 0)$; \vspace{0.5pc}\\
$\mathbf{Class \h{4pt}II:}$ \h{2pt}For some $\beta_1 \in \big(\h{0.5pt}0, \pi\h{0.5pt}\big)$, $w^{\infty} = L\h{0.5pt}\big[\h{0.5pt}u^{\infty}\h{0.5pt}\big]$ with $u^{\infty}$ equaling to  \begin{eqnarray*} \text{either }\h{10pt}\left(0, \h{3pt} \dfrac{\cos \beta_1 + \cos \varphi}{1 + \cos \beta_1 \cos \varphi}, \h{3pt}\dfrac{\sin \beta_1 \sin \varphi}{1 + \cos \beta_1 \cos \varphi}\right) \h{10pt}\text{or}\h{10pt}\left(0, \h{3pt} \dfrac{\cos \beta_1 - \cos \varphi}{1 - \cos \beta_1 \cos \varphi}, \h{3pt}\dfrac{\sin \beta_1 \sin \varphi}{1 - \cos \beta_1 \cos \varphi}\right) \h{20pt}\text{on $\big[\h{0.5pt}0, \pi \h{0.5pt}\big]$;}
\end{eqnarray*}
$\mathbf{Class \h{4pt}III:}$ \h{2pt}For some $\beta_2 \in \big(\h{0.5pt}0, \pi\h{0.5pt}\big)$, $w^{\infty} = L\h{0.5pt}\big[\h{0.5pt}u^{\infty}\h{0.5pt}\big]$ with $u^{\infty}$ equaling to   \begin{eqnarray*} \text{either} \h{2pt}\left(\dfrac{\sin \beta_2 \sin J\left(\varphi\right)}{1 + \cos \beta_2 \cos J\left(\varphi\right)}, \h{3pt} \dfrac{\cos \beta_2 + \cos J\left(\varphi\right)}{1 + \cos \beta_2 \cos J\left(\varphi\right)}, \h{3pt}0\right)\h{2pt}\text{or}\h{2pt}\left(\dfrac{\sin \beta_2 \sin J\left(\varphi\right)}{1 - \cos \beta_2 \cos J\left(\varphi\right)}, \h{3pt} \dfrac{\cos \beta_2 - \cos J\left(\varphi\right)}{1 - \cos \beta_2 \cos J\left(\varphi\right)}, \h{3pt}0\right) \h{5pt}\text{on $\big[\h{0.5pt}0, \pi \h{0.5pt}\big]$.}
\end{eqnarray*}Here $J\left(\varphi\right) := \arccos\left( \dfrac{2 \cos \varphi}{1 + \cos^2 \varphi}\right)$.

\end{lemma}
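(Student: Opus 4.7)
Since $w^{\infty} = L[u^{\infty}(\varphi)]$ is $0$-homogeneous and harmonic from $B_1$ to $\mathbb{S}^4$, the $3$-vector field $u^{\infty}$ satisfies the ODE system (3.8) with $|u^{\infty}|=1$. Smoothness of $w^{\infty}$ at the two poles of $\mathbb{S}^2$ forces $u^{\infty}_1(0)=u^{\infty}_1(\pi)=u^{\infty}_3(0)=u^{\infty}_3(\pi)=0$, and hence $u^{\infty}_2(0), u^{\infty}_2(\pi) \in \{-1, +1\}$. Harmonicity combined with the weak conformality of $L[u^{\infty}]\colon \mathbb{S}^2 \to \mathbb{S}^4$ produces the first integral $|u^{\infty}{}'|^2\sin^2\varphi = 4(u^{\infty}_1)^2 + (u^{\infty}_3)^2$ throughout $[0,\pi]$.

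The plan is a case analysis based on whether each of $u^{\infty}_1$ and $u^{\infty}_3$ is identically zero. Each component satisfies a linear second-order ODE on $(0,\pi)$ with $e_2[u^{\infty}]$ treated as a known coefficient depending only on $\varphi$, so by ODE uniqueness each is either identically zero or has only isolated zeros in $(0,\pi)$. If both vanish identically, then $u^{\infty}_2 \equiv \pm 1$, yielding Class I. If only $u^{\infty}_1 \equiv 0$, parametrize $u^{\infty} = (0, \cos\alpha, \sin\alpha)$; the first integral reduces to the separable ODE $\alpha'\sin\varphi = \pm\sin\alpha$, which integrates to $\tan(\alpha/2) = \tan(\beta_1/2)\,\tan(\varphi/2)^{\pm 1}$ for some $\beta_1 \in (0,\pi)$, and the half-angle identities produce exactly the two formulas of Class II. If only $u^{\infty}_3 \equiv 0$, writing $u^{\infty} = (\sin\gamma, \cos\gamma, 0)$, the first integral becomes $\gamma'\sin\varphi = \pm 2\sin\gamma$, which integrates to $\tan(\gamma/2) = \tan(\beta_2/2)\,\tan(\varphi/2)^{\pm 2}$; the identity $\tan(J(\varphi)/2) = \tan^2(\varphi/2)$ recasts this as $\tan(\gamma/2) = \tan(\beta_2/2)\,\tan(J/2)^{\pm 1}$, yielding Class III after the same half-angle identities. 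The factor of $2$ here originates from the weight-$2$ angular dependence $(\cos 2\theta, \sin 2\theta)$ in the first two components of $L[u^{\infty}]$; smoothness of $w^{\infty}$ at the poles (which demands $u^{\infty}_1 = O(\sin^2\varphi)$) rules out a lower exponent.

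The main obstacle is ruling out the remaining case, in which both $u^{\infty}_1$ and $u^{\infty}_3$ are not identically zero. Here the ODE reasoning alone is inconclusive and one must invoke the minimality from Proposition 4.1. Parametrize $u^{\infty} = (\sin\alpha\sin\beta, \cos\alpha, \sin\alpha\cos\beta)$; the energy density on $\mathbb{S}^2$ then splits into an $\alpha$-part and a $\beta$-part, the latter being $\beta'^2\sin^2\alpha\,\sin\varphi + 3\sin^2\alpha\sin^2\beta/\sin\varphi$. In the remaining case $\sin\alpha \not\equiv 0$ and $\sin(2\beta) \not\equiv 0$, so this $\beta$-part has a strictly positive potential contribution. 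I would construct an axially symmetric competitor $\tilde w = L[\tilde u(r,\varphi)]$ on $B_R$ by damping $\beta$ to zero on an inner ball via a radial cut-off $\chi(r)$ with $\chi(R)=1$ and $\chi \equiv 0$ on $[0,R/2]$, setting $\tilde u = (\sin\alpha\sin(\chi\beta), \cos\alpha, \sin\alpha\cos(\chi\beta))$. This competitor agrees with $w^{\infty}$ on $\partial B_R$ and reduces the potential integrand to zero on $B_{R/2}$; the hard part is a Luckhaus-type interpolation estimate showing that the kinetic cost introduced on the annulus $B_R \setminus B_{R/2}$ is dominated by this potential saving, exploiting the $0$-homogeneity of $u^{\infty}$ and an optimal choice of $\chi$. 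This strict energy decrease would contradict the minimality in Proposition 4.1 and thus exclude the remaining case, completing the classification.
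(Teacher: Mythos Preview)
Your derivations of Classes I, II, and III from the corresponding case assumptions are fine and match the paper's, including the first integral and the separable ODEs $\alpha'\sin\varphi=\pm\sin\alpha$ and $\gamma'\sin\varphi=\pm 2\sin\gamma$. The substantive divergence is in how you exclude the ``mixed'' case where neither $u_1^\infty$ nor $u_3^\infty$ vanishes identically.

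You overlook a key piece of information built into the setup: since $w^\infty$ is a blow-up of $L[u_b^+]$ or $L[u_c^-]$ at a point of $l_3^+$, and the first and third components of $u_b^+$, $u_c^-$ are nonnegative on $B_1^+$, one has $v_1:=u_1^\infty\ge 0$ and $v_3:=u_3^\infty\ge 0$ on $(0,\pi)$. With these sign conditions the mixed case is ruled out by a short Wronskian argument, with no appeal to minimality. Subtracting $v_1$ times the $v_3$-equation from $v_3$ times the $v_1$-equation in the ODE system gives
\[
\big((v_1v_3'-v_3v_1')\sin\varphi\big)'=-\dfrac{3\,v_1v_3}{\sin\varphi}\le 0 \quad\text{on }(0,\pi).
\]
Since $(v_1v_3'-v_3v_1')\sin\varphi$ vanishes at both endpoints (smoothness at the poles) and is nonincreasing, it is identically zero; hence $(v_3/v_1)'\equiv 0$ wherever $v_1>0$, so $v_3=c\,v_1$. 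Plugging back into the two equations, the different singular weights ($4/\sin^2\varphi$ versus $1/\sin^2\varphi$) force $c=0$, i.e.\ $v_3\equiv 0$, contradicting the mixed-case hypothesis. The same sign conditions also close the minor gap in your Class II/III parametrizations: once $v_3>0$ (resp.\ $v_1>0$) on $(0,\pi)$, the sign of $\alpha'$ (resp.\ $\gamma'$) cannot switch, so a single branch of the separable ODE applies globally.

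Your proposed competitor argument via Proposition~4.1 is therefore unnecessary here, and as written it is also incomplete: the claimed domination of the radial kinetic cost $\int(\chi')^2\beta^2\sin^2\alpha\, r^2\sin\varphi$ by the potential/kinetic savings is not established and is not obvious, since there is no Poincar\'e-type control of $\int\beta^2\sin^2\alpha\sin\varphi$ by $\int\beta'^2\sin^2\alpha\sin\varphi$ or $\int\sin^2\alpha\sin^2\beta/\sin\varphi$ for an arbitrary harmonic profile. In the paper, minimality is invoked only \emph{after} this lemma (in Proposition~4.4) to discard Class~III and the $\beta_1\neq\pi/2$ members of Class~II.
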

\begin{proof}[\bf Proof] In the proof we use $v$ to denote the $3$-vector field $u^{\infty}$. Since $x_0 = (\h{0.5pt}0, 0, z_0\h{0.5pt})$, where $z_0 \in \big(\h{0.5pt}0, 1\big)$, by the non-negativity of $u_{\h{1pt}b; \h{0.5pt}1}^+$, $u_{\h{1pt}b; \h{0.5pt}3}^+$, $u_{\h{1pt}c; \h{0.5pt}1}^-$ and $u_{\h{1pt}c; \h{0.5pt}3}^-$ on $B_1^+$, we have \begin{eqnarray}v_1 \geq 0, \h{20pt}v_3 \geq 0 \h{30pt}\text{ on $\big(\h{0.5pt}0, \pi\big)$.}\end{eqnarray}

If $v_1 = 0$ at some $\varphi_0 \in \big(0, \pi\big)$, then by (4.20), $\varphi_0$ is a minimum point of $v_1$. It follows $v_1'\left(\varphi_0\right) = 0$. Here and in the following $'$ denotes the derivative with respect to the variable $\varphi$. By applying uniqueness result for ordinary differential equation to the equation of $v_1$ in (3.8), it turns out $v_1 \equiv 0$ on $\big[\h{0.5pt}0, \pi \h{0.5pt}\big]$. If in addition $v_3 = 0$ at some point in   $\big(\h{0.5pt}0, \pi\h{0.5pt}\big)$, then similarly we have $v_3 \equiv 0$ on $\big[\h{0.5pt}0, \pi \h{0.5pt}\big]$. Here we just need to use the equation of $v_3$ in (3.8). In this case, $w^{\infty}$ is the constant mapping $N^*$ or $- N^*$. If $v_1 \equiv 0$ on $\big[\h{0.5pt}0, \pi \h{0.5pt}\big]$ and $v_3 > 0$ on $\big(\h{0.5pt}0, \pi \h{0.5pt}\big)$, then we can represent $v$ by \begin{eqnarray} v = \big(0, v_2, v_3\big) = \big(0, \cos \alpha, \h{1pt}\sin \alpha\big), \h{20pt}\text{ where $\alpha = \arccos v_2$.}\end{eqnarray}Plugging the representation (4.21) into (3.9), we get \begin{eqnarray*}\left( \alpha' \right)^2 = \left(\dfrac{\sin \alpha}{\sin \varphi}\right)^2 \h{20pt}\text{on $\big(\h{0.5pt}0, \pi\h{0.5pt}\big)$.}
\end{eqnarray*}The sign of $\alpha'$ does not change on $\big(\h{0.5pt}0, \pi \h{0.5pt}\big)$. Otherwise there will be a $\varphi_1 \in \big(\h{0.5pt}0, \pi\h{0.5pt}\big)$ so that $\alpha'(\varphi_1) = 0$. Using the last equality yields $\sin \alpha(\varphi_1) = 0$. It violates the assumption that $v_3\left(\varphi_1\right) = \sin \alpha(\varphi_1) > 0$. Therefore we have from the last equality that \begin{eqnarray}\alpha' = \dfrac{\sin \alpha}{\sin \varphi} \h{10pt}\text{on $\big(\h{0.5pt}0, \pi\h{0.5pt}\big)$} \h{20pt}\text{or}\h{20pt}\alpha' = - \dfrac{\sin \alpha}{\sin \varphi} \h{10pt}\text{on $\big(\h{0.5pt}0, \pi\h{0.5pt}\big)$.}
\end{eqnarray}Solving the two ODEs in (4.22), we obtain the harmonic maps given in Class II of the lemma.

In the remaining we suppose $v_1 > 0$ on $\big(\h{0.5pt}0, \pi \h{0.5pt}\big)$. In this case we multiply $v_3$ and $v_1$ on both sides of the equations of $v_1$ and $v_3$ in (3.8), respectively. Then we subtract one of the two resulting equations from another one. By this way, it turns out\begin{eqnarray} \big( \big(v_1 v_3' - v_3 v_1' \big)\sin \varphi  \big)' \h{2pt}=\h{2pt} - \dfrac{3\h{1pt} v_1 v_3}{\sin \varphi} \h{2pt}\leq\h{2pt} 0, \h{20pt}\text{for all $\varphi \in \big(0, \pi\big)$.}
\end{eqnarray}Here we also have used (4.20) in the last inequality above. (4.23) infers that $\big(v_1 v_3' - v_3 v_1'\big)\h{1pt}\sin \varphi$ is non-increasing on $\big(\h{0.5pt}0, \pi\h{0.5pt}\big)$. The quantity $\big(v_1 v_3' - v_3 v_1'\big)\sin \varphi$ equals to $0$ when $\varphi = 0$ and $\pi$. Therefore we have  $\left(v_1v_3' - v_3 v_1'\right)\sin \varphi  \equiv 0$,  for all $\varphi \in \big(0, \pi\big)$.  Since $v_1 > 0$ on $\big(\h{0.5pt}0, \pi\h{0.5pt}\big)$, it follows  \begin{eqnarray*}\left( \dfrac{v_3}{v_1}\right)' \equiv 0, \h{20pt}\text{for all $\varphi \in \big(0, \pi\big)$.}\end{eqnarray*}Therefore $v_3 = c v_1$ on $\big(\h{0.5pt}0, \pi\h{0.5pt}\big)$, where $c$ is a constant. If $c \neq 0$, then by the equations of $v_1$ and $v_3$  in (3.8), it holds $v_1 \equiv 0$ on $\big(\h{0.5pt}0, \pi\h{0.5pt}\big)$. It violates our assumption that $v_1 > 0$ on $\big(\h{0.5pt}0, \pi\h{0.5pt}\big)$. Therefore $c = 0$. Equivalently $v_3 \equiv 0$ on $\big(\h{0.5pt}0, \pi\h{0.5pt}\big)$. Now we can represent $v$ by $v = \big(v_1, v_2, 0\big) = \big(\sin \alpha, \h{1pt}\cos \alpha, 0\big)$,  where $\alpha = \arccos v_2$. Plugging this representation of $v$ into (3.9) then yields \begin{eqnarray*} \left( \alpha' \right)^2 = \left(\dfrac{2 \sin \alpha}{\sin \varphi}\right)^2 \h{20pt}\text{on $\big(\h{0.5pt}0, \pi\h{0.5pt}\big)$.}
\end{eqnarray*}The sign of $\alpha'$ does not change on $\big(\h{0.5pt}0, \pi \h{0.5pt}\big)$. Therefore we have from the last equality that \begin{eqnarray}\alpha' = \dfrac{2 \h{0.5pt}\sin \alpha}{\sin \varphi} \h{10pt}\text{on $\big(\h{0.5pt}0, \pi\h{0.5pt}\big)$} \h{20pt}\text{or}\h{20pt}\alpha' = - \dfrac{2 \h{0.5pt}\sin \alpha}{\sin \varphi} \h{10pt}\text{on $\big(\h{0.5pt}0, \pi\h{0.5pt}\big)$.}
\end{eqnarray}Solving the two ODEs in (4.24), we obtain the harmonic maps given in Class III of the lemma.
\end{proof}

With Proposition 4.1 and Lemma 4.3, we can further determine $w^{\infty}$ at possible singularity $x_0$. \begin{prop} Suppose that $\mathcal{W} = L\h{1pt}\big[\h{0.5pt}u_{\h{1pt}b}^+\h{0.5pt}\big]$ or $L\h{0.5pt}\big[\h{0.5pt}u_{\h{1pt}c}^-\h{0.5pt}\big]$. Moreover we assume that  $x_0 \in B_1^+ \cap \h{1pt} l_3$ is a singular point of $\mathcal{W}$. Then the tangent map $w^{\infty}$ of $\mathcal{W}$ at $x_0$ must  be either  $\Lambda^+$ or $\Lambda^-$ in (1.14).
\end{prop}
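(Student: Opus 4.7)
\noindent My plan is to use Lemma 4.3, which restricts the candidates for $w^{\infty}$ to Class I, II, or III, and then eliminate all cases except $\Lambda^+$ and $\Lambda^-$ by combining the minimality from Proposition 4.1 with an interior $\varepsilon$-regularity theorem.

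First I would rule out Class I. If $w^{\infty} \equiv \pm N^*$, then by the strong $H^1_{\mathrm{loc}}$-convergence $w^{(j)}\to w^{\infty}$ established in (4.4), we have $k_j^{-1}\int_{B_{k_j R}(x_0)} |\nabla \mathcal{W}|^2 \to 0$ for every $R\in(0,1)$. An $\varepsilon$-regularity result at interior points of $l_3$ analogous to Proposition 3.1 (available because, by Proposition 2.1, $\mathcal{W}$ solves (1.13) weakly in a neighbourhood of $x_0$, so the Rivi\`{e}re--Struwe framework applies) then forces $\mathcal{W}$ to be smooth at $x_0$, contradicting that $x_0$ is a singular point. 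To rule out Class III I would observe that $u_3^{\infty}\equiv 0$ confines the image to the totally geodesic $2$-sphere $\{w_4=w_5=0\}\cap \mathbb{S}^4$, and because the $L$-operator encodes the azimuthal direction by $\cos 2\theta,\sin 2\theta$, the target is wrapped twice. A direct integration (analogous to the calculation giving $8\pi$ for Class II) yields Dirichlet energy exactly $16\pi$ on $B_1$. Since $\pi_2(\mathbb{S}^4)=0$, the boundary trace $w^{\infty}|_{\partial B_1}$ is null-homotopic in $\mathbb{S}^4$, so I would construct an axially-symmetric competitor $w^* = L[u^*]$ with the same trace as $w^{\infty}$ but strictly smaller energy by introducing a compactly supported $u_3^*$-perturbation near the equator which activates the $(w_4,w_5)$-components; a Brezis--Coron style bubbling computation makes the energy drop explicit and contradicts Proposition 4.1.

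The main obstacle is the final step: within Class II, every map has the common reduced energy $8\pi$, so Proposition 4.1 alone cannot distinguish the parameter $\beta_1$. To force $\beta_1=\pi/2$ I plan to combine two structural inputs. The first is boundary matching: $w^{\infty}|_{\partial B_1}$ is the limit of the rescaled profiles $\mathcal{W}|_{\partial B_{k_j}(x_0)}$, whose pole values along $l_3^\pm$ are forced by the finite-energy constraint $u_{b;1}^+=u_{b;3}^+=0$ on $l_3$ to be $\pm 1$, jumping sign across the singularity $x_0$. The second is the non-negativity $u_{b;1}^+,u_{b;3}^+\ge 0$ on $B_1^+$, which is preserved under rescaling around $x_0=(0,0,z_0)$ with $z_0>0$ and so transfers to $u^{\infty}$. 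Using these together with an explicit interpolation within the $L[u]$-class, I would show that any Class II profile with $\beta_1\neq \pi/2$ admits a strictly lower-energy axially-symmetric competitor—necessarily one that activates $u_1^*\neq 0$ and thereby leaves the degenerate locus $\{u_1=0\}$ containing Class II—contradicting minimality. This uniqueness identification is the delicate point; once established, the only remaining candidates are $\Lambda^\pm$, with the sign determined by which of $N^*$ or $-N^*$ is the limiting value of $\mathcal{W}$ immediately above $x_0$ along $l_3^+$.
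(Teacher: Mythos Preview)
Your elimination of Class I is correct and matches the paper exactly. Your treatment of Class III is in the right spirit but vaguer than what the paper does: rather than a Brezis--Coron bubbling argument, the paper takes the explicit tangential perturbation
\[
P_* = \big(0,\,0,\,0,\,f(r)\sin\alpha_\star\cos\theta,\,f(r)\sin\alpha_\star\sin\theta\big),
\]
computes the Hessian of the Dirichlet energy at $w^\infty$ in this direction, and reduces the sign question to the one-dimensional inequality $\int_0^1 (f')^2 r^2 - 3\int_0^1 f^2 < 0$, which is exactly Lemma~1.3 of [58]. This is a second-variation computation, not a bubbling construction; your description could be made to work but would require more effort than the two-line calculation the paper gives.

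The real gap is your Class II argument for $\beta_1\neq\pi/2$. Activating $u_1^*\neq 0$ is not the paper's method and is unlikely to lower energy: the $u_1$-component carries the heavier weight $4/\rho^2$ in the $E$-density, so pushing mass into $u_1$ is expensive, not cheap. Your ``boundary matching'' observation does not help either, since for every $\beta_1\in(0,\pi)$ the Class~II profiles satisfy $u_2^\infty(0),u_2^\infty(\pi)\in\{+1,-1\}$ with opposite signs at the two poles; the parameter $\beta_1$ only controls the interior shape, not the pole values. The paper instead uses the Brezis--Coron--Lieb center-shifting trick (Theorem~7.3 of [11]): given the boundary trace $w^\infty|_{\partial B_1}$, one builds a $0$-homogeneous competitor centred at a point $x_*\in l_3\cap B_1^+$ slightly off the origin. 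When $\cos\beta_1\neq 0$ the boundary data is ``unbalanced'' and this translated cone has Dirichlet energy strictly less than $8\pi$ for $x_*$ close enough to $0$, contradicting Proposition~4.1. This competitor stays entirely within the $u_1\equiv 0$ sector and is admissible for (4.5); your proposed competitor does not.
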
\begin{proof}[\bf Proof] Since $w^{\infty}$ is $0$-homogeneous, we take $R$ in Proposition 4.1 to be $1$. If $w^{\infty}$ is a constant map, then by (4.4) and standard $\epsilon$-regularity result (similar to Proposition 3.1), $x_0$ is a smooth point of $\mathcal{W}$. This contradicts our assumption that $x_0$ is a singularity of $\mathcal{W}$. If $w^{\infty}$ is a harmonic map in Class II of Lemma 4.3 with $\cos \beta_1 \neq 0$, then we fix a $x_* \in B_1^+ \cap\h{1pt}l_3$. Using the value of $w^{\infty}$ on $\p B_1$, we can construct a $0$-homogeneous map with respect to the center $x_*$. Similarly as  the proof of Theorem 7.3 in [11], the Dirichlet energy on $B_1$ of this constructed $0$-homogeneous map is strictly less than $8 \pi$, provided that $x_*$ is sufficiently close to $0$. The closeness depends on $\cos \beta_1$. Therefore $w^{\infty}$ cannot saturate the minimal energy of (4.5) since the Dirichlet energy of $w^{\infty}$ on $B_1$ equals to $8 \pi$. We get a contradiction to Proposition 4.1. Hence $w^{\infty}$ can only be $\Lambda^+$, $\Lambda^-$  or the harmonic maps in Class III of Lemma 4.3. In the remaining we show that  $w^{\infty}$ cannot be a harmonic map in Class III of Lemma 4.3. For our case, we can use an easier construction than the one given in the proof of Theorem 7.4 in [11]. Suppose that $w^{\infty} = L\h{0.5pt}\left[\h{0.5pt}u^{\infty}\h{0.5pt}\right]$ is a harmonic map in Class III of Lemma 4.3. Denoting by $\alpha_{\star}$ the angular function $\arccos u^{\infty}_2$, we represent $u^{\infty}$ by $\big(\sin \alpha_{\star}, \h{2pt}\cos \alpha_{\star}, \h{2pt}0\big)$. Let $f$ be a radial scalar function with $f(1) = 0$ and $P_*$ be a perturbation which takes the form \begin{eqnarray} P_*  = \big(\h{0.5pt}0, \h{3pt}0, \h{3pt}0, \h{3pt}f(r)\h{0.5pt}\sin \alpha_{\star} \h{0.5pt}\cos \theta, \h{3pt}f(r)\h{0.5pt}\sin \alpha_{\star} \h{0.5pt} \sin \theta\h{0.5pt}\big).
\end{eqnarray}If we denote by $\mathrm{Hess}^{\h{1pt}D}_{\h{1pt}w^{\infty}}\big[\h{0.5pt}\cdot, \cdot \h{0.5pt}\big]$ the Hessian of Dirichlet energy at $w^{\infty}$, it then turns out \begin{eqnarray*} \mathrm{Hess}^{\h{1pt}D}_{\h{1pt}w^{\infty}}\big[\h{0.5pt}P_*,  P_* \h{0.5pt}\big] = 2 \int_{B_1} \big|\h{1pt} \nabla P_* \h{0.5pt}\big|^2 - \big|\h{0.5pt} P_* \h{0.5pt}\big|^2 \h{2pt} \big|\h{1pt} \nabla w^{\infty} \h{0.5pt} \big|^2.
\end{eqnarray*}By (4.25) and in light that $\alpha_{\star}$ satisfies either the first or the second equation in (4.24),  it follows \begin{eqnarray*} \dfrac{1}{2 \pi} \int_{B_1} \big|\h{0.5pt} \nabla P_* \h{0.5pt}\big|^2 - \big|\h{0.5pt} P_* \h{0.5pt} \big|^2 \h{1pt} \big|\h{0.5pt} \nabla w^{\infty}\h{0.5pt} \big|^2 \h{2pt}=\h{2pt}  \int_0^{\pi} \sin^2 \alpha_{\star} \sin \varphi \h{1pt} \int_0^{1} \big(\h{0.5pt}f'\h{0.5pt}\big)^2 \h{0.5pt}  r^2 - 3 \int_0^1 f^2 \h{2pt}\leq\h{2pt}  \int_0^{1} \big(\h{0.5pt}f'\h{0.5pt}\big)^2 \h{0.5pt}  r^2 - 3 \int_0^1 f^2.
\end{eqnarray*}Using Lemma 1.3 in [58], we can find a $f$ compactly supported in the interval $\big(0, 1\big)$ so that the most-right-hand side above is strictly negative. The proof then completes.
\end{proof}\
\\
\textbf{\large IV.3. \small STRUCTURE OF SINGULARITY}\vspace{1pc}\\
By the same arguments as in the proof of Regularity theorem 6.4 in [31], it holds \begin{lemma}The set of singularities of $\mathcal{W}$ is discrete.\end{lemma}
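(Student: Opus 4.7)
The plan is to adapt the standard Schoen--Uhlenbeck dimension-reduction argument, as employed in [31, Theorem 6.4], to the present setting, with Proposition 4.4 supplying the classification of tangent maps at singularities on $l_3$. All the regularity-away-from-$l_3$ machinery is already in hand: by the H\'elein--Rivi\`ere--Struwe arguments together with [49], $\mathcal{W}$ is smooth on $B_1 \setminus (l_3 \cup T)$; Proposition 2.1 supplies smoothness on $T^{\circ}$; and Proposition 3.2, combined with the symmetric-extension trick indicated in the introduction, gives smoothness at the origin and at the two poles. Hence the singular set $\Sigma(\mathcal{W})$ is contained in $(l_3 \cap \overline{B_1}) \setminus \{0, \pm N^*\}$ and is symmetric across the $x_1$-$x_2$ plane.

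Suppose, for contradiction, that $\Sigma(\mathcal{W})$ fails to be discrete. Then, after a reflection if necessary, some $x_* \in l_3 \cap B_1^+ \setminus \{0, N^*\}$ is an accumulation point and, by axial symmetry, I may choose a sequence $x_n \in \Sigma(\mathcal{W}) \cap l_3^+$ with $r_n := |x_n - x_*| \downarrow 0$. Form the blow-up sequence $w^{(n)}(y) := \mathcal{W}(x_* + r_n y)$, in direct analogy with (4.1). A monotonicity identity centered at $x_*$, derived exactly as (3.1) from comparison against radial reparametrizations supported in small balls around $x_*$ (where the Signorini constraint on $T$ is inactive because $x_*$ is bounded away from $T$), together with the comparison-map construction of Proposition 4.1 applied at $x_*$, yields along a subsequence a strong $H^1_{\mathrm{loc}}(B_1)$ limit $w^*$ which is a $0$-homogeneous tangent map of $\mathcal{W}$ at $x_*$. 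Proposition 4.4 then forces $w^* \in \{\Lambda^+, \Lambda^-\}$ if $x_*$ itself is singular, while the $\epsilon$-regularity of Proposition 3.1 forces $w^*$ to be a constant map otherwise.

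Set $y_n := (x_n - x_*)/r_n$, which lies in $\mathbb{S}^2 \cap l_3 = \{\pm e_z\}$; after a further subsequence $y_n \equiv y_*$ for some $y_* \in \{\pm e_z\}$. The crucial transfer step is to show that $y_*$ is a singular point of $w^*$: for each small $\rho > 0$,
\[
\rho^{-1}\!\int_{B_\rho(y_*)} |\nabla w^*|^2 \;\geq\; \liminf_{n\to\infty} \rho^{-1}\!\int_{B_\rho(y_n)} |\nabla w^{(n)}|^2 \;=\; \liminf_{n\to\infty} (\rho\, r_n)^{-1}\!\int_{B_{\rho r_n}(x_n)} |\nabla \mathcal{W}|^2 \;\geq\; \epsilon_0,
\]
where $\epsilon_0$ is the $\epsilon$-regularity threshold of Proposition 3.1 applied at the singular point $x_n$; indeed, monotonicity together with the fact that $x_n$ is not a regular point forces the renormalized energy on every ball centered at $x_n$ to exceed $\epsilon_0$. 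Letting $\rho \to 0^+$ shows that $y_*$ is a singularity of $w^*$ at a point with $|y_*|=1$. This rules out the constant case immediately, and since $\Sigma(\Lambda^\pm) = \{0\}$ it likewise rules out $w^* \in \{\Lambda^+, \Lambda^-\}$, producing the desired contradiction.

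The main technical obstacle is ensuring that all three ingredients---the monotonicity formula, the minimality-based comparison-map construction giving strong $H^1_{\mathrm{loc}}$-convergence to a tangent map, and the upper semi-continuity of the renormalized energy density---transfer cleanly to a center $x_* \in l_3^+ \setminus \{0, N^*\}$ rather than the origin. Because such an $x_*$ is strictly interior to $B_1^+$ and bounded away from $T$, the one-sided obstacle plays no role in any sufficiently small ball around $x_*$, and the lower-order potential term rescales as $r_n^2$ and hence is absorbed in the blow-up; with those observations the three ingredients reduce to their classical forms and the argument proceeds exactly as in [31].
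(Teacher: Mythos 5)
Your proposal is correct and reconstructs precisely the Federer/Schoen--Uhlenbeck dimension-reduction argument that the paper invokes by citing Regularity Theorem 6.4 of [31]: blow up at an accumulation point $x_*$ of singularities on $l_3$, use minimality-induced strong $H^1_{\mathrm{loc}}$-convergence and upper semicontinuity of the rescaled density to transfer a singularity of $\mathcal{W}$ onto the unit sphere in the tangent map, and contradict the classification (constant or $\Lambda^\pm$, both smooth away from the origin) from Propositions 3.1 and 4.4. The only cosmetic point is that the blow-up should be normalized (e.g.\ take $r_n = 2|x_n - x_*|$ or work on $B_2$) so that the rescaled singularity $y_*$ lands in the interior of the domain on which the strong convergence holds, but this is standard and does not affect the argument.
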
\noindent In this section we consider the asymptotic behaviour of $\mathcal{W}$ near each singularity. Firstly we show the uniqueness of tangent map of $\mathcal{W}$ at a possible singularity. That is \begin{lemma}Let $\mathcal{W}$ be either $L\big[\h{0.5pt}u_{\h{1pt}b}^+\h{0.5pt}\big]$ or $L\h{0.5pt}\big[\h{0.5pt}u_{\h{1pt}c}^-\h{0.5pt}\big]$. If $x_0 \in B_1^+ \cap\h{1pt}l_3$ is a singular point of $\mathcal{W}$, then it holds \begin{eqnarray*}   r^{-3} \int_{B_r\left(x_0\right)} \big|\h{1pt} \mathcal{W} -  \Lambda \left( \cdot - x_0 \right) \h{1pt}\big|^2 + r^{-1} \int_{B_r\left(x_0\right)} \big|\h{1pt}\nabla \mathcal{W} - \nabla \Lambda \left( \cdot - x_0 \right) \h{1pt}\big|^2 \longrightarrow 0, \h{20pt}\text{as $r \rightarrow 0^+$,}
\end{eqnarray*}where $\Lambda = \Lambda^+$ or $\Lambda^-$ in (1.14). Moreover $\Lambda$ is uniquely determined by $x_0$, the location of singularity.
\end{lemma}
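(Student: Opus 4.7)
For any sequence $k_j \to 0^+$, the arguments leading up to (4.4) show that the rescaled maps $w^{(j)}(x) = \mathcal{W}(x_0 + k_j x)$ converge strongly in $H^1_{\mathrm{loc}}(B_1)$, along a subsequence, to a $0$-homogeneous harmonic tangent map $w^{\infty}: B_1 \to \mathbb{S}^4$, which by Proposition 4.4 must be either $\Lambda^+$ or $\Lambda^-$. What Lemma 4.6 adds is that this limit $\Lambda \in \{\Lambda^+,\Lambda^-\}$ is the \emph{same} along every vanishing sequence, together with the stated quantitative scale-invariant decay. My plan is to apply the tangent-map uniqueness theorem of L.~Simon for locally energy-minimizing harmonic maps into analytic targets with integrable tangent map. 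Two hypotheses must be verified: (i) $\mathcal{W}$ is locally $\mathbb{S}^4$-valued energy-minimizing on a punctured ball $B_\rho(x_0) \setminus \{x_0\}$; (ii) the tangent maps $\Lambda^\pm$ are integrable.

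For (i), I exploit that $x_0 \in l_3 \cap B_1^+$ lies in the open upper half-ball, strictly away from the free boundary $T$, so I can pick $\rho > 0$ with $B_\rho(x_0) \cap T = \emptyset$. Then the Signorini obstacles defining the minimization problems in (1.12) are inactive on $B_\rho(x_0)$, which means $\mathcal{W}$ is a free minimizer of $E_\mu$ under all $\mathcal{R}$-axially symmetric variations compactly supported there. An $O(2)$-equivariant symmetrization of an arbitrary $\mathbb{S}^4$-valued $H^1$-competitor (constructed via the rotation action about $l_3$ combined with a projection back onto the $L[\cdot]$-form of (1.10), in the same spirit as Step 1 of the proof of Lemma 2.11) shows that no such competitor has smaller Dirichlet energy, giving $\mathbb{S}^4$-valued local minimality on $B_\rho(x_0) \setminus \{x_0\}$.

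Hypothesis (ii) is verified by noting that $\Lambda^\pm$ are, up to orientation on the domain, the standard totally geodesic isometric embedding of $\mathbb{S}^2$ onto the equatorial $2$-sphere $\{w \in \mathbb{S}^4 : w_1 = w_2 = 0\}$. The Jacobi kernel of such an embedding splits into a tangential part spanned by the M\"obius conformal fields on the domain (integrating via the conformal group of $\mathbb{S}^2$) and a normal part spanned by the two constant fields $\partial_{w_1}, \partial_{w_2}$ (integrating via $\mathrm{SO}(5)$-rotations of the target $\mathbb{S}^4$). Each Jacobi field thus extends to a smooth one-parameter family of harmonic maps, so $\Lambda^\pm$ are integrable in Simon's sense. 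Granting (i)--(ii), Simon's theorem produces a unique $\Lambda \in \{\Lambda^+,\Lambda^-\}$ for which
\[
\int_0^{r_0} r^{-1}\, \bigl\| \mathcal{W}(x_0 + r\cdot) - \Lambda \bigr\|_{L^2(\mathbb{S}^2)} \, dr \;<\; \infty ;
\]
squaring and integrating radially gives $r^{-3} \int_{B_r(x_0)} |\mathcal{W} - \Lambda(\cdot - x_0)|^2 \to 0$, and combining this $L^2$ decay with the monotonicity identity of type (3.1) re-centered at $x_0$, together with interior elliptic regularity for harmonic maps into analytic manifolds, upgrades the decay to the $H^1$-scale-invariant statement $r^{-1} \int_{B_r(x_0)} |\nabla \mathcal{W} - \nabla \Lambda(\cdot - x_0)|^2 \to 0$.

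The main obstacle is the unconstrained local minimality in (i). Since $\mathbb{S}^4$ is non-linear, an $\mathcal{R}$-axial symmetrization of a general $\mathbb{S}^4$-valued competitor cannot be obtained by componentwise averaging; instead one must push forward along the $O(2)$-action and either argue directly that the resulting map remains admissible, or lift a non-equivariant competitor to an equivariant one by a Karcher-type mean while controlling the Dirichlet energy, exploiting the fact that the non-equivariant competitor is forced to take values in a small geodesic ball around $\Lambda^\pm$ for scales $\rho$ small enough (thanks to the strong $H^1_{\mathrm{loc}}$ convergence in (4.4) and Lemma 4.3). Once this symmetrization step is carried out, the verification of (ii) is a short direct computation with the Jacobi operator on a totally geodesic $\mathbb{S}^2 \subset \mathbb{S}^4$, and the application of Simon's theorem to conclude is standard.
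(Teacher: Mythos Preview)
Your route via Simon's uniqueness theorem is genuinely different from the paper's, and considerably heavier. The paper exploits a feature specific to this problem: on the axis $l_3$, the components $u_1$ and $u_3$ vanish, so $\mathcal{W}$ can only take the two values $\pm N^*$ there. Once one tangent map along some sequence is known to be, say, $\Lambda^+$, the uniform convergence (4.13) forces $\mathcal{W}=N^*$ at a point $x_j^+$ just above $x_0$ on $l_3$ and $\mathcal{W}=-N^*$ at a point $x_j^-$ just below. Since singularities are discrete (Lemma~4.5) and $\mathcal{W}$ is continuous on $l_3$ away from them, $\mathcal{W}\equiv N^*$ on the whole open segment between $x_0$ and $x_j^+$. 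If some other sequence produced the tangent map $\Lambda^-$, the same reasoning would place a point with $\mathcal{W}=-N^*$ on that segment, a contradiction. This two–value trick on the axis settles uniqueness with no analysis of Jacobi fields or minimality beyond what is already in Proposition~4.1 and (4.4); the scale-invariant $L^2$ and $H^1$ convergence then follow because every sequence has a subsequence converging to the \emph{same} limit.

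Your approach has two real gaps. First, hypothesis~(i) --- unrestricted local minimality of $\mathcal{W}$ among \emph{all} $\mathbb{S}^4$-valued competitors --- is not available from the paper and is not established by your sketch. The minimization problems (1.12) and Proposition~4.1 only give minimality within the $\mathscr{R}$-axially symmetric class $L[u]$; upgrading this to full minimality by an $O(2)$-symmetrization or Karcher mean is a well-known difficulty for sphere-valued maps, and there is no general energy-decreasing equivariant projection onto the $L[\cdot]$-ansatz. Second, your computation in~(ii) is off: for the totally geodesic $\mathbb{S}^2\subset\mathbb{S}^4$, a normal Jacobi field $V$ satisfies $\Delta_{\mathbb{S}^2}V+2V=0$, so the normal kernel consists of restrictions of linear functions (a six-dimensional space), not the constant fields $\partial_{w_1},\partial_{w_2}$. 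These first eigenfunctions do integrate via $SO(5)$-rotations, so integrability is ultimately true, but your description of the kernel is incorrect. Given that the paper's argument is a few lines of elementary reasoning on the axis, invoking Simon here is both unnecessary and, as written, incomplete.
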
\begin{proof}[\bf Proof] Recalling  the blow-up sequence (4.1), by Proposition 4.4, we know that the limit $w^{\infty}$ in (4.4) satisfies $w^{\infty} = L\h{0.5pt}\big[\h{0.5pt}u^{\infty}\h{0.5pt}\big]$  with $u^{\infty} = \big(0, \h{2pt}\cos \varphi, \h{2pt}\sin \varphi \big)$ or $\big(0, \h{2pt}- \cos \varphi, \h{2pt}\sin \varphi \big)$.  Suppose that $u^{\infty} = \big(0, \h{2pt}\cos \varphi, \h{2pt}\sin \varphi \big)$. Then by the uniform convergence in (4.13), it follows $u^{(j)}\big(\sigma_2, 0\h{0.5pt}\big) \longrightarrow \big(\h{0.5pt}0, \h{2pt}1, \h{2pt}0\h{0.5pt}\big)$  and $u^{(j)}\big(\sigma_2, \pi\big) \longrightarrow \big(\h{0.5pt}0, \h{2pt}- 1, \h{2pt}0\h{0.5pt}\big)$,  as $j \rightarrow \infty$. Here $\sigma_2$ is some constant in $(0, 1)$. Denote by $x_j^+$ the point $ x_0 + k_j \h{1pt}\big(\h{0.5pt}0, \h{1pt}0, \h{1pt}\sigma_2 \h{0.5pt}\big)$ and  $x_j^-$ the point $ x_0 - k_j \h{1pt} \big(\h{0.5pt}0, \h{1pt}0, \h{1pt}\sigma_2 \h{0.5pt}\big)$. The two limits infer  \begin{eqnarray}\mathcal{W}\left(x_j^+\right) = N^* \h{20pt}\text{and} \h{20pt} \mathcal{W}\left(x_j^-\right) = - N^*, \h{20pt}\text{ provided $j$ is large.}\end{eqnarray} Here $N^* = (0, 0, 1, 0, 0)$. In light of Lemma 4.5, when $j$ is large enough, $x_0$ is the only singularity of $\mathcal{W}$ on the segment connecting $x_j^+$ and $x_0$. Therefore $\mathcal{W} \equiv N^*$ on the open segment between $x_j^+$ and $x_0$.  Similarly for $j$ large, $\mathcal{W} \equiv - N^*$ on the open segment between $x_j^-$ and $x_0$. Fix a $j$ sufficiently large. If we have another positive decreasing sequence $\big\{ \rho_k \big\}$ so that up to a subsequence $\mathcal{W}\left(x_0 + \rho_k\h{0.5pt}\cdot \h{1pt}\right)$ converges to $\Lambda^-$ strongly in $H_{\mathrm{loc}}^1\big(B_1; \h{0.5pt}\mathbb{S}^4\big)$ as $k \rightarrow \infty$ (here $\rho_k \rightarrow 0$ as $k \rightarrow \infty$), then the same derivation of (4.26) can be applied to find  a point on the open segment between $x_j^+$ and $x_0$ at which $\mathcal{W}$ equals to $- N^*$. But this is impossible. Therefore for any decreasing sequence $\big\{ \rho_k \big\}$, there is a subsequence, still denoted by $\big\{\h{0.5pt}\rho_k\h{0.5pt}\big\}$, so that $\mathcal{W}\left(x_0 + \rho_k\h{0.5pt}\cdot \h{1pt}\right)$ converges $\Lambda^+$ strongly in $H_{\mathrm{loc}}^1\big(B_1; \h{0.5pt}\mathbb{S}^4\big)$ as $k \rightarrow \infty$. The proof then follows.
\end{proof}

Moreover the following proposition holds by Lemma 4.6, $W^{2, p}$-estimate for elliptic equations and similar arguments as in Sect.3.15 of [60].\begin{prop}Let $\mathcal{W} = L\big[\h{0.5pt}u_{\h{1pt}b}^+\h{0.5pt}\big]$ or $L\h{0.5pt}\big[\h{0.5pt}u_{\h{1pt}c}^-\h{0.5pt}\big]$. Suppose that $x_0 \in B_1^+ \cap\h{1pt}l_3$ is a singular point of $\mathcal{W}$ with $\Lambda$ being its   tangent map  at $x_0$. Then we have \begin{eqnarray*} \sum_{j = 0}^2 \sigma^j \h{2pt}\Big\|\h{1pt}\nabla^j \mathcal{W} - \nabla^j \Lambda\left(\cdot - x_0\right)\h{1pt}\Big\|_{\infty; \h{1pt}\p B_{\sigma}\left(x_0\right)} \longrightarrow 0, \h{20pt}\text{as $\sigma \rightarrow 0^+$.}
\end{eqnarray*}Here $\|\cdot\|_{\infty; \h{1pt}\p B_{\sigma}\left(x_0\right)}$ is the $L^\infty$-norm on $\p B_{\sigma}\left(x_0\right)$.
\end{prop}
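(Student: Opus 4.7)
I would reformulate the stated decay as a $C^2$-convergence statement for blow-ups and then combine Lemma 4.6 with interior $\epsilon$-regularity and a $W^{2,p}$ bootstrap. For $\sigma > 0$ sufficiently small set $\mathcal{W}^{(\sigma)}(x) := \mathcal{W}(x_0 + \sigma x)$ on $B_2$. Since $\Lambda$ is $0$-homogeneous about $x_0$, one has $\Lambda((x_0 + \sigma x) - x_0) = \Lambda(x)$, and the chain rule yields $\| \nabla^j \mathcal{W} - \nabla^j\Lambda(\cdot - x_0)\|_{\infty;\,\partial B_\sigma(x_0)} = \sigma^{-j} \|\nabla^j\mathcal{W}^{(\sigma)} - \nabla^j\Lambda\|_{\infty;\,\partial B_1}$, so the claim of Proposition 4.7 is equivalent to
\begin{equation*}
\sum_{j=0}^2 \bigl\|\nabla^j \mathcal{W}^{(\sigma)} - \nabla^j \Lambda \bigr\|_{L^\infty(\partial B_1)} \longrightarrow 0, \qquad \text{as } \sigma \to 0^+.
\end{equation*}
Rescaling (1.13) gives
\begin{equation*}
-\Delta \mathcal{W}^{(\sigma)} = \bigl|\nabla \mathcal{W}^{(\sigma)}\bigr|^2 \mathcal{W}^{(\sigma)} + \sigma^2 \Bigl\{ \tfrac{3\sqrt{2}}{2}\mu \nabla_w S\bigl[\mathcal{W}^{(\sigma)}\bigr] - \tfrac{9\sqrt{2}}{2}\mu S\bigl[\mathcal{W}^{(\sigma)}\bigr] \mathcal{W}^{(\sigma)} \Bigr\},
\end{equation*}
so the inhomogeneous terms vanish as $\sigma^2$, while Lemma 4.6 gives $\mathcal{W}^{(\sigma)} \to \Lambda$ strongly in $H^1(B_2) \cap L^2(B_2)$.

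Next I would establish uniform higher-order estimates of $\mathcal{W}^{(\sigma)}$ on a fixed annulus. Choose $A = \{3/8 < |x| < 13/8\}$ and $A' = \{1/2 < |x| < 3/2\}$, so that $\partial B_1 \subset A' \subset \overline{A'} \subset A$. Since by Lemma 4.5 the singular set of $\mathcal{W}$ is discrete, for all $\sigma$ small $\mathcal{W}^{(\sigma)}$ is smooth on $\overline{A}$. Because $\Lambda$ is smooth on $\overline{A}$ with $r_0^{-1}\int_{B_{r_0}(x)}|\nabla\Lambda|^2 \le C r_0$ for $x \in A$, choosing $r_0$ small and then (using strong $H^1$ convergence together with a finite cover of $A$ by balls of radius $r_0/2$) $\sigma$ small, I can arrange
\begin{equation*}
r_0^{-1}\int_{B_{r_0}(x)} |\nabla \mathcal{W}^{(\sigma)}|^2 < \epsilon_0 \qquad \text{for every } x \in A,
\end{equation*}
where $\epsilon_0$ is the threshold in the interior analogue of the $\epsilon$-regularity used to obtain Proposition 3.1. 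The Rivi\`ere--Struwe theory then yields uniform $W^{2,p}$ estimates for $\mathcal{W}^{(\sigma)}$ on $\overline{A'}$, and a standard Schauder bootstrap applied to the rescaled equation above upgrades these to a uniform bound $\|\mathcal{W}^{(\sigma)}\|_{C^{3}(\overline{A'})} \leq C$.

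Finally, Arzel\`a--Ascoli gives, for any sequence $\sigma_n \to 0$, a subsequence along which $\mathcal{W}^{(\sigma_n)}$ converges in $C^2(\overline{A'})$; the strong $L^2$ convergence from Lemma 4.6 forces the limit to equal $\Lambda$, so the whole family $\mathcal{W}^{(\sigma)}$ converges to $\Lambda$ in $C^2(\overline{A'})$. Restricting to $\partial B_1 \subset A'$ and reading the result through the chain-rule identity at the start of the plan yields Proposition 4.7. The main obstacle is propagating the smallness of $\int |\nabla \mathcal{W}^{(\sigma)}|^2$ on small balls uniformly across the entire annulus, so that Rivi\`ere--Struwe applies simultaneously at every interior point and one can bootstrap all the way up to $C^3$; this is controlled by comparing to the smooth reference $\Lambda$ on $\overline{A}$ and is the only non-routine ingredient beyond the already established Lemmas 4.5--4.6.
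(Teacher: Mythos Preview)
Your proposal is correct and follows essentially the same approach as the paper, which states only that Proposition 4.7 ``holds by Lemma 4.6, $W^{2,p}$-estimate for elliptic equations and similar arguments as in Sect.~3.15 of [60]'' (Simon's lecture notes). Your plan is precisely a detailed execution of that sketch: reduce to $C^2$-convergence of the blow-ups on a fixed annulus, use Lemma 4.6 for strong $H^1$-convergence, exploit smoothness of $\Lambda$ away from the origin together with $\epsilon$-regularity to obtain uniform $W^{2,p}$ bounds, bootstrap, and conclude by Arzel\`a--Ascoli and uniqueness of the limit.
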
\
\vspace{0.3pc}\\
\setcounter{section}{5}
\setcounter{theorem}{0}
\setcounter{equation}{0}
\textbf{\large V.  \small BIAXIAL TORUS AND SPLIT-CORE SEGMENT STRUCTURES}\vspace{1pc}\\
In this section we prove Theorems 1.6-1.7. Recall that $\mathcal{W}_{\h{1pt}b}^+ = L \h{1pt}\big[\h{1pt}u_b^+\h{1pt}\big]$ and $\mathcal{W}_{\h{1pt}c}^- = L\h{1pt}\big[\h{1pt}u_c^-\h{1pt}\big]$. \vspace{1pc}\\
\textbf{\large V.1. \small BIAXAIL TORUS STRUCTURE IN $\mathcal{W}_b^+$}\vspace{1pc}\\
To simplify the notation, we use $u^+$ to denote $u_{\h{1pt}b}^+$. In this section $\lambda_1$, $\lambda_2$, $\lambda_3$ are three eigenvalues in Definition 1.3 computed in terms  of $u^+$. Note that $u_{3}^+ = 0$ on $T$. By Definition 1.3, it follows $\lambda_1 = - \dfrac{1}{2} \left( u_{1}^+ + \dfrac{1}{\sqrt{3}} \h{1pt}u_{2}^+\right)$, $\lambda_2 =  \dfrac{1}{4} \bigg\{  u_{1}^+ + \dfrac{1}{\sqrt{3}} \h{1pt}u_{ 2}^+  - \left|\h{1pt}u_{ 1}^+ - \sqrt{3} \h{0.5pt}u_{ 2}^+ \h{1pt}\right| \h{1pt} \bigg\}$, $\lambda_3 =  \dfrac{1}{4} \bigg\{  u_{ 1}^+ + \dfrac{1}{\sqrt{3}} \h{1pt}u_{ 2}^+  + \left|\h{1pt}u_{ 1}^+ - \sqrt{3} \h{0.5pt}u_{2}^+ \h{1pt}\right| \h{1pt} \bigg\}$ on $T$. The three eigenvalues cannot be all $0$ at some point on $T$, since otherwise $u^+ = 0$ at this point. If $\lambda_1 = \lambda_2$ on $T$, then $3 \left( u_{1}^+ + \dfrac{1}{\sqrt{3}} \h{1pt}u_{2}^+ \right) = \left|\h{1pt}u_{1}^+ - \sqrt{3} \h{0.5pt}u_{ 2}^+ \h{1pt}\right|$. If $\lambda_2 = \lambda_3$ on $T$, then  $u_{1}^+ = \sqrt{3}\h{1pt}u_{2}^+$. If $\lambda_1 = \lambda_3$ on $T$, then $3 \left( u_{1}^+ + \dfrac{1}{\sqrt{3}} \h{1pt}u_{2}^+ \right) = - \left|\h{1pt}u_{1}^+ - \sqrt{3} \h{0.5pt}u_{ 2}^+ \h{1pt}\right|$. Therefore by the unit length condition for $u^+$ and the strict positivity of $u_{1}^+$ (this is true by strong maximum principle), the map $u^+$ is uniaxial on $T \sim \big\{\h{0.5pt}0\h{0.5pt}\big\}$ if and only if  $u^+ = \big( \sqrt{3}\big/2, \h{1pt}1\big/2, \h{1pt}0 \big)$ or $u^+ = \big( \sqrt{3}\big/2, \h{1pt}- 1\big/2, \h{1pt}0 \big)$. In light that $u_{2}^+ \geq b$ on $T$ and $u^+$ is regular at the origin (see Proposition 3.2), it follows $u_{2}^+ = 1$ at the origin. Since $u_{2}^+ = - 1/2$ when $\rho = 1$ and $z = 0$, using regularity of $u^+$ on $T$, we can find a point on $T$ at where $u_{2}^+ = 1/2$. Equivalently the vector $ \big(\sqrt{3}\big/2, \h{1pt}1\big/2, \h{1pt}0 \big)$ can be taken by $u^+$ at some point on $T$. $u^+$ is a classical solution to (2.1) on a small neighborhood of $\interior{T}$ in $\mathbb{D}$. Moreover the system (2.1) is an analytic elliptic system. Therefore due to [48], $u^+$ is real analytic on $\interior{T}$, which infers real analyticity of $u_{2}^+$ on $\interior{T}$. It then turns out that $1/2$ can only be taken finitely many times by $u_{2}^+$ on $\interior{T}$. Otherwise $u_{2}^+ \equiv 1/2$ on $T$. This is impossible. Since $u_{2}^+(0) = 1$ and $u_{ 2}^+\left(1, 0\right) = - 1/2$, within all points on $\interior{T}$ at where $u_{2}^+ = 1/2$, there is one, denoted by $x_0 = \left(\rho_0, 0 \right)$, so that for a sufficiently small $\epsilon > 0$, it holds $$u_{2}^+ > \dfrac{1}{2} \h{5pt}\text{ on $\Big\{\left(\rho, 0\right) : \rho \in \left(\h{0.5pt} \rho_0 - \epsilon, \h{1pt}\rho_0\right) \Big\}$} \h{10pt}\text{ and } \h{10pt} u_{2}^+ < \dfrac{1}{2} \h{5pt}\text{ on $\Big\{\left(\rho, 0\right) : \rho \in \left(\h{0.5pt} \rho_0 , \h{1pt}\rho_0 + \epsilon\right) \Big\}$.}$$ For points in $D_{\epsilon}\left(x_0\right) \sim T$,  strong maximum principle  induces that  $u_{3}^+ > 0$ on $ D_{\epsilon}\left(x_0\right) \cap \mathbb{D}^+$  and $u_{3}^+ < 0$  on $D_{\epsilon}\left(x_0\right)  \cap \mathbb{D}^-$.  Here $\mathbb{D}^-$ is the lower-half part of $\mathbb{D}$. By Definition 1.3, we then have  \begin{eqnarray}\lambda_3 > \lambda_2 \h{20pt}\text{ on $D_{\epsilon}\left(x_0\right) \sim T$.}\end{eqnarray} It can also be computed that \begin{small}$\lambda_2 - \lambda_1   \h{2pt}=\h{2pt}  \dfrac{1}{4} \bigg\{ 3 \left( u_{ 1}^+ + \dfrac{1}{\sqrt{3}} \h{1pt}u_{ 2}^+ \right) - \sqrt{\left( u_{ 1}^+ - \sqrt{3} \h{1pt}u_{ 2}^+\right)^2 + 4 \h{1pt}\left(u_{ 3}^+\right)^2 }  \h{2pt} \bigg\}$\end{small}\noindent.  By the regularity of $u^+$ on $D_{\epsilon}\left(x_0\right)$ and the fact that $u^+\left(x_0\right) = \left( \sqrt{3}\big/ 2 , \h{1pt} 1 \big/2, \h{1pt}0 \right)$, it then turns out  \begin{eqnarray} \lambda_2  \h{2pt}>\h{2pt}\lambda_1 \h{20pt}\text{on $D_{\epsilon}\left(x_0\right)$, provided $\epsilon$ is suitably small.}
\end{eqnarray}This inequality and (5.1) infer \begin{eqnarray} \lambda_3 \h{2pt}>\h{2pt} \lambda_2 \h{2pt}>\h{2pt}\lambda_1 \h{20pt}\text{on $D_{\epsilon}\left(x_0\right) \sim T$, provided $\epsilon$ is suitably small.}\end{eqnarray}

On $\Big\{\left(\rho, 0\right) : \rho \in \left(\h{0.5pt} \rho_0 , \h{1pt}\rho_0 + \epsilon\right) \Big\}$, it holds $u_{ 2}^+ \in \left(\h{0.5pt}0, 1/2 \right)$, provided $\epsilon$ is small. Therefore we have $u_{1}^+ > \sqrt{3}\big/2$ on $\Big\{\left(\rho, 0\right) : \rho \in \left(\h{0.5pt} \rho_0 ,\h{1pt} \rho_0 + \epsilon\right) \Big\}$, which  infers \begin{eqnarray} u_{1}^+ - \sqrt{3}\h{1pt}u_{ 2}^+ \h{2pt}>\h{2pt} 0 \h{20pt}\text{ on $\Big\{\left(\rho, 0\right) : \rho \in \left(\h{0.5pt} \rho_0 , \h{1pt}\rho_0 + \epsilon\right) \Big\}$.}\end{eqnarray} It then follows by Definition 1.3 that $\lambda_3 \h{2pt}> \h{2pt}\lambda_2$  on $\Big\{\left(\rho, 0\right) : \rho \in \left(\h{0.5pt} \rho_0 , \h{1pt}\rho_0 + \epsilon\right) \Big\}$, provided $\epsilon $ is small. Utilizing (5.2), we then obtain \begin{eqnarray*}\lambda_3 \h{2pt}>\h{2pt} \lambda_2 \h{2pt}>\h{2pt}\lambda_1 \h{20pt}\text{on $\Big\{\left(\rho, 0\right) : \rho \in \left(\h{0.5pt} \rho_0 , \h{1pt}\rho_0 + \epsilon\right) \Big\}$, provided $\epsilon$ is suitably small.}
\end{eqnarray*}Similarly we also have \begin{eqnarray*}\lambda_3 \h{2pt}>\h{2pt} \lambda_2 \h{2pt}>\h{2pt}\lambda_1 \h{20pt}\text{on $\Big\{\left(\rho, 0\right) : \rho \in \left(\h{0.5pt} \rho_0 - \epsilon, \h{1pt}\rho_0\right) \Big\}$, provided $\epsilon$ is suitably small.}
\end{eqnarray*}
By combining the above arguments with (5.3), on $D_{\epsilon}\left(x_0\right) \sim  \big\{x_0\big\}$, $u^+$ is biaxial with $\lambda_3$ being the largest eigenvalue.

By (5.3),  $\kappa^\star$ in (5) of Theorem 1.6 is a director field determined by $u^+$ on $D_{\epsilon}\left(x_0\right) \sim T$. It is also well-defined and continuous on $\Big\{\left(\rho, 0\right) : \rho \in \left(\h{0.5pt} \rho_0 , \h{1pt} \rho_0 + \epsilon\right) \Big\}$. In fact by (5.4) and $u_{3}^+ = 0$ on $T$, it holds \begin{eqnarray}\kappa^{\star} \h{2pt}\equiv\h{2pt} e_{\rho} \h{20pt}\text{on $\Big\{\left(\rho, 0\right) : \rho \in \left(\h{0.5pt} \rho_0 , \rho_0 + \epsilon\right) \Big\}$.}
\end{eqnarray}However  $\kappa^{\star}$  is not continuous on $\Big\{\left(\h{0.5pt}\rho, 0\h{0.5pt}\right) : \rho \in \big(\h{0.5pt} \rho_0 - \epsilon, \h{1pt} \rho_0  \big] \h{1pt} \Big\}$.
Fix an $\epsilon' \in \left(\h{0.5pt}0, \epsilon \right)$ and denote by $x'$ the point $\left(\h{0.5pt}\rho_0 - \epsilon', \h{1pt}0\h{0.5pt}\right)$. When we approach $x'$ along the lower part of $\p D_{\epsilon'}\left(x_0\right)$, the component $u_{3}^+$ keeps negative. It then follows \begin{small}\begin{eqnarray*}&&\dfrac{\sqrt{2}\h{1pt}u_{3}^+}{\sqrt{\left(u_{1}^+ - \sqrt{3}\h{1pt}u_{2}^+ \right)^2 + 4 \left( u_{3}^+ \right)^2}}\left\{ 1 + \dfrac{u_{1}^+ - \sqrt{3}\h{1pt}u_{2}^+}{\sqrt{\left( u_{1}^+ - \sqrt{3}\h{1pt}u_{2}^+ \right)^2 + 4 \left(u_{3}^+\right)^2}} \right\}^{- 1/2} \\
\\
\\
 && = \h{2pt}- \dfrac{\sqrt{2}}{2} \bigg\{ \left( u_{1}^+ - \sqrt{3}\h{1pt}u_{2}^+ \right)^2 + 4 \left(u_{3}^+\right)^2 \bigg\}^{- 1/4}\left\{ \sqrt{\left( u_{1}^+ - \sqrt{3}\h{1pt}u_{2}^+ \right)^2 + 4 \left(u_{3}^+\right)^2} - \left(u_{1}^+ - \sqrt{3}\h{1pt}u_{2}^+ \right) \right\}^{1/2},
\end{eqnarray*}\end{small}\noindent where the left-hand side above is evaluated  on the lower part of $\p D_{\epsilon'}\left(x_0\right)$. Since we have $u_{2}^+ > 1\big/2$ at $x'$, it follows $u_{1}^+ <  \sqrt{3}\big/2$ at $x'$. Therefore we get $u_{1}^+ - \sqrt{3}\h{1pt}u_{2}^+ < 0$ at $x'$. Utilizing this inequality and the last equality, we can approach $x'$ along the lower part of $\p D_{\epsilon'}\left(x_0\right)$ and  get \begin{small}\begin{eqnarray*}&&\dfrac{\sqrt{2}\h{1pt}u_{ 3}^+}{\sqrt{\left(u_{1}^+ - \sqrt{3}\h{1pt}u_{ 2}^+ \right)^2 + 4 \left( u_{ 3}^+ \right)^2}}\left\{ 1 + \dfrac{u_{1}^+ - \sqrt{3}\h{1pt}u_{2}^+}{\sqrt{\left( u_{1}^+ - \sqrt{3}\h{1pt}u_{ 2}^+ \right)^2 + 4 \left(u_{ 3}^+\right)^2}} \right\}^{- 1/2} \longrightarrow - 1. \end{eqnarray*}\end{small}\noindent Meanwhile it holds \begin{small}\begin{eqnarray*}\dfrac{\sqrt{2}}{2} \left\{ 1 + \dfrac{u_{ 1}^+ - \sqrt{3}\h{1pt}u_{ 2}^+}{\sqrt{\left( u_{ 1}^+ - \sqrt{3}\h{1pt}u_{ 2}^+ \right)^2 + 4 \left(u_{ 3}^+\right)^2}} \right\}^{1/2} \longrightarrow 0, \h{10pt}\text{if $x \rightarrow x'$ along the lower part of $\p D_{\epsilon'}\left(x_0\right)$. }\end{eqnarray*}\end{small}\noindent The last two limits  infer that when we approach $x'$ along the lower part of $\p D_{\epsilon'}\left(x_0\right)$,  the director field $\kappa^{\star}$ approaches to $- e_z$. Similarly when we approach $x'$ along the upper part of $\p D_{\epsilon'}\left(x_0\right)$, the director field $\kappa^{\star}$ approaches to $e_z$. Here we  need the positivity of $u_{3}^+$ on the upper part of $\p D_{\epsilon'}\left(x_0\right)$. By (1.15), $\kappa^\star$ has positive  coefficient in front of $e_{\rho}$ when $x \in \p D_{\epsilon'}\left(x_0\right) \sim \big\{\h{0.5pt}x'\h{0.5pt}\big\}$. Therefore when we start from $x'$ and rotate counter-clockwisely along $\p D_{\epsilon'}\left(x_0\right)$ back to $x'$, the director field $\kappa^{\star}$ continuously varies from $- e_z$ to $e_z$. Meanwhile $\kappa^{\star}$ keeps on the right-half part of $\left(\rho, z\right)$\h{0.5pt}-\h{0.5pt}plane. The angle of $\kappa^{\star}$ is totally changed by $\pi$.  Part $(6)$ in Theorem 1.6 follows by (5.5) and L'H$\mathrm{\hat{o}}$pital's rule.\vspace{1.5pc}\\
\textbf{\large V.2. \small SPLIT-CORE LINE SEGMENT STRUCTURE}\vspace{1pc}\\
In this section we prove Theorem 1.7 by considering the mapping $\mathcal{W}_{\h{1pt}c}^-$, equivalently $u_{\h{1pt}c}^-$. To simplify the notation, we use $u^-$ to denote $u_{\h{1pt}c}^-$. Throughout this section $\lambda_1$, $\lambda_2$, $\lambda_3$ are three eigenvalues in Definition 1.3 computed in terms  of $u^-$. Since $u_{2}^- \leq c$ on $T$, by the smoothness of $\mathcal{W}_{\h{1pt}c}^-$ at the origin (see Proposition 3.2), we get $u^- = \left(0, - 1, 0\right)$ at the origin. Since $u^- = \left(0, 1, 0\right)$ when $\rho = 0$ and $z = 1$, we can find a location on $l_3$ at where $u^-$ is singular. Denote by $x_0^+ = \left(0, z_0\right)$ the lowest singularity of $u^-$ on $l_3^+$. Utilizing Proposition 4.7, for any $\epsilon > 0$ sufficiently small, we have a $\delta\left(\epsilon\right) > 0$ suitably small so that  \begin{eqnarray} \sum_{j = 0}^2 \left|\h{1pt}\p_{\psi}^j\h{1pt}u^- - \p_{\psi}^j\left(\h{0.5pt}0, \h{1pt}\cos \psi, \h{1pt}\sin \psi \h{0.5pt}\right) \h{1pt}\right|_{\infty;\h{1pt}\big\{\sigma\big\} \times \big[\h{0.5pt}0, \h{0.5pt}\pi\h{0.5pt}\big]} \h{2pt}<\h{2pt}\epsilon, \h{20pt}\text{for all $\sigma$ satisfying $\sigma \in \left(\h{0.5pt}0, \h{1pt} \delta\left(\epsilon\right) \right)$.}
\end{eqnarray}Here we use $\left(r_*, \psi, \theta\right)$ to denote the spherical coordinate system with respect to the center $x_0^+$. Without ambiguity, $u^-$ in (5.6) is also understood as a $3$-vector field depending on $r_*$ and $\psi$.

In what follows we define a dumbbell. \begin{definition}Let $x_0^-$ be the symmetric point of $x_0^+$ with respect to the $(x_1, x_2)$\h{0.5pt}-\h{0.5pt}plane.  Let $\epsilon_1$ be another positive constant. It is sufficiently small and satisfies $\epsilon_1 < \delta\left(\epsilon\right)$. In $(x_1, z)$\h{0.5pt}-\h{0.5pt}plane, the horizontal line $z = z_0 - \delta\left(\epsilon\right) + \epsilon_1$ has two intersections  with the circle $\p D_{\delta\left(\epsilon\right)}\left(x_0^+\right)$. Here we also use $D_{\rho}\left(x\right)$ to denote a disk in $(x_1, z)$\h{0.5pt}-\h{0.5pt}plane with center $x$ and radius $\rho$. The intersection point with positive $x_1$-coordinate is denoted by $x_1^+$, while the intersection point with negative $x_1$-coordinate is denoted by $x_2^+$.  The distance between $x_1^+$ ($x_2^+$ resp.) and $l_3$ equals to \begin{eqnarray}\rho_{\epsilon, \epsilon_1} = \epsilon_1^{1/2} \left( \h{1pt}2 \h{0.5pt} \delta\left(\epsilon\right) - \epsilon_1 \right)^{1/2}.\end{eqnarray} Similarly the horizontal line $z = - z_0 + \delta\left(\epsilon\right) - \epsilon_1$ also has two intersections  with $\p D_{\delta\left(\epsilon\right)}\left( x_0^-\right)$. We denote by $x_1^-$ the intersection point with positive $x_1$-coordinate, while $x_2^-$ denotes another intersection point which has negative $x_1$-coordinate. The contour $\mathscr{C}$ in the $(x_1, z)$\h{0.5pt}-\h{0.5pt}plane is then  defined as follows. Firstly we  start from $x_1^+$ and rotate counter-clockwisely  along $\p D_{\delta\left(\epsilon\right)}\left(x_0^+\right)$ to $x_2^+$. Then we connect $x_2^+$ and $x_2^-$ by the vertical segment between them. From $x_2^-$, we rotate counter-clockwisely along $\p D_{\delta\left(\epsilon\right)}\left(x_0^-\right)$ to $x_1^-$. Finally we connect $x_1^-$ and $x_1^+$ by the vertical segment between them. The dumbbell, denoted by $\mathrm{D}_{\epsilon, \epsilon_1}$, is the region in $(x_1, z)$\h{0.5pt}-\h{0.5pt}plane which is enclosed by the contour $\mathscr{C}$.   \end{definition}

On $\Big\{ \left(\h{0.5pt}0, z\right) : z \in \left(z_0, z_0 + \delta\left(\epsilon\right) \right) \cup \left(- z_0 - \delta\left(\epsilon\right), - z_0 \right)\Big\}$, it holds $u^- = \left(\h{0.5pt}0, 1, 0 \h{0.5pt}\right)$. By Definition 1.3, $u^-$ is uniaxial with two eigenvalues equaling to $ - \sqrt{3}\big/6$. The largest eigenvalue is $ \sqrt{3}\big/3$, whose associated eigenspace is given by $\mathrm{span}\Big\{e_z\Big\}$. On $\Big\{ \left(\h{0.5pt}0, z\right) : z \in \left(- z_0, z_0 \right)\Big\}$, it holds $u^- = \left(\h{0.5pt}0, - 1, 0 \h{0.5pt}\right)$. Still by Definition 1.3, $u^-$ is uniaxial with two eigenvalues equaling to $\sqrt{3}\big/6$. The largest eigenvalue is $\sqrt{3}\big/6$, whose associated eigenspace is given by $\mathrm{span}\Big\{ e_{\theta}, \h{1pt}e_{\rho} \Big\}$.

In what follows we show the biaxiality of $u^-$ on $\mathrm{D}_{\epsilon, \epsilon_1} \sim l_3$. We firstly compare the three eigenvalues on $D_{\delta\left(\epsilon\right)}\left(x_0^+\right)$. Due to the symmetry of $u^-$ with respect to the variable $z$, the case for $D_{\delta\left(\epsilon\right)}\left(x_0^-\right)$ can be similarly studied. Suppose that $\sigma$ is an arbitrary number in $\big(\h{1pt}0, \delta\left(\epsilon\right)]$. Using the polar angle $\psi$ in the spherical coordinate $\left(r_*, \psi, \theta\right)$, we have \begin{eqnarray*} u^-\left(\sigma, \psi\right) = u^-\left(\sigma, 0\right) + \p_{\psi}\h{0.5pt} u^- \h{2pt}\Big|_{\left(\sigma, 0\right)} \psi + \int_0^{\psi} \int_0^{\psi_1} \p^2_{\psi}\h{1pt} u^- \h{2pt}\Big|_{\left(\sigma, \h{1pt}\zeta\right)} \h{1pt}\mathrm{d}\h{1pt}\zeta \h{2pt}\mathrm{d}\h{1pt}\psi_1, \h{20pt}\text{for any $\psi \in \big[\h{0.5pt}0, \pi\h{0.5pt}\big].$}
\end{eqnarray*}Here we understand $u^-$ as a mapping depending on $r_*$ and $\psi$. In light of the regularity of $\mathcal{W}_{\h{1pt}c}^-$ at $\left(\h{0.5pt}0, z_0 + \sigma\right)$, it follows $\p_{\psi}\h{0.5pt} u^- \h{2pt}\Big|_{\left(\sigma, 0\right)} = 0$. The last equality is then reduced to \begin{eqnarray*} u^-\left(\sigma, \psi\right) = u^-\left(\sigma, 0\right)   + \int_0^{\psi} \int_0^{\psi_1} \p^2_{\psi}\h{1pt} u^- \h{2pt}\Big|_{\left(\sigma, \h{1pt}\zeta\right)} \h{1pt}\mathrm{d}\h{1pt}\zeta \h{2pt}\mathrm{d}\h{1pt}\psi_1, \h{20pt}\text{for any $\psi \in \big[\h{0.5pt}0, \pi\h{0.5pt}\big],$}
\end{eqnarray*}which furthermore infers \begin{eqnarray*} \sqrt{3}\h{1pt}u_{1}^-\h{1pt}\Big|_{\left(\sigma, \psi\right)} + u_{2}^-\h{1pt}\Big|_{\left(\sigma, \psi \right)} \h{2pt}=\h{2pt} 1 - \left( 1 -  \cos \psi\right) +  \int_0^{\psi} \int_0^{\psi_1} \sqrt{3}  \h{2pt}\p^2_{\psi}\h{1pt} u_{1}^- \h{2pt}\Big|_{\left(\sigma, \h{1pt}\zeta\right)} + \p^2_{\psi}\h{1pt} \left[ u_{2}^- - \cos \psi \right] \h{2pt}\Big|_{\left(\sigma, \h{1pt}\zeta\right)} \h{1pt}\mathrm{d}\h{1pt}\zeta \h{2pt}\mathrm{d}\h{1pt}\psi_1.
\end{eqnarray*}Applying the estimate in (5.6) to the last equality then yields \begin{eqnarray*} \sqrt{3}\h{1pt}u_{1}^-\h{1pt}\Big|_{\left(\sigma, \psi\right)} + u_{2}^-\h{1pt}\Big|_{\left(\sigma, \psi \right)} \h{2pt}\leq\h{2pt} 1 - \left( 1 -  \cos \psi\right) + 2 \h{1pt}\epsilon \h{1.5pt}\psi^2,\h{20pt}\text{for any $\psi \in \big[\h{0.5pt}0, \pi\h{0.5pt}\big]$.}
\end{eqnarray*}Therefore we can fix $\epsilon$ sufficiently small and find a universal constant $\psi_0 \in (\h{0.5pt}0, \pi / 4 \h{0.5pt})$ so that \begin{eqnarray}\sqrt{3}\h{1pt}u_{1}^-\h{1pt}\Big|_{\left(\sigma, \psi\right)} + u_{2}^-\h{1pt}\Big|_{\left(\sigma, \psi \right)} \h{2pt}<\h{2pt} 1, \h{20pt}\text{for any $\sigma \in \big(\h{1pt}0, \delta\left(\epsilon\right) \big]$ and $\psi \in \big(\h{0.5pt}0, \psi_0\h{0.5pt}\big)$.}
\end{eqnarray}Still by (5.6), on the sector with the polar angle $\psi \in \big[\h{0.5pt}\psi_0, \pi \h{0.5pt}\big]$, it holds \begin{eqnarray*}\sqrt{3}\h{1pt}u_{1}^-\h{1pt}\Big|_{\left(\sigma, \psi\right)} + u_{2}^-\h{1pt}\Big|_{\left(\sigma, \psi \right)} \h{2pt}=\h{2pt} \cos \psi + \sqrt{3}\h{1pt}u_{1}^-\h{1pt}\Big|_{\left(\sigma, \psi\right)} + \left[\h{1pt}u_{2}^-\h{1pt}\Big|_{\left(\sigma, \psi \right)} - \cos \psi \right] \h{2pt}\leq\h{2pt}\cos \psi_0 + 4 \h{0.5pt}\epsilon.
\end{eqnarray*}In light of the last estimate, we can keep choosing $\epsilon$ small enough and obtain $\sqrt{3}\h{1pt}u_{1}^-\h{1pt}\Big|_{\left(\sigma, \psi\right)} + u_{2}^-\h{1pt}\Big|_{\left(\sigma, \psi \right)} \h{2pt}<\h{2pt} 1$,  for any $\sigma \in \big(\h{1pt}0, \delta\left(\epsilon\right) \big]$ and $\psi \in \big[\h{0.5pt}\psi_0, \pi\h{0.5pt}\big]$. This estimate and (5.8) infer \begin{eqnarray*}\sqrt{3}\h{1pt}u_{ 1}^-\h{1pt}\Big|_{\left(\sigma, \psi\right)} + u_{2}^-\h{1pt}\Big|_{\left(\sigma, \psi \right)} \h{2pt}<\h{2pt} 1, \h{10pt}\text{for any $\sigma \in \big(\h{1pt}0, \delta\left(\epsilon\right) \big]$ and $\psi \in \big(\h{0.5pt}0, \pi\h{0.5pt}\big)$, provided $\epsilon$ is sufficiently small.}
\end{eqnarray*} When $\psi \in (0, \pi)$, $u_{1}^-$ is strictly positive. It turns out $$\sqrt{3}\h{1pt}u_{ 1}^-\h{1pt}\Big|_{\left(\sigma, \psi\right)} + u_{ 2}^-\h{1pt}\Big|_{\left(\sigma, \psi \right)} \h{2pt}>\h{2pt} - 1, \h{20pt}\text{ for any $\sigma \in \big(\h{0.5pt}0, \delta\left(\epsilon\right) \big]$ and  $\psi \in (0, \pi)$.}$$ Utilizing the last two bounds then yields \begin{eqnarray} \left( \sqrt{3}\h{1pt}u_{ 1}^-\h{1pt}\Big|_{\left(\sigma, \psi\right)} + u_{2}^-\h{1pt}\Big|_{\left(\sigma, \psi \right)} \right)^2 < 1, \h{20pt}\text{for any $\sigma \in \big(\h{1pt}0, \delta\left(\epsilon\right) \big]$ and $\psi \in \big(\h{0.5pt} 0, \pi \h{0.5pt}\big)$.}
\end{eqnarray} Equivalently  we have \begin{eqnarray} \left(u_{1}^- - \sqrt{3}\h{1pt}u_{2}^- \right)^2 + 4 \left(u_{*, 3}^-\right)^2 \h{2pt}> \h{2pt} 9 \left( u_{1}^- + \dfrac{1}{\sqrt{3}} \h{1pt}u_{2}^- \right)^2 \h{20pt} \text{on $\big(\h{1pt}0, \delta\left(\epsilon\right) \big]\times \big(\h{0.5pt} 0, \pi \h{0.5pt}\big)$.}
\end{eqnarray} By Definition 1.3, it holds \begin{eqnarray} \lambda_2 - \lambda_1 = \dfrac{1}{4} \bigg\{ 3\left( u_{1}^- + \dfrac{1}{\sqrt{3}} \h{1pt}u_{2}^- \right) - \sqrt{\big( u_{1}^- - \sqrt{3} \h{0.5pt}u_{2}^- \big)^2 + 4 \left(u_{3}^-\right)^2 } \h{1pt} \bigg\}.
\end{eqnarray} Moreover we also have \begin{eqnarray} \lambda_3 - \lambda_1 = \dfrac{1}{4} \bigg\{ 3\left( u_{1}^- + \dfrac{1}{\sqrt{3}} \h{1pt}u_{2}^- \right) + \sqrt{\big( u_{1}^- - \sqrt{3} \h{0.5pt}u_{2}^- \big)^2 + 4 \left(u_{3}^-\right)^2 } \h{1pt} \bigg\}.
\end{eqnarray}Applying (5.10) to the right-hand sides of (5.11)-(5.12) and utilizing the symmetry of $u^-$ with respect to the variable $z$, we get \begin{eqnarray}\lambda_3 \h{1pt}>\h{1pt} \lambda_1 \h{1pt}> \h{1pt}\lambda_2 \h{20pt}\text{on $\Big[ D_{\delta\left(\epsilon\right)}\left(x_0^+\right) \cup D_{\delta\left(\epsilon\right)}\left(x_0^-\right) \Big] \sim l_3$.}
\end{eqnarray}Now we fix a sufficiently small $\epsilon$ so that (5.13) holds. We are left to compare the three eigenvalues on $\mathrm{R}_{\star}$, where $\mathrm{R}_{\star}$ is the rectangle in $(x_1, z)$\h{0.5pt}-\h{0.5pt}plane with four vertices given by $x_1^{+}$, $x_1^{-}$, $x_2^{+}$, $x_2^-$. Since $u^-$ is regular on $\mathrm{R}_{\star}$, $u^-$ is close to $\left(\h{1pt}0, -1, 0\h{0.5pt}\right)$ on $\mathrm{R}_{\star}$, provided $\epsilon_1$ is small enough. It turns out $\lambda_1 > \lambda_2$ on $\mathrm{R}_{\star}$. Using the strict positivity of $u_{1}^-$, we have $$\sqrt{3}\h{1pt}u_{1}^- + u_{2}^- \h{2pt}>\h{2pt} u_{2}^- \h{2pt}\geq\h{2pt} -1 \h{10pt}\text{ on $\mathrm{D}_{\epsilon, \epsilon_1} \sim l_3$.}$$ Moreover $\sqrt{3}\h{1pt}u_{1}^- + u_{2}^- < 1$ on $\mathrm{R}_{\star}$ in that $u^-$ is close to $\left(\h{1pt}0, -1, 0\h{0.5pt}\right)$ on $\mathrm{R}_{\star}$. It turns out $ \left(\sqrt{3}\h{1pt}u_{1}^- + u_{2}^- \right)^2 \h{2pt}<\h{2pt}1$ on $\mathrm{R}_{\star} \sim l_3$.  Equivalently  we have $\left(u_{1}^- - \sqrt{3}\h{1pt}u_{2}^- \right)^2 + 4 \left(u_{3}^-\right)^2 \h{2pt}> \h{2pt} 9 \left( u_{1}^- + \dfrac{1}{\sqrt{3}} \h{1pt}u_{2}^- \right)^2$  on $\mathrm{R}_{\star} \sim l_3$. Therefore $\lambda_3 > \lambda_1$ on $\mathrm{R}_{\star} \sim l_3$ by this inequality and (5.12). Hence it holds $\lambda_3 \h{1pt}>\h{1pt} \lambda_1 \h{1pt}> \h{1pt}\lambda_2$ on $\mathrm{R}_{\star} \sim l_3$, which together with  (5.13) then infer the biaxiality of $u^-$ on $\mathrm{D}_{\epsilon, \epsilon_1} \sim l_3$. More precisely we have  \begin{eqnarray}\lambda_3 \h{1pt}>\h{1pt} \lambda_1 \h{1pt}> \h{1pt}\lambda_2 \h{20pt}\text{on $\mathrm{D}_{\epsilon, \epsilon_1} \sim l_3$, provided that $\epsilon$ and $\epsilon_1$ are sufficiently small.}
\end{eqnarray}

In the end we describe the variation of director field near the vertical segment connecting $x_0^+$ and $x_0^-$. In light of (5.14), the largest eigenvalue is $\lambda_3$ on $\mathrm{D}_{\epsilon, \epsilon_1} \sim l_3$. Therefore $\kappa_\star$ in (4) of Theorem 1.7 is a director field determined by $u^-$ on  $\mathrm{D}_{\epsilon, \epsilon_1} \sim l_3$.  Fixing $\big(0, 0, z\big)$ with $z \in \big(\h{0.5pt}z_0, \h{1pt} z_0 + \delta\left(\epsilon\right)\h{1pt}\big]$, we have $u^-$ approaching to $(0, 1, 0)$ while $x$ approaches to $(0, 0, z)$. It then turns out \begin{eqnarray}\dfrac{\sqrt{2}}{2} \left\{ 1 + \dfrac{u_{1}^- - \sqrt{3}\h{1pt}u_{2}^-}{\sqrt{\left( u_{1}^- - \sqrt{3}\h{1pt}u_{2}^- \right)^2 + 4 \left(u_{3}^-\right)^2}} \right\}^{1/2} \longrightarrow 0, \h{10pt}\text{as $x \rightarrow (0, 0, z)$.}
\end{eqnarray}Since $u_{3}^-$ is positive when $x$ is close to $(0, 0, z)$, we have \begin{small}\begin{eqnarray*}&&\dfrac{\sqrt{2}\h{1pt}u_{3}^-}{\sqrt{\left(u_{1}^- - \sqrt{3}\h{1pt}u_{2}^- \right)^2 + 4 \left( u_{3}^- \right)^2}}\left\{ 1 + \dfrac{u_{1}^- - \sqrt{3}\h{1pt}u_{2}^-}{\sqrt{\left( u_{1}^- - \sqrt{3}\h{1pt}u_{2}^- \right)^2 + 4 \left(u_{3}^-\right)^2}} \right\}^{- 1/2} \\[3mm]
 && =  \dfrac{\sqrt{2}}{2} \bigg\{ \left( u_{ 1}^- - \sqrt{3}\h{1pt}u_{2}^- \right)^2 + 4 \left(u_{3}^-\right)^2 \bigg\}^{- 1/4}\left\{ \sqrt{\left( u_{1}^- - \sqrt{3}\h{1pt}u_{2}^- \right)^2 + 4 \left(u_{3}^-\right)^2} - \left(u_{1}^- - \sqrt{3}\h{1pt}u_{2}^- \right) \right\}^{1/2},
\end{eqnarray*}\end{small}\noindent which infers \begin{small}\begin{eqnarray*}\dfrac{\sqrt{2}\h{1pt}u_{ 3}^-}{\sqrt{\left(u_{ 1}^- - \sqrt{3}\h{1pt}u_{ 2}^- \right)^2 + 4 \left( u_{ 3}^- \right)^2}}\left\{ 1 + \dfrac{u_{  1}^- - \sqrt{3}\h{1pt}u_{ 2}^-}{\sqrt{\left( u_{ 1}^- - \sqrt{3}\h{1pt}u_{ 2}^- \right)^2 + 4 \left(u_{ 3}^-\right)^2}} \right\}^{- 1/2} \longrightarrow 1, \h{5pt}\text{as $x \rightarrow (0, 0, z)$.}
\end{eqnarray*}\end{small}\noindent Applying this limit and (5.15), we obtain \begin{eqnarray}\kappa_\star \longrightarrow e_z, \h{10pt}\text{as $x \rightarrow (0, 0, z)$, where $z \in \big(z_0, \h{1pt}z_0 + \delta\left(\epsilon\right)\big]$.}
\end{eqnarray}Utilizing the strict negativity of $u_{3}^-$ in the lower-half plane, we also have \begin{eqnarray}\kappa_\star \longrightarrow - e_z, \h{10pt}\text{as $x \rightarrow (0, 0, z)$, where $z \in \big[- z_0 - \delta\left(\epsilon\right), \h{1pt}- z_0 \h{1pt}\big)$.}
\end{eqnarray}As for $(0, 0, z)$ with $z \in (- z_0, z_0)$, $u^-$ is close to $(0, -1, 0) $ when $x $ is close to $(0, 0, z)$. It follows \begin{eqnarray}\kappa_\star \longrightarrow e_{\rho}, \h{10pt}\text{as $x \rightarrow (0, 0, z)$, where $z \in \big( - z_0, z_0 \h{1pt}\big)$.}
\end{eqnarray}Moreover by $u_{3}^- = 0$ on $T$, we also have \begin{eqnarray}\kappa_\star  \h{2pt}=\h{2pt}e_{\rho} \h{20pt}\text{on $\mathrm{D}_{\epsilon, \epsilon_1} \cap \text{$x_1$\h{0.5pt}-\h{0.5pt}axis},$ provided $\epsilon_1$ is small.}
\end{eqnarray}Now we denote by $\mathscr{C}^+$ the part of $\mathscr{C}$ with positive $x_1$\h{0.5pt}-\h{0.5pt}coordinate. By (5.16)-(5.19), the director field $\kappa_\star$ varies from $- e_z$ to $e_{\rho}$ and then to $e_z$ when points vary from $\big(0, 0, - z_0 - \delta\left(\epsilon\right)\big)$ to $\big(0, 0, z_0 + \delta\left(\epsilon\right)\big)$ along the contour $\mathscr{C}^+$. Since  $\kappa_\star $ has positive coefficient in front of $e_{\rho}$ for all points on $\mathscr{C}^+ \sim l_3$, therefore $\kappa_\star $ keeps on the right-half part of $(\rho, z)$-plane for all points on $\mathscr{C}^+ \sim l_3$. When  points are changed from $\big(0, 0, - z_0 - \delta\left(\epsilon\right)\big)$ to $\big(0, 0, z_0 + \delta\left(\epsilon\right)\big)$ along $\mathscr{C}^+$, the angle of $\kappa_\star $ is totally changed by $\pi$.  Part (5) in Theorem 1.7 follows from (5.6), (5.18) and the representation of $\kappa_\star$.\vspace{2pc}
\\
\setcounter{section}{6}
\setcounter{theorem}{0}
\setcounter{equation}{0}
\textbf{\large VI. \small CONVERGENCE RESULTS}\vspace{1pc}\\
In this last section, we prove Theorem 1.8 and the convergence result in Theorem 1.5.\vspace{1pc}\\
\textbf{\large VI.1. \small PROOF OF THEOREM 1.8}\vspace{1pc}\\
$\mathcal{W}_b^+$ is uniformly bounded in $H^1(B_1; \mathbb{S}^4)$ for all $b \in \mathrm{I}_-$. Up to a subsequence, there is a $\mathcal{W}_\star^+ \in H^1(B_1; \mathbb{S}^4)$ so that $\mathcal{W}_b^+ $ converges weakly in $H^1(B_1; \mathbb{S}^4)$ to $\mathcal{W}_\star^+$  as $b \rightarrow - 1$. $\mathcal{W}_\star^+$ is a weak solution to (1.13). Moreover there is a $\mathbb{S}^2$-valued mapping $u_\star^+$ on $B_1$ so that $\mathcal{W}_\star^+ = L\h{1pt}\big[\h{1pt}u_\star^+ \h{1pt}\big]$. Since the energy $E_\mu \big[ \h{1pt}u_b^+\h{1pt}\big]$ is non-decreasing with respect to $b$, it holds by lower semi-continuity that \begin{eqnarray} E_\mu\big[\h{1pt}u_\star^+\h{1pt}\big] \leq E_\mu \big[\h{1pt}u_b^+\h{1pt}\big] < E_\mu \big[\h{1pt}U^*\h{1pt}\big], \h{20pt}\text{for any $b \in \mathrm{I}_-$.}
\end{eqnarray}

Fix $x_0 \in l_3^+$ and a radius $r$ satisfying $B_r(x_0) \subset\subset B_1^+$.  By the fact that $u_b^+$ is the minimizer of the first minimization problem in (1.12), we can apply similar construction as in the proof of convergence theorem 5.5 in [31] to show  $\mathcal{W}_b^+$ converges to $\mathcal{W}_\star^+$ strongly in $H^1(B_r(x_0); \mathbb{S}^4)$. As for points on $\mathbb{D}$, there is no obstruction to prevent $u_b^+$ converging to $u_\star^+$ strongly in $H^1_{\mathrm{loc}}(\mathbb{D}; \mathbb{S}^2)$. Otherwise by standard bubbling analysis (see [51]), there will be a non-trivial harmonic map from $\mathbb{R}^2$ to $\mathbb{S}^2$ which is a bubble map obtained by this bubbling analysis. However this is impossible since for all $b \in \mathrm{I}_-$, $u_{b; 1}^+ > 0$ on $\mathbb{D}$. At the origin, by (3.1) and (1.12), it holds \begin{eqnarray}r^{-1}\int_{B_r}\big|\h{1pt}\nabla \mathcal{W}_\star^+ \h{1pt}\big|^2 \leq \liminf_{b \h{0.5pt}\rightarrow \h{0.5pt}-1} \h{1pt} r^{-1}\int_{B_r}\big|\h{1pt}\nabla \mathcal{W}_b^+ \h{1pt}\big|^2 \leq 24 \pi, \h{20pt}\text{for any $r \in (0, 1)$.}
  \end{eqnarray} Hence there is a universal constant $c > 0$ so that  for any $r_0 \in (0, 1)$ and $\sigma \in (0, r_0)$, $$ \int_{B_{r_0}} \big|\h{1pt}\nabla \mathcal{W}_b^+ - \nabla \mathcal{W}_\star^+ \h{1pt}\big|^2 = \int_{B_{r_0} \h{1pt}\sim\h{1pt}B_{\sigma}} \big|\h{1pt}\nabla \mathcal{W}_b^+ - \nabla \mathcal{W}_\star^+ \h{1pt}\big|^2 + \int_{B_{\sigma}} \big|\h{1pt}\nabla \mathcal{W}_b^+ - \nabla \mathcal{W}_\star^+ \h{1pt}\big|^2 \leq c \h{1pt}\sigma + \int_{B_{r_0} \h{1pt}\sim\h{1pt}B_{\sigma}} \big|\h{1pt}\nabla \mathcal{W}_b^+ - \nabla \mathcal{W}_\star^+ \h{1pt}\big|^2.$$ Here we have used (6.2) and (3.1) to get the last inequality above. By taking $b \rightarrow - 1$ and $\sigma \rightarrow 0$ successively in the last estimate, it turns out $\mathcal{W}_b^+$ converges to $\mathcal{W}_\star^+$ strongly in $H^1(B_{r_0}; \mathbb{S}^4)$. In summary we have $\mathcal{W}_b^+$ converges to $\mathcal{W}_\star^+$ strongly in $H^1_{\mathrm{loc}}(B_1; \mathbb{S}^4)$. Applying this strong convergence and Fatou's lemma to (3.1), we also have (3.1) holds for $\mathcal{W}_\star^+$. Similar monotonicity inequality holds for $\mathcal{W}_\star^+$ at other points in $B_1$. Therefore (6.1) and proof of Proposition 3.2 can be applied to infer the smoothness of $\mathcal{W}_\star^+$ at the origin. More precisely $\mathcal{W}_\star^+$ is smooth in a neighborhood of $0$. By this smoothness result and $H^1_{\mathrm{loc}}(B_1; \mathbb{S}^4)$-convergence of $\mathcal{W}_b^+$ as $b \rightarrow - 1$, for $\epsilon > 0$ sufficiently small, we can find a uniform radius $r_\star$ and $b_1 \in \mathrm{I}_-$ so that $\displaystyle r_\star^{-1}\int_{B_{r_\star}}\big|\h{1pt}\nabla \mathcal{W}_b^+ \h{1pt}\big|^2 < \epsilon$, for all $b \in (-1, b_1)$. By standard $\epsilon$-regularity result and Arzel\`{a}-Ascoli theorem, $\mathcal{W}_b^+$ converges to $\mathcal{W}_\star^+$ uniformly in a neighborhood of $0$. Therefore $\mathcal{W}_\star^+$ is continuous on $T$ with its third component $\mathcal{W}_{\star; 3}^+ = 1$ at $0$. Utilizing maximum principle we also have $u_{\star; 1}^+ > 0$ on $\mathbb{D}$. There then  exists a $b_\star \in \mathrm{I}_-$ so that $u_{\star; 2}^+ = \mathcal{W}_{\star; 3}^+ \geq b_\star$ on $T$. That is $u_\star^+ \in \mathscr{F}_{b_\star, +}$, which induces $E_\mu\big[\h{1pt}u_\star^+\h{1pt}\big] \geq \mathrm{Min} \h{2pt} \bigg\{ \h{1pt}E_{\mu}\left[\h{1pt}u\right] : u \in \mathscr{F}_{\h{1pt}b_\star, +} \bigg\}.$ By this lower bound and (6.1), $u_\star^+$ is a minimizer of $E_\mu$-energy on $\mathscr{F}_{\h{1pt}b_\star, +}$. $u_\star^+$ is then the $u_{b_\star}^+$ in Theorem 1.8. Moreover by lower semi-continuity (or (6.1)), it holds $\displaystyle E_\mu\big[\h{1pt}u_\star^+\h{1pt}\big] \h{2pt}\leq\h{2pt} \liminf_{b \h{1pt}\rightarrow \h{1pt} - 1} E_\mu \big[\h{1pt}u_b^+\h{1pt}\big].$ Utilizling the non-decreasing  monotonicity of $E_\mu\big[\h{1pt}u_b^+\h{1pt}\big]$ yields $\displaystyle \limsup_{b \h{1pt}\rightarrow \h{1pt} - 1} E_\mu \big[\h{1pt}u_b^+\h{1pt}\big] \h{2pt} \leq \h{2pt} E_\mu\big[\h{1pt}u_\star^+\h{1pt}\big].$ Then we induce that $E_\mu\big[\h{1pt}u_b^+\h{1pt}\big]$ converges to $E_\mu\big[\h{1pt}u_\star^+\h{1pt}\big]$ as $b \rightarrow -1$.

Denote by $u_\star$ a minimizer of $E_\mu$-energy on $\mathscr{F}^s$. Moreover we assume $u_{\star; 1} \geq 0$, $u_{\star; 3} \geq 0$ on $B_1^+$. Similar arguments for $u_b^+$ or $u_c^-$ can be applied to show that $u_{\star}$ is regular on $T$. Let $f$ be a smooth radial function compactly supported in $B_1$ and equal to $1$ in $B_{1/2}$. Utilizing $f$ and $U^*$, we construct $u_{\star} + \epsilon f U^*$ and denote by $u_\star^\epsilon$ the normalized vector field of $u_{\star} + \epsilon f U^*$. There exists a $b_\epsilon \in \mathrm{I}_-$ satisfying $b_\epsilon \rightarrow - 1$ as $\epsilon \rightarrow 0$ so that $u_{\star; 2}^\epsilon \geq b_\epsilon$ on $T$ in the sense of trace. It then holds $E_\mu\big[\h{1pt}u_\star^\epsilon\h{1pt}\big] \h{2pt}\geq\h{2pt}E_\mu\big[\h{1pt}u_{b_\epsilon}^+\h{1pt}\big] \h{2pt}=\h{2pt}E_\mu\big[\h{1pt}u_\star^+\h{1pt}\big].$ Taking $\epsilon \rightarrow 0$  yields $E_\mu\big[\h{1pt}u_\star\h{1pt}\big] \geq E_\mu\big[\h{1pt}u_\star^+\h{1pt}\big]$, which furthermore infers $E_\mu\big[\h{1pt}u_\star\h{1pt}\big] = E_\mu\big[\h{1pt}u_\star^+\h{1pt}\big]$. The case when $c \rightarrow 1$ can be similar discussed. The proof is then completed.\\
\\
\textbf{\large VI.2. \small PROOF OF CONVERGENCE RESULT IN THEOREM 1.5}\vspace{1pc}\\
Suppose $u_\mu$ is a minimizer of $E_\mu$-energy within configuration space $\mathscr{F}^s$. It can be shown that up to a subsequence, denoted by $\mu_k$, $u_{\mu_k}$ converges strongly in $H^1(B_1)$ to a limit $u^\star_\infty$ as $k \rightarrow \infty$. Moreover
\begin{lemma}$u^\star_\infty$ is a minimizer of the energy $E$ (see (1.8)) on $\mathscr{F}_1$. Here $\mathscr{F}_1$ is the configuration space  $\bigg\{u \in \mathscr{F}^s :  P\h{0.5pt}\big(\h{0.5pt}u\h{0.5pt}\big) = \dfrac{1}{3} \h{4pt} \text{a.e. in $B_1$} \bigg\}$.
\end{lemma}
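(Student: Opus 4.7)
The plan is to use the obvious two-sided comparison: on one side the hedgehog $U^*$ sits in $\mathscr{F}_1$ and gives a uniform energy bound forcing the potential to vanish in the limit, and on the other side every admissible competitor in $\mathscr{F}_1$ has vanishing potential term and can therefore be compared directly against $u_{\mu_k}$ via the minimality in $\mathscr{F}^s$.

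First I would check that $\mathscr{F}_1$ is non-empty by verifying that the hedgehog map satisfies $P(U^*)\equiv 1/3$. A direct substitution of $U^*=\big(\tfrac{\sqrt 3}{2}\tfrac{\rho^2}{r^2},\tfrac{3}{2}(\tfrac{z^2}{r^2}-\tfrac13),\sqrt 3\tfrac{\rho z}{r^2}\big)$ into the cubic $P$ defined in (1.6), using $\rho^2/r^2+z^2/r^2=1$, collapses to the constant $1/3$; this is also exactly the normalization under which $D_a$ was chosen so that $0$ is the minimum of $F_a$ on the sphere. Consequently $E_{\mu}[U^*]=E[U^*]=24\pi$ for every $\mu$. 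By the minimality of $u_{\mu_k}$ in $\mathscr{F}^s$ we obtain the uniform bound
\begin{equation*}
E[u_{\mu_k}]+\sqrt{2}\,\mu_k\int_{B_1}\bigl(1-3P(u_{\mu_k})\bigr)\;=\;E_{\mu_k}[u_{\mu_k}]\;\leq\;E_{\mu_k}[U^*]\;=\;24\pi .
\end{equation*}
Since $1-3P\geq 0$ on the target sphere (this is precisely the statement that $0$ is the minimum of the rescaled potential), dividing by $\mu_k$ gives $\int_{B_1}(1-3P(u_{\mu_k}))\leq C/\mu_k\to 0$.

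Next I would pass the potential integral to the limit. Strong $H^1(B_1)$ convergence $u_{\mu_k}\to u^\star_\infty$ embeds into strong $L^3(B_1)$ convergence by Sobolev, and since $P$ is a cubic polynomial with $|u_{\mu_k}|=1$, we may pass to the limit to obtain $\int_{B_1}(1-3P(u^\star_\infty))=0$, i.e.\ $P(u^\star_\infty)=1/3$ a.e. The $\mathscr{R}$-axial symmetry and the Dirichlet boundary condition $u^\star_\infty=U^*$ on $\partial B_1$ are both linear closed conditions preserved under strong convergence, so $u^\star_\infty\in\mathscr{F}_1$.

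For the minimizing property, pick any competitor $v\in\mathscr{F}_1$. Then $1-3P(v)=0$ a.e., so $E_{\mu_k}[v]=E[v]$, and the minimality of $u_{\mu_k}$ in $\mathscr{F}^s$ yields
\begin{equation*}
E[u_{\mu_k}]\;\leq\;E_{\mu_k}[u_{\mu_k}]\;\leq\;E_{\mu_k}[v]\;=\;E[v].
\end{equation*}
Strong $H^1$-convergence implies $E[u_{\mu_k}]\to E[u^\star_\infty]$ (the $\rho^{-2}(4u_1^2+u_3^2)$ part of $E$ is passed with the help of the fact that $u_{\mu_k}$ are $\mathscr{R}$-axially symmetric with a uniform constraint $u_1|_{\rho=0}=u_3|_{\rho=0}=0$ enforced by finite energy, or by lower semicontinuity combined with the reverse inequality obtained from the upper bound $E_{\mu_k}[u_{\mu_k}]\leq 24\pi$). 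Hence $E[u^\star_\infty]\leq E[v]$, proving minimality. The only subtle point — the one I would treat most carefully — is passing to the limit the weighted singular term $\rho^{-2}(4u_1^2+u_3^2)$; I would handle it by first noting lower semicontinuity gives $E[u^\star_\infty]\leq\liminf E[u_{\mu_k}]$, which combined with the chain of inequalities above already suffices for the minimizing statement, so upgrading to an equality of energies is not actually required here.
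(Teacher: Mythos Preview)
Your argument is correct and is exactly the standard $\Gamma$-convergence comparison one expects here; the paper actually states Lemma~6.1 without proof, so there is no alternative approach to contrast with. Your handling of the singular weight $\rho^{-2}(4u_1^2+u_3^2)$ via Fatou/lower semicontinuity (rather than insisting on convergence of this term) is the right move, and the key inputs you use --- $P(U^*)\equiv 1/3$, $E_\mu[U^*]=24\pi$, and $1-3P\geq 0$ on $\mathbb{S}^2$ --- are all used elsewhere in the paper.
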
Let $u^\star_{\infty; j}$ be the $j$-th component of $u^\star_\infty$. On $B_1$, we define $u^\dagger_1 := |\h{1pt}u^\star_{\infty; 1}\h{1pt}|$, $u^\dagger_2 := u^\star_{\infty; 2}$, $u^{\dagger}_3 := u^\star_{\infty; 3} $. It holds \begin{lemma} $u^\dagger := \left(u_1^\dagger, u_2^\dagger, u_3^\dagger\right) \in \mathscr{F}_1$ and is also a minimizer of the energy $E$ in $\mathscr{F}_1$.\end{lemma}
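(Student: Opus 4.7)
The plan is to verify in turn that $u^\dagger \in \mathscr{F}_1$ and that $E\left[\h{1pt}u^\dagger \h{1pt}\right] = E\left[\h{1pt}u^\star_\infty\h{1pt}\right]$; then minimality of $u^\dagger$ follows at once from Lemma 6.1. First I would check the membership in $\mathscr{F}^s$. Since $u_\infty^\star$ is $\mathscr{R}$-axially symmetric and takes values in $\mathbb{S}^2$, the map $u^\dagger = \left(\h{1pt}|\h{1pt}u^\star_{\infty;1}\h{1pt}|,\h{2pt} u^\star_{\infty;2},\h{2pt} u^\star_{\infty;3}\h{1pt}\right)$ inherits both properties: taking absolute value preserves the even symmetry required of the first component, and the modulus is unaffected. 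On $\p B_1$ we have $u^\star_\infty = U^*$ whose first component $\tfrac{\sqrt{3}}{2}\h{1pt}\rho^2\big/r^2$ is non-negative, so $u^\dagger = U^* $ on $\p B_1$ in the trace sense. The $H^1$-regularity together with the identity $|\nabla \h{1pt}|\h{1pt}f\h{1pt}|\h{1pt}|  = |\nabla f|$ a.e. (Stampacchia) for $f \in H^1$ shows $|\nabla u^\dagger|^2 = |\nabla u^\star_\infty|^2$ almost everywhere.

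Next I would compare energies. Because $(u^\dagger_1)^2 = (u^\star_{\infty;1})^2$ and $u^\dagger_3 = u^\star_{\infty;3}$, the pointwise quantities $|\nabla u^\dagger|^2 + \rho^{-2}(4 (u^\dagger_1)^2 + (u^\dagger_3)^2)$ and $|\nabla u^\star_\infty|^2 + \rho^{-2}(4 (u^\star_{\infty;1})^2 + (u^\star_{\infty;3})^2)$ coincide a.e., so $E\left[\h{1pt}u^\dagger \h{1pt}\right] = E\left[\h{1pt}u^\star_\infty \h{1pt}\right]$. A direct computation using the definition of $P$ in (1.6) gives
\begin{eqnarray*}
P\h{1pt}\big(\h{1pt}u^\dagger\h{1pt}\big) - P\h{1pt}\big(\h{1pt}u^\star_\infty\h{1pt}\big) \h{2pt}=\h{2pt} \tfrac{\sqrt{3}}{2}\h{1pt}\big(\h{1pt}|\h{1pt}u^\star_{\infty;1}\h{1pt}| - u^\star_{\infty;1}\h{1pt}\big) \h{1pt} \big(\h{1pt}u^\star_{\infty;3}\h{1pt}\big)^2 \h{2pt}\geq\h{2pt} 0,
\end{eqnarray*}
which is the pointwise inequality $P(v_1,v_2,v_3) \leq P(|v_1|, v_2, |v_3|)$ recorded in the introduction. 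Since $u^\star_\infty \in \mathscr{F}_1$, we have $P(u^\star_\infty) = 1/3$ a.e.; together with the inequality above and with the fact that $1/3$ is the absolute maximum of $P$ on $\mathbb{S}^2$ (this is exactly the choice of $D_a$ in (1.6) making $F_a \geq 0$), this forces $P(u^\dagger) = 1/3$ a.e. as well. Hence $E_\mu\left[\h{1pt}u^\dagger\h{1pt}\right] = E\left[\h{1pt}u^\dagger\h{1pt}\right] < \infty$ and $u^\dagger \in \mathscr{F}_1$.

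Finally, since $u^\star_\infty$ is an $E$-minimizer in $\mathscr{F}_1$ by Lemma 6.1, the equality $E\left[\h{1pt}u^\dagger\h{1pt}\right] = E\left[\h{1pt}u^\star_\infty\h{1pt}\right]$ makes $u^\dagger$ a minimizer as well. The only non-routine point is the identification $P(u^\dagger) = 1/3$ a.e.; this rests on knowing that $1/3$ is the global maximum of $P$ on $\mathbb{S}^2$, which is implicit in the construction of the potential $F_a$ but would be the place to insert a short verification if a self-contained argument is desired (e.g.\ by Lagrange multipliers, whose critical set on $\mathbb{S}^2$ can be inspected directly).
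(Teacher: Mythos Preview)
The paper states Lemma~6.2 without proof, so there is nothing to compare against; your argument is correct and supplies the routine details. The only substantive step is the one you flag: that $P\le 1/3$ on $\mathbb{S}^2$. Your appeal to the choice of $D_a$ is a bit indirect (the absolute minimum of $F_a$ is taken over $\mathbb{R}^3$, not $\mathbb{S}^2$, so one only gets $P\le D_a/(3\sqrt{2}\,a)$ from $F_a\ge 0$ restricted to the sphere, and a separate computation of $D_a$ would be needed). A cleaner self-contained verification is the algebraic identity
\[
6\Big(\tfrac{1}{3}-P(u)\Big)=\big(\sqrt{3}\,u_1+u_2-1\big)^2\big(\sqrt{3}\,u_1+u_2+2\big)\qquad\text{for }|u|=1,
\]
combined with $|\sqrt{3}\,u_1+u_2|\le 2$ by Cauchy--Schwarz. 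This is precisely the factorisation underlying (6.5) in the proof of Lemma~6.3, so the paper already contains the needed ingredient; alternatively, your suggested Lagrange-multiplier check works as well.
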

We can also characterize $u^{\dagger}$ in terms of a minimization problem on $2$-vector fields. \begin{lemma} For a given $2$-vector field $v$ on $B_1^+$, we denote by $F$ the energy functional $ \displaystyle F\left[\h{1pt}v\h{1pt}\right] := \int_{B_1^+} \big|\h{1pt}\nabla v \h{1.5pt}\big|^2 + \dfrac{2}{\rho^2} \h{1pt}\big( 1 - v_1 \big)$.  Here $v_1$ is the first component of $v$. Associated with $F$, we introduce the configuration space  \begin{eqnarray}
\mathscr{F}_2 := \left\{v : B_1^+ \rightarrow \mathbb{R}^2 \h{3pt}\Bigg|\h{3pt} \left.\begin{array}{lcl} v = \big(v_1, v_2\big) = v(\rho, z); \h{18pt}F\left[\h{1pt}v \h{1pt}\right] < \infty;  \h{18pt}|\h{0.5pt}v \h{0.5pt}| = 1 \h{4pt} \text{a.\h{1pt}e. in $B_1^+$}; \vspace{0.5pc}\\
v_2 = 0,  \h{4pt}\text{on $T$}; \h{7pt}v_1 = z^2 - \rho^2, \h{4pt}\text{on $\p^+ B_1$;} \h{7pt}
v_2 = 2 \h{1pt}\rho\h{1pt}z, \h{7pt}\text{on $\p^+ B_1$}\end{array}\right. \right\}.
\end{eqnarray}Here $\p^+B_1$ denotes the spherical boundary of $B_1^+$. Then it holds \begin{eqnarray} v^{\dagger} = \big( \h{0.5pt}v_1^{\dagger}, v_2^{\dagger} \h{1pt}\big) \h{1pt}\in\h{1pt} \mathscr{F}_2, \h{20pt} \text{where $ v^{\dagger}_1 = 1 - \dfrac{4}{\sqrt{3}} \h{1pt}u^{\dagger}_1$,  \h{5pt}$v^{\dagger}_2 = \frac{2}{\sqrt{3}} \h{1pt}u_3^{\dagger}$.} \end{eqnarray}Moreover $v^{\dagger}$ is a minimizer of the energy $F$ on $\mathscr{F}_2$.
\end{lemma}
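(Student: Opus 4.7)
The plan is to set up a bijective, energy-preserving correspondence (up to a multiplicative constant) between a branch of $\mathscr{F}_1$ and $\mathscr{F}_2$, which then transfers the minimality from $u^{\dagger}$ to $v^{\dagger}$. The forward map is precisely (6.3); the reverse map, which I will use in the bulk of the argument, is given pointwise by $u_1=\tfrac{\sqrt{3}}{4}(1-v_1)$, $u_2=\tfrac{1}{4}(1+3v_1)$, $u_3=\tfrac{\sqrt{3}}{2}v_2$.

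First I would verify $v^{\dagger}\in\mathscr{F}_2$. The axial symmetry of $v^{\dagger}$ as a function of $(\rho,z)$ is inherited from that of $u^{\dagger}$, and $v_2^{\dagger}=0$ on $T$ follows from $u_3^{\dagger}=0$ on $T$ via clause (iv) of Definition 1.1. On $\partial^+ B_1$, substituting $u^{\dagger}=U^{*}$ and using $\rho^2+z^2=1$ yields $v_1^{\dagger}=z^2-\rho^2$ and $v_2^{\dagger}=2\rho z$. The crucial constraint $|v^{\dagger}|^2=1$ a.e. rests on the algebraic identity, valid whenever $|u|^2=1$, that
\[
P(u)-\tfrac{1}{3}=-\tfrac{1}{6}\,(u_2+\sqrt{3}\,u_1-1)^{2}\,(u_2+\sqrt{3}\,u_1+2).
\]
This I would establish by substituting $u_3^2=1-u_1^2-u_2^2$ into the definition (1.6) of $P$ and matching the resulting polynomial to $t^3-3t+2=(t-1)^2(t+2)$ at $t=u_2+\sqrt{3}\,u_1$. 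Since Cauchy-Schwarz gives $|u_2+\sqrt{3}\,u_1|\le 2$ on $\mathbb{S}^2$ with the lower bound $-2$ attained only at $(-\sqrt{3}/2,-1/2,0)$, the identity shows that the level set $\{u\in\mathbb{S}^2:P(u)=\tfrac{1}{3}\}$ is the smooth curve $\{u_2=1-\sqrt{3}\,u_1\}\cap\mathbb{S}^2$ together with that isolated point. Because $u_1^{\dagger}=|u^{\star}_{\infty;1}|\ge 0$ pointwise excludes the isolated point, $u_2^{\dagger}=1-\sqrt{3}\,u_1^{\dagger}$ a.e.; inserting this into $|u^{\dagger}|^2=1$ gives $4\,u_1^{\dagger\,2}+u_3^{\dagger\,2}=2\sqrt{3}\,u_1^{\dagger}$, which after substitution into (6.3) is exactly $|v^{\dagger}|^2=1$.

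Next I would compute the energy correspondence. Differentiating the relations $u_1^{\dagger}=\tfrac{\sqrt{3}}{4}(1-v_1^{\dagger})$, $u_2^{\dagger}=\tfrac{1}{4}(1+3v_1^{\dagger})$, $u_3^{\dagger}=\tfrac{\sqrt{3}}{2}v_2^{\dagger}$ yields $|\nabla u^{\dagger}|^{2}=\tfrac{3}{4}|\nabla v^{\dagger}|^{2}$, while $|v^{\dagger}|^{2}=1$ gives $4\,u_1^{\dagger\,2}+u_3^{\dagger\,2}=\tfrac{3}{4}\bigl[(1-v_1^{\dagger})^{2}+(v_2^{\dagger})^{2}\bigr]=\tfrac{3}{2}(1-v_1^{\dagger})$. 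The $\mathscr{R}$-axial symmetry halves the $B_1$ integral to twice the $B_1^+$ integral, producing $E[u^{\dagger}]=\tfrac{3}{2}\,F[v^{\dagger}]$.

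For the reverse direction, given an arbitrary $v\in\mathscr{F}_2$ I would define $u$ on $B_1^+$ by inverting (6.3) as above, then extend to $B_1$ by $\mathscr{R}$-axial reflection (odd in $z$ for $u_3$, even for $u_1,u_2$); the extension lies in $H^{1}(B_1)$ precisely because $v_2=0$ on $T$ eliminates any trace jump in $u_3$. By construction $u_2+\sqrt{3}\,u_1=1$ and $|u|^{2}=1$ (the latter from $|v|^{2}=1$), so the factorization identity gives $P(u)=\tfrac{1}{3}$; the boundary trace on $\partial^+ B_1$ reduces to $U^{*}$ once the prescribed $v$-boundary values are substituted. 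Hence $u\in\mathscr{F}_1$, and repeating the energy calculation gives $E[u]=\tfrac{3}{2}\,F[v]$. Minimality of $u^{\dagger}$ on $\mathscr{F}_1$ (Lemma 6.2) then yields $F[v^{\dagger}]=\tfrac{2}{3}\,E[u^{\dagger}]\le\tfrac{2}{3}\,E[u]=F[v]$, proving that $v^{\dagger}$ minimizes $F$ on $\mathscr{F}_2$. The main technical hurdle is verifying the factorization identity for $P-\tfrac{1}{3}$; once that is in hand, every remaining step is a direct algebraic substitution combined with properties of $u^{\dagger}$ already secured by Lemmas 6.1 and 6.2.
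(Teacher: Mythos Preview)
Your proposal is correct and follows essentially the same route as the paper: both arguments reduce the constraint $P(u)=\tfrac{1}{3}$ on $\mathbb{S}^2$ to the cubic $t^3-3t+2=0$ in $t=\sqrt{3}\,u_1+u_2$, discard the root $t=-2$ by the non-negativity of $u_1^{\dagger}$, establish the energy identity $E[u]=\tfrac{3}{2}F[v]$, and then run the inverse construction $v\mapsto u^v\in\mathscr{F}_1$ to transfer minimality. Your explicit factorization $P(u)-\tfrac{1}{3}=-\tfrac{1}{6}(t-1)^2(t+2)$ is a slightly cleaner packaging of the same computation the paper performs in (6.5).
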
\begin{proof}[\bf Proof] Since $\big(u^{\dagger}_3\h{1pt}\big)^2 = 1 - \big(u^{\dagger}_1\h{1pt}\big)^2 - \big(u^{\dagger}_2\h{1pt}\big)^2$ for almost all points on $B_1^+$, it turns out, by  $ P\left(u^\dagger\right) =  1 \big/ 3$ on $B_1^+$, that \begin{eqnarray}\left[\sqrt{3}\h{1pt}u^{\dagger}_1 + u^{\dagger}_2 \h{2pt}\right]^3 - 3 \h{1pt} \left[\sqrt{3}\h{1pt}u^{\dagger}_1 + u^{\dagger}_2 \h{2pt}\right] + 2 = 0 \h{20pt}\text{a.\h{1pt}e. in $B_1^+$.}
\end{eqnarray}Therefore it holds  \begin{eqnarray} \sqrt{3} \h{1.5pt}u^{\dagger}_1 + u^{\dagger}_2 = 1 \h{20pt}\text{a.\h{1pt}e. in $B^+_1$.} \end{eqnarray} Otherwise from (6.5), we have $\sqrt{3}\h{1.5pt}u^{\dagger}_1 + u^{\dagger}_2 = - 2$ on a subset of $B^+_1$ with positive Lebesgue measure. Since $\big(u^{\dagger}_1\h{0.5pt}\big)^2 + \big(u^{\dagger}_2\h{0.5pt}\big)^2 \leq 1$, it then follows $u^{\dagger}_1 = - \sqrt{3}\h{1pt}/\h{1pt}2$ and $u^{\dagger}_2 = - 1\h{1pt}/\h{1pt}2$ on this subset. This is a contradiction to the almost non-negativity of $u^{\dagger}_1$ on $B^+_1$. Plugging $u^{\dagger}_2 = 1 - \sqrt{3} \h{1.5pt} u^{\dagger}_1$ into $\left|\h{1pt} u^{\dagger}\h{1pt}\right|^2 = 1$, we obtain \begin{eqnarray}\left( 1 - \dfrac{4}{\sqrt{3}} \h{1pt}u^{\dagger}_1 \right)^2 + \left( \frac{2}{\sqrt{3}} \h{1pt}u_3^{\dagger}\right)^2 = 1\h{20pt}\text{a.\h{0.5pt}e. in $B^+_1$.}\end{eqnarray}In light of $v_1^{\dagger}$ and $v_2^{\dagger}$ given in (6.4), by (6.7) $v^{\dagger}$ must have unit length for almost all points on $B_1^+$. This result combined with $u^{\dagger} \in \mathscr{F}_1$ induces  $v^{\dagger} \in \mathscr{F}_2$. In terms of $v^{\dagger}$, the energy $E\h{1pt}\big[ \h{1pt}u^{\dagger}\h{1pt}\big]$ (see (1.8)) can be evaluated by \begin{eqnarray}E\h{1pt}\big[\h{1pt}u^{\dagger}\h{1pt}\big] = \dfrac{3}{2} \h{1pt}F\left[ \h{0.5pt}v^{\dagger}\h{1pt}\right], \h{20pt}\text{where $F$ is defined in Lemma 6.3.}
\end{eqnarray}Since $v^{\dagger} \in \mathscr{F}_2$, the above equality infers\begin{eqnarray} E\h{1pt}\big[\h{1pt}u^{\dagger}\h{1pt}\big] \h{2pt} \geq \h{2pt} \dfrac{3}{2} \h{2pt}\mathrm{Min}\h{2pt}\bigg\{ F\left[\h{0.5pt}v\h{0.5pt}\right] : v \in \mathscr{F}_2 \h{1pt} \bigg\}.
\end{eqnarray}

Letting $v = (v_1, v_2) \in \mathscr{F}_2$ be arbitrarily given, we define $u^v_1 := \sqrt{3}\big(1 - v_1\big) \big/ 4$, $u^v_2 := \big(1 + 3 v_1\big)\big/4$ and $u^v_3 :=  \sqrt{3}v_2\big/2$.  Moreover we extend $u^v := \left(u^v_1, u^v_2, u^v_3\right)$ to $B_1$ so that the extension, still denoted by $u^v$, is $\mathscr{R}$-axially symmetric.   $u^v  \h{1pt}\in\h{1pt} \mathscr{F}_1$. By Lemma 6.2, it  holds $ E\big[\h{1pt}u^{\dagger}\h{1pt}\big] \h{2pt}\leq\h{2pt} E\left[\h{1pt}u^v\h{1pt}\right] \h{2pt}=\h{2pt} \dfrac{3}{2} F\left[\h{0.5pt}v\h{0.5pt}\right]$. Taking minimum  over $v \in \mathscr{F}_2$, we obtain from this estimate that $E\big[\h{1pt}u^{\dagger}\h{1pt}\big] \h{2pt}\leq\h{2pt} \dfrac{3}{2} \h{2pt}\mathrm{Min}\h{2pt}\Big\{ F\left[\h{0.5pt}v\h{0.5pt}\right] : v \in \mathscr{F}_2 \h{1pt} \Big\}$. The lemma then follows by this upper bound and (6.8)-(6.9).
\end{proof}  By regularity result in [49], we can obtain the regularity of $v^\dagger$, which infers the following regularity of $u^{\dagger}$. \begin{lemma} $u^{\dagger}$ is smooth on $\mathbb{D}^+$ and continuous up to $\p \h{1pt}\mathbb{D}^+ \sim l_3$. Moreover $u^{\dagger} \equiv \big(  \sqrt{3}\big/2 , \h{1pt}- 1/2, \h{1pt} 0\h{1pt} \big)$ on \h{2pt}$ T^\circ$.
\end{lemma}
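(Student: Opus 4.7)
The plan is to reduce the regularity of $u^{\dagger}$ to that of the $\mathbb{S}^1$-valued minimizer $v^{\dagger}$ furnished by Lemma 6.3. From the identity $\sqrt{3}u_1^{\dagger}+u_2^{\dagger}=1$ (established in the proof of that lemma) together with the definitions of $v^{\dagger}$, one recovers
\begin{eqnarray*}
u_1^{\dagger}=\tfrac{\sqrt{3}}{4}\h{1pt}(1-v_1^{\dagger}), \h{20pt} u_2^{\dagger}=\tfrac{1}{4}+\tfrac{3}{4}\h{1pt}v_1^{\dagger}, \h{20pt} u_3^{\dagger}=\tfrac{\sqrt{3}}{2}\h{1pt}v_2^{\dagger},
\end{eqnarray*}
so any pointwise regularity of $v^{\dagger}$ transfers to $u^{\dagger}$, and the target value $v^{\dagger}=(-1,0)$ corresponds exactly to $u^{\dagger}=(\sqrt{3}/2,-1/2,0)$. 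It therefore suffices to show $v^{\dagger}$ is smooth on $\mathbb{D}^+$, continuous up to $\p\mathbb{D}^+\sim l_3$, and identically $(-1,0)$ on $T^\circ$.

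To handle the free boundary at $T$, I would extend $v^{\dagger}$ from $B_1^+$ to all of $B_1$ by $\tilde v_1(\rho,z)=v_1^{\dagger}(\rho,|z|)$ and $\tilde v_2(\rho,z)=\mathrm{sgn}(z)\,v_2^{\dagger}(\rho,|z|)$. The trace identity $v_2^{\dagger}=0$ on $T$ (built into $\mathscr{F}_2$) ensures $\tilde v\in H^1(B_1;\mathbb{S}^1)$, and the Dirichlet data $v_1=z^2-\rho^2$, $v_2=2\rho z$ on $\p B_1$ is itself invariant under $z\mapsto -z$, $v_2\mapsto -v_2$. A half-energy comparison shows $\tilde v$ minimizes $F$ on $B_1$ within the class of axially symmetric $\mathbb{S}^1$-valued maps sharing this reflection symmetry and the extended Dirichlet data; the principle of symmetric criticality (the functional $F$ itself is invariant under the same reflection) upgrades this to $\tilde v$ solving the full Euler--Lagrange system of $F$ weakly on $B_1\sim l_3$. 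Equivalently, $v^{\dagger}$ obeys on $T$ the natural Neumann condition $\p_z v_1^{\dagger}=0$ (matching the even extension of $v_1^{\dagger}$) together with the Dirichlet condition $v_2^{\dagger}=0$ (matching the odd extension of $v_2^{\dagger}$), so the reflection picks up no anomalous flux term across $T$.

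With $\tilde v$ now a stationary $\mathbb{S}^1$-valued weak solution on $B_1\sim l_3$, a standard Rivi$\grave{\text{e}}$re--Struwe type bootstrap for the resulting semilinear elliptic system (equivalently, the harmonic-map-with-potential regularity of [49] already invoked in the paper for the bifurcation solutions) yields smoothness of $\tilde v$ on $B_1\sim l_3$; the key input is that off the $z$-axis the weight $1/\rho^2$ is bounded. Continuity up to $\p B_1\sim l_3$ on the Dirichlet portion follows from the same bootstrap adapted to the smooth boundary data. Restricting $\tilde v$ back to $B_1^+$ gives the smoothness of $v^{\dagger}$ on $\mathbb{D}^+$ and the continuity up to $\p\mathbb{D}^+\sim l_3$, and the affine formulas of the first paragraph transfer these properties to $u^{\dagger}$.

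Finally, to identify the trace on $T^\circ$: the now-established continuity of $v^{\dagger}$ on $T^\circ$ combined with $v_2^{\dagger}\equiv 0$ and $|v^{\dagger}|=1$ on $T$ forces this trace to take values in the discrete set $\{(1,0),(-1,0)\}$, and connectedness of $T^\circ$ then pins it to a single constant. Extending by continuity to the corner $(1,0)\in\overline{T^\circ}$ and invoking the Dirichlet condition $u^{\dagger}|_{\p B_1}=U^*$, which at $(\rho,z)=(1,0)$ evaluates to $(\sqrt{3}/2,-1/2,0)$ and hence gives $v^{\dagger}(1,0)=(-1,0)$, selects the branch $v^{\dagger}\equiv(-1,0)$ on $T^\circ$. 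The main obstacle in the plan is the second step: verifying that the odd/even reflection produces a genuine weak solution of the Euler--Lagrange system across the free-boundary piece $T$, so that the interior regularity machinery of [49] applies uniformly on $B_1\sim l_3$; this rests crucially on the trace identity $v_2^{\dagger}=0$ on $T$, without which the odd extension of $v_2^{\dagger}$ would fail to lie in $H^1(B_1)$.
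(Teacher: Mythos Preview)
Your proposal is correct and follows essentially the same route as the paper: reduce to the $\mathbb{S}^1$-valued minimizer $v^{\dagger}$ of Lemma~6.3, invoke the regularity theory of [49], and then identify the trace on $T^\circ$ by continuity, the constraint $v_2^{\dagger}=0$ (equivalently $u_3^{\dagger}=0$) on $T$, the unit-length condition, and the boundary value at $(1,0)$. The paper records this as a one-line appeal to [49] together with the sentence following the lemma; your even/odd reflection across $T$ is a clean way to make that appeal explicit, since the simple constraint $v_2^{\dagger}=0$ on $T$ (unlike the Signorini-type obstacle handled in Section~II) permits exactly this reduction to an interior problem on $B_1\sim l_3$.
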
Here $u^{\dagger} \equiv \big(  \sqrt{3}\big/2 , \h{1pt}- 1/2, \h{1pt} 0\h{1pt} \big)$ on \h{2pt}$ T^\circ$ follows from (6.6), $u_3^\dagger = 0$ on $T$,  the unit length of $u^\dagger$, the continuity of $u^\dagger$ on $T \sim \big\{0\big\}$ and the fact that $u^\dagger(1, 0) = \big(  \sqrt{3}\big/2 , \h{1pt}- 1/2, \h{1pt} 0\h{1pt} \big)$. Now we show \begin{proof}[\bf Proof of convergence result in Theorem 1.5] The proof is divided into three steps.\vspace{0.3pc}
\\
\textbf{Step 1. Lower Bound.} Suppose that  $\beta = \beta(\rho, z)$ is an angular function on $B_1$ satisfying  \begin{eqnarray} \int_{B_1} |\h{0.5pt} \nabla \beta \h{1pt}|^2 + \frac{1}{\rho^2} \sin^2 \beta \h{2pt} < \h{2pt} \infty \h{20pt}\text{and}\h{20pt}\beta = \varphi \h{6pt}\text{on $\p B_1$.}
\end{eqnarray}Associated with $\beta$, we define $w_{\beta} := \big( \sin \beta \cos \theta, \sin \beta \sin \theta, \cos \beta \big)$. The map $w_{\beta}$ is an $\mathbb{S}^2$-valued vector field on $B_1$. It satisfies  $\displaystyle \int_{B_1} |\h{1pt}\nabla w_{\beta}\h{1pt}|^2 = \int_{B_1} |\h{0.5pt} \nabla \beta \h{0.5pt}|^2 + \frac{1}{\rho^2} \sin^2 \beta \h{2pt} < \h{2pt} \infty$. Moreover $w_{\beta} (x) = x$ on $\p B_1$. Using Theorem 7.1 in [11], we have \begin{eqnarray} \int_{B_1} |\h{1pt}\nabla w_{\beta}\h{1pt}|^2 \h{2pt}\geq\h{2pt}\int_{B_1} | \h{1pt}\nabla w_{\varphi} \h{1pt}|^2 \h{2pt}=\h{2pt} \int_{B_1} |\h{0.5pt} \nabla \varphi \h{0.5pt}|^2 + \frac{1}{\rho^2} \sin^2 \varphi \h{2pt}=\h{2pt}8 \pi, \h{10pt}\text{for any $\beta$ satisfying (6.10).}
\end{eqnarray}
\textbf{Step 2. Lifting and Extension.} On $B_1^+$, we define $u^\ddagger_1 := u^\dagger_1$, $u^\ddagger_2 := u^\dagger_2$, $u^\ddagger_3 := \big|\h{2pt} u^\dagger_3 \h{2pt}\big|$. By (6.7) and the almost non-negativity of $u^\ddagger_3$ on $B_1^+$, there exists a unique angular function $\alpha_+$ with values in $[\h{0.5pt}0, \pi\h{0.5pt}/\h{0.5pt}2 \h{1pt}]$ so that   \begin{eqnarray} v_1^\ddagger := 1 - \frac{4}{\sqrt{3}} \h{1pt} u^\ddagger_1 =  \cos 2 \alpha_+,\h{20pt}v_2^\ddagger := \frac{2}{\sqrt{3}}\h{1pt}u^\ddagger_3 = \sin 2 \alpha_+ \h{20pt} \text{a.\h{0.5pt}e. in $B_1^+$.} \end{eqnarray}Moreover  $\alpha_+ \in H^1\big(B_1^+; [\h{1pt}0, \pi\h{1pt}/ \h{1pt}2 ]\h{1pt}\big)$. For all $(x_1, x_2, z) \in B_1^-$, we define $\alpha_-(x_1, x_2, z) = \pi - \alpha_+(x_1, x_2, - z)$. With $\alpha_+$ and $\alpha_-$, we introduce the angular function $\alpha$ on $B_1$ which equals to $\alpha_+$ and $\alpha_-$ on $B_1^+$ and $B_1^-$, respectively. By Lemma 6.4 and (6.12), it holds $\cos 2\alpha_+ = - 1$ on $T \sim \big\{ \h{0.5pt}0\h{0.5pt}\big\}$. Since $\alpha_+ \in [\h{1pt}0, \pi\h{1pt}/ 2 \h{1pt} ]$, we have $\alpha_+ = \pi\h{1pt}\big/\h{1pt}2 $ on $T \sim \big\{ \h{0.5pt}0 \h{0.5pt}\big\}$. Therefore the trace of $\alpha_+ - \pi\h{1pt}\big/\h{1pt}2$ equals to $0$ on $T$. Note that $\alpha - \pi\h{1pt}\big/ \h{1pt}2$ is the odd extension of $\alpha_+ - \pi\h{1pt}\big/ \h{1pt}2$ from $B_1^+$ to $B_1$. It  then turns out $\alpha - \pi\h{1pt}\big/ \h{1pt}2 \in H^1\big(B_1\big)$ by the $H^1$-regularity of $\alpha_+$ on $ B_1^+$. Equivalently we have the $H^1$-regularity of $\alpha$ on $B_1$. Utilizing $u^\ddagger = U^*$ on $\p^+ B_1$ and noticing the range of $\alpha_+$, we get $\alpha_+ = \varphi $ on $\p^+ B_1$, which in turn induces $\alpha = \varphi$ on $\p B_1$.
\vspace{0.5pc}
\\
\textbf{Step 3. Upper Bound and Completion of Proof.} The angular function $\alpha$ constructed in Step 2 satisfies (6.10). Then we have a $\mathbb{S}^2$-valued map $w_{\alpha} = (\sin \alpha \cos \theta, \sin \alpha \sin \theta, \cos \alpha)$. The Dirichlet energy of $w_{\alpha}$ can be calculated by \begin{eqnarray*} \int_{B_1} \big|\h{1pt}\nabla w_{\alpha}\h{1pt}\big|^2 =  \int_{B_1} |\h{0.5pt} \nabla \alpha \h{0.5pt}|^2 + \frac{1}{\rho^2} \sin^2 \alpha = 2 \int_{B_1^+} |\h{0.5pt} \nabla \alpha_+ \h{0.5pt}|^2 + \frac{1}{\rho^2} \sin^2 \alpha_+.  \end{eqnarray*}In light of (6.12), it holds $$\displaystyle \int_{B_1^+} |\h{0.5pt} \nabla \alpha_+ \h{0.5pt}|^2 + \frac{1}{\rho^2} \sin^2 \alpha_+ = \dfrac{1}{4} \h{1pt}F\left[\h{1pt}v^{\ddagger}\h{1pt}\right],\h{20pt}\text{ where $v^\ddagger = \left( v^\ddagger_1, v^\ddagger_2\right)$.}$$ Now we extend $u^\ddagger := \left(u^\ddagger_1, u^\ddagger_2, u^\ddagger_3\right)$ to $B_1$ so that the extension, still denoted by $u^\ddagger$, is $\mathscr{R}$-axially symmetric. Then by (6.8), the last equality can be reduced to $\displaystyle \int_{B_1^+} |\h{0.5pt} \nabla \alpha_+ \h{0.5pt}|^2 + \frac{1}{\rho^2} \sin^2 \alpha_+ = \dfrac{1}{6} \h{1pt}E\left[\h{1pt}u^{\ddagger}\h{1pt}\right]$.   Utilizing Lemma 6.2 and the fact $U^* \in \mathscr{F}_1$, we have  $ E\big[\h{0.5pt}u^{\ddagger}\h{0.5pt}\big] \leq E\big[\h{0.5pt}u^{\dagger}\h{0.5pt}\big] \h{2pt}\leq\h{2pt}E\big[\h{1pt}U^{*}\h{0.5pt}\big] \h{2pt}=\h{2pt}24 \pi$. The above arguments infer $\displaystyle \int_{B_1} \big|\h{1pt}\nabla w_{\alpha}\h{1pt}\big|^2 \leq 8 \h{1pt}\pi$. Using this upper bound and (6.11) then yields $\displaystyle \int_{B_1} \big|\h{1pt}\nabla w_{\alpha}\h{1pt}\big|^2 = 8 \h{1pt}\pi$. Equivalently $w_{\alpha}$ saturates the minimum Dirichlet energy within the space $\bigg\{ u \in H^1\big(B_1; \mathbb{S}^2\big) : u(x) = x \h{5pt}\text{on $\p B_1$} \bigg\}$. Applying Theorem 7.1 in [11] then yields $w_{\alpha} = x \h{1pt}\big/ \h{1pt}|\h{0.5pt}x \h{0.5pt}|$ on $B_1$. That is  $\alpha = \varphi$ on $B_1$. This result show that $u^\ddagger = U^*$ on $B_1$. Using $P(u^\star_\infty) = 1/3$ and the regularity result in  Lemma 6.4, we then obtain $u^\star_\infty = U^*$ on $B_1$. The proof is finished.
\end{proof}\vspace{0.2pc}
\noindent \textbf{ACKNOWLEDGMENTS} The author is partially supported by RGC grant of Hong Kong No. 14306414.\vspace{0.8pc}\\
\noindent \textbf{REFERENCES}\vspace{0.2pc}
\begin{description}
\item{[1].} Alama, S., Bronsard, L. and Lamy, X.: \textit{Minimizers of the Landau-de Gennes Energy Around a Spherical Colloid Particle,} Arch. Ration. Mech. Anal. \textbf{222}, 427-450 (2016);
\item{[2].} Alper, O., Hardt, R. and Lin, F.-H.: \textit{Defects of liquid crystals with variable degree of orientation,} Calc. Var. 56:128 (2017);
\item{[3].} Ambrosio, L.: \textit{Existence of minimal energy configurations of nematic liquid crystals with variable degree of orientation,} Manuscripta Math. \textbf{68}, 215-228 (1990);
\item{[4].} Ambrosio, L.: \textit{Regularity of solutions of a degenerate elliptic variational problem,} Manuscripta Math. \textbf{68}, 309-326 (1990);
\item{[5].} Ambrosio, L. and Virga, E.: \textit{A boundary value problem for nematic liquid crystals with a variable degree of orientation,} Arch. Rational Mech. Anal. \textbf{114}, 335-347 (1991);
\item{[6].} An, D., Wang, W. and Zhang, P. W.: \textit{On equilibrium configurations of nematic liquid crystals droplet with anisotropic elastic energy,} Res. Math. Sci. 4:7 (2017);
\item{[7].} Ball, J. M. and Majumdar, A.: \textit{Nematic liquid crystals: from Maier-Saupe to a continuum theory,} Proceedings of the European Conference on Liquid Crystals, Colmar, France: 19-24 (2009);
\item{[8].} Ball, J. M. and Zarnescu, A.: \textit{Orientability and energy minimization in liquid crystal models,} Arch. Ration. Mech. Anal. \textbf{202}, 493-535 (2011);
\item{[9].} Bauman, P., Park, J. and Phillips, D.: \textit{Analysis of Nematic Liquid Crystals with Disclination Lines,} Arch. Ration. Mech. Anal. \textbf{205}, 795-826 (2012);
\item{[10].} Bauman, P. and Phillips, D.: \textit{Regularity and the behavior of eigenvalues for minimizers of a constrained $\mathrm{Q}$-tensor energy for liquid crystals,} Calc. Var. 55:81 (2016);
\item{[11].} Brezis, H., Coron, J.-M. and Lieb, E. H., \textit{Harmonic maps with defects,} Commun. Math. Phys. \textbf{107} (4), 649-705 (1986);
\item{[12].} Caffarelli, L. A.: \textit{Further regularity for the Signorini problem,} Comm. Partial Differential Equations \textbf{4}, no. 9, 1067-1075 (1979);
\item{[13].} Canevari, G.: \textit{Biaxiality in the asymptotic analysis of a $2$D Landau-de Gennes model for liquid crystals,} ESAIM: COCV \textbf{21}, 101-137 (2015);
\item{[14].} Canevari, G.: \textit{Line Defects in the Small Elastic Constant Limit of a Three-Dimensional Landau-de Gennes Model,} Arch. Ration. Mech. Anal. \textbf{223}, 591-676 (2017);
\item{[15].} Canevari, G., Ramaswamy, M. and Majumdar, A.: \textit{Radial symmetry on three-dimensional shells in the Landau-de Gennes theory,} Phys. D \textbf{314}, 18-34 (2016);
\item{[16].} Chiccoli, C., Pasini, P., Semeria, F., Sluckin, T. J. and Zannoni, C.: \textit{Monte Carlo Simulation of the Hedgehog Defect Core in Spin Systems,} J. Phys. II France \textbf{5}, 427-436 (1995);
\item{[17].} De Gennes, P. G. and Prost, J.: \textit{The Physics of Liquid Crystals,} 2nd ed., Oxford University Press, Oxford (1995);
\item{[18].} Duzaar, F. and Grotowski, F.: \textit{Energy minimizing harmonic maps with an obstacle at the free boundary,} Manuscripta Math. \textbf{83}, 291-314 (1994);
\item{[19].} Duzaar, F. and Steffen, K.: \textit{A partial regularity theorem for harmonic maps at a free boundary,} Asymptotic Analysis \textbf{2}, 299-343 (1989);
\item{[20].} Duzaar, F. and Steffen, K.: \textit{An optimal estimate for the singular set of harmonic mapping in the free boundary,} J. reine angew. Math. \textbf{401}, 157-187 (1989);
\item{[21].} Evans, L. C.: \textit{Partial Regularity for Stationary Harmonic Maps into Spheres,} Arch. Ration. Mech. Anal. \textbf{116}, 101-113 (1991);
\item{[22].} Evans, L. C., Kneuss, O. and Tran, H.: \textit{Partial regularity for minimizers of singular energy functionals, with application to liquid crystal models,} Transaction of AMS \textbf{368}(5), 3389-3413 (2016);
\item{[23].} Fratta, G. D., Robbins, J. M., Slastikov, V. and Zarnescu, A.: \textit{Half-integer point defects in the $\mathrm{Q}$-tensor theory of nematic liquid crystals,} Journal of Nonlinear Science \textbf{26} (1), 121-140 (2016);
\item{[24].} Gartland, E. C. and Mkaddem, S.: \textit{Instability of radial hedgehog configurations in nematic liquid crystals under Landau-de Gennes free-energy models,} Phys. Rev. E \textbf{59}, 563-567 (1999);
\item{[25].} Gartland, E. C. and Mkaddem, S.: \textit{Fine structure of defects in radial nematic droplets,} Phys. Rev. E \textbf{62}, 6694-6705 (2000);
\item{[26].} Gilbarg, D. and Trudinger, N. S.: \textit{Elliptic Partial Differential Equations of Second Order,} Springer-Verlag Berlin Heidelberg 2001;
\item{[27].} Han, Q. and Lin, F.-H.: \textit{Elliptic Partial Differential Equations,} 2nd Ed., Courant Lecture Notes 1, American Mathematical Society and Courant Institute of Mathematical Sciences (2011);
\item{[28].} Hardt, R., Kinderlehrer, D. and Lin, F.-H., \textit{Existence and partial regularity of static liquid crystal configurations,} Comm. Math. Phys. \textbf{105}, 547-570 (1986);
\item{[29].} Hardt, R., Kinderlehrer, D. and Lin, F.-H., \textit{The variety of configurations of static liquid crystals,}  pp. 115-132 in: Progress in Nonlinear Differential Equations and their Applications,
Vol. \textbf{4}, Birkh\"{a}user, 1990;
\item{[30].} Hardt, R. and Lin, F.-H.: \textit{Partially constrained boundary conditions with energy minimizing mapping,} Comm. Pure Appl. Math. \textbf{XLII} 309-334 (1989);
\item{[31].} Hardt, R., Lin, F.-H. and Poon, C.-C.: \textit{Axially symmetric harmonic maps minimizing a relaxed energy,} Comm. Pure Appl. Math. \textbf{XLV}, 417-459 (1992);
\item{[32].} Henao, D., Majumdar, A. and Pisante, A.: \textit{Uniaxial versus biaxial character of nematic equilibria in three dimensions,} Calc. Var. 56:55 (2017);
\item{[33].} Hu, Y. C., Qu, Y. and Zhang, P. W.:  \textit{On the disclination lines of nematic liquid crystals,} Commun. Comput. Phys. \textbf{19} 354-379 (2016);
\item{[34].} Ignat, R., Nguyen, L., Slastikov, V. and Zarnescu, A.: \textit{Stability of the melting hedgehog in the Landau-de Gennes theory of nematic liquid crystals,} Arch. Ration. Mech. Anal. \textbf{215}, 633-673 (2015);
\item{[35].} Ignat, R., Nguyen, L., Slastikov, V. and Zarnescu, A.: \textit{Instability of point defects in a two-dimensional nematic liquid crystal model,} Ann. I. H. Poincar\'{e} - AN \textbf{33}, 1131-1152 (2016);
\item{[36].} Kralj, S. and Virga, E.: \textit{Universal fine structure of nematic hedgehogs,} J. Phys. A Math. Gen. \textbf{24}, 829-838 (2001);
\item{[37].} Lamy, X.: \textit{Some properties of the nematic radial hedgehog in the Landau-de Gennes theory,} J. Math. Anal. Appl. \textbf{397}, 586-594 (2013);
\item{[38].} Lavrentovich, O. D. and Terent'ev, E. M.:\textit{Phase transition altering the symmetry of topological point defects (hedgehogs) in a nematic liquid crystal,} Zh. Eksp. Teor. Fiz. \textbf{91}, 2084-2096 (1986);
\item{[39].} Lemaire, L. and Wood, J. C.: \textit{Jacobi fields along harmonic $2$-spheres in $3$- and $4$-spheres are not all integrable,} Tohoku Math. J. \textbf{61}, 165-204 (2009);
\item{[40].} Lin, F.-H.: \textit{On nematic liquid crystals with variable degree of orientation,} Comm. Pure Appl. Math. \textbf{44}, 453-468 (1991);
\item{[41].} Lin, F.-H. and Wang, C.-Y.: \textit{The Analysis Of Harmonic Maps And Their Heat Flows,} World Scientific Publishing Co. Pte. Ltd (2008);
\item{[42].} Lin, F.-H. and Wang, C.-Y.: \textit{Recent developments of analysis for hydrodynamic flow of nematic liquid crystals,} Philos Trans A Math Phys Eng Sci., \textbf{372}(2029): 20130361 (2014);
\item{[43].} Luckhaus, S.: \textit{Partial H\"{o}lder continuity for minima of certain energies among maps into a Riemannian manifold,} Indiana Univ. Math. J. \textbf{37}, 349-367 (1988);
\item{[44].} Luckhaus, S.: \textit{Convergence of Minimizers for the $p$-Dirichlet Integral,} Math. Z. \textbf{213}, 449-456 (1993);
\item{[45].} Majumdar, A. and Zarnescu A.: \textit{Landau-de Gennes theory of nematic liquid crystals: the Oseen-Frank limit and beyond,} Arch. Ration. Mech. Anal. \textbf{196}, 227-280 (2010);
\item{[46].} Majumdar, A.: \textit{Equilibrium order parameters of liquid crystals in the Landau-de Gennes theory,} Eur. J. Appl. Math. \textbf{21}, 181-203 (2010);
\item{[47].} Majumdar, A.: \textit{The radial-hedgehog solution in Landau-de Gennes' theory for nematic liquid crystals,} Eur. J. Appl. Math. \textbf{23}, 61-97 (2012);
\item{[48].} Morrey, Jr. C. B.: \textit{Multiple integrals in the calculus of variations,} Grundlehren d.math.Wissenschaften in Einzeldarst., \textbf{130}, Springer Verlag, Berlin-Heidelberg-New York (1966);
\item{[49].} M\"{u}ller, F. and Schikorra, A.: \textit{Boundary regularity via Uhlenbeck-Rivi$\grave{\text{e}}$re decomposition,} Analysis International mathematical journal of analysis and its applications, \textbf{29} (2),  199-220 (2009);
\item{[50].} Ne\v{c}as, J.: \textit{Direct Methods in the Theory of Elliptic Equations,} Springer Monographs in Mathematics, Springer-Verlag Berlin Heidelberg (2012);
\item{[51].} Parker, T.: \textit{Bubble tree convergence for harmonic maps,} J. Differ. Geom. \textbf{44}, 595-633 (1996);
\item{[52].} Penzenstadler, E. and Trebin, H.-R.: \textit{Fine structure of point defects and soliton decay in nematic liquid crystals,} J. Phys. France \textbf{50}, 1027-1040 (1989);
\item{[53].} Petrosyan, A., Shahgholian, H. and Uraltseva, N.: \textit{Regularity of Free Boundaries in Obstacle-Type Problems,} Graduate Studies in Mathematics \textbf{136}, American Mathematical Society (2012);
\item{[54].} Rivi$\grave{\text{e}}$re, T. and Struwe, M.: \textit{Partial Regularity for Harmonic Maps and Related Problems,} Comm. Pure Appl. Math. \textbf{LXI}, 451-463 (2008);
\item{[55].} Rosso, R. and Virga, E. G.: \textit{Metastable nematic hedgehogs,} J. Phys. A \textbf{29}, 4247-4264 (1996);
\item{[56].} Schoen, R. and Uhlenbeck, K.: \textit{A regularity theory for harmonic maps,} J. Diff. Geom. \textbf{17}, 307-335 (1982);
\item{[57].} Schoen, R. and Uhlenbeck, K.: \textit{Boundary regularity and the Dirichlet problem of harmonic maps,} J. Diff. Geom. \textbf{18}, 253-268 (1983);
\item{[58].} Schoen, R. and Uhlenbeck, K.: \textit{Regularity of minimizing harmonic maps into the sphere,} Inventiones Math. \textbf{78}, 89-100 (1984);
\item{[59].} Schopohl, N. and Sluckin, T. J.: \textit{Hedgehog structure in nematic and magnetic systems,} J. Phys. France \textbf{49}, 1097-1101 (1988);
\item{[60].} Simon, L.: \textit{Theorems on Regularity and Singularity of Energy Minimizing Maps,} Lectures in Mathematics ETH Z\"{u}rich, Birkh\"{a}user Verlag 1996;
\item{[61].} Sonnet, A., Kilian, A. and Hess, S.: \textit{Alignment tensor versus director: Description of defects in nematic liquid crystals,} Phys. Rev. E \textbf{52}, 718-722 (1995).
\end{description}\vspace{2pc}
\noindent YONG YU\\
Department of Mathematics\\
The Chinese University of Hong Kong \\
E-mail: yongyu@math.cuhk.edu.hk
\end{document}